\newcommand{\Ad}{\operatorname{Ad}}
\newcommand{\id}{\operatorname{id}} 
\newcommand{\Aut}{\operatorname{Aut}}
\newcommand{\Span}{\operatorname{Span}}
\newcommand{\Per}{\operatorname{Per}}
 \newcommand{\supp}{\operatorname{supp}}
\newcommand{\Is}{\operatorname{Is}}
\newcommand{\ev}{\operatorname{ev}}
\newcommand{\val}{\operatorname{val}}
\newcommand{\BD}{\operatorname{BD}}
\newcommand{\J}{C^*_r\left({J_R}\right)}
\newcommand{\F}{C^*_r\left({F_R}\right)}
\newcommand{\Real}{\operatorname{Re}}
\newcommand\RDA{\textup{\fontsize{4pt}{4pt}\selectfont RDA}}
\newcommand{\C}{\overline{\mathbb C}}
\newcommand{\Ro}{\operatorname{RO}}
\newcommand{\crt}{\operatorname{Crit}}
\DeclareMathOperator{\orb}{Orb}
   \theoremstyle{plain}
   \newtheorem{thm}{Theorem}[section]
   \newtheorem{prop}[thm]{Proposition}
   \newtheorem{lemma}[thm]{Lemma}  
   \newtheorem{cor}[thm]{Corollary}
   \theoremstyle{definition}
   \newtheorem{defn}[thm]{Definition}
   \newtheorem{example}[thm]{Example}
   \theoremstyle{remark}
   \numberwithin{equation}{section}
        \date{\today}
\title[The $C^*$-algebra of a
rational map]{The groupoid $C^*$-algebra of a rational map}
\author{Klaus Thomsen}
\date{\today}
\email{matkt@imf.au.dk}
\address{Institut for matematiske fag, Ny Munkegade, 8000 Aarhus C, Denmark}
\begin{document}

\maketitle

\begin{abstract} This paper contains a quite detailed description of the
  $C^*$-algebra arising from the transformation groupoid of a
   rational map of
  degree at least two on the Riemann sphere. The algebra is decomposed
  stepwise via extensions of familiar $C^*$-algebras whose nature
  depend on the structure of the Julia set and the stable regions in the
  Fatou set, as well as on the behaviour of the critical points.
 \end{abstract}

\section{Introduction and presentation of results}

In non-commutative geometry it is a basic principle, referred to as
Connes' dictum in \cite{Kh}, that a quotient space should
be replaced with a non-commutative algebra, preferably a
$C^*$-algebra, in the cases where the topology of the quotient is ill-behaved. Following this dictum the procedure should go via an
intermediate step which first produces a groupoid, and the
non-commutative algebra should then arise as the convolution algebra
of the groupoid. The structure of the resulting non-commutative algebra offers to compensate for the
poor topology which the quotient very often is equipped with, and at
the same time it encodes the equivalence relation defining the
quotient space which is otherwise lost.

A standard example of the construction is
the classical crossed product arising from a group acting on a locally
compact Hausdorff space which in this picture is a non-commutative
substitute for the quotient of the space under the orbit equivalence
relation given by the action. There are several other examples of this type
of construction arising from dynamical systems. The $C^*$-algebras
introduced by D. Ruelle in \cite{Ru} are non-commutative algebras
representing the quotient space under the homoclinic equivalence
relation arising from a hyperbolic homeomorphism, while the extension
of Ruelles approach by I. Putnam (\cite{Pu}) also allows to consider the quotient by the heteroclinic
equivalence relations of the same type of dynamics. The full orbit equivalence
relation arising from a non-invertible continuous self map can also
serve as input when the map is locally injective. For local
homeomorphisms the construction was developed in stages by J. Renault
(\cite{Re}), V. Deaconu (\cite{De}) and C. Anantharaman-Delaroche (\cite{An}), while the extension to locally injective maps was carried
out in \cite{Th1}. In all cases, including the work of Ruelle and
Putnam, a major problem is to equip the natural groupoid with a
sufficiently nice topology which allows the construction of the
convolution $C^*$-algebra. The best one can hope for is to turn the
groupoid into a locally compact Hausdorff groupoid in such a way that the
range map becomes a local homeomorphism. In this case the groupoid is said
to be \'etale. These crucial properties come relatively cheap for the
transformation groupoid of Renault, Deaconu and
Anantharaman-Delaroche, while it is harder to obtain them for the
groupoids in Putnam's construction (\cite{PS}). In a recent
work (\cite{Th2}) it was shown that it is possible to formulate the
definition of the transformation groupoid of a homeomorphism or a
local homeomorphism in such a way that it not only makes sense for a
larger class of continuous maps, but also retains the structure of a locally compact second countable
Hausdorff groupoid with the important \'etale property. This class of
continuous maps includes the non-constant holomorphic self-maps of a
Riemann surface and for such maps it was shown in \cite{Th2} that the
convolution $C^*$-algebra of the transformation groupoid obtained in
this way is equipped with a one-parameter
group of automorphisms for which the KMS-states correspond to the
conformal measures introduced in complex dynamics by
D. Sullivan. Applied to a particular class of quadratic maps the
result was systems for which the KMS states exhibit phase transition
with spontaneous symmetry breaking in the sense of Bost and
Connes. This illustrates one application of Connes' dictum, obtaining
new models in quantum statistical mechanics. In \cite{Th2} the focus
was on the one-parameter action with its KMS states, and the structure of
the $C^*$-algebras carrying the action was not investigated. It is the purpose of the present paper to present a relatively
detailed description of these $C^*$-algebras $C^*_r(R)$ when they arise from a
rational map $R$ of degree at least two acting on the Riemann sphere $\C$. It
is well-known that the dynamics of such a map is highly complicated,
exhibiting features that are both beautiful and fascinating. As we try
to show here, the structure of the associated $C^*$-algebra is no less fascinating, although the appreciation of it may require a somewhat
more specialized background of the observer than what is needed to
admire the colorful pictures used to depict the dynamics of the maps.

The dynamics of a rational map is partitioned by two totally invariant subsets; the Julia set on which the
map behaves chaotically under iteration and the Fatou set on which its
iterates form an equi-continuous family. As one would expect from
familiarity with crossed products this division gives rise to a
decomposition of $C_r^*(R)$ as an extension where the Fatou set, as the
open subset, gives rise to an ideal $C^*_r(F_R)$ and the Julia set, as the
closed subset, represents the corresponding quotient
$C^*_r(J_R)$. Thus the first decomposition of $C^*_r(R)$ is given by
an extension
\begin{equation*}
\begin{xymatrix}{
0 \ar[r] & \F \ar[r] & C^*_r(R) \ar[r] & \J \ar[r] & 0
}\end{xymatrix}
\end{equation*} 
which reflects the partitioning of the $\C$ by the Julia and Fatou
sets. The
two $C^*$-algebras $C^*_r(J_R)$ and $\F$ in this extension are of very
different nature. The $C^*$-algebra $\J$ of the Julia set is always purely infinite,
nuclear and satisfies the universal coefficient theorem of Rosenberg
and Schochet (\cite{RS}), and it is often, but not always
simple. Ideals in $C^*_r(J_R)$ arise from the possible presence of finite subsets of the Julia
set invariant
under the equivalence relation represented by the transformation groupoid which produces the $C^*$-algebra $C^*_r(R)$; we call this
relation 'restricted orbit equivalence' and it is a relation
which is slightly
stronger than orbit equivalence. The finite subsets of the Julia set invariant under restricted orbit equivalence comprise the finite
subsets considered by Makarov and Smirnov in their work on phase transition in the
thermodynamic formalism associated to the dynamics (\cite{MS1},
\cite{MS2}), and they are closely related to, but not identical with the subsets
introduced in \cite{GPRR} in connection with work on exceptional
rational maps. The possible presence of such subsets of the Julia set
implies that in general the structure of $\J$ must
be decoded from an extension of the form
\begin{equation}\label{jrextint}
\begin{xymatrix}{
0 \ar[r] & C^*_r(J_R \backslash E_R) \ar[r] & \J \ar[r] & B \ar[r] & 0
}\end{xymatrix}
\end{equation}  
where $C^*_r(J_R \backslash E_R)$ is purely infinite and simple, while
$B$ is a finite direct sum of algebras of the form
$M_n(\mathbb C)$ for some $n \leq 3$ or $C(\mathbb T) \otimes
M_n(\mathbb C)$ for some $n \leq 4$.

In contrast to $\J$ the $C^*$-algebra $\F$ of the Fatou set is finite, and
its ideal structure is typically much more complex than that of
$\J$. This is partly due to the fact that the Fatou
set is partitioned into classes of connected components, the
so called stable regions which are termed super-attracting,
attracting, parabolic, Siegel and Herman regions according
to the asymptotic behaviour of their elements under iteration. This
division of $F_R$ results
in a direct sum decomposition of the ideal $C^*_r(F_R)$ where each
direct summand is further decomposed as an extension where the
structure of the ideal depends on the type of the stable region and where the nature of the quotient is
governed by the presence or absence of critical and periodic
points in the region. Specifically,
$$
\F = \oplus_{i=1}^N C^*_r(\Omega_i),
$$
where $C^*_r(\Omega_i)$ is the $C^*$-algebra obtained by restricting
attention to the stable region $\Omega_i \subseteq F_R$. It turns out that the
nature of the algebra $C^*_r(\Omega_i)$ varies with the type of the
stable region:

When $\Omega_i$ is super-attractive there is an
extension
\begin{equation*}
\begin{xymatrix}{
0 \ar[r] & \mathbb K \otimes MT_d \ar[r] & C^*_r(\Omega_i) \ar[r] & B \ar[r] & 0
}\end{xymatrix}
\end{equation*} 
where $B$ is a finite direct sum of algebras stably isomorphic to
either $\mathbb C$ or the continuous functions on the Cantor set. The
algebra $MT_d$ is the mapping torus
of an endomorphism on a Bunce-Deddens algebra of type $d^{\infty}$
where $d$ is the product of the valencies of the elements in the
critical orbit.

When $\Omega_i$ is attractive there is a
an
extension
\begin{equation*}
\begin{xymatrix}{
0 \ar[r] & \mathbb K \otimes C\left(\mathbb
  T^2\right) \ar[r] & C^*_r(\Omega_i) \ar[r] & B \ar[r] & 0
}\end{xymatrix}
\end{equation*} 
where $B$ is a finite direct sum of algebras stably isomorphic to
either $\mathbb C$ or the continuous functions on the circle $\mathbb
T$.

When $\Omega_i$ is parabolic there is a
an
extension
\begin{equation*}
\begin{xymatrix}{
0 \ar[r] & \mathbb K \otimes C\left(\mathbb
  T\right) \otimes C_0(\mathbb R) \ar[r] & C^*_r(\Omega_i) \ar[r] & B \ar[r] & 0
}\end{xymatrix}
\end{equation*} 
where $B$ is a finite direct sum of algebras stably isomorphic to $\mathbb C$.

When $\Omega_i$ is of Siegel type there is a
an
extension
\begin{equation}\label{S-H}
\begin{xymatrix}{
0 \ar[r] & \mathbb K \otimes C_0\left( \mathbb R\right) \otimes A_{\theta} \ar[r] & C^*_r(\Omega_i) \ar[r] & B \ar[r] & 0
}\end{xymatrix}
\end{equation} 
where $B$ is a finite direct sum of algebras stably isomorphic to
either $\mathbb C$ or the continuous functions on the circle $\mathbb
T$, and $A_{\theta}$ is the irrational rotation algebra corresponding
to the angle of rotation in the Siegel domain inside $\Omega_i$.

Finally, when $\Omega_i$ is of Herman type there is a
an extension quite similar to (\ref{S-H}). The only difference is that
while the quotient algebra $B$ must contain a summand stably
isomorphic to $C(\mathbb T)$ in the Siegel case, in the Herman case all summands are
stably isomorphic to $\mathbb C$.

It almost goes without saying that the entire structure in the
decomposition of $C^*_r(R)$ described above reflects easily identified
structures of the dynamics of $R$. For example, the difference between
the structure of the summands in $\F$ coming from a Siegel and Herman
region is due to the periodic point in a Siegel domain which is absent in
a Herman ring.

From the point of view of operator algebra theory a study of a
non-simple $C^*$-algebra often begins with a description of the primitive
ideals and the corresponding irreducible quotients. In \cite{CT} T. Carlsen and the author identified the primitive and
maximal ideals of the $C^*$-algebras arising from the transformation
groupoid of a locally injective surjection on a finite dimensional
compact metric space. In the final section of the present paper the method
from \cite{CT} is carried over to the groupoid $C^*$-algebras of
rational maps and we obtain in this way a description of the primitive ideals and primitive
quotients. In particular, it is shown that the primitive ideal space
of $C^*_r(R)$ is only Hausdorff in the hull-kernel topology when it
has to be, i.e. when $C^*_r(R)$ is simple. This occurs only when $J_R
= \C$ and there are no finite sets invariant under restricted orbit
equivalence. In all other cases the primitive ideal space is not even
$T_0$. 

While there is often a rich variety of primitive quotients, there are
always very few types of simple quotients. The finite invariant subsets under
restricted orbit equivalence give rise to maximal ideals, but the
corresponding simple quotients are matrix algebras of size no more than
4. In most cases $\J$ is also a simple quotient, but only when there
are no finite subsets of $J_R$ invariant under restricted
orbit equivalence. There are no other simple quotients. In particular,
when there are finite subsets of $J_R$ invariant under restricted
orbit equivalence the only simple quotients of $C^*_r(R)$ are matrix
algebras of size not exceeding 4.

There are other ways to associate a $C^*$-algebra to a rational map,
and we refer to \cite{DM} and \cite{KW} for these. It would be
interesting to find the precise relationship between the algebras
investigated here and those of Kajiwara and Watatani. Presently it is
only clear that they are generally very different.

\smallskip

\emph{Acknowledgement.} This work was completed during a visit to the
Institut Henri Poincar\'e as part of the Research in Paris program, and I take the opportunity to thank
the IHP for support and for the exceptional working conditions.

\section{\'Etale groupoids and $C^*$-algebras from dynamical systems}\label{dynsys}

Let $G$ be an \'etale second countable locally compact Hausdorff groupoid with unit
space $G^{(0)}$. Let $r : G \to G^{(0)}$ and $s : G \to G^{(0)}$ be
the range and source maps, respectively. For $x \in G^{(0)}$ put $G^x = r^{-1}(x), \ G_x = s^{-1}(x) \ \text{and} \ \Is_x =
s^{-1}(x)\cap r^{-1}(x)$. Note that $\Is_x$ is a group, the \emph{isotropy group} at $x$. The space $C_c(G)$ of continuous compactly supported
functions is a $*$-algebra when the product is defined by
$$
(f_1 f_2)(g) = \sum_{h \in G^{r(g)}} f_1(h)f_2(h^{-1}g)
$$
and the involution by $f^*(g) = \overline{f\left(g^{-1}\right)}$. Let $x\in G^{(0)}$. There is a
representation $\pi_x$ of $C_c(G)$ on the Hilbert space $l^2(G_x)$ of
square-summable functions on $G_x$ given by 
\begin{equation*}\label{pix}
\pi_x(f)\psi(g) = \sum_{h \in G^{r(g)}} f(h)\psi(h^{-1}g) .
\end{equation*}
The \emph{reduced groupoid
  $C^*$-algebra} $G^*_r(G)$ is the completion of $C_c(G)$ with respect to the norm
$$
\left\|f\right\| = \sup_{x \in G^{(0)}} \left\|\pi_x(f)\right\| .
$$

\subsubsection{Stability of $C^*_r(G)$}

We shall need the following sufficient condition for stability of
$C^*_r(G)$. Recall that a $C^*$-algebra $A$ is \emph{stable} when $A
\otimes \mathbb K  \simeq A$ where $\mathbb K$ denotes the
$C^*$-algebra of compact operators on an infinite dimensional
separable Hilbert space. Recall also that a \emph{bi-section} in $G$ is
an open subset $U \subseteq G$ such that $r : U \to G^{(0)}$ and $s :
U \to G^{(0)}$ both are injective.

\begin{lemma}\label{stability} Let $G$ be a locally compact second
  countable \'etale groupoid. Assume that for
  every compact subset $K \subseteq G^{(0)}$ there is a finite
  collection $\left\{U_i\right\}_{i=1}^N$ of bi-sections in $G$ such
  that
\begin{enumerate}
\item[i)] $K \subseteq \bigcup_{i=1}^N s(U_i)$,
\item[ii)] $r(U_i) \cap r(U_j) = \emptyset$ when $i\neq j$, and
 \item[iii)] $K \cap \bigcup_{i=1}^N r\left(U_i\right) = \emptyset$.
\end{enumerate}
It follows that $C^*_r(G)$ is a
  stable $C^*$-algebra.
\end{lemma}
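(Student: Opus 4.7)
The approach is to verify the Hjelmborg--R{\o}rdam stability criterion: a separable $C^*$-algebra $A$ is stable if and only if for every $a \in A_+$ and every $\epsilon > 0$ there exists $b \in A$ with $\|a - b^*b\| < \epsilon$ and $\|abb^*\| < \epsilon$. Second countability of $G$ gives separability of $C^*_r(G)$, so the criterion applies. By density of $C_c(G)$ and continuity of the square root on $A_+$, it suffices to produce, for each $a_0 := f^*f$ with $f \in C_c(G)$, an \emph{exact} element $b \in C^*_r(G)$ satisfying $b^*b = a_0$ and $a_0 b = 0$; the perturbation errors incurred when returning to an arbitrary positive $a$ can be absorbed into $\epsilon$.

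Given such $a_0$, set $K := s(\operatorname{supp} a_0) \cup r(\operatorname{supp} a_0) \subseteq G^{(0)}$, which is compact, and apply the hypothesis to obtain bi-sections $U_1, \ldots, U_N$ satisfying (i)--(iii). Using a partition of unity on $K$ subordinate to the open cover $\{s(U_i)\}$ and pulling back along the source homeomorphisms $s|_{U_i} : U_i \to s(U_i)$, choose $\chi_i \in C_c(U_i)$ so that $\Phi := \sum_i \chi_i^* \chi_i \in C_c(G^{(0)})$ equals $1$ on a neighbourhood of $K$. By (iii) the compact set $\bigcup_i r(\operatorname{supp} \chi_i) \subseteq \bigcup_i r(U_i)$ is disjoint from $K$, so Urysohn's lemma yields $\psi \in C_c(G^{(0)})$ equal to $1$ on $K$ and vanishing on $\bigcup_i r(\operatorname{supp} \chi_i)$. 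Define $b := \sum_i \chi_i a_0^{1/2}$.

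The verification rests on two identities coming from the bi-section structure. Hypothesis (ii) forces $\chi_j^* \chi_i = 0$ for $i \neq j$, since a nonzero term would require elements of $U_j^{-1}$ and $U_i$ sharing the same range in $G^{(0)}$, contradicting the disjointness of $r(U_j)$ and $r(U_i)$; meanwhile $\chi_i^* \chi_i = |\chi_i|^2$ as a function on $s(U_i) \subseteq G^{(0)}$. Hence $b^*b = a_0^{1/2} \Phi a_0^{1/2}$, and since $\Phi = 1$ on $K \supseteq s(\operatorname{supp} a_0) \cup r(\operatorname{supp} a_0)$ one has $\Phi a_0 = a_0 = a_0 \Phi$, so $(1-\Phi)a_0(1-\Phi) = 0$ and consequently $(1-\Phi) a_0^{1/2} = 0$ by the standard norm argument, yielding $b^*b = a_0$. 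For the orthogonality, $(\psi \chi_i)(g) = \psi(r(g)) \chi_i(g)$ vanishes on $\operatorname{supp} \chi_i$ by the choice of $\psi$, so $\psi \chi_i = 0$; combined with $a_0^{1/2} \psi = a_0^{1/2}$ (from $\psi a_0 = a_0$ by the same argument), this gives $a_0^{1/2} b = a_0^{1/2} \psi b = 0$, hence $a_0 b = 0$.

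The main obstacle is to coordinate the $\chi_i$'s and $\psi$ so that the three requirements $\sum_i \chi_i^* \chi_i = 1$ on $K$, $\psi = 1$ on $K$, and $\psi \chi_i = 0$ hold simultaneously; this is precisely where all three hypotheses enter. Hypothesis (i) enables the partition of unity on $K$; hypothesis (ii) diagonalises $b^*b$ by killing every cross-term $\chi_j^* \chi_i$ and so collapses it to an element of $C_c(G^{(0)})$; and hypothesis (iii) separates $K$ from the compact set $\bigcup_i r(\operatorname{supp} \chi_i)$, which is what permits $\psi$ to act as the identity on $a_0$ while annihilating every $\chi_i$ from the left.
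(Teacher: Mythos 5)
Your proposal is correct and takes essentially the same route as the paper's proof: both verify the Hjelmborg--R{\o}rdam stability criterion by approximating $a$ with a compactly supported positive element, then using a partition of unity subordinate to $\left\{s(U_i)\right\}$ pulled through the bi-sections to build a ``shifted copy'' of it, with hypothesis ii) killing the cross-terms so the adjoint-square recovers $a_0$ and hypothesis iii) giving the orthogonality. The differences are cosmetic: the paper uses the square-zero form of the criterion (taking $v = w\sqrt{b}$ with $v^2 = 0$) and checks $bw = 0$ directly from the supports, where you use the approximate-orthogonality form and an auxiliary Urysohn function $\psi$ to see the same vanishing.
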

\begin{proof} By Theorem 2.1 and b) of Proposition 2.1 in
  \cite{HR} it suffices to consider a positive element $a \in
  C^*_r(G)$ and an $\epsilon > 0$ and show that there is an element $v
  \in C^*_r(G)$ such that $\left\|v^*v -a\right\| \leq \epsilon$ and
  $v^2 = 0$. Write $a = a_0^*a_0$ for some $a_0 \in C^*_r(G)$. By approximating
$a_0$ with an element $h \in C_c(G)$ and taking $b = h^*h$ we obtain
an element $b \in C_c(G)$ which is positive in $C^*_r(G)$ and
satisfies that $\left\|a-b\right\| \leq \epsilon$. Let $\supp b$ be
the support of $b$ in $G$ and set $K = r\left(
  \supp b\right)$. By assumption there is a finite collection
$\left\{U_i\right\}_{i=1}^N$ of bi-sections such that i)-iii)
hold. Let $\left\{h_i\right\} \subseteq C_c\left(G^{(0)}\right)$ be a
partition of unity on $K$ subordinate to $\left\{s\left(U_i\right)\right\}_{i=1}^N$. For each $i$ let $f_i \in C_c(G)$ be
supported in $U_i$ and satisfy that $f_i=1$ on $s^{-1}\left(\supp
h_i\right) \cap U_i$. Set $w = \sum_{i=1}^{N} f_i\sqrt{h_i} \in C_c(G)$
and note that $w^*wb = \sum_i h_ib = b$ while $bw= 0$. Set $v =
w\sqrt{b}$.
\end{proof}

\subsubsection{$G$-orbits and reductions} When $W$ is a subset of $G^{(0)}$ we set
$$
G_W = \left\{ g \in G : \ r(g), s(g) \in W \right\},
$$
which is a subgroupoid of $G$, called a \emph{reduction} of $G$. When
$W$ is an open subset of $G$ the reduction $G_W$ will be an \'etale
groupoid in the relative topology inherited from $G$ and there is an embedding $C_r^*(G_W) \subseteq
C^*_r(G)$, cf. e.g. Proposition 1.9 in \cite{Ph}. In fact,
$C^*_r\left(G_W\right)$ is a hereditary $C^*$-subalgebra of
$C^*_r\left(G\right)$. When $x \in G^{(0)}$, the set $Gx = \left\{ r(g) : \ g \in G_x
\right\}$ will be called the \emph{$G$-orbit} of $x$. We say that $W \subseteq G^{(0)}$ is \emph{$G$-invariant}
when $x \in W \Rightarrow Gx \subseteq W$. When $W$ is $G$-invariant and locally compact in the topology
inherited from $G^{(0)}$ the reduction $G_W$ is an \'etale locally
compact groupoid in the topology inherited
from $G$. If $W$ is both open and
$G$-invariant $C^*_r\left(G_W\right)$ is an ideal in
$C^*_r\left(G\right)$. Similarly, when $F$ is a closed subset of
$G^{(0)}$ which is also $G$-invariant, $C^*_r\left(G_F\right)$ is a quotient of
$C^*_r\left(G\right)$. It is known that under a suitable amenability condition the
kernel of the quotient map 
$$
\pi_F : C^*_r\left(G\right) \to
C^*_r\left(G_F\right)
$$ 
is $C^*_r\left(G_{G^{(0)} \backslash
    F}\right)$. We shall avoid the amenability issue here and prove
this equality directly in the cases we are interested in. See Section
\ref{julia/fatou}.

We shall need the following fact which follows straightforwardly from
the definitions.

\begin{lemma}\label{basdecomp} Assume that there is a finite partition
  $G^{(0)} = \sqcup_{i=1}^n W_i$ such that each $W_i$ is open and
  $G$-invariant. It follows that
$$
C^*_r(G) \simeq \oplus_{i=1}^n C^*_r\left(G_{W_i}\right) .
$$
\end{lemma}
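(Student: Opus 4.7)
The plan is to show first that the finite partition of $G^{(0)}$ lifts to a finite partition of $G$ itself by clopen subgroupoids, then to deduce the $*$-algebraic splitting of $C_c(G)$, and finally to check that the reduced norm decouples across the summands.

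First I would verify that $G = \bigsqcup_{i=1}^n G_{W_i}$. Given $g \in G$, pick $i$ with $r(g) \in W_i$. By $G$-invariance, $Gr(g) \subseteq W_i$; since $g^{-1} \in G_{r(g)}$ satisfies $r(g^{-1}) = s(g)$, we get $s(g) \in W_i$, so $g \in G_{W_i}$. The $W_i$ are pairwise disjoint, so this expression is unique. Moreover, $G_{W_i} = r^{-1}(W_i) \cap s^{-1}(W_i)$ is open in $G$ (since $r,s$ are continuous and $W_i$ is open), and closed in $G$ because its complement is $\bigcup_{j\neq i} G_{W_j}$, a finite union of open sets. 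Hence each $G_{W_i}$ is clopen.

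Next, the clopen decomposition of $G$ gives a direct-sum decomposition at the level of compactly supported functions: restriction defines a map $C_c(G) \to \bigoplus_{i=1}^n C_c(G_{W_i})$, $f \mapsto (f|_{G_{W_i}})_i$, and extension-by-zero provides the inverse. This is a $*$-algebra isomorphism because the convolution in $C_c(G)$ does not mix the summands: if $f_i \in C_c(G_{W_i})$ and $f_j \in C_c(G_{W_j})$ with $i \neq j$, then $f_i f_j$ vanishes because for any $g \in G$ one has $f_i(h) f_j(h^{-1}g) = 0$ for every $h$ (the source and range of $h$ and $h^{-1}g$ cannot both land in $W_i$ and in $W_j$). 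The involution clearly preserves each summand.

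The main point to check is that the reduced norm decouples. Fix $x \in G^{(0)}$, say $x \in W_j$. For any $g \in G_x$, $s(g) = x \in W_j$, whence $r(g) \in W_j$ by $G$-invariance, so $G_x \subseteq G_{W_j}$, and similarly $G^{r(g)} \subseteq G_{W_j}$ for every $g \in G_x$. Inspection of the formula for $\pi_x$ then shows that $\pi_x(f_i) = 0$ whenever $f_i \in C_c(G_{W_i})$ with $i \neq j$, and that the restriction of $\pi_x$ to $C_c(G_{W_j})$ coincides with the canonical representation of $C_c(G_{W_j})$ on $l^2((G_{W_j})_x) = l^2(G_x)$. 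Consequently, for $f = \sum_i f_i$ with $f_i \in C_c(G_{W_i})$,
\[
\|f\|_{C^*_r(G)} \;=\; \sup_{x \in G^{(0)}} \|\pi_x(f)\| \;=\; \max_{1 \le j \le n}\ \sup_{x \in W_j} \|\pi_x(f_j)\| \;=\; \max_{1 \le j \le n} \|f_j\|_{C^*_r(G_{W_j})}.
\]
This is exactly the $C^*$-norm on the direct sum $\bigoplus_{i=1}^n C^*_r(G_{W_i})$, so the map $f \mapsto (f|_{G_{W_i}})_i$ extends to the desired isomorphism. No serious obstacle is anticipated; the only care needed is the bookkeeping that both $G_x$ and $G^{r(g)}$ stay inside a single $G_{W_j}$, which is exactly what $G$-invariance buys.
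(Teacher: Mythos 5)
Your proof is correct and fills in precisely the ``straightforward from the definitions'' argument that the paper leaves unproved: the $G$-invariance of the $W_i$ yields the clopen partition $G=\sqcup_i G_{W_i}$, the convolution product does not mix the summands, and the regular representations $\pi_x$ for $x\in W_j$ only see $C_c(G_{W_j})$, so the reduced norm is the maximum of the summand norms. No gaps; this is the intended argument.
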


The following result was obtained by Muhly, Renault and Williams in \cite{MRW}.  

\begin{thm}\label{MRW} Let $G$ be an \'etale second
  countable locally compact Hausdorff groupoid and $W \subseteq G^{(0)}$ an open subset
  such that $Gx \cap W \neq \emptyset$ for all $x \in G^{(0)}$. It
  follows that $C^*_r(G)$ is stably isomorphic to
  $C^*_r\left(G_W\right)$.
\end{thm}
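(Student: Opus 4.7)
The plan is to realise $C^*_r(G_W)$ as a full hereditary $C^*$-subalgebra of $C^*_r(G)$ and then invoke L.~G.~Brown's stable isomorphism theorem, which asserts that a full hereditary $C^*$-subalgebra of a $\sigma$-unital $C^*$-algebra is stably isomorphic to the ambient algebra. Since $G$ is second countable, $C^*_r(G)$ is separable and hence $\sigma$-unital, so Brown's hypothesis is automatic once fullness is established. The hereditary embedding $C^*_r(G_W) \subseteq C^*_r(G)$ is already supplied by Proposition 1.9 of \cite{Ph}, as recorded in the paragraph preceding the theorem, so only the fullness of this subalgebra needs to be proved.

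To do this, let $J$ denote the closed two-sided ideal of $C^*_r(G)$ generated by $C^*_r(G_W)$. I would first show that $J \cap C_0(G^{(0)}) = C_0(G^{(0)})$ and then deduce $J = C^*_r(G)$. For a fixed $x \in G^{(0)}$ the orbit hypothesis produces some $g_0 \in G_x$ with $r(g_0) \in W$. Since $r$ is a local homeomorphism and $W$ is open, a bisection $U \ni g_0$ with $r(U) \subseteq W$ exists; pick $\phi \in C_c(U)$ with $\phi(g_0) \neq 0$. The injectivity of $r$ and $s$ on bisections makes $\phi\phi^*$ and $\phi^*\phi$ land in $C_0(G^{(0)})$, supported in $r(U) \subseteq W$ and in $s(U)$, respectively. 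Thus $\phi\phi^* \in C^*_r(G_W)$, and the sandwich $\phi^*(\phi\phi^*)\phi = (\phi^*\phi)^2$ belongs to $J$ and takes the value $|\phi(g_0)|^4 \neq 0$ at $x$. Letting $x$ vary, $J \cap C_0(G^{(0)})$ has no common zero and is therefore all of $C_0(G^{(0)})$.

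To pass from this to $J = C^*_r(G)$ it suffices to recall that $C_0(G^{(0)})$ sits non-degenerately in $C^*_r(G)$: for $h \in C_c(G)$, choose $\psi \in C_c(G^{(0)})$ equal to $1$ on the compact set $r(\supp h)$; the convolution formula then gives $\psi h = h$, so that $h \in J$. A density argument finishes the proof of fullness, and Brown's theorem delivers the desired stable isomorphism $C^*_r(G) \otimes \mathbb K \simeq C^*_r(G_W) \otimes \mathbb K$.

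The main obstacle is fullness, and the crux of it is the computation showing that for $\phi \in C_c(U)$ with $U$ a bisection, both $\phi^*\phi$ and $\phi\phi^*$ lie in $C_0(G^{(0)})$. This is precisely where the \'etale structure is used decisively, since it is what lets one upgrade the orbit hypothesis (a pointwise statement about $G^{(0)}$) to algebraic fullness of the ideal generated by $C^*_r(G_W)$.
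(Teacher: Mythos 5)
Your proof is correct, but it takes a genuinely different route from the paper's. The paper disposes of the statement in one line: the set $\bigcup_{x\in W}G_x=s^{-1}(W)$ is a $(G,G_W)$-equivalence in the sense of Muhly--Renault--Williams, so Theorem 2.8 of \cite{MRW} gives a Morita equivalence between $C^*_r(G)$ and $C^*_r\left(G_W\right)$, and stable isomorphism follows since both algebras are separable. You instead work directly inside $C^*_r(G)$: using the hereditarity of $C^*_r\left(G_W\right)\subseteq C^*_r(G)$ recorded just before the theorem, you verify fullness by hand, and your key computation is sound --- for $\phi\in C_c(U)$ with $U$ a bisection through a chosen $g_0\in G_x$ with $r(g_0)\in W$ and $r(U)\subseteq W$, the convolution identities do give $\phi\phi^*\in C_c(W)\subseteq C^*_r\left(G_W\right)$ and $\phi^*\phi\in C_c\left(G^{(0)}\right)$, and since $s$ is injective on $U$ the element $(\phi^*\phi)^2=\phi^*(\phi\phi^*)\phi$ of the ideal $J$ takes exactly the value $\left|\phi(g_0)\right|^4\neq 0$ at $x$; the nondegeneracy step $\psi h=h$ then correctly upgrades $C_0\left(G^{(0)}\right)\subseteq J$ to $J=C^*_r(G)$, and Brown's theorem applies because second countability makes both algebras separable, hence $\sigma$-unital. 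What your route buys is self-containedness: it avoids the groupoid-equivalence machinery of \cite{MRW} entirely, at the cost of a page of elementary convolution estimates, whereas the paper's route buys brevity by outsourcing all the work to a single citation. Both arguments use the hypotheses in parallel ways --- openness of $W$ for the hereditary embedding (respectively for $G_W$ being \'etale), and the orbit condition $Gx\cap W\neq\emptyset$ for fullness (respectively for surjectivity of the range map on the equivalence) --- and both ultimately rest on Brown-type stability results.
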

\begin{proof} The set $\bigcup_{x \in W} G_x$ is a
  $\left(G,G_W\right)$-equivalence in the sense of \cite{MRW} and
  hence Theorem 2.8 of \cite{MRW} applies.
\end{proof}

\subsubsection{Pure infiniteness of $C^*_r(G)$}\label{pure13}

Following \cite{An} we say that an \'etale groupoid $G$ is
\emph{essentially free} when the points $x$ of the unit space
$G^{(0)}$ for which the isotropy group $\Is_x$ is
trivial (i.e. only consists of $\{x\}$) is dense in $G^{(0)}$. 
In the same vein we say that $G$ is \emph{locally contracting} when every
open non-empty subset of $G^{(0)}$ contains an open subset
$V$ with the property that there is an open bisection $S$ in $G$
such that $\overline{V} \subseteq s(S)$ and
$\alpha_{S}^{-1}\left(\overline{V}\right) \subsetneq V$, when $\alpha_S
: r(S) \to s(S)$ is the homeomorphism defined such that $\alpha_S(x) =
s(g)$ where $g \in S$ is the unique element with $r(g) =x$, cf. Definition
2.1 of \cite{An} (but note that the source map is denoted by $d$ in
\cite{An}). 

We say that a $C^*$-algebra is \emph{purely infinite} when every
non-zero hereditary $C^*$-subalgebra of $A$ contains an infinite projection.
Proposition 2.4 of \cite{An} then says the following.

\begin{thm}\label{an}
Let $G$ be an \'etale second countable locally compact Hausdorff
groupoid. Assume that $G$ is essentially free and locally
contracting. Then $C^*_r(G)$ is purely infinite.
\end{thm}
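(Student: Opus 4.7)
The plan is to verify the definition of pure infiniteness directly: starting from a non-zero hereditary $C^*$-subalgebra $H \subseteq C^*_r(G)$, I aim to exhibit a non-zero projection $p \in H$ that is Murray--von Neumann equivalent in $C^*_r(G)$ to a proper subprojection of itself. Essential freeness will let me localize the problem from an arbitrary positive element of $H$ to a continuous function on $G^{(0)}$ with support in a prescribed open set, while local contraction will supply, through a bi-section $S$ with $\alpha_S^{-1}(\overline{V}) \subsetneq V$, the partial isometry implementing the proper equivalence.

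First I would pass from $H$ to a tractable element using the canonical faithful conditional expectation $E : C^*_r(G) \to C_0(G^{(0)})$ given on $C_c(G)$ by restriction to the unit space. Take positive $a \in H$ with $\|a\| = 1$; a Kishimoto--Renault type argument, available because $G$ is essentially free, shows that for any prescribed non-empty open $V \subseteq G^{(0)}$ and any $\epsilon > 0$ there is a $c \in C^*_r(G)$ with $\|c^* a c - h\| < \epsilon$ for some $h \in C_c(V)_+$ of norm close to one. I would apply this with the specific $V$ and open bi-section $S$ supplied by the local contracting hypothesis, that is, $\overline{V} \subseteq s(S)$ and $\alpha_S^{-1}(\overline{V}) \subsetneq V$, arranging in addition that $h \equiv \|h\|$ on $\alpha_S^{-1}(\overline{V})$ and that $h$ carries strictly positive mass at some point of $V \setminus \alpha_S^{-1}(\overline{V})$. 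After absorbing the $\epsilon$-error by functional calculus, $H$ contains the hereditary subalgebra generated by this $h$.

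Next the bi-section $S$ supplies a partial isometry: taking $\varphi \in C_c(S)$ equal to $1$ on a neighborhood of $s^{-1}\bigl(\alpha_S^{-1}(\overline{V})\bigr) \cap S$, the convolution formula produces an element $u \in C^*_r(G)$ with $u^* g u = g \circ \alpha_S$ for every $g \in C_0(s(S))$. I would then set $p = \chi_{[\|h\|/2,\|h\|]}(h) \in H$ and $q = u^* p u$; by construction $q$ is a projection supported in $\alpha_S^{-1}(\overline{V})$, and because $h$ is constant equal to $\|h\|$ on that set, $q \leq p$, while the proper inclusion $\alpha_S^{-1}(\overline{V}) \subsetneq V$ together with the additional mass of $h$ outside $\alpha_S^{-1}(\overline{V})$ forces $q \neq p$. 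The element $u$ realizes a Murray--von Neumann equivalence $p \sim q \lneq p$, so $p$ is infinite.

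The main obstacle is the localization step in the second paragraph: transferring an arbitrary positive element of $H$ to a function on $G^{(0)}$ with support in a prescribed open set. This is the analytic heart of the argument and is where essential freeness enters in full force, through an approximation and averaging procedure along bi-sections that exploits the density in $G^{(0)}$ of points with trivial isotropy. Once such an $h$ is in hand, the bookkeeping with the bi-section from local contraction is comparatively routine.
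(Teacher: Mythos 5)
You should first be aware that the paper contains no proof of this statement: it is quoted directly from Proposition 2.4 of \cite{An}, so the comparison has to be with the argument given there (and its close relative in Laca--Spielberg). Your overall shape --- use the faithful conditional expectation onto $C_0(G^{(0)})$ together with essential freeness to cut a positive element of the hereditary subalgebra $H$ down to (something equivalent to) a positive function $h\in C_c(V)$, then bring in the contracting bisection $S$ --- is indeed the shape of that proof, and deferring the localization step to a standard ``Kishimoto--Renault'' lemma is acceptable.

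The genuine gap is the final step, where you set $p=\chi_{[\|h\|/2,\|h\|]}(h)\in H$. Since you define $p$ by functional calculus of the continuous function $h$, it lies in $C^*(h)\subseteq C_0(G^{(0)})$ only if the level set $\{x: h(x)\ge \|h\|/2\}$ is clopen; in general this spectral projection lives in the bidual and is not an element of $C^*_r(G)$ at all, let alone of $H$. In the situations where the theorem is applied in this paper, $G^{(0)}$ is the Julia set or the whole Riemann sphere, which may well be connected, and then $C_0(V)$ contains \emph{no} nonzero projections whatsoever; arranging $h$ to be constant on $\alpha_S^{-1}(\overline V)$ cannot help, because a nonzero projection in $C_0(V)$ would be the characteristic function of a nonempty compact open subset of $V$. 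So as written your argument never produces a projection, and producing one is exactly the nontrivial point. The known proof manufactures the projection from the bisection itself rather than from a function on the unit space: choosing $f\in C_c(V)$ with $0\le f\le 1$, $f\equiv 1$ on $\alpha_S^{-1}(\overline V)$ and $\operatorname{supp}f\not\subseteq \alpha_S^{-1}(\overline V)$, and $\xi\in C_c(G)$ supported in $S$ with $\xi\equiv 1$ on $S\cap s^{-1}(\overline V)$, the element $w=\xi f^{1/2}$ satisfies $w^*w=f$, while $ww^*$ is a function supported in $\alpha_S^{-1}(\overline V)$, so $(w^*w)(ww^*)=ww^*$ and $w^*w\ne ww^*$; moreover $(1-w^*w)w=0$, so $v=w+(1-w^*w)^{1/2}$ is an isometry in the unitization, and the Blackadar--Cuntz scaling-element argument converts this into an infinite projection inside the relevant hereditary subalgebra. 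Some device of this kind --- extracting a projection from the partial-isometry structure coming from $S$ --- is indispensable and is missing from your proposal.
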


\subsection{The transformation groupoid of a local homeomorphism}\label{trans}

In this section we describe the construction of an \'etale second countable locally compact
Hausdorff groupoid from a local
homeomorphism of a locally compact Hausdorff space which was introduced in increasing generality by
J. Renault \cite{Re}, V. Deaconu \cite{De} and Anantharaman-Delaroche
\cite{An}.

Let $X$ be a second countable locally compact Hausdorff space and
$\varphi : X \to X$ a local homeomorphism. Thus we assume that
$\varphi$ is open and locally injective, but not necessarily
surjective. Set
$$
G_{\varphi} = \left\{ (x,k,y) \in X \times \mathbb Z  \times X :
  \ \exists n,m \in \mathbb N, \ k = n -m , \ \varphi^n(x) =
  \varphi^m(y)\right\} .
$$
This is a groupoid with the set of composable pairs being
$$
G_{\varphi}^{(2)} \ =  \ \left\{\left((x,k,y), (x',k',y')\right) \in G_{\varphi} \times
  G_{\varphi} : \ y = x'\right\}.
$$
The multiplication and inversion are given by 
$$
(x,k,y)(y,k',y') = (x,k+k',y') \ \text{and}  \ (x,k,y)^{-1} = (y,-k,x)
.
$$
Note that the unit space of $G_{\varphi}$ can be identified with
$X$ via the map $x \mapsto (x,0,x)$. Under this identification the
range map $r: G_{\varphi} \to X$ is the projection $r(x,k,y) = x$
and the source map the projection $s(x,k,y) = y$.

To turn $G_{\varphi}$ into a locally compact topological groupoid, fix $k \in \mathbb Z$. For each $n \in \mathbb N$ such that
$n+k \geq 0$, set
$$
G_{\varphi}(k,n) = \left\{ \left(x,l, y\right) \in X \times \mathbb
  Z \times X: \ l =k, \ \varphi^{k+n}(x) = \varphi^n(y) \right\} .
$$
This is a closed subset of the topological product $X \times \mathbb Z
\times X$ and hence a locally compact Hausdorff space in the relative
topology.
Since $\varphi$ is locally injective $G_{\varphi}(k,n)$ is an open subset of
$G_{\varphi}(k,n+1)$ and hence the union
$$
{G_{\varphi}}(k) = \bigcup_{n \geq -k} {G_{\varphi}}(k,n) 
$$
is a locally compact Hausdorff space in the inductive limit topology. The disjoint union
$$
G_{\varphi} = \bigcup_{k \in \mathbb Z} {G_{\varphi}}(k)
$$
is then a locally compact Hausdorff space in the topology where each
$G_{\varphi}(k)$ is an open and closed set. In fact, as is easily verified, $G_{\varphi}$ is an \'etale groupoid,
i.e. the range and source maps are local homeomorphisms. The groupoid
$G_{\varphi}$ will be called the \emph{transformation groupoid} of
$\varphi$. To simplify notation we denote in the following the
corresponding $C^*$-algebra by $C^*_r(\varphi)$; i.e. we set
$$
C^*_r\left(G_{\varphi}\right) = C^*_r(\varphi).
$$ 
Note that the $G_{\varphi}$-orbit $G_{\varphi}x$ of a point $x
\in X$ is the \emph{orbit} of $x$ under $\varphi$, i.e.
$$
G_{\varphi}x = \bigcup_{n,m \in \mathbb N}
\varphi^{-n}\left(\varphi^m(x)\right);
$$ 
by some authors called the full or grand orbit to
distinguish it from $\left\{\varphi^n(x) : \ n \in \mathbb N\right\}$,
which we will call the \emph{forward orbit}. 

When the local homeomorphism $\varphi$ is proper, in the sense that
inverse images of compact sets are compact, the $C^*$-algebra
$C^*_r(\varphi)$ can be realised as the crossed product by an
endomorphism, cf. \cite{De},\cite{An}, in the following way: The
subset
$$
R_{\varphi} = G_{\varphi}(0) \simeq \left\{ (x,y) \in X \times X : \ \varphi^n(x) =
  \varphi^n(y) \ \text{for some} \ n \in \mathbb N \right\}
$$
is an open subgroupoid of $G_{\varphi}$ and an \'etale groupoid in
itself. The reduced groupoid $C^*$-algebra
$C^*_r\left(R_{\varphi}\right)$ is a $C^*$-subalgebra of
$C^*_r(\varphi)$. When $\varphi$ is proper there is an endomorphism $\widehat{\varphi}
: C^*_r(R_{\varphi}) \to C^*_r(R_{ \varphi})$ defined such that
$$
\widehat{\varphi}(f)(x,y) = \left( \# \varphi^{-1}(\varphi(x)) \#
  \varphi^{-1}(\varphi(y))\right)^{-1/2}f(\varphi(x),\varphi(y))
$$
when $f \in C_c(R_{\varphi})$. We will refer to this endomorphism as
\emph{the Deaconu endomorphism}. As shown in \cite{An} there is an
isomorphism
$$
C^*_r(\varphi) = C^*_r\left(R_{\varphi}\right)
\rtimes_{\widehat{\varphi}} \mathbb N,
$$
where the crossed product is a crossed product by an endomorphism both
in the sense of Paschke (\cite{P}) and the sense of Stacey (\cite{St}).

\subsection{The transformation groupoid of a rational map}

In this section we describe an \'etale groupoid coming from a non-constant
holomorphic map on a Riemann surface by a construction introduced in
\cite{Th2}. But since we shall focus on the Riemann sphere in this
paper we restrict the presentation accordingly.

Let $\overline{\mathbb C}$ be the Riemann sphere and $R :
\overline{\mathbb C} \to \overline{\mathbb C}$ a rational map of degree
at least $2$. Consider a subset $X \subseteq \overline{\mathbb C}$ which
is locally compact in the topology inherited from $\overline{\mathbb
  C}$, without isolated points and \emph{totally
$R$-invariant} in the sense that $R^{-1}(X) = X$. Let
$\mathcal P$ be the pseudo-group on $X$ of local homeomorphisms $\xi : U
\to V$ between open subsets of $X$ with the property that there are
open subsets $U_1,V_1$ in $\overline{\mathbb C}$ and a bi-holomorphic map $\xi_1 : U_1
\to V_1$ such that $U_1 \cap X = U, \ V_1 \cap X = V$ and $\xi =
\xi_1$ on $U$. For each $k \in \mathbb Z$ we denote by
$\mathcal T_k(X)$ the elements $\eta : U \to V$ of $\mathcal P$ with the
property that there are natural numbers $n,m$ such that $k = n-m$ and
\begin{equation}\label{crux0} 
R^n(z) = R^m(\eta(z))  \ \ \forall z \in U.
\end{equation}
The elements of $\mathcal T =\bigcup_{k \in \mathbb Z} \mathcal T_k(X)$ will
be called \emph{local transfers} for $R|_X$. 
We denote by $[\eta]_x$ the germ at a point $x \in X$ of an element $\eta \in \mathcal
T$. Set
$$
\mathcal G_{X} = \left\{ (x,k,\eta,y) \in X \times \mathbb Z \times
  \mathcal P  \times X : \ \eta \in \mathcal T_k(X) , \ \eta(x)
  = y \right\} .
$$
We define an equivalence relation $\sim$ in $\mathcal G_{X}$ such
that $(x,k,\eta,y) \sim (x',k',\eta',y')$ when $ x = x', \ y = y', \ k
= k'$ and $[\eta]_x = [\eta']_x$. Let
$\left[x,k,\eta,y\right]$ denote the equivalence class of $(x,k,\eta,y)  \in \mathcal G_{X}$. The quotient space 
$G_{X} = \mathcal G_{X}/{\sim}$ is a groupoid where two elements
$\left[x,k,\eta,y\right]$ and $\left[x',k',\eta',y'\right]$ are
composable when $y= x'$ and their product is
$$
\left[x,k,\eta,y\right]\left[y,k',\eta',y'\right] =
\left[x,k+k',\eta'\circ \eta ,y'\right] .
$$ 
The inversion in $G_{X}$ is defined such that
$\left[x,k,\eta,y\right]^{-1} = \left[ y,-k,\eta^{-1},x \right]$. The unit space of $G_{X}$ can be identified with $X$ via the map $x \mapsto [x,0,\id,x]$, where $\id$ is the
identity map on $X$. When $\eta \in \mathcal T_k(X)$ and $U$ is an open subset of the domain
of $\eta$ we set
\begin{equation}\label{baseset}
U(\eta) = \left\{ \left[z,k,\eta,\eta(z)\right]  : \ z \in U \right\}.
\end{equation}
It is straightforward to verify that by varying $k$, $\eta$ and $U$, the sets (\ref{baseset}) constitute a base
for a topology on $G_{X}$. A crucial point is that the topology is
also Hausdorff since the local transfers are holomorphic. It follows that $G_{X}$ is an 
  \'etale second countable locally compact Hausdorff groupoid,
  cf. \cite{Th2}. 

It is the possible presence of critical points of $R$ in $X$ which
prevents us from using the procedure of Section \ref{trans} to get an
\'etale groupoid. The additional feature, the local transfers, which
is introduced to obtain a well-behaved \'etale groupoid is therefore
not necessary when there are no critical points in $X$, and it is
therefore reassuring that the two constructions coincide in the
absence of critical points in $X$.

\begin{prop}\label{nocrit} Assume that $Y \subseteq X$ is an open
  subset of $X$ which does not contain any
  critical point of $R$ and is $R$-invariant in the sense that $R(Y)
  \subseteq Y$. Then the reduction $\left(G_X\right)_Y$ is isomorphic,
  as a topological groupoid, to the transformation groupoid of the local homeomorphism $R|_Y
  : Y \to Y$. 
\end{prop}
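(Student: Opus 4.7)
The plan is to show that the obvious forget-the-germ map
\[
\Phi\bigl([x,k,\eta,y]\bigr) \ = \ (x,k,y)
\]
is an isomorphism of topological groupoids from $(G_X)_Y$ onto $G_{R|_Y}$. The first observation is that $R|_Y$ really is a local homeomorphism: since $Y$ is open, $R$-invariant, and contains no critical points of $R$, the forward orbit of every $y \in Y$ stays inside $Y$, so $R^m$ is locally biholomorphic at every $y \in Y$ and every $m \geq 1$, and in particular the transformation groupoid $G_{R|_Y}$ of Section \ref{trans} is well defined.

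The substantive step is bijectivity, and everything hinges on germ uniqueness. If $[x,k,\eta,y]$ and $[x,k,\eta',y]$ both lie in $(G_X)_Y$, one may replace the natural numbers witnessing that $\eta, \eta' \in \mathcal T_k(X)$ by a common pair $(n,m)$ with $k = n-m$, so that $R^n = R^m \circ \eta = R^m\circ \eta'$ on a common neighborhood of $x$; local biholomorphy of $R^m$ at $y$ then forces $\eta = \eta'$ as germs at $x$, giving injectivity. For surjectivity, given $(x,k,y) \in G_{R|_Y}$ with $R^n(x) = R^m(y)$, I would choose neighborhoods $U_1 \ni x$ and $V_1 \ni y$ in $\overline{\mathbb C}$ on which $R^m : V_1 \to R^m(V_1)$ is biholomorphic and $R^n(U_1) \subseteq R^m(V_1)$, and set $\eta_1 = (R^m|_{V_1})^{-1}\circ R^n|_{U_1}$. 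Total $R$-invariance of $X$ ensures that $\eta_1$ carries $U_1\cap X$ into $V_1\cap X$, so $\eta := \eta_1|_{U_1\cap X}$ belongs to $\mathcal T_k(X)$ and satisfies $\Phi([x,k,\eta,y]) = (x,k,y)$.

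That $\Phi$ respects composition and inversion is immediate from the formulas defining these operations in the two groupoids. For the topology, a basic open set $U(\eta)\cap (G_X)_Y$ with $\eta \in \mathcal T_k(X)$ and $R^n = R^m\circ \eta$ on $U$ is sent by $\Phi$ bijectively onto the graph $\{(z,k,\eta(z)) : z \in U\}\subseteq G_{R|_Y}(k,m)$. Local biholomorphy of $R^m$ makes the source projection $G_{R|_Y}(k,m) \to Y$ a local homeomorphism near each point of this graph with local inverse $z \mapsto (z,k,\eta(z))$, so the graph is open in $G_{R|_Y}$. Conversely, any point of $G_{R|_Y}(k,m)$ has a neighborhood of exactly this form, obtained by the surjectivity construction. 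Hence $\Phi$ maps base to base and is a homeomorphism.

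The only delicate point is the germ-uniqueness claim, and it is precisely there that the two hypotheses on $Y$ — absence of critical points together with $R$-invariance — are both used, to make every iterate $R^m$ locally biholomorphic at every point of $Y$. Once this is granted, the inverse function theorem reduces every question about the germ $[\eta]_x$ to the triple $(x,k,y)$, and the correspondence $\Phi$ becomes essentially tautological.
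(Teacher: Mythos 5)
Your proof is correct and takes essentially the same route as the paper: the same forget-the-germ map $[x,k,\eta,y]\mapsto (x,k,y)$, with injectivity and surjectivity deduced from local injectivity of $R^m$ at points of $Y$ (which is exactly where $R(Y)\subseteq Y$ and the absence of critical points enter), and the same graph arguments for continuity and openness. One cosmetic remark: the local section $z\mapsto (z,k,\eta(z))$ inverts the range projection $r$, not the source projection $s$, but this does not affect the argument.
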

\begin{proof} Define $\mu : \left(G_X\right)_Y
  \to G_{R|_Y}$ such that $\mu[x,k,\eta,y] = (x,k,y)$.

$\mu$ is surjective:
  If $(x,k,y) \in G_{R|_Y}$ there are $n,m \in \mathbb N$ such that $k
  = n-m$ and $R^n(x) = R^m(y)$. Note that $(R^n)'(x) \neq 0$ and
  $(R^m)'(y) \neq 0$ since $Y$ does not contain critical points. It
  follows that there are open neighbourhoods $U$ and $V$ of $x$
  and $y$ in $\overline{\mathbb C}$,
  respectively, such that $R^n : U \to R^n(U)$ and $R^m : V \to
  R^m(V)$ are univalent holomorphic maps. Let $R^{-m} : R^m(V) \to V$ be
  the inverse of $R^m : V \to
  R^m(V)$. Then $\eta_0 = R^{-m} \circ R^n : R^{-n}\left(R^m(V)\right)
  \cap U \to R^{-m}(R^n(U)) \cap V$ is bi-holomorphic. Set $\eta =
  \eta_0|_X$ and note that $[x,k,\eta,y] \in \left(G_X\right)_Y$.

$\mu$ is injective: Assume that $\mu[x,k,\eta,y] =
\mu[x',k',\eta',y']$. Then $(x,k,y) = (x',k',y')$. Since
$\left(R^m\right)'(y) \neq 0$ it follows that $R^m$ is injective in a
neighborhood of $y$. Since $R^m(\eta(z)) = R^n(z) =
R^m\left(\eta'(z)\right)$ for all $z$ close to $x$ it follows that
$\eta = \eta'$ in a neighborhood of $x$, i.e. $\left[x,k,\eta,y\right]
= \left[x',k',\eta',y'\right]$. 


$\mu$ is continuous: Let $\mu[x,k,\eta,y] = (x,k,y)$ and consider an
open neighborhood $\Omega$ of $(x,k,y)$ in $G_{R|_Y}$. There
is then an $m \in \mathbb N$ and a local transfer $\eta \in \mathcal T_k\left(X\right)$ such that $k+ m \geq 0$, $\eta(x) = y$ and $R^{k+m}(z) =
R^m(\eta(z))$ for all $z$ in a neighborhood of $x$. By definition of
the topology of $G_{R|_Y}$, the set $\Omega \cap G_{R|_Y}(k,m)$ is
open in $G_{R|_Y}(k,m)$ when this set has the relative topology inherited from $X \times \mathbb Z
\times X$. From this fact it follows that there is an open
neighborhood $W$ of $x$ in the domain of $\eta$ such that
$(z,k,\eta(z)) \in \Omega \cap G_{R|_Y}(k,m)$
for all $z \in W$. Since $Y$ is open in $X$ we can shrink $W$ to ensure
that $W \subseteq Y$ and $\eta(W) \subseteq Y$. Then
$\left\{[z,k,\eta,\eta(z)] : \ z \in W \right\}$ is an open
neighborhood of $[x,k,\eta ,y]$ in $\left(G_X\right)_Y$ such that $\mu\left( \left\{[z,k,\eta,\eta(z)] : \ z \in W \right\}\right)
\subseteq \Omega$.

$\mu$ is open: Let $\eta \in \mathcal T_k(X)$ be a local
transfer. Let $U$ be an open subset of the domain of $\eta$ such that
$U \subseteq Y$ and $\eta(U) \subseteq Y$. It suffices to show that
$$
\eta \left( \left\{ [z,k,\eta,\eta(z)] : \ z \in U \right\} \right)
$$
is open in $G_{R|_Y}$. To this end note that there is an $m \in \mathbb N$ such that
$k+m \geq 0$ and $R^{k+m}(z) = R^m(\eta(z))$ for all $z \in
U$. Consider a point $(z_1,k,z_2) \in \eta \left( \left\{
    [z,k,\eta,\eta(z)] : \ z \in U \right\} \right)$. Then
$(z_1,k,z_2) \in G_{R|_Y}(k,m)$. Let $W$ be an open neighborhood
of $z_2$ in $X$ such that $W \subseteq \eta(U)$ and $R^m$ is injective
on $W$. If $(z_1',k,z_2') \in \left( \eta^{-1}(W) \times \{k\} \times
  W\right) \cap G_{R|_Y}(k,m)$ we have that $R^m(z'_2) =
R^{k+m}(z'_1) = R^m(\eta(z'_1))$ which implies that $z'_2 =
\eta(z'_1)$. This shows that 
 $$
\left( \eta^{-1}(W) \times \{k\} \times
  W \right) \cap G_{R|_Y}(k,m) \subseteq \eta \left( \left\{
    [z,k,\eta,\eta(z)] : \ z \in U \right\} \right) .
$$
\end{proof}

When we take the set $X$ in the construction of $G_X$ to be the whole Riemann
sphere $\overline{\mathbb C}$ we obtain the groupoid
$G_{\overline{\mathbb C}}$ and the corresponding $C^*$-algebra
$C^*_r\left(G_{\overline{\mathbb C}}\right)$. There is no good
reason to emphasize $\overline{\mathbb C}$ so we will denote the
groupoid $G_{\overline{\mathbb C}}$ by $G_R$ instead and 
$C^*_r\left(G_{\overline{\mathbb C}}\right)$ by $C^*_r(R)$. We call
$G_R$ the \emph{transformation groupoid} of $R$. It
  follows from Lemma 4.1 in \cite{Th2} that two elements $x,y \in
  \overline{\mathbb C}$
  are in the same $G_R$-orbit if and only if there are natural numbers
  $n,m \in \mathbb N$ such that $R^n(x) = R^m(y)$ and $\val
  \left(R^n,x\right) = \val \left(R^m,y\right)$, where
  $\val\left(R^n,x\right)$ is the valency of $R^n$ at the point
  $x$. In particular, the $G_R$-orbit of $x$, which we will denote by
  $\Ro(x)$ in the following, is a subset of
  the orbit of $x$ conditioned by an equality of valencies. Specifically,
\begin{equation*}
\begin{split}
&\Ro(x) = \\
&\left\{ y \in \C : \ R^n(y) = R^m(x) \ \text{and} \
  \val\left(R^n,y\right) = \val\left(R^m,x\right) \ \text{for some} \
  n,m \in \mathbb N \right\}.
\end{split}
\end{equation*}
We call $\Ro(x)$ the \emph{restricted orbit} of $x$. In the following a subset $Y$ of $\C$ will be called \emph{restricted
  orbit invariant}, or $\Ro$-invariant, when $y \in Y \Rightarrow
\Ro(y) \subseteq Y$. Note that a totally $R$-invariant
subset is $\Ro$-invariant, but the converse can fail when there are
isolated points in $Y$.

We shall work with many different reductions of $G_R$. Recall that for
any subset $Y \subseteq \C$ the \emph{reduction} of $G_R$ to $Y$ is
the set
$$
G_Y = \left\{ \left[x,k,\eta,y\right] \in G_R : \ x,y \in Y \right\} .
$$
This is always a subgroupoid of $G_R$. If $X$ is both locally compact in
the topology inherited from $\C$ and $\Ro$-invariant, $G_X$ will be a
locally compact Hausdorff \'etale groupoid in the topology inherited
from $G_R$. When $X$ is also totally $R$-invariant and has no isolated
points the reduction is equal to the groupoid $G_X$ described above,
showing that our notation is consistent. Furthermore, when $X$ is open in the
relative topology of a subset $Y \subseteq \C$ which is
$\Ro$-invariant and locally compact in the topology inherited from
$\C$, the reduction $G_X$ is again a
locally compact Hausdorff \'etale groupoid in the topology inherited
from $G_R$. The reduction $G_X$ of $G_R$ by such a set is the most general type
of reduction we shall need in this paper. To simplify notation we set
$$
C^*_r(X) = C^*_r\left(G_X\right) .
$$

\section{The Julia-Fatou extension}\label{julia/fatou}


Let $X$ be an $\Ro$-invariant subset of $\C$ which is locally compact
in the relative topology, or at least an open subset of such a set. The $C^*$-algebra $C^*_r\left({X}\right)$ carries a natural
action $\beta$ by the circle group $\mathbb T$ defined such that
\begin{equation}\label{gaugeact}
\beta_{\mu}(f)\left[x,k,\eta, y\right] = \mu^k
f\left[x,k,\eta, y\right] 
\end{equation}
when $f \in C_c\left(G_{X}\right)$ and $\mu \in \mathbb T$. To
identify the fixed point algebra of $\beta$ set
$$
G^0_X = \left\{ [x,k,\eta,y]\in G_X: \ k = 0 \right\} .
$$
Since $G^0_X$ is an open subgroupoid of $G_X$ there is an inclusion
$C_c\left(G^0_X\right) \subseteq
C_c\left(G_X\right)$ which extends to an embedding
$C^*_r\left(G^0_X\right) \subseteq C^*_r\left(X\right)$
of $C^*$-algebras.

\begin{lemma}\label{fixalg} $C^*_r\left(G^0_X\right)$ is the fixed
  point algebra $C^*_r(X)^{\beta}$ of the gauge action.
\end{lemma}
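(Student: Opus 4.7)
The plan is to prove the two inclusions separately. The easy direction $C^*_r(G^0_X) \subseteq C^*_r(X)^\beta$ follows at once from the definition (\ref{gaugeact}): any $f \in C_c(G^0_X)$, viewed inside $C_c(G_X)$, is supported where $k=0$, so $\beta_\mu(f) = f$ for every $\mu \in \mathbb T$. Passing to the closure in $C^*_r(X)$ and using strong continuity of $\beta$ gives the inclusion at the $C^*$-algebra level.

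For the reverse inclusion I would use the standard averaging argument. Define
\eQ{
E(a) = \int_{\mathbb T} \beta_\mu(a) \, d\mu, \qquad a \in C^*_r(X),
}
with respect to normalised Haar measure on $\mathbb T$. Since $\beta$ is a strongly continuous action of the compact group $\mathbb T$ by $*$-automorphisms, $E$ is a faithful conditional expectation onto the fixed point algebra $C^*_r(X)^\beta$, and it is contractive. I would then compute $E$ on the dense $*$-subalgebra $C_c(G_X)$: for $f \in C_c(G_X)$ and $\gamma \in G_X$ with integer coordinate $k(\gamma)$, the formula (\ref{gaugeact}) gives
\eQ{
E(f)(\gamma) = \int_{\mathbb T} \mu^{k(\gamma)} \, d\mu \cdot f(\gamma),
}
which equals $f(\gamma)$ when $k(\gamma) = 0$ and vanishes otherwise. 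Thus $E(f)$ is the restriction of $f$ to $G^0_X$, and in particular $E(f) \in C_c(G^0_X) \subseteq C^*_r(G^0_X)$.

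Now take any $a \in C^*_r(X)^\beta$. Pick a sequence $f_n \in C_c(G_X)$ converging to $a$ in norm. Because $E$ is bounded and fixes $a$, we have
\eQ{
a = E(a) = \lim_n E(f_n),
}
with each $E(f_n)$ lying in $C^*_r(G^0_X)$. Since $C^*_r(G^0_X)$ is closed in $C^*_r(X)$ (as it is a $C^*$-subalgebra, indeed hereditary by the remarks following the definition of a reduction), the limit $a$ belongs to $C^*_r(G^0_X)$, which completes the proof.

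The only potential obstacle is a subtlety in the last step: one has to know that the natural embedding $C^*_r(G^0_X) \hookrightarrow C^*_r(X)$ is isometric, so that a norm-convergent sequence inside $C^*_r(G^0_X)$ retains its limit there. This is however exactly the content of the general fact, recalled earlier in the excerpt (via Proposition 1.9 of \cite{Ph}), that the reduced $C^*$-algebra of an open subgroupoid embeds as a $C^*$-subalgebra. Everything else in the argument is a routine application of the compact-group averaging technique.
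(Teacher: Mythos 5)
Your proof is correct and takes essentially the same approach as the paper: both arguments average a compactly supported approximant of a $\beta$-fixed element over $\mathbb T$, observe that this average is just the $k=0$ homogeneous component, which lies in $C_c\left(G^0_X\right)$, and conclude using that $C^*_r\left(G^0_X\right)$ embeds isometrically (hence as a closed subalgebra) into $C^*_r(X)$. One inessential slip: your parenthetical claim that $C^*_r\left(G^0_X\right)$ is hereditary in $C^*_r(X)$ is not needed and is false in general for fixed-point algebras of gauge actions; the isometric embedding recorded just before the lemma is all the argument requires.
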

\begin{proof} Let $ a \in C^*_r\left({X}\right)^{\beta}$. For any
  $\epsilon > 0$ there is a function $f \in C_c\left(G_{X}\right)$
  such that $\left\|a -f \right\| \leq \epsilon$. We can then write
  $f$ as a finite sum $f
  = \sum_{k \in \mathbb Z} f_k$ where $f_k$ is supported in
$$
\left\{ \left[x,l,\eta,y\right] \in G_X : \ l = k \right\}.
$$
Since $\int_{\mathbb T} \beta_{\mu}(f_k) \ d \mu = \int_{\mathbb T}
\mu^k f_k \ d\mu = 0$ when $k \neq
  0$ and $\left\| a - \int_{\mathbb T} \beta_{\mu}(f) \ d
    \mu\right\| \leq \epsilon$ we deduce that $\left\| a -
    f_0\right\| \leq \epsilon$. This shows that $C^*_r\left({X}\right)^{\beta}
  \subseteq C^*_r\left(G^0_X\right)$, and the reversed inclusion is trivial. 
\end{proof}

Just as for local homeomorphisms, \cite{An}, \cite{De}, there is an
inductive limit decomposition of $C^*_r\left(G^0_{X}\right)$ which
throws some light on its structure. Let $n \in \mathbb N$. Set
$$
G^0_{X}(n) = \left\{ \left[ x,0, \eta,y\right] \in G_{X} : \
  R^n(z) = R^n\left(\eta(z) \right) \ \text{in a neighbourhood of
    $x$} \right\} .
$$
Each $G^0_{X}(n)$ is an open subgroupoid of $G^0_{X}$,
$G^0_X(n) \subseteq G^0_{X}(n+1)$ for all $n$ and
$G^0_{X} = \bigcup_n G^0_{X}(n)$.
It follows that $C^*_r\left(G^0_{X}(n)\right) \subseteq
C^*_r\left(G^0_{X}(n+1)\right) \subseteq
C^*_r\left(G^0_{X}\right)$ for all $n$, and
\begin{equation}\label{union}
C^*_r\left(G^0_{X}\right) = \overline{\bigcup_n
  C^*_r\left(G^0_{X}(n)\right)}.
\end{equation}

Assume now that $X$ is an $\Ro$-invariant subset of $\C$ which is locally compact
in the relative topology, and not just an open subset of such a set. Let $Y$ be a closed $\Ro$-invariant subset of $X$. Then $X\backslash
Y$ is open in $X$ and $\Ro$-invariant. Since $Y$ and $X \backslash Y$ are
locally compact in the topology inherited from $\C$ we can consider
the reduced groupoid $C^*$-algebras $C^*_r(Y)$ and $C^*_r\left(X
  \backslash Y\right)$. Furthermore, we have a surjective $*$-homomorphism
$$
\pi_Y : C^*_r\left(X\right) \to C^*_r\left(Y\right)
$$ 
because $Y$ is closed and $\Ro$-invariant in $X$.

\begin{lemma}\label{quot} Let $X$ be a $\Ro$-invariant subset of $\C$
  which is locally compact in the relative topology. Let $Y$ be a closed $\Ro$-invariant subset
  of $X$. The sequence
\begin{equation*}
\begin{xymatrix}{
0 \ar[r] & C^*_r\left({X \backslash
    Y}\right) \ar[r] & C^*_r\left(X\right) \ar[r]^-{\pi_Y} &
C^*_r\left(Y\right) \ar[r] & 0
}\end{xymatrix}
\end{equation*}
is exact.
\end{lemma}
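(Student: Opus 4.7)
The plan is to verify surjectivity of $\pi_Y$ together with the trivial inclusion $C^*_r(X\setminus Y) \subseteq \ker\pi_Y$ directly, and then to attack the reverse inclusion $\ker\pi_Y \subseteq C^*_r(X\setminus Y)$ by exploiting the gauge action in order to descend to the fixed-point subalgebra, where an inductive-limit argument finishes the job.

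First I would record that $X\setminus Y$ is itself $\Ro$-invariant: if some $x \in X\setminus Y$ had a representative $z \in \Ro(x) \cap Y$, then $x \in \Ro(z) \subseteq Y$, a contradiction. Consequently $G_{X\setminus Y}$ is an open $\Ro$-invariant subgroupoid of $G_X$ and $C^*_r(X\setminus Y)$ embeds as an ideal in $C^*_r(X)$; moreover this ideal is trivially contained in $\ker\pi_Y$, since any $f \in C_c(G_{X\setminus Y})$ restricts to the zero function on $G_Y$. For surjectivity of $\pi_Y$, given $g \in C_c(G_Y)$ I would cover $\supp g$ by finitely many basic open sets of the form $U(\eta)\cap G_Y$, each of which is the intersection with $G_Y$ of the corresponding basic open set $U(\eta)$ of $G_X$, use a partition of unity on $X$ to split $g$ accordingly, and extend each piece by bump functions to an element of $C_c(G_X)$. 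The resulting $f \in C_c(G_X)$ satisfies $\pi_Y(f) = g$, and density of $C_c(G_Y)$ in $C^*_r(Y)$ completes the surjectivity proof.

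For $\ker\pi_Y \subseteq C^*_r(X\setminus Y)$ I would use the gauge action $\beta$ from (\ref{gaugeact}). The ideal $C^*_r(X\setminus Y)$ is $\beta$-invariant and $\pi_Y$ is equivariant for the corresponding action on $C^*_r(Y)$, so every Fourier component $a_k = \int_{\mathbb T} \mu^{-k}\beta_\mu(a)\, d\mu$ of an element $a \in \ker\pi_Y$ again lies in $\ker\pi_Y$, and $a$ is recovered as a norm limit of finite sums of such components. For each $k$ the element $a_k^* a_k$ lies in the fixed-point algebra $C^*_r(G^0_X)$ by Lemma~\ref{fixalg}, and since $C^*_r(X\setminus Y)$ is a closed ideal, $a_k \in C^*_r(X\setminus Y)$ if and only if $a_k^*a_k \in C^*_r(X\setminus Y) \cap C^*_r(G^0_X)$. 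This reduces the whole problem to showing exactness of the fixed-point sequence
\[
0 \to C^*_r(G^0_{X\setminus Y}) \to C^*_r(G^0_X) \to C^*_r(G^0_Y) \to 0.
\]

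The main obstacle is this final exactness at the fixed-point level, where the $\mathbb T$-grading is no longer available to iterate the argument. Here I would invoke the inductive limit description (\ref{union}) to reduce further to exactness, for each fixed $n$, of
\[
0 \to C^*_r(G^0_{X\setminus Y}(n)) \to C^*_r(G^0_X(n)) \to C^*_r(G^0_Y(n)) \to 0.
\]
For each $n$ the groupoid $G^0_X(n)$ has orbits of cardinality bounded by $(\deg R)^n$ and isotropy bounded by the valency of $R^n$ at the relevant point, making it a proper \'etale groupoid whose reduced and full $C^*$-algebras coincide; consequently the standard exactness theorem for ideal/quotient pairs of groupoid $C^*$-algebras applies at each stage. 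Passing to the inductive limit on each of the three algebras preserves exactness and yields the exactness of the fixed-point sequence, and backtracking through the gauge-action reduction then produces the desired equality $\ker\pi_Y = C^*_r(X\setminus Y)$.
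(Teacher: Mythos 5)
Your overall strategy (pass to Fourier/Ces\`aro components via the gauge action, reduce to the fixed point algebra $C^*_r\left(G^0_X\right)$, and then use the filtration $G^0_X(n)$) runs parallel to the paper's, and the reductions themselves are sound: components of an element of $\ker\pi_Y$ stay in $\ker\pi_Y$, $a_k^*a_k\in I$ forces $a_k\in I$ for a closed ideal $I$, and inductive limits of exact sequences are exact. The genuine gap is the justification of the level-$n$ exactness
$0 \to C^*_r\left(G^0_{X\setminus Y}(n)\right) \to C^*_r\left(G^0_X(n)\right) \to C^*_r\left(G^0_Y(n)\right) \to 0$,
which you base on the claim that $G^0_X(n)$ is a \emph{proper} \'etale groupoid, so that full and reduced algebras coincide and ``the standard exactness theorem'' applies. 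That properness claim is false in general: the map $(r,s)$ fails to be proper precisely where the valency of $R^n$ jumps. For instance, for $R(z)=z^3-3z$ one has $R(-2)=R(1)=-2$ with $\val(R,-2)=1$ and $\val(R,1)=2$; taking $x_j\to -2$ non-critical and the branch sending $x_j$ to a preimage $y_j\to 1$ of $R(x_j)$ gives germs $\left[x_j,0,\eta_j,y_j\right]\in G^0_{\C}(1)$ whose images under $(r,s)$ stay in a compact set, yet no subsequence can converge, since a limit would be a germ from $-2$ to $1$, which cannot exist because the valencies differ. So the presence of critical points --- the very phenomenon this groupoid is built to handle --- destroys properness. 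Moreover, the ``standard exactness theorem'' for an invariant open/closed decomposition is a statement about \emph{full} groupoid $C^*$-algebras; transferring it to the reduced algebras requires an amenability (weak containment) argument for $G^0_X(n)$ or at least for its reduction to $Y$, and this is exactly the issue the paper announces it will avoid (``We shall avoid the amenability issue here and prove this equality directly''). Amenability of $G^0_X(n)$ is in fact plausible and could be extracted, e.g., from the finite-dimensionality of its irreducible representations (as in the proof of Corollary \ref{UCT..}) together with the equivalence of nuclearity and amenability for \'etale groupoids, but none of this is in your argument; as written, the decisive step rests on an incorrect properness assertion and an unproved full-equals-reduced claim.

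By contrast, the paper replaces this entire issue with an elementary estimate: since the source fibres of $G^0_X(n)$ have at most $(\deg R)^n$ elements, one has $\left\|f\right\| \leq (\deg R)^n \sup_{\gamma}\left|f(\gamma)\right|$ for $f\in C_c\left(G^0_X(n)\right)$, and Proposition II.4.2 of \cite{Re} bounds $\sup_{\gamma\in s^{-1}(Y)}\left|g(\gamma)\right|$ by $\left\|\pi_Y(g)\right\|$. Approximating a gauge-invariant element of $\ker\pi_Y$ (obtained via a gauge-invariant approximate unit and Lemma \ref{fixalg}) by some $g\in C_c\left(G^0_X(n)\right)$, one splits $g=g_1+g_2$ with $g_1$ supported in $s^{-1}(X\setminus Y)$ and $g_2$ uniformly small, and the norm estimate converts this pointwise splitting into a norm approximation inside $C^*_r\left(X\setminus Y\right)$. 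If you want to salvage your route, you must either carry out an honest amenability argument for the groupoids $G^0_X(n)$ and their reductions, or replace the appeal to properness by a direct estimate of this kind.
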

\begin{proof} It is clear that
$C^*_r\left({X \backslash
    Y}\right) \subseteq \ker \pi_Y$. To establish the reverse
inclusion, let $a \in \ker \pi_Y$ and let $\epsilon > 0$. Note
  first that the formula (\ref{gaugeact}) also defines an action by
  $\mathbb T$ on $C^*_r\left(Y\right)$ which makes
  $\pi_Y$ equivariant. It follows that $\ker \pi_Y$ is left globally
  invariant by the gauge action, and there is therefore
  an approximate unit in $\ker \pi_Y$ consisting of elements fixed by
  the gauge action. It follows then from Lemma \ref{fixalg} that there is an element $u \in
  C^*_r\left(G^0_X\right) \cap \ker \pi_Y$ such that
  $\left\|ua-a\right\| \leq \epsilon$. It follows from (\ref{union}) that
$$
C^*_r\left(G^0_X\right) \cap \ker \pi_Y = \overline{ \bigcup_n
  C^*_r\left(G^0_X(n)\right) \cap \ker \pi_Y} ,
$$
and there is therefore an $n \in \mathbb N$ and an element $v \in
C^*_r\left(G^0_X(n)\right) \cap \ker \pi_Y$ such that
$\left\|ua-va\right\| \leq \epsilon$. Note now that $\# R^{-n}(x)\leq
(\deg R)^n$ for all $x \in \C$ when
$\deg R$ is the degree of $R$. By definition of the norm on
$C^*_r\left(G^0_X(n)\right)$ this gives us the estimate
\begin{equation}\label{Kest}
\left\|f\right\| \leq (\deg R)^n \sup_{\gamma \in G^0_X(n)}
\left|f(\gamma)\right|
\end{equation}
for all $f \in C_c\left(G^0_X(n)\right)$. Set $\delta
={\epsilon}((2(\deg R)^n+1)(\|a\| +1))^{-1}$ and choose $g \in C_c(G^0_X(n))$ such that $\left\|g - v
\right\| \leq \delta$. Then $\left\|\pi_Y(g)\right\| \leq \delta$ and
hence 
$$
\sup_{\gamma \in G^0_X(n) \cap s^{-1}(Y)} \left|g(\gamma)\right|
\leq \delta,
$$
by Proposition II 4.2 of \cite{Re}. We can therefore write $g = g_1 +
g_2$ where $g_1 \in C_c(G^0_X(n))$ has support in $s^{-1}(X\backslash Y)$ and 
$\sup_{\gamma \in G^0_X(n)} \left|g_2(\gamma)\right| \leq 2
\delta$. 
It follows then from (\ref{Kest}) that $\left\|g_1a - va\right\| \leq \delta\|a\| +
\left\|g_2a\right\| \leq \epsilon$, and hence
that $\left\|g_1a - a\right\| \leq 3\epsilon$. Since $g_1a \in
C^*_r\left({X \backslash Y}\right)$ it follows that $a \in C^*_r\left({X \backslash Y}\right)$.

\end{proof}

Since the Fatou set $F_R$ and the Julia set $J_R$ are totally
$R$-invariant and hence also $\Ro$-invariant we get the following.

\begin{cor}\label{jf-cor} The sequence
\begin{equation*}
\begin{xymatrix}{
0 \ar[r] & C^*_r\left({F_R}\right) \ar[r] & C^*_r\left(R\right) \ar[r]^-{\pi_{J_R}} &C^*_r\left({J_R}\right) \ar[r] & 0.
}\end{xymatrix}
\end{equation*}
is exact.
\end{cor}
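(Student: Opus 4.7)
The plan is to obtain this as an immediate specialization of Lemma \ref{quot} with $X = \overline{\mathbb C}$ and $Y = J_R$. The verification reduces to three small checks: that $\overline{\mathbb C}$ itself qualifies as a $\Ro$-invariant subset of $\overline{\mathbb C}$ that is locally compact in the relative topology (trivially so, being compact and totally $R$-invariant); that $J_R$ is a closed subset of $\overline{\mathbb C}$ (a standard fact of complex dynamics, since $F_R$ is open by its definition via equicontinuity); and that $J_R$ is $\Ro$-invariant.

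For the $\Ro$-invariance, I would invoke the observation already recorded in the paper that every totally $R$-invariant subset is $\Ro$-invariant. Since $J_R$ is classically totally $R$-invariant, i.e.\ $R^{-1}(J_R) = J_R$, this is automatic. The same remark handles $F_R$ and shows that $\overline{\mathbb C} \setminus J_R = F_R$ is the open $\Ro$-invariant complement required by Lemma \ref{quot}.

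With these checks in place, Lemma \ref{quot} applied to the pair $(\overline{\mathbb C}, J_R)$ yields the exact sequence
\begin{equation*}
\begin{xymatrix}{
0 \ar[r] & C^*_r(\overline{\mathbb C} \setminus J_R) \ar[r] & C^*_r(\overline{\mathbb C}) \ar[r]^-{\pi_{J_R}} & C^*_r(J_R) \ar[r] & 0,
}\end{xymatrix}
\end{equation*}
and the notational conventions $C^*_r(R) = C^*_r(\overline{\mathbb C})$ and $F_R = \overline{\mathbb C} \setminus J_R$ rewrite this as the asserted sequence.

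There is no real obstacle here: all the analytic work — in particular the non-trivial part, establishing that $\ker \pi_{J_R}$ is contained in $C^*_r(F_R)$ via the gauge action and the inductive limit decomposition $C^*_r(G^0_X) = \overline{\bigcup_n C^*_r(G^0_X(n))}$ together with the norm estimate on $C^*_r(G^0_X(n))$ — is already carried out in the proof of Lemma \ref{quot}. The corollary is simply the instance of that lemma relevant to the Julia/Fatou decomposition.
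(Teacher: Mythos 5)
Your proposal is correct and follows exactly the paper's route: the corollary is deduced as an immediate specialization of Lemma~\ref{quot} to $X = \overline{\mathbb C}$, $Y = J_R$, using that $J_R$ is closed and totally $R$-invariant, hence $\Ro$-invariant, with complement $F_R$. This is the same argument as in the paper, carried out with the same (correct) checks.
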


\section{The structure of $\J$}\label{JR}

\subsection{Pure infiniteness of $\J$}

\begin{prop}\label{pursimp} $G_{J_R}$ is essentially free and locally
  contractive, and
  $\J$ is purely infinite.
\end{prop}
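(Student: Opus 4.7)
The plan is to verify the two hypotheses of \refthm{an}: that $G_{J_R}$ is essentially free and locally contracting. Once these are in place, pure infiniteness of $\J$ follows immediately.

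For essential freeness I would exhibit a countable exceptional set $E \subseteq J_R$ outside of which the isotropy is trivial, and then invoke the fact that $J_R$ is perfect. A nontrivial element of $\Is_x$ is a germ $[x,k,\eta,x]$ with $\eta(x)=x$ and $R^{n}(z) = R^{m}(\eta(z))$ locally for some $n,m \in \mathbb N$ with $k = n-m$. If $k \neq 0$ then $R^n(x) = R^m(x)$ with $n \neq m$, so $x$ is eventually periodic, and there are only countably many such points. If $k = 0$ then $R^n \circ \eta = R^n$ near $x$ with $\eta(x) = x$, and provided $x$ is not critical for $R^n$ the map $R^n$ is a local biholomorphism at $x$, forcing $\eta = \id$ germinally. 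Hence I can take $E$ to be the union of the eventually periodic points and $\bigcup_{i \geq 0} R^{-i}(\crt(R))$, both of which are countable. Since $J_R$ is compact and perfect, every nonempty relatively open subset of $J_R$ is uncountable, so $J_R \setminus E$ is dense in $J_R$.

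For local contractiveness the key input is the classical density of repelling periodic points in $J_R$. Given a nonempty open $U \subseteq J_R$, I would pick a repelling periodic point $p \in U$ of period $q$ and let $\phi$ denote the local holomorphic inverse branch of $R^q$ at $p$ with $\phi(p) = p$; this satisfies $|\phi'(p)| = |(R^q)'(p)|^{-1} < 1$. Next I would choose a small open disc $D \subseteq \C$ around $p$ on which $\phi$ is defined with $\sup_{\overline{D}} |\phi'| \leq \lambda < 1$ and $D \cap J_R \subseteq U$, together with a strictly smaller open disc $D'$ satisfying $\overline{D'} \subseteq D$. Set $V = D' \cap J_R$ and consider the basic open bisection
$$
S = \bigl\{\, [z, q, R^q, R^q(z)] : z \in \phi(D) \cap J_R \,\bigr\},
$$
which makes sense as $R^q$ is a valid local transfer in $\mathcal T_q(J_R)$, univalent on $\phi(D)$ because $(R^q)'(p) \neq 0$, and $J_R$ is preserved by $R^q$ and by $\phi$ by total $R$-invariance. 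Then $r(S) = \phi(D) \cap J_R$, $s(S) = D \cap J_R \supseteq \overline{V}$, and $\alpha_S^{-1}$ is the restriction of $\phi$ to $s(S)$, so $\alpha_S^{-1}(\overline{V}) = \phi(\overline{V}) \subseteq \overline{D'} \cap J_R \subseteq V$. The strict inclusion $\phi(\overline{V}) \subsetneq V$ follows from $\diam \phi(\overline{V}) \leq \lambda \diam \overline{V} < \diam V$, where $\diam V > 0$ because $p$ is not isolated in $J_R$.

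The main obstacle I anticipate is the $k=0$ step in the isotropy analysis: one must correctly enlarge the exceptional set beyond the obvious periodic and preperiodic points so as to include every point whose forward orbit meets $\crt(R)$, since these are precisely the points at which some iterate $R^n$ fails to be locally injective and thus admits nontrivial local deck transformations acting as isotropy. Once essential freeness and local contractiveness are in hand, \refthm{an} delivers pure infiniteness of $\J$.
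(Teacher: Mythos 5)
Your proposal is correct and follows essentially the same route as the paper: both arguments verify the two hypotheses of Theorem~\ref{an}, getting essential freeness from the fact that non-trivial isotropy forces a point to be pre-periodic or pre-critical (which you derive directly from the germ definition rather than citing Proposition 4.4 of \cite{Th2}) combined with a Baire/perfectness density argument, and getting local contractiveness from the density of repelling periodic points together with a contracting inverse branch near such a point, exactly as in the paper's proof. One minor slip: the chain $\phi(\overline{V}) \subseteq \overline{D'} \cap J_R \subseteq V$ is not right as written, since boundary points of $D'$ lying in $J_R$ need not belong to $V$; the contraction estimate at the fixed point $p$ gives $\phi(\overline{V}) \subseteq D' \cap J_R = V$ directly, and this together with your diameter comparison yields the required strict inclusion, playing the role of the witness point $z_1$ in the paper's argument.
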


For the proof of Proposition \ref{pursimp} we need a couple of lemmas.

\begin{lemma}\label{simpleobs} Assume that $\left(R^n\right)'(x) \neq
  0$. Then $R^n(x) \in \Ro(x)$.
\end{lemma}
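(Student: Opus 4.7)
The plan is to verify the definition of $\Ro(x)$ directly by exhibiting explicit natural numbers $n',m'$ and a chain-rule computation for valencies.

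First I would unfold the definition: to show that $R^n(x) \in \Ro(x)$ I need natural numbers $n',m' \in \mathbb N$ such that
\eQ{
R^{n'}\left(R^n(x)\right) = R^{m'}(x) \quad \text{and} \quad \val\left(R^{n'}, R^n(x)\right) = \val\left(R^{m'},x\right).
}
The natural candidate is $n'=1$ and $m'=n+1$, since then the first equality $R^{n+1}(x) = R^{n+1}(x)$ is automatic. (If one prefers $0 \in \mathbb N$, the even simpler choice $n'=0$, $m'=n$ works identically.)

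The remaining step is the valency identity, and this is where the hypothesis $(R^n)'(x) \neq 0$ enters. I would invoke the multiplicativity of the valency under composition, namely
\eQ{
\val\left(R^{n+1}, x\right) = \val\left(R, R^n(x)\right)\cdot \val\left(R^n,x\right).
}
Since $(R^n)'(x)\neq 0$ means $\val(R^n,x) = 1$, the right-hand side reduces to $\val(R, R^n(x)) = \val(R^{n'}, R^n(x))$, which is exactly what is needed.

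I do not expect any serious obstacle: once the correct choice of exponents is made, the argument is a one-line application of the chain rule for valencies together with the hypothesis. The only thing one has to be slightly careful about is that the valency/multiplicity of a holomorphic map is multiplicative under composition, which is standard and follows from considering the leading term of the Taylor expansion.
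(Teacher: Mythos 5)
Your proof is correct, but it takes a different route than the paper's. You verify membership in $\Ro(x)$ via the valency characterization of the restricted orbit (the displayed description of $\Ro(x)$, which the paper quotes from Lemma 4.1 of \cite{Th2}), choosing the exponents $(n',m')=(1,n+1)$ (or $(0,n)$) and combining multiplicativity of the valency with the fact that $(R^n)'(x)\neq 0$ forces $\val(R^n,x)=1$; both steps are sound, including the degenerate choice with $n'=0$. The paper instead argues directly from the definition of $\Ro(x)$ as the $G_R$-orbit: since $x$ is not critical for $R^n$, the map $R^n$ is univalent on a neighbourhood $U$ of $x$, so $R^n|_U$ is a local transfer and $[x,n,R^n|_U,R^n(x)]\in G_R$, which exhibits $R^n(x)$ in the orbit of $x$ explicitly. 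What your approach buys is brevity at the level of the stated orbit criterion, but it leans on the cited equivalence between orbit membership and the valency condition; the paper's one-line construction is self-contained at the groupoid level and, more importantly, produces the explicit local transfer and groupoid element, which is the form in which the lemma is actually invoked later (for instance in the proofs of Lemma \ref{7!!}, Lemma \ref{escape} and Lemma \ref{crt}, where one needs the bisection $\{[z,k,R^k|_U,R^k(z)]:z\in U\}$, not merely the valency identity).
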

\begin{proof} When $x$ is not critical for $R^n$ there is an
  open neighbourhood $U$ of $x$ such that $R^n : U \to R^n(U)$ is a local
  transfer and $[x,n,R^n|_U,R^n(x)] \in G_{R}$.
\end{proof}

In the following
proof and in the rest of the paper $\crt$ will denote the set of critical points
of $R$.


\begin{lemma}\label{7!!} Let
  $Y $ be a closed $\Ro$-invariant subset of $\C$. If $Y$ does not
  contain an isolated point which is periodic or critical, it follows that the
  elements of $Y$ that are neither pre-periodic nor pre-critical are
  dense in $Y$.
\end{lemma}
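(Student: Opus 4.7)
The plan is to reduce to a Baire category argument, isolating and separately handling the obstruction posed by isolated points of $Y$. Let $P \subseteq \overline{\mathbb C}$ denote the set of points $z$ for which $R^n(z)$ is periodic or critical for some $n \geq 0$, so that the goal is to show $Y \setminus P$ is dense in $Y$. Since $R$ has finitely many critical points and countably many periodic points (roots of $R^k(z) - z$ for $k \in \mathbb N$), and since $R^{-n}$ of any point is finite, $P$ is countable.

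The heart of the argument is the claim that no isolated point of $Y$ belongs to $P$, which is where the $\Ro$-invariance of $Y$ really enters. Suppose for contradiction $y \in Y$ is isolated in $Y$ and $y \in P$, and let $m \geq 0$ be minimal with $q := R^m(y)$ periodic or critical. By hypothesis $y$ itself is neither, so $m \geq 1$; minimality forces $R^k(y) \notin \crt$ for $0 \leq k \leq m-1$, whence $\val(R^m, y) = 1$. Consequently $R^m(y) = q = R^0(q)$ with equal valencies, so $q \in \Ro(y) \subseteq Y$. Since $q$ is periodic or critical, the hypothesis says $q$ is not isolated in $Y$, so we can pick $q_j \in Y \setminus \{q\}$ with $q_j \to q$. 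Shrinking to a neighborhood $W$ of $y$ on which $R^m$ is univalent, let $\eta = (R^m|_W)^{-1}$; then $\eta(q_j) \to \eta(q) = y$ with $\eta(q_j) \neq y$, and $R^m(\eta(q_j)) = q_j$ with $\val(R^m, \eta(q_j)) = 1 = \val(R^0, q_j)$, so $\eta(q_j) \in \Ro(q_j) \subseteq Y$. This contradicts the isolation of $y$ in $Y$.

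To finish, let $V$ be any nonempty open subset of $Y$. If $V$ contains an isolated point $y$ of $Y$, then by the claim $y \in V \setminus P$. Otherwise every point of $V$ is a limit point of $Y$; as $V$ is open in the compact metric space $Y$ it is a Baire space, and each singleton in $V$ is nowhere dense, so the countable set $P \cap V$ is meager and Baire's theorem gives $V \setminus P$ dense in $V$. Either way $V \setminus P \neq \emptyset$, so $Y \setminus P$ is dense in $Y$. The main obstacle is the claim about isolated points: the valency identity $\val(R^m, y) = 1$ coming from the minimal choice of $m$ is exactly what lets one promote accumulation near $q$ into accumulation near $y$ by means of a genuine local inverse of $R^m$, and without $\Ro$-invariance (as opposed to ordinary $R$-invariance) one would not be able to place the pulled-back points $\eta(q_j)$ in $Y$.
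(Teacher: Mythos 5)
Your proof is correct. The intermediate claim is sound: minimality of $m$ gives $\val(R^m,y)=1$, so $q=R^m(y)\in \Ro(y)\subseteq Y$; the hypothesis forces $q$ to be a limit point of $Y$; and pulling the approximating points $q_j$ back through the univalent branch $\eta=(R^m|_W)^{-1}$ keeps them in $Y$ (valency $1$ on both sides, plus $\Ro$-invariance) and contradicts the isolation of $y$. The countability of the pre-periodic/pre-critical set $P$ and the Baire argument on the perfect part of an open set, with isolated points handled by the claim, then give density. This is the same circle of ideas as the paper's proof but organized differently. The paper argues by contradiction: it applies Baire to the decomposition of a hypothetical open set of pre-periodic/pre-critical points into the finite sets $R^{-j}\left(\Per_n R \cup \crt\right)$, so that a finite set has non-empty interior in $Y$ and therefore yields an isolated point $z_0$ of $Y$, which is then pushed forward along valency-one iterates (staying in $Y$ by $\Ro$-invariance, and staying isolated) until one reaches an isolated periodic or critical point of $Y$, contradicting the hypothesis. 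You instead prove unconditionally that no isolated point of $Y$ is pre-periodic or pre-critical, running the same local-transfer mechanism in the opposite direction (pulling non-isolation at $q$ back to $y$ rather than pushing isolation at $z_0$ forward), and your Baire step uses only countability of $P$ together with nowhere-density of singletons, rather than finiteness of each $R^{-j}\left(\Per_n R\cup\crt\right)$. What your route buys is that the preservation of isolation along restricted-orbit relations given by univalent iterates, which the paper leaves terse, is made explicit, and the density statement is cleanly separated from the dynamical input; the paper's route is shorter because the finite-set-with-interior argument produces the needed isolated point in one stroke.
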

\begin{proof} Assume that
there is a non-empty open subset $U \subseteq Y$ consisting entirely
of pre-critical and pre-periodic point. Let $\Per_n R$ denote the set
of $n$-periodic points of $R$. Since 
$$
U \subseteq \bigcup_{n,j} R^{-j}\left( \Per_n R \cup \crt \right)
$$
by assumption it follows from the Baire category theorem that there
are $n, j \in \mathbb N$ and a non-empty open subset $W$ of $Y$ such that
$$
W \subseteq U  \cap R^{-j}\left(\Per_n R \cup \crt \right) .
$$
Since $R$ is neither periodic nor constant
it follows that $R^{-j}\left(\Per_n R \cup \crt \right)$ is finite. Hence $W$ must contain a point $z_0$ which is isolated in
$Y$. If $z_0 \notin \crt$ we conclude from Lemma \ref{simpleobs} that $R(z_0) \in Y$ since
$Y$ is $\Ro$-invariant. By repeating this argument we either reach an
$l < j$ such that $R^l(z_0) \in \crt  \cap Y$ or conclude that
$R^j(z_0) \in Y$. In the first case $R^l$ is a local transfer in an
open neighbourhood of $z_0$ and in the second $R^j$ is. Hence
$R^l(z_0)$ is isolated in $Y$ in the first case, and $R^j(z_0)$ in the second. In any case we conclude that $Y$
contains an isolated point which is either critical or periodic. This contradicts our
assumption on $Y$.
\end{proof}

We can then give the proof of Proposition \ref{pursimp}:

\begin{proof} By Proposition 4.4 of \cite{Th2} the elements of $J_R$ with
  trivial isotropy group in $G_{J_R}$ are the points that are neither
  pre-periodic nor pre-critical. It is well-known that $J_R$ is
  closed, totally $R$-invariant and without isolated points. It follows therefore from Lemma
  \ref{7!!} that $G_{J_R}$ is essentially free.   

To prove that $G_{J_R}$ is also locally contracting we use that the repelling periodic points are
dense in $J_R$ by ii) of Theorem 14.1 in \cite{Mi}. The argument is
then essentially the
same used in the proof of Lemma 4.2 in \cite{Th3}: Let $U \subseteq
J_R$ be an open non-empty set. There is then a repelling periodic
point $z_0 \in U \cap \mathbb C$, and there is an $n \in
\mathbb N$, a positive number $\kappa > 1$ and an open neighbourhood $W
\subseteq U \cap \mathbb C$ of $z_0$ such that $R^n(z_0) =
z_0$, $R^n$ is injective on $W $
and
\begin{equation}\label{eu1}
\left|R^n(y) -z_0\right| \geq \kappa |y -z_0|
\end{equation}  
for all $y \in W$. Let $\delta_0 > 0$ be so
small that 
\begin{equation}\label{eu2}
 \left\{y \in \mathbb C : |y - z_0| \leq \delta_0 \right\} \subseteq
R^n(W) \cap W .
\end{equation} 
Since $z_0$ is not isolated in $J_R$ there is an element $z_1\in J_R
\cap \mathbb C$ such that $0 < \left|z_1-z_0\right| < \delta_0$. Choose
$\delta$ strictly between $\left| z_1 -z_0\right|$ and $\delta_0$ such that 
\begin{equation}\label{cru}
\kappa
\left|z_1-z_0\right| > \delta.
\end{equation}
Set $V = \left\{y \in J_R \cap \mathbb C: |y -z_0| <  \delta \right\}$. 
Then 
\begin{equation}\label{eu102}
\overline{V}  \subsetneq R^n\left({V} \right). 
\end{equation}
Indeed, if
$\left|y-z_0\right| \leq \delta$ (\ref{eu2}) implies that there is a $y' \in W$ such that
$R^n(y') = y$ and then (\ref{eu1}) implies that $\left|y' - z_0\right|
< \delta$. Since $y' \in J_R$ because $R^{-1}\left(J_R\right) = J_R$, it follows that $\overline{V} \subseteq R^n\left({V} \right)$. On the other hand, it follows from (\ref{cru}) and (\ref{eu1}) that $R^n(z_1) \notin \overline{V}$. This shows
that (\ref{eu102}) holds. Then
$$
S = \left\{ \left[z,n,R^n|_{V},R^n(z)\right] \in G_{J_R} : \ z \in
  V \right\}
$$
is an open bisection in $G_{J_R}$ such that $\overline{V}  \subseteq s(S)$ and 
$$
\alpha_{S^{-1}} \left(\overline{V} \right) \subsetneq {V} ,
$$
where $\alpha_{S^{-1}} : s(S) \to r(S)$ is the homeomorphism defined
by $S$, cf. Section \ref{pure13}. This
shows that $G_{J_R}$ is locally contracting and it follows then from
Theorem \ref{an} that $\J$ is purely infinite.
\end{proof}

\subsection{Exposed points and finite quotients of $\J$}\label{stabjr}

\begin{lemma}\label{escape} Let $X$ be a totally $R$-invariant set which is locally compact in the
topology inherited from $\C$. Let $Y \subseteq X$ be a closed
  $\Ro$-invariant subset of $X$ and let $Y_0$ be the subset of $Y$ obtained by
  deleting the isolated points of $Y$. Then $Y_0$ is totally
  $R$-invariant, i.e $R^{-1}\left(Y_0\right) = Y_0$.
\end{lemma}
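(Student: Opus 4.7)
The plan is to verify the two inclusions $Y_0 \subseteq R^{-1}(Y_0)$ and $R^{-1}(Y_0) \subseteq Y_0$ separately; in each direction I first locate the relevant point in $Y$ by closedness and then argue that it is non-isolated. The workhorse is Lemma \ref{simpleobs}: whenever $z \notin \crt$, we have $R(z) \in \Ro(z)$, and symmetrically $z \in \Ro(R(z))$ (by taking $n=1$, $m=0$ in the definition of $\Ro$). Combined with the $\Ro$-invariance of $Y$, this lets us transport points along $R$, or along a local inverse branch, as long as we stay away from the finite set $\crt$.

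For $Y_0 \subseteq R^{-1}(Y_0)$, fix $y \in Y_0$ and choose $y_n \in Y \setminus \{y\}$ with $y_n \to y$. Since $\crt$ is finite, all but finitely many $y_n$ lie outside $\crt$, so $R(y_n) \in \Ro(y_n) \subseteq Y$. Since $R^{-1}(R(y))$ is also finite and $y_n \to y$ with $y_n \neq y$, for $n$ large the point $y_n$ lies outside $R^{-1}(R(y))$, so $R(y_n) \neq R(y)$. As $Y$ is closed and $R(y_n) \to R(y)$ we obtain $R(y) \in Y$, and the $R(y_n)$ witness that $R(y)$ is not isolated in $Y$, so $R(y) \in Y_0$.

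For $R^{-1}(Y_0) \subseteq Y_0$, fix $y'$ with $R(y') \in Y_0$ and pick $z_n \in Y \setminus \{R(y')\}$ converging to $R(y')$. Choose a small disk $U$ around $y'$ containing no critical point of $R$ other than possibly $y'$ itself. If $y' \notin \crt$, a univalent inverse branch $\phi$ of $R$ defined on a neighbourhood of $R(y')$ with $\phi(R(y')) = y'$ supplies points $\phi(z_n) \in \Ro(z_n) \subseteq Y$ tending to $y'$ and distinct from $y'$. If $y' \in \crt$ with local valency $k = \val(R, y')$, then for $n$ large each $z_n$ has exactly $k$ distinct preimages $w_n^{(1)}, \dots, w_n^{(k)}$ in $U \setminus \{y'\}$, all non-critical, hence all lying in $\Ro(z_n) \subseteq Y$, and all converging to $y'$. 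In either case $y'$ is a limit of points of $Y \setminus \{y'\}$, so closedness of $Y$ forces $y' \in Y$ and non-isolation forces $y' \in Y_0$.

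The main obstacle is exactly this critical case of the second inclusion: $\Ro$-invariance alone does not place $y'$ in $Y$ when $y' \in \crt$, because $y' \notin \Ro(R(y'))$ in that case. This is precisely where the removal of isolated points is decisive: the non-isolation of $R(y')$ in $Y$ supplies enough simple (non-critical) preimages accumulating at $y'$ to pull $y'$ into the closed set $Y$.
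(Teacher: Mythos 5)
Your proof is correct in substance and rests on the same mechanism as the paper's: approximate the relevant point by points of $Y$ with trivial valency, transport them using Lemma~\ref{simpleobs} together with the $\Ro$-invariance of $Y$, and then invoke closedness and the distinctness of the approximants to conclude membership in $Y_0$. The organization, however, differs. The paper proves in one stroke the stronger orbit statement --- if $y \in Y_0$, $x \in X$ and $R^n(x) = R^m(y)$ then $x \in Y_0$ --- by matching neighbourhoods so that $R^n(U_0) = R^m(V_0)$, pruning the approximating sequence $y_k \to y$ so that it avoids the finite sets $\bigcup_{j=0}^m R^{-j}(\crt)$ and $R^{-m}\bigl(\bigcup_{j=0}^n R^j(\crt)\bigr)$, and pulling it back through $R^n$; the chosen preimages then automatically have valency one, so no local analysis at critical points is required. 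You instead verify the two one-step inclusions separately and, at a critical preimage $y'$, use the local degree-$k$ model of $R$ to manufacture non-critical preimages of the approximants; this is perfectly adequate, since the stated identity $R^{-1}(Y_0)=Y_0$ yields the paper's stronger version by iteration. One small point you should make explicit: $Y$ is closed only in the relative topology of $X$, and $X$ need not be closed in $\C$ (think of $X$ a stable region of the Fatou set). Before concluding that ``closedness of $Y$ forces $R(y)\in Y$'', respectively ``$y'\in Y$'', you must therefore note that $R(y)$, respectively $y'$, lies in $X$; this follows at once from the total $R$-invariance of $X$ (since $R^{-1}(X)=X$ gives $R(X)\subseteq X$ as well), and it is in fact the only place where the hypothesis on $X$ enters.
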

\begin{proof} Let $y \in Y_0$. Let $n,m \in \mathbb N$ and consider a
  point $x \in X$ a such that
  $R^n(x) = R^m(y)$. We must show that $x \in Y_0$. Since
  $R^{-n}\left(R^n(x)\right)$ is a finite set there is an open
  neighborhood $U_0$ of $x$ such that 
\begin{equation}\label{escape11}
R^{-n}(R^m(y)) \cap \overline{U_0} = \{x\}.
\end{equation}
Because $R^n$ and $R^m$ are both open maps we can also arange that
there is an open neighborhood $V_0$ of $y$ such that $R^n(U_0) =
R^m(V_0)$. Set $U = U_0 \cap X$ and $V = V_0 \cap X$ and note that
$R^n(U) = R^m(V)$. Since $\bigcup_{j=0}^m R^{-j}(\crt)$ and
  $R^{-m}\left(\bigcup_{j=0}^n R^{j}(\crt)\right)$ are both finite
  sets and $y$ is not isolated in $Y$, there is a sequence $\{y_k\}$
  of mutually distinct elements
  in 
$$
Y\cap V \backslash \left(  R^{-m}\left(\cup_{j=0}^nR^j\left(\crt \right)\right) \cup \bigcup_{j=0}^m R^{-j}(\crt)\right)
$$ 
converging to
  $y$. Since $\# R^{-m}(z)  \leq (\deg R)^m$ for all $z$ we can prune the sequence
  $\{y_k\}$ to arrange that $k \neq l \Rightarrow
  R^m(y_k) \neq R^m(y_l)$. Let $\{x_k\} \subseteq U$ be points such that $R^n(x_k) =
  R^m(y_k)$ for all $k$. Then $x_k \in U \backslash
  \bigcup_{j=0}^nR^{-j}(\crt)$ for all $k$. Passing to a
  subsequence we can arrange that $\{x_k\}$
  converges in $X$, necessarily to $x$ because of
  (\ref{escape11}). Note that $\val (R^m,y_k) = \val
  (R^n,x_k) = 1$ for all $k$ since $x_k \notin \bigcup_{j=0}^nR^{-j}(\crt)$ and
  $y_k \notin \bigcup_{j=0}^mR^{-j}(\crt)$. It follows
  therefore, either from Proposition 4.1 in \cite{Th2} or from Lemma
  \ref{simpleobs} above, that $x_k \in \Ro(y_k)$. Hence $x_k \in Y$ because $y_k \in Y$ and $Y$ is $\Ro$-invariant. It follows that $x \in Y$
  because $Y$ is closed. Furthermore, since the $x_k$'s are distinct,
  $x \in Y_0$. This shows that $Y_0$ is totally $R$-invariant. 
\end{proof}

\begin{lemma}\label{snit} Let $L$ be a closed $\Ro$-invariant
  subset of $J_R$. Then $L$ is either finite or equal to $J_R$.
\end{lemma}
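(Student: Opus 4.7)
The plan is to reduce from the $\Ro$-invariant set $L$ to a genuinely totally $R$-invariant subset $L_0$ by stripping off the isolated points, and then to invoke the classical density of backward orbits in the Julia set.

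First I would apply Lemma \ref{escape} with $X = \C$ and $Y = L$: let $L_0 \subseteq L$ be the set of non-isolated points of $L$, so that $R^{-1}(L_0) = L_0$. Since the isolated points of $L$ form an open subset of $L$, the set $L_0$ is closed in $L$, and hence closed in $\C$.

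Next, assume that $L$ is infinite; the goal becomes $L = J_R$. Being an infinite closed subset of the compact space $\C$, the set $L$ has an accumulation point, which lies in $L$ because $L$ is closed and is not isolated by construction, so $L_0 \neq \emptyset$. Fix $z \in L_0 \subseteq J_R$. The exceptional set $E(R)$ of a rational map of degree at least two contains at most two points, each a super-attracting periodic point and therefore lying in $F_R$; hence $z \notin E(R)$. By the classical density theorem for backward orbits (cf.\ \cite{Mi}), $\overline{\bigcup_{n \geq 0} R^{-n}(z)} = J_R$. Using $R^{-1}(L_0) = L_0$ one has $\bigcup_{n \geq 0} R^{-n}(z) \subseteq L_0$, and since $L_0$ is closed this yields $J_R \subseteq L_0 \subseteq L \subseteq J_R$, so $L = J_R$.

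The main obstacle is the passage from $\Ro$-invariance to strict total backward $R$-invariance: $\Ro$-invariance is weaker than $R^{-1}(L) \subseteq L$ because of the valency condition built into the restricted orbit, and only after removing isolated points---the content of Lemma \ref{escape}---does one obtain $R^{-1}(L_0) = L_0$. Once that reduction is in hand, the argument is a direct appeal to the Montel-type density of backward orbits in $J_R$.
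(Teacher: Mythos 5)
Your proof is correct and follows essentially the same route as the paper: strip the isolated points via Lemma \ref{escape} to obtain a closed totally $R$-invariant set $L_0$, note that $L$ infinite forces $L_0 \neq \emptyset$, and then conclude from classical complex dynamics that $L_0 = J_R$. The only cosmetic difference is that you invoke the density of the backward orbit of a non-exceptional point of $J_R$ directly, whereas the paper cites Corollary 4.13 of \cite{Mi} on closed totally invariant sets, which is the same fact in packaged form.
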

\begin{proof} It follows from Lemma \ref{escape} that we can write $L = L_0 \cup L_1$ where $L_0$ is closed and totally
  $R$-invariant while $L_1$ is discrete. If $L_0 \neq \emptyset$ it
  follows from Corollary 4.13 of \cite{Mi} that $L_0 = J_R =L$. If
  $L_0 = \emptyset$ the compactness of $L$ implies it is finite. 
\end{proof} 

\subsubsection{Exposed points}

The preceding lemma forces us to look for points in $J_R$, or more
generally in $\C$, whose restricted orbits are finite. In the following we say that a point $ x \in  \C$ is \emph{exposed}
when the restricted orbit $\Ro(x)$ of $x$ is finite. A non-empty subset $A \subseteq \C$ is
\emph{exposed} when it is finite and $\Ro$-invariant. Clearly the so-called
exceptional points, those with finite orbit, are exposed. There
are at most two of them and they are always elements of the Fatou
set. See e.g. \S 4.1 in \cite{B}. In contrast exposed points can
occur in the Julia set as well.

\begin{lemma}\label{four0} Let $A \subseteq \C$ be a finite subset
  with the property that 
\begin{equation}\label{four1}
R^{-1}(A) \backslash \crt \subseteq A.
\end{equation}
Then $A$ contains at most 4 elements, and at most 3 if it contains a
critical point.
\end{lemma}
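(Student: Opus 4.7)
The plan is to combine the Riemann--Hurwitz ramification count for $R$ with a fibre-counting argument driven by the hypothesis, which rewrites as $R^{-1}(A) \subseteq A \cup \crt$. Set $n = |A|$, $d = \deg R \geq 2$, and for $x \in \C$ write $e(x) = \val(R,x) - 1 \geq 0$, so that $x \in \crt$ iff $e(x) > 0$. Since $R$ is a holomorphic map of degree $d$ between two copies of the Riemann sphere, Riemann--Hurwitz gives $\sum_{x \in \C} e(x) = 2d-2$.

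Each fibre satisfies $|R^{-1}(a)| = d - \sum_{x \in R^{-1}(a)} e(x)$, and summing over $a \in A$ yields
\begin{equation*}
|R^{-1}(A)| = dn - E_A, \qquad E_A := \sum_{x \in R^{-1}(A)} e(x) \leq 2d-2 .
\end{equation*}
On the other hand the hypothesis forces $R^{-1}(A) \setminus A \subseteq \crt$, and each point there carries $e(x) \geq 1$. Splitting $E_A = E_A^{\mathrm{in}} + E_A^{\mathrm{out}}$ according to whether the contributing points of $R^{-1}(A)$ lie in $A$ or not, I obtain $|R^{-1}(A)| \leq n + E_A^{\mathrm{out}}$. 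Combining the two estimates and rearranging,
\begin{equation*}
n(d-1) \leq E_A + E_A^{\mathrm{out}} = 2E_A - E_A^{\mathrm{in}} \leq 4(d-1) - E_A^{\mathrm{in}},
\end{equation*}
which in particular yields $n \leq 4$.

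For the refinement, suppose $A$ contains a critical point $p$. If $R(p) \in A$ then $p \in R^{-1}(A) \cap A \cap \crt$, so $E_A^{\mathrm{in}} \geq e(p) \geq 1$ and the display above forces $n < 4$. If instead $R(p) \notin A$, then $p$ falls outside $R^{-1}(A)$ altogether, so its ramification contributes to $(2d-2) - E_A$, improving the bound to $E_A \leq 2d-3$; inserting this into $n(d-1) \leq 2E_A$ (from the same display, after discarding the nonnegative term $-E_A^{\mathrm{in}}$) gives $n \leq 4 - 2/(d-1) < 4$. Either way $n \leq 3$.

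The only thing to keep straight is the bookkeeping for the three populations of critical points, namely those in $A \cap R^{-1}(A)$, those in $R^{-1}(A) \setminus A$, and those in $\C \setminus R^{-1}(A)$, and the direction in which each of them pushes the inequality. Once this split is set up, both bounds fall out immediately from the total ramification count $2d - 2$.
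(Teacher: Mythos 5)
Your proof is correct and takes essentially the same route as the paper's (which is itself a repetition of Lemma 1 of \cite{GPRR}): the degree identity $d\,\# A = \# R^{-1}(A) + \sum_{x \in R^{-1}(A)}(\val(R,x)-1)$, the hypothesis (\ref{four1}) to compare $\# R^{-1}(A)$ with $\# A$ plus critical points, and the total ramification bound $2d-2$. The only cosmetic differences are that you bound the preimages escaping $A$ by their ramification weight $E_A^{\mathrm{out}}$ where the paper uses $\# \crt \leq 2d-2$, and you obtain $\# A \leq 3$ by a case split on whether $R(p) \in A$, where the paper instead observes that the estimate $\# R^{-1}(A) \leq \# A + \# \crt$ becomes strict when $A \cap \crt \neq \emptyset$.
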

\begin{proof} The proof is a repetition of the proof of Lemma 1 in
  \cite{GPRR}. Set $d = \deg R$ and let $\val(R,x)$ denote the valency
  of $R$ at $x$. Then
\begin{equation*}
\begin{split}
& d \left( \# A\right) = \sum_{x \in R^{-1}(A)} \val (R,x) \ \ \ \ \ \
\ \
\text{(because $\sum_{x \in R^{-1}(y)} \val (R,x) = d$ for all $y$)} \\
& = \# R^{-1}(A) + \sum_{x \in R^{-1}(A)} (\val (R,x) -1) \\
& \leq \# A + \# \crt +   \sum_{x \in R^{-1}(A)} (\val (R,x)
-1) \ \      \ \ \ \ \ \ \ \ \ \ \ \ \text{(by (\ref{four1}))}\\
& = \# A + \# \crt + 2d-2 \ \ \ \ \ \ \ \ \ \ \ \text{(by Theorem 2.7.1 of
  \cite{B})} \\
& \leq \# A + 4(d-1) \ \ \ \ \ \ \ \ \  \text{(by Corollary 2.7.2 of \cite{B}).}
\end{split}
\end{equation*}
It follows that $\# A \leq 4$. When $A$ contains a critical point the
first inequality above is strict and hence $\# A \leq 3$. 
\end{proof}

It follows from Lemma \ref{simpleobs} that an exposed subset satisfies
(\ref{four1}). This gives us the following.

\begin{cor}\label{four} An exposed subset does not contain more than 4 elements. If it contains a
critical point it contains at most 3 elements.
\end{cor}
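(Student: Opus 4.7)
The plan is to deduce the corollary from Lemma \ref{four0} by checking that every exposed subset $A$ satisfies the hypothesis $R^{-1}(A) \setminus \crt \subseteq A$. Once that inclusion is established, the bounds $\# A \leq 4$ in general and $\# A \leq 3$ when $A$ contains a critical point are immediate from Lemma \ref{four0}.

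To verify the inclusion, I would take an arbitrary $y \in R^{-1}(A) \setminus \crt$ and show $y \in A$. By assumption $R(y) \in A$, and since $y$ is not critical we have $R'(y) \neq 0$. Applying Lemma \ref{simpleobs} with $n = 1$ yields $R(y) \in \Ro(y)$. Now $\Ro(\cdot)$ partitions $\C$ into $G_R$-orbits, so belonging to the same restricted orbit is an equivalence relation; from $R(y) \in \Ro(y)$ one therefore concludes $y \in \Ro(R(y))$. Since $A$ is $\Ro$-invariant and $R(y) \in A$, we get $\Ro(R(y)) \subseteq A$ and in particular $y \in A$, as required.

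The only real content is the step using Lemma \ref{simpleobs} together with the symmetry of $\Ro$-equivalence; there is no difficulty beyond that, so I do not anticipate any obstacle. After establishing (\ref{four1}) one simply invokes Lemma \ref{four0} to finish.
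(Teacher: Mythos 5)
Your proof is correct and follows exactly the paper's route: the paper likewise notes that Lemma~\ref{simpleobs} gives the inclusion (\ref{four1}) for any exposed set (via the symmetry of the restricted-orbit relation and $\Ro$-invariance) and then applies Lemma~\ref{four0}. You have simply spelled out the one-line verification that the paper leaves implicit.
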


The upper bound on the number of elements in an exposed set can be
improved when $R$ is a polynomial since the number of critical points
for a polynomial is at most $\deg R$, and $\infty$ is always one
of them. Specifically, applied to a polynomial the proof of Lemma
\ref{four0} yields the following.

\begin{lemma}\label{poly} Assume that $R$ is a polynomial of degree at least 2 and $A$ an exposed subset of $\mathbb C$. Then $\# A \leq 2$,
  and $\# A \leq 1$ when $A$ contains a critical point.
\end{lemma}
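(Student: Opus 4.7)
The plan is to reproduce the counting argument of Lemma \ref{four0} but exploit the fact that, for a polynomial of degree $d\geq 2$, the point at infinity is a totally invariant critical point of valency $d$, which absorbs a large share of the Riemann--Hurwitz budget.

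First I would observe that because $A \subseteq \mathbb{C}$ and $R(\infty) = \infty \notin A$, we have $R^{-1}(A) \subseteq \mathbb{C}$, and similarly any critical point lying over $A$ must sit in $\mathbb{C}$. By Lemma \ref{simpleobs} and the $\Ro$-invariance of $A$ we still have $R^{-1}(A)\setminus \crt \subseteq A$, so $R^{-1}(A) \subseteq A \cup (\crt \cap \mathbb{C})$.

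Next I would separate the contribution of $\infty$ from the total ramification. Since $\val(R,\infty) = d$, the Riemann--Hurwitz identity $\sum_{x \in \crt}(\val(R,x)-1) = 2d-2$ applied in $\overline{\mathbb C}$ gives
\begin{equation*}
\sum_{x \in \crt \cap \mathbb{C}}(\val(R,x)-1) \;=\; (2d-2) - (d-1) \;=\; d-1,
\end{equation*}
and since $R'$ is a polynomial of degree $d-1$, we also have $\#(\crt \cap \mathbb{C}) \leq d-1$.

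Now I would run the computation of Lemma \ref{four0} restricted to $\mathbb{C}$:
\begin{equation*}
\begin{split}
d\cdot \#A &= \sum_{x\in R^{-1}(A)}\val(R,x) \\
&= \#R^{-1}(A) + \sum_{x\in R^{-1}(A)}(\val(R,x)-1) \\
&\leq \#A + \#(\crt\cap\mathbb{C}) + (d-1) \\
&\leq \#A + 2(d-1).
\end{split}
\end{equation*}
Since $d\geq 2$, dividing by $d-1$ yields $\#A \leq 2$. For the sharper statement, if $A$ contains a critical point $x_0$, then necessarily $x_0 \in A\cap (\crt\cap\mathbb{C})$, so the inclusion $R^{-1}(A)\subseteq A\cup(\crt\cap\mathbb{C})$ over-counts $x_0$ and the first inequality above improves by at least $1$. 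This gives $(d-1)\#A \leq 2(d-1)-1$, hence $\#A<2$ and therefore $\#A\leq 1$.

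The argument is essentially bookkeeping; the only real point to be careful about is ensuring that both the cardinality bound $\#(\crt\cap\mathbb{C}) \leq d-1$ and the ramification bound $\sum_{x\in\crt\cap\mathbb{C}}(\val(R,x)-1) \leq d-1$ are used simultaneously, which is exactly what makes the coefficient drop from $4(d-1)$ in the rational case to $2(d-1)$ here.
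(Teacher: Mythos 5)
Your proof is correct and follows essentially the same route as the paper: the paper simply notes that the counting argument of Lemma \ref{four0} applied to a polynomial, using that $\infty$ is a totally invariant critical point of valency $d$ lying outside $R^{-1}(A)$, improves the budget from $4(d-1)$ to $2(d-1)$, which is exactly the computation you carry out, including the same "strict inequality when $A$ meets $\crt$" refinement for the second assertion. Your write-up just makes explicit the bounds $\#(\crt\cap\mathbb C)\leq d-1$ and $\sum_{x\in\crt\cap\mathbb C}(\val(R,x)-1)=d-1$ that the paper leaves implicit.
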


We say that an exposed subset is of \emph{type 1} when it does not contain a
critical point, of \emph{type 2} when it contains a pre-periodic
critical point and \emph{type 3} when it contains a critical point,
but no pre-periodic critical point.

\begin{lemma}\label{typ1} Let $A$ be a finite subset of ${\C}$. Then $A$ is an exposed subset of type 1 if and only if 
\begin{equation}\label{maksmir2}
R^{-1}(A) \backslash \crt = A.
\end{equation}
\end{lemma}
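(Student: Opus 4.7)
The plan is to prove the two implications separately, using Lemma~\ref{simpleobs} as the main tool in both directions. The key observation is that Lemma~\ref{simpleobs} says $R^n(x) \in \Ro(x)$ whenever $(R^n)'(x) \neq 0$, i.e.\ whenever $\val(R^n,x) = 1$. Combined with the fact that $\Ro$ is an equivalence relation (so $y \in \Ro(x) \Rightarrow \Ro(x) = \Ro(y)$), this will let us move elements in and out of $A$ by single applications of $R$ as long as we avoid critical points.

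For the forward direction, suppose $A$ is exposed of type 1. If $x \in A$, then $x \notin \crt$, so $(R)'(x) \neq 0$ and Lemma~\ref{simpleobs} gives $R(x) \in \Ro(x) \subseteq A$; hence $x \in R^{-1}(A) \setminus \crt$. Conversely, if $y \in R^{-1}(A) \setminus \crt$, then $R(y) \in A$ and $y$ is not critical, so again by Lemma~\ref{simpleobs} $R(y) \in \Ro(y)$, whence $\Ro(y) = \Ro(R(y)) \subseteq A$ by $\Ro$-invariance, giving $y \in A$.

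For the reverse direction, suppose $A$ is finite and $R^{-1}(A) \setminus \crt = A$. The equality immediately forces $A \cap \crt = \emptyset$, so $A$ will be of type 1 as soon as it is shown to be exposed. The inclusion $A \subseteq R^{-1}(A)$ says $A$ is forward-invariant: $R(A) \subseteq A$, so $R^m(x) \in A$ for all $x \in A$ and all $m \geq 0$. Since $A$ contains no critical points, every iterate $R^k(x)$ for $x \in A$ lies outside $\crt$, which forces $\val(R^m, x) = 1$ for every $x \in A$ and every $m$.

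It remains to check $A$ is $\Ro$-invariant. Let $x \in A$ and $y \in \Ro(x)$, so there are $n,m \in \mathbb{N}$ with $R^n(y) = R^m(x)$ and $\val(R^n,y) = \val(R^m,x)$. By the previous paragraph the right-hand valency equals $1$, so $\val(R^n,y) = 1$, which means none of $y, R(y), \dots, R^{n-1}(y)$ is a critical point of $R$. Since $R^n(y) = R^m(x) \in A$ (by forward invariance of $A$) and $R^{n-1}(y) \notin \crt$, the hypothesis $R^{-1}(A) \setminus \crt = A$ yields $R^{n-1}(y) \in A$; iterating this descent $n$ times gives $y \in A$, as required. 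The only step that requires any care is this last descent, where one must keep track that the ``no critical point'' condition survives all the way down, which is exactly guaranteed by $\val(R^n,y) = 1$.
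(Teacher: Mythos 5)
Your proof is correct and follows essentially the same route as the paper: Lemma~\ref{simpleobs} gives forward invariance and the inclusion $R^{-1}(A)\setminus\crt\subseteq A$ in one direction, while in the other direction the valency condition $\val(R^n,y)=\val(R^m,x)=1$ keeps the backward descent away from critical points so that (\ref{maksmir2}) can be applied repeatedly. No gaps.
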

\begin{proof} Assume first that $A$ is an exposed subset
  of type 1. Since $A$ does not contain a critical point it follows
  from Lemma \ref{simpleobs} 
  that $R(A) \subseteq A$, i.e. $A \subseteq R^{-1}(A) \backslash
  \crt$. Combined with (\ref{four1}) this shows that
  (\ref{maksmir2}) holds. 

Conversely, it follows from (\ref{maksmir2}) that  
$R(A)
  \subseteq A$ and that $A\cap \crt =
  \emptyset$. Hence $\val (R^n,x) = 1$ for all $x \in A$ and all
  $n \in \mathbb N$. Thus if
  $x \in A$ and $y \in \Ro(x)$ we have both that $R^m(y) =
  R^n(x) \in A$ for some $n,m$, and that $\val (R^m,y) = \val(R^n,x) =1$. But then $R^j(y) \notin \crt$ for
  all $0 \leq j \leq m-1$, and repeated use of (\ref{maksmir2}) then shows
  that $y \in A$, i.e. $A$ is $\Ro$-invariant.
\end{proof}

Finite subsets of the Julia set satisfying (\ref{maksmir2}) were
considered by Makarov and Smirnov in \cite{MS1} and their work can be
used to find examples of polynomials with exposed subsets of type 1 in the Julia
set. In \cite{MS2} a rational map with an exposed subset of type 1 was
called exceptional. This notion was extended in \cite{GPRR} where a
rational map was called exceptional when the Julia set contains a finite
forward invariant subset satisfying (\ref{four1}). There are exceptional rational maps, in the
sense of \cite{GPRR}, without any exposed
subsets, and conversely, there are non-exceptional rational
maps with exposed subsets in the Julia set. Thus although there is of
course a relationship between exposed subsets and the subsets used to
define the exceptional maps in \cite{GPRR}, the two notions are not
the same. Note that it follows from Corollary \ref{four} that the total number of
exposed points is at most 4.

\subsubsection{Finite quotients of $\J$}

Let $E_R$ denote the union of the exposed subsets in $\C$; a set with
at most 4 elements. When $E_R \cap J_R
\neq \emptyset$ the purely infinite $C^*$-algebra $\J$ will have 
$C^*_r\left( E_R \cap J_R\right)$ as a quotient. The corresponding
ideal, however, is simple.

\begin{prop}\label{JRsimple} $C^*_r\left(J_R \backslash E_R \right)$ is
    simple.
\end{prop}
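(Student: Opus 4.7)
The plan is the standard two-step strategy for simplicity of groupoid $C^*$-algebras: verify that the reduction $G_{J_R\setminus E_R}$ is both essentially free and minimal, then invoke a standard simplicity theorem for Hausdorff étale groupoids (topologically principal plus minimal implies simplicity of the reduced $C^*$-algebra).

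First I would record that $J_R \setminus E_R$ is open in $J_R$ and $\Ro$-invariant. Since $\Ro$ is an equivalence relation and $E_R$ is $\Ro$-invariant by definition, any $y \in J_R \setminus E_R$ has $\Ro(y) \cap E_R = \emptyset$ and so $\Ro(y) \subseteq J_R \setminus E_R$; hence $G_{J_R \setminus E_R}$ is a locally compact Hausdorff étale groupoid and $C^*_r(J_R \setminus E_R)$ is defined. Essential freeness is then essentially inherited from $G_{J_R}$: by Proposition 4.4 of \cite{Th2} the points of $J_R$ with trivial isotropy are precisely those that are neither pre-periodic nor pre-critical, and Lemma \ref{7!!} says these are dense in $J_R$. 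All such points have infinite orbits and therefore lie in $J_R \setminus E_R$, so they form a dense subset of $J_R \setminus E_R$ with trivial isotropy in $G_{J_R \setminus E_R}$.

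The main step is minimality: I must show that any proper closed $\Ro$-invariant subset $F \subsetneq J_R \setminus E_R$ is empty. Set $F' = F \cup (E_R \cap J_R)$. I claim $F'$ is closed in $J_R$ and $\Ro$-invariant. Closedness: if $y \in J_R$ is an accumulation point of $F$ and $y \notin E_R$, then $y$ lies in $J_R \setminus E_R$ and is in the closure of $F$ relative to $J_R \setminus E_R$, hence in $F$. The $\Ro$-invariance of $F'$ follows from the $\Ro$-invariance of $F$ in $J_R \setminus E_R$ (combined with $\Ro(y) \subseteq J_R \setminus E_R$ for $y \in F$) and the $\Ro$-invariance of $E_R \cap J_R$. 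Lemma \ref{snit} then forces $F'$ to be either finite or all of $J_R$. If $F' = J_R$, then $F = J_R \setminus E_R$, contradicting properness. Otherwise $F'$ is a finite $\Ro$-invariant subset, hence an exposed subset, so $F' \subseteq E_R$; this forces $F \subseteq (J_R \setminus E_R) \cap E_R = \emptyset$.

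With essential freeness and minimality of $G_{J_R \setminus E_R}$ established, simplicity of $C^*_r(J_R \setminus E_R)$ follows from the standard fact that any non-zero ideal in a Hausdorff étale topologically principal groupoid $C^*$-algebra must meet $C_0(G^{(0)})$ non-trivially, and the corresponding open invariant subset of the unit space must then be all of $J_R \setminus E_R$ by minimality. The main obstacle is making the minimality argument go through cleanly — specifically, the verification that $F \cup (E_R \cap J_R)$ remains closed and $\Ro$-invariant in $J_R$, which is what lets Lemma \ref{snit} be applied; the rest is routine given the machinery already in place.
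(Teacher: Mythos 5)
Your proof is correct and follows essentially the same route as the paper: density of points with trivial isotropy via Lemma \ref{7!!}, reduction of simplicity to the absence of non-trivial closed $\Ro$-invariant subsets of $J_R \setminus E_R$ (the paper cites Corollary 2.18 of \cite{Th1} where you invoke the standard topologically principal plus minimal criterion), and the key step of adjoining $E_R \cap J_R$ to a closed invariant set and applying Lemma \ref{snit}. The only cosmetic difference is that you spell out the closedness and invariance of $L \cup (E_R \cap J_R)$, which the paper leaves implicit.
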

\begin{proof}
It follows from Lemma \ref{7!!}
  that there are many points in $J_R \backslash E_R$ whose isotropy
  group in $G_{J_R \backslash E_R}$ is trivial. By Corollary 2.18 of
  \cite{Th1} it suffices therefore to show that there are no
  non-trivial (relatively) closed $\Ro$-invariant subset in $J_R \backslash
  E_R$. Assume that $L$ is a non-empty $\Ro$-invariant subset of $J_R
  \backslash E_R$ which is closed in $J_R \backslash E_R$. It follows
  first that
  $L \cup \left( E_R \cap J_R\right)$ is closed and $\Ro$-invariant in $J_R$ and then
  from Lemma \ref{snit} that $L = J_R \backslash E_R$, which is the
  desired conclusion. 
\end{proof}

In order to obtain a description of the quotient 
$$
C_r^*(J_R \cap E_R) \simeq \J/C^*_r\left(J_R \backslash E_R\right)
$$
we
consider a more general situation because the result can then be used
when we examine $\F$.

\begin{lemma}\label{discretequot} Let $x \in \C$ and assume that the
  restricted orbit $\Ro(x)$ of $x$ is discrete in
  $\C$. There is an isomorphism
$$
C^*_r\left({\Ro(x)}\right) \simeq C^*\left( \Is_x \right)
\otimes \mathbb K\left( l^2(\Ro(x))\right),
$$
where $\mathbb K\left( l^2(\Ro(x))\right)$ denotes the $C^*$-algebra
of compact operators on $l^2\left(\Ro(x)\right)$ and $\Is_x$ the
isotropy group
$\Is_x = \left\{ g \in G_R : \ s(g) = r(g) = x\right\}$. 
\end{lemma}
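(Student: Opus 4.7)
The plan is to exhibit $G_{\Ro(x)}$ as a discrete transitive groupoid, identify it explicitly with the product of the pair groupoid on $\Ro(x)$ with the isotropy group $\Is_x$, and then read off the $C^{*}$-algebra from this decomposition.

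First I would verify the basic topological/combinatorial facts. Since $\Ro(x)\subseteq\C$ is discrete and the range map of $G_{\Ro(x)}$ is a local homeomorphism onto a discrete space, the groupoid $G_{\Ro(x)}$ is itself discrete; every basic open set $U(\eta)$ from (\ref{baseset}) sitting over a single point of $\Ro(x)$ reduces to a single point, so $C^*_r(G_{\Ro(x)})$ is the reduced completion of the convolution algebra of finitely supported functions. The set $\Ro(x)$ is countable because all iterates $R^{n}$ are finite-to-one. Finally, by the definition of restricted orbit, $G_{\Ro(x)}$ is transitive: for any $y,z\in\Ro(x)$ there is at least one $g\in G_{\Ro(x)}$ with $r(g)=y$ and $s(g)=z$.

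The heart of the proof is then to fix, for each $y\in\Ro(x)$, an element $\gamma_{y}\in G_{\Ro(x)}$ with $r(\gamma_{y})=y$ and $s(\gamma_{y})=x$ (with $\gamma_{x}$ the unit at $x$), and to define
\eQ{
\Phi\colon G_{\Ro(x)}\longrightarrow \mathrm{Pair}(\Ro(x))\times\Is_{x},\qquad g\longmapsto\bigl((r(g),s(g)),\,\gamma_{r(g)}^{-1}\,g\,\gamma_{s(g)}\bigr),
}
where $\mathrm{Pair}(\Ro(x))$ is the pair groupoid. A straightforward check (transitivity gives surjectivity, the torsor structure of $\Is_{x}$ on each double fiber gives injectivity, and groupoid axioms give multiplicativity) shows $\Phi$ is an isomorphism of discrete groupoids.

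Finally, passing to $C^{*}$-algebras, the isomorphism $\Phi$ induces
\eQ{
C^*_r\bigl(G_{\Ro(x)}\bigr)\simeq C^*_r\bigl(\mathrm{Pair}(\Ro(x))\bigr)\otimes C^{*}(\Is_{x})\simeq \mathbb K\bigl(\ell^{2}(\Ro(x))\bigr)\otimes C^{*}(\Is_{x}),
}
since the reduced groupoid $C^{*}$-algebra of a pair groupoid on a countable discrete set is the compacts on $\ell^{2}$ of that set, and for the discrete group $\Is_{x}$ the reduced and full constructions agree in the cases relevant here (the isotropy groups arising in the applications are abelian, in fact quotients of $\mathbb Z$). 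The main obstacle, and the only step requiring real care, is verifying that the product decomposition respects the \emph{reduced} norm, not only the convolution structure. This is done by choosing the basepoint $x$, using the section $\{\gamma_{y}\}$ to write $\ell^{2}(G_{\Ro(x),\,x})\simeq\ell^{2}(\Ro(x))\otimes\ell^{2}(\Is_{x})$, checking that under this identification $\pi_{x}$ becomes the tensor product of the regular representation of $\mathrm{Pair}(\Ro(x))$ on $\ell^{2}(\Ro(x))$ with the left regular representation of $\Is_{x}$ on $\ell^{2}(\Is_{x})$, and then invoking transitivity to conclude that $\left\Vert\cdot\right\Vert=\sup_{y}\left\Vert\pi_{y}(\cdot)\right\Vert=\left\Vert\pi_{x}(\cdot)\right\Vert$ realizes precisely the minimal tensor product norm.
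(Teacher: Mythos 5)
Your proof is correct and is essentially the paper's own argument: your section $\{\gamma_y\}$ is the paper's choice of $\eta_y\in r^{-1}(y)\cap s^{-1}(x)$, and the trivialization $G_{\Ro(x)}\simeq \mathrm{Pair}(\Ro(x))\times\Is_x$ is just the groupoid-level formulation of the paper's matrix units $1_{\eta_u\eta_v^{-1}}$ and multiplier unitaries $u_g=\sum_{y}1_{\eta_y g\eta_y^{-1}}$, with your identification of $\pi_x$ as a tensor-product representation supplying the reduced-norm verification the paper dismisses as ``easy to see''. One small correction: the isotropy groups occurring here ($\mathbb Z$, subgroups of $\mathbb Q/\mathbb Z$, $\mathbb Z\oplus\mathbb Z_d$, by Proposition 4.4 of \cite{Th2}) are abelian but not in general quotients of $\mathbb Z$; abelian, hence amenable, is all you need to conclude $C^*_r(\Is_x)=C^*(\Is_x)$.
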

\begin{proof} Note that the reduction $G_{\Ro(x)}$ is discrete in $G_R$ and that $C^*_r\left({\Ro(x)}\right)$ is generated by $1_z, \ z \in G_{\Ro(x)}$,
  when $1_z$ denotes the characteristic function of the set
  $\{z\}$. For every $y \in \Ro(x)$ choose an element $\eta_y \in r^{-1}(y)
  \cap s^{-1}(x)$. For every $g \in \Is_x$ set
$$
u_g = \sum_{y \in \Ro(x)} 1_{\eta_y g \eta_y^{-1}} .
$$
The sum converges in the strict topology and defines a unitary in the
multiplier algebra of $C^*_r\left({\Ro(x)}\right)$. Note that $u_gu_h = u_{gh}$ so that $u$ is a unitary
representation of $\Is_x$ as multipliers of
$C^*_r\left({\Ro(x)}\right)$. The elements $1_{\eta_u\eta_v^{-1}}, u,v
\in \Ro(x)$, constitute a system of matrix units generating a copy of
$\mathbb K\left( l^2\left(\Ro(x)\right)\right)$ inside
$C^*_r\left({\Ro(x)}\right)$. Since each $u_g$ commutes with
$1_{\eta_u\eta_v^{-1}}$ for all $u,v$ we get a $*$-homomorphism
$C^*\left(\Is_x\right) \otimes \mathbb K\left( l^2\left(\Ro(x)\right)\right)
\to C^*_r\left({\Ro(x)}\right)$ sending $1_g \otimes 1_{\eta_u
  \eta_v^{-1}}$ to $u_g 1_{\eta_u
  \eta_v^{-1}}$. It is easy to see that this is an isomorphism.
\end{proof}

\begin{lemma}\label{expoinJ} Let $A$ be a finite $\Ro$-orbit in
  $J_R$. Set $ n = \# A$. 
\begin{enumerate}
\item[a)] Assume that $A$ is of type 1. Then $n \leq 4$ and
$$
C^*_r\left(A\right) \simeq C(\mathbb T)
  \otimes M_n(\mathbb C).
$$ 
\item[b)] Assume that $A$ is of type 2. Then $n \leq 3$ and
$$
C^*_r\left(A\right) \simeq M_n(\mathbb C)
  \otimes C(\mathbb T) \otimes \mathbb C^d
$$ 
where $d = \lim_{k \to
    \infty} \val (R^k,x)$ for any critical point $x \in A$.
\item[c)] Assume that $A$ is of type 3. Then $n \leq 3$ and 
$$
C^*_r\left(A\right) \simeq M_n(\mathbb C)   \otimes \mathbb C^d
$$ 
where $d = \lim_{k \to
    \infty} \val (R^k,x)$ for any critical point $x \in A$.
\end{enumerate}
\end{lemma}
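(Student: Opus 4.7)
My plan is to reduce to a computation of the isotropy group $\Is_x$ at a conveniently chosen $x \in A$ via Lemma \ref{discretequot}, and then to read off $\Is_x$ from the local structure of $R$ at $x$ together with the forward dynamics. Since $A$ is a finite $\Ro$-orbit, it is discrete in $\C$ and $A = \Ro(x)$ for every $x \in A$, so the bounds on $n = \# A$ are immediate from Corollary \ref{four}, and Lemma \ref{discretequot} gives
\[
C^*_r(A) \simeq C^*(\Is_x) \otimes \mathbb K(l^2(A)) \simeq C^*(\Is_x) \otimes M_n(\mathbb C).
\]
It therefore suffices to identify $C^*(\Is_x)$ as $C(\mathbb T)$, $C(\mathbb T) \otimes \mathbb C^d$, or $\mathbb C^d$ in the respective cases.

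In type 1, Lemma \ref{typ1} gives $R(A) \subseteq A$, so $R$ acts on a finite set and admits a periodic point $x \in A$ of some minimal period $p$. Since all valencies along the forward orbit of $x$ are $1$, an element $[x,k,\eta,x] \in \Is_x$ requires $R^n(x) = R^m(x)$ with $k = n-m$, which forces $k \in p\mathbb Z$; the germ of $\eta$ is then uniquely pinned down by the condition $\eta(x) = x$ because $R^m$ is locally biholomorphic at $x$. Hence $\Is_x \cong \mathbb Z$ and $C^*(\Is_x) \cong C(\mathbb T)$, which is (a).

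In types 2 and 3 I take $x = c$ to be a critical point in $A$. Because $A \subseteq J_R$ and critical periodic points are super-attracting and hence in $F_R$, any terminal cycle met by $c$ contains no critical point, so the forward orbit of $c$ meets $\crt$ only finitely often; thus $d := \lim_{k \to \infty} \val(R^k,c)$ is well defined, and the $\Ro$-equivalence of any two critical points in $A$ makes $d$ independent of the choice. Choose $l$ so large that $\val(R^l,c) = d$, and choose local coordinates around $c$ and $R^l(c)$ in which $R^l$ takes the normal form $z \mapsto z^d$. For $n-m = k$ with $n,m \geq l$, the equation $R^n(z) = R^m(\eta(z))$ then reduces, after composition with a local inverse branch of $R^{m-l}$ at the non-critical point $R^m(c)$, to $\eta(z)^d = u_k(z)\, z^d$ for an explicit holomorphic unit $u_k$ invariant under $z \mapsto \omega z$ (for $\omega^d = 1$); its $d$ holomorphic $d$-th roots with $\eta(c) = c$ are precisely the germs parametrised by the $d$-th roots of unity. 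In type 3 the non-pre-periodicity of $c$ forces $k = 0$, so $\Is_c \cong \mathbb Z/d$ and $C^*(\Is_c) \cong \mathbb C^d$, which is (c). In type 2 the admissible $k$ fill $p\mathbb Z$ (with $p$ the period of the terminal cycle), and the $\omega$-invariance of $u_k$ makes the time-shift generator commute with the rotations, so $\Is_c \cong \mathbb Z \times \mathbb Z/d$ and $C^*(\Is_c) \cong C(\mathbb T) \otimes \mathbb C^d$, proving (b).

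The main technical point will be the isotropy computation at a critical point: getting the local normal form right, checking that no germs of local transfers escape the parametrisation by the $d$-th roots of unity, and verifying in type 2 that the extension of $\mathbb Z$ by $\mathbb Z/d$ coming from the admissible $k$'s and the rotations actually splits as a direct product rather than a non-trivial extension.
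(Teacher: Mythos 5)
Your argument is correct, and its skeleton is the same as the paper's: bound $\# A$ by Corollary \ref{four}, invoke Lemma \ref{discretequot} to reduce everything to the isotropy group at one point of $A$, and then identify $C^*(\Is_x)$ as $C(\mathbb T)$, $C(\mathbb T)\otimes \mathbb C^d$ or $\mathbb C^d$. The one place you genuinely diverge is the identification of $\Is_x$ itself: the paper simply quotes cases b), c) and d2) of Proposition 4.4 in \cite{Th2} (after noting, as you do, that a critical periodic cycle would be super-attracting and hence lie in $F_R$, so the asymptotic valency is finite), whereas you re-derive those isotropy groups from scratch via the local normal form $z \mapsto z^d$ for $R^l$ at the critical point and the parametrisation of the germs by $d$-th roots of unity. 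Your computation is sound, including the two points you flag as delicate: composing with the local inverse branch of $R^{m-l}$ is legitimate because $\val(R^j,c)=d$ for $j\ge l$ forces $R^j(c)\notin\crt$ there, so every germ in $\Is_c$ is captured by the equation $\eta(z)^d=u_k(z)z^d$; and in type 2 the invariance of $u_k$ under $z\mapsto\omega z$ does give a root $g$ with $g(\omega z)=g(z)$, so a lift of the $k=p$ generator commutes with the rotations and the extension $0\to\mathbb Z/d\to\Is_c\to p\mathbb Z\to 0$ is a direct product rather than merely split. What your route buys is a self-contained proof not resting on the external isotropy computation in \cite{Th2}; what the paper's route buys is brevity, since that proposition already covers all three cases uniformly.
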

\begin{proof} a): It follows from  Corollary \ref{four} that $n \leq 4$. It follows from Lemma \ref{typ1} that $A =\Ro(x)$ for some
  point $x \in J_R$ which is periodic and whose forward orbit is
  contained in $A$. Since $A$ contains no critical
  points Proposition 4.4 b) in \cite{Th2} tells us that $\Is_x \simeq
  \mathbb Z$. Then the conclusion follows from Lemma
  \ref{discretequot}.

b) When $A$ contains a critical point the last assertion in Corollary \ref{four} says that $\#
A \leq 3$. When $x$ is a pre-periodic critical point in $A$ we can
determine the isotropy group $\Is_x$ from Proposition 4.4 in
\cite{Th2}. Since $J_R$ does not contain any periodic critical orbit
we are in situation d2) of that proposition and obtain therefore the
stated conclusion from Lemma \ref{discretequot}.

c) follows in the same way as b). The only difference is that $\Is_x$
is determined by use of c) in Proposition 4.4 of \cite{Th2}.   
\end{proof}


Given a point $z \in \C$ we call the limit $\lim_{k \to \infty}
\val\left(R^k,z\right)$ occuring in Lemma \ref{expoinJ} \emph{the
  asymptotic valency} of $z$. It can be infinite, but only when $z$ is
pre-periodic to a critical periodic orbit.

\begin{thm}\label{Juliaext} There is an extension
\begin{equation}\label{juliaextension}
\begin{xymatrix}{
0 \ar[r] & C^*_r\left({J_R  \backslash
    E_R}\right) \ar[r] & \J \ar[r]^-{\pi_{E_R}} &
\oplus_A C^*_r\left(A\right) \ar[r] & 0,
}\end{xymatrix}
\end{equation}
where the direct sum $\oplus_A$ is over the (possibly empty) collection
of finite $\Ro$-orbits $A$ in $J_R$. Furthermore, $C^*_r\left({J_R  \backslash
    E_R}\right)$ is separable, purely infinite, nuclear, simple and
satisfies the universal coefficient theorem of Rosenberg and Schochet,
\cite{RS}. If non-zero, the
quotient $\oplus_A C^*_r\left(A\right)$ is isomorphic to a finite
  direct sum of matrix algebras $M_n(\mathbb C)$ with $n \leq 3$ and
  circle algebras $C(\mathbb T)\otimes M_n(\mathbb C)$ with $n \leq
  4$. 
\end{thm}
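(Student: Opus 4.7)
The plan is to assemble the extension by applying earlier lemmas in a specific order, and then to read off the structure of each piece from the results already proved.

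First I would verify that $E_R\cap J_R$ is a closed, $\Ro$-invariant subset of $J_R$: it is finite by Corollary~\ref{four} (hence closed) and $\Ro$-invariant by its very definition as a union of exposed subsets. Lemma~\ref{quot}, applied with $X=J_R$ and $Y=E_R\cap J_R$, then delivers an exact sequence
\begin{equation*}
\begin{xymatrix}{
0 \ar[r] & C^*_r(J_R\setminus E_R) \ar[r] & C^*_r(J_R) \ar[r]^-{\pi_{E_R}} & C^*_r(E_R\cap J_R) \ar[r] & 0.
}\end{xymatrix}
\end{equation*}
Since $E_R\cap J_R$ is finite and discrete, its $\Ro$-orbits form a finite partition by open, $\Ro$-invariant subsets; Lemma~\ref{basdecomp} applied to $G_{E_R\cap J_R}$ therefore gives $C^*_r(E_R\cap J_R)\simeq \bigoplus_A C^*_r(A)$, the sum running over the finite $\Ro$-orbits $A\subseteq J_R$. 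This establishes the extension (\ref{juliaextension}).

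Next, for each individual $A$, I would invoke Lemma~\ref{expoinJ}, which classifies $C^*_r(A)$ according to whether $A$ is of type~1, 2, or~3. In type~1 one obtains $C(\mathbb T)\otimes M_n(\mathbb C)$ with $n\leq 4$; in types~2 and~3 one gets $M_n(\mathbb C)\otimes C(\mathbb T)\otimes \mathbb C^d$ or $M_n(\mathbb C)\otimes \mathbb C^d$ respectively, with $n\leq 3$. In the type~2 and~3 cases the factor $\mathbb C^d$ (with $d$ finite) must be justified: on a finite $\Ro$-orbit the asymptotic valency of the critical point is automatically finite because $J_R$ contains no periodic critical orbit (critical points in $J_R$ are never periodic since $J_R$ carries no super-attracting cycle). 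Thus each summand is, up to stabilization, either $\mathbb C$ or $C(\mathbb T)$ tensored with a matrix algebra of the stated size, yielding the claimed structure of the quotient.

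For the properties of the kernel $C^*_r(J_R\setminus E_R)$: separability is automatic since $G_R$ is second countable; simplicity is Proposition~\ref{JRsimple}; and pure infiniteness follows from Proposition~\ref{pursimp} together with the fact, recalled in Section~\ref{dynsys}, that $C^*_r(G_W)$ is a hereditary $C^*$-subalgebra of $C^*_r(G)$ whenever $W\subseteq G^{(0)}$ is open, combined with the elementary observation that a hereditary $C^*$-subalgebra of a purely infinite $C^*$-algebra is again purely infinite.

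The main obstacle is to establish nuclearity and the UCT for $C^*_r(J_R\setminus E_R)$, since the paper has deliberately sidestepped the amenability issue for $G_R$. I would handle this by proving (or invoking from \cite{Th2}) topological amenability of the transformation groupoid $G_{J_R\setminus E_R}$: once that is in place, nuclearity of the reduced groupoid $C^*$-algebra follows from the standard result of Anantharaman-Delaroche and Renault, and the UCT follows from Tu's theorem that $C^*_r(G)$ is KK-equivalent to a commutative $C^*$-algebra when $G$ is an amenable, second countable, \'etale, locally compact Hausdorff groupoid. Amenability of $G_{J_R\setminus E_R}$ can in turn be reduced to amenability of the Deaconu groupoid of a local homeomorphism: away from the critical orbits the reduction coincides, by Proposition~\ref{nocrit}, with the Deaconu groupoid of $R$ restricted to an open $R$-invariant set, and one then transfers amenability across the Morita equivalence provided by Theorem~\ref{MRW} after passing to a suitable open subset meeting every $G_R$-orbit and avoiding the (finitely many) critical and exposed orbits. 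Once nuclearity and the UCT are in hand, together with the already-established simplicity and pure infiniteness, the classification of Kirchberg and Phillips identifies $C^*_r(J_R\setminus E_R)$ as a Kirchberg algebra satisfying the UCT, completing the proof of the theorem.
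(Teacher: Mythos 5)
Your assembly of the extension via Lemma \ref{quot} and Lemma \ref{basdecomp}, the identification of the quotient through Lemma \ref{expoinJ} (including the remark that critical points in $J_R$ are never periodic, so the asymptotic valency is finite), and the separability, simplicity and pure infiniteness of the ideal all match the paper's own argument and are fine. The gap is in the final step, nuclearity and the UCT for $C^*_r(J_R\setminus E_R)$. Your plan is to find an open subset $W$ of $J_R\setminus E_R$ meeting every orbit on which, by Proposition \ref{nocrit}, the reduction becomes the Deaconu transformation groupoid of a local homeomorphism, and then to transfer amenability (hence nuclearity and the UCT) through the equivalence of Theorem \ref{MRW}. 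But Proposition \ref{nocrit} requires the open set $Y$ to be forward invariant, $R(Y)\subseteq Y$, as well as critical-point free; forward invariance forces $Y\cap\bigcup_{n\geq 0}R^{-n}(\crt)=\emptyset$. In the case the theorem really has to cover, namely $\crt\cap J_R\neq\emptyset$, the backward orbit of a critical point $c\in J_R$ is dense in $J_R$ (Theorem 4.2.5 of \cite{B}, since $c$ is not exceptional), so the only relatively open forward-invariant subset of $J_R\setminus E_R$ avoiding $\crt$ is empty, and no $W$ as in your plan exists. Dropping forward invariance does not rescue the argument: for a non-invariant open $W$ the reduction $(G_{J_R})_W$ is not the transformation groupoid of any local homeomorphism -- its arrows pass through critical points outside $W$, and by Proposition 4.4 of \cite{Th2} pre-critical points of $W$ have torsion in their isotropy (e.g. $\mathbb Z\oplus\mathbb Z_d$ or subgroups of $\mathbb Q/\mathbb Z$), which never occurs for a Deaconu groupoid. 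Note also that topological amenability of $G_{J_R}$ cannot simply be ``invoked from \cite{Th2}''; the paper states explicitly that it avoids the amenability issue.

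This is precisely why the paper takes a different route in Section \ref{amUCT}: it passes to $J'_R=J_R\setminus\bigl(E_R\cup\bigcup_{j=0}^{3}R^{-j}(\crt)\bigr)$, realizes $C^*_r(J'_R)$ as a Katsura algebra $\mathcal O_{X_\Gamma}$ over the fixed-point algebra $C^*_r(G^0_{J'_R})$, shows that this coefficient algebra (and the ideal $X_\Gamma X_\Gamma^*$) is an inductive limit of liminary, hence type I, algebras, and then obtains nuclearity and the UCT from Corollary 7.4 and Proposition 8.8 of \cite{Ka}, transferring to $C^*_r(J_R\setminus E_R)$ by the stable isomorphism of Proposition \ref{prop13} (simplicity plus \cite{Br}). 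Your reduction-to-Deaconu strategy does work, and is essentially standard, when $\crt\cap J_R=\emptyset$, but in general you would have to replace it by an argument of the above kind (or prove amenability of $G^0_{J_R}$ directly by an analysis equivalent to the paper's type I computation), so as written the nuclearity/UCT claim is not established.
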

\begin{proof} The extension (\ref{juliaextension}) is a special case
  of the extension from Lemma \ref{quot}. The direct sum decomposition
  of the quotient follows from Lemma
  \ref{basdecomp} and the description of it from Lemma \ref{expoinJ}. The pure infiniteness of
  the ideal follows from Proposition \ref{pursimp} because pure
  infiniteness is inherited by ideals. It is simple by Proposition
  \ref{JRsimple}. That $C^*_r\left({J_R  \backslash
    E_R}\right)$ is nuclear and satisfies the UCT will be shown in
Section \ref{amUCT} below by making a connection to the work of
Katsura, \cite{Ka}.
\end{proof}

In view of Theorem \ref{Juliaext} it seems appropriate to point out that the Julia set can contain
exposed orbits of all three types. For an example of type 1 observe that for the Chebyshev polynomial
$R(z) = z^2 -2$ the set $E_R \cap J_R$
consists of the points $\left\{-2,2\right\}$ which is a finite $\Ro$-orbit of type 1 in the Julia set $[-2,2]$. Hence
$$
\oplus_A C^*_r\left(A\right) \simeq C(\mathbb T)
\otimes M_2(\mathbb C).
$$ 
No other polynomial in the quadratic family $z^2+c$ has an exposed
point in
the Julia set.

For an example of a finite $\Ro$-orbit of type 2 consider the rational
map 
$$
R(z) = \frac{(z-2)^2}{z^2}.
$$ 
The Julia set is the entire sphere
in this case, cf. \S 11.9 in \cite{B}, and the maximal exposed subset consists of the points
$\left\{0,\infty,1\right\}$ which is the union of the finite $\Ro$-orbit
$\{1,\infty\}$ of type 1 and the finite $\Ro$-orbit $\{0\}$ which is of
type 2. The asymptotic valency $\lim_{n \to \infty}
\val\left(R^n,0\right)$ is $2$ and hence  
$$
\oplus_A C^*_r\left(A\right) \simeq C(\mathbb T)
\oplus C(\mathbb T)  \oplus \left(C(\mathbb T)
\otimes M_2(\mathbb C)\right). 
$$

To give examples of finite $\Ro$-orbits of type 3 in the Julia set we
use the work of M. Rees. She shows in Theorem 2 of \cite{R} that for 'many' $\lambda
\in \mathbb C\backslash \{0\}$ the rational map
$$
R(z) = \lambda\left(1 - \frac{2}{z}\right)^2
$$
has a dense critical forward orbit. In particular, the Julia set $J_R$ is the
entire sphere. The critical points are $0$ and $2$, and
$R^{-1}(0) = \{2\}$. Since the forward orbit of $0$ is dense it follows that $\{0\}$ is a finite $\Ro$-orbit of
type 2. There are no
other exposed points, i.e. $E_R  =
\{0\}$. Hence
$$
\oplus_A C^*_r\left(A\right) \simeq \mathbb C \oplus \mathbb C 
$$
in this case because the asymptotic valency of $0$ is $2$.

\subsection{Amenability and the UCT}\label{amUCT}

Set 
$$
J'_R = J_R  \backslash \left( E_R \cup \bigcup_{j=0}^3
  R^{-j}(\crt)\right)
$$ 
and consider  
$$
\Gamma = \left\{ \left[x,k,\eta, y\right] \in G_{J'_R} :
  \ k = 1 \right\} 
$$
which is an open subset of $G_{J'_R}$. Let $X_{\Gamma}$
be the closure of $C_c(\Gamma)$ in $C^*_r\left(J'_R\right)$. Since 
$X_{\Gamma}^* X_{\Gamma} \subseteq C^*_r\left(G^0_{J'_R}\right)$ 
we can consider $X_{\Gamma}$ as a Hilbert $C^*_r\left(G^0_{J'_R }\right)$-module with the 'inner product' $\left< a,b\right> =
a^*b$. Furthermore, since 
$$
C^*_r\left(G^0_{J'_R}\right)X_{\Gamma} \subseteq X_{\Gamma}
$$ 
we can
consider any $a \in C^*_r\left(G^0_{J'_R}\right)$ as an adjointable operator $\varphi(a)$ on
$X_{\Gamma}$. Then the pair $(\varphi, X_{\Gamma})$ is
a $C^*$-correspondence in the sense of Katsura, \cite{Ka}, and we aim
to show that the $C^*$-algebra $\mathcal O_{X_{\Gamma}}$ introduced in
\cite{Ka} is a copy of $C^*_r(J'_R)$.

\begin{lemma}\label{injphi} $\varphi$ is injective. 
\end{lemma}
\begin{proof} Assume that $\varphi(a) = 0$. To show that $a = 0$
  consider the continuous function $j(a)$ on $G^0_{J'_R}$
  defined by $a$, cf. Proposition 4.2 on page 99 in
  \cite{Re}. Consider an element $\gamma \in G_{J'_R}$ such that
  $s(\gamma) \notin R^{-4}(\crt)$. It follows from Lemma \ref{simpleobs} that
  there is a function $f \in C_c(\Gamma)$ supported in $\left\{
    \left[z,1, R|_U, R(z) \right] : z \in U\right\}$ for some open
  neighborhood $U$ of $s(\gamma)$ such that $ff^* \in C_c(J'_R)$ and
  $ff^*(s(\gamma)) = 1$. Since $aff^* = 0$ it follows that
  $j(a)(\gamma) = 0$, i.e. $j(a) = 0$ on
  the set
$$
\left\{ \gamma \in G^0_{J'_R} : \ s(\gamma) \notin R^{-4}(\crt) \right\}
.
$$
Since this set is dense in $G_{J'_R}$ it follows first that
  $j(a) = 0$, and then that $a = 0$ because $j$ is injective.
\end{proof}

Note that we can consider the inclusions $C^*_r\left(G^0_{J'_R}\right) \subseteq C^*_r(J'_R)$ and $X_{\Gamma}
\subseteq C^*_r(J'_R)$ as an injective representation of
the $C^*$-correspondence $(\varphi, X_{\Gamma})$. It follows that there
is an injective $*$-homomorphism $\psi_t : \mathcal K(X_{\Gamma}) \to
C^*_r(J'_R)$ such that
$$
\psi_t\left(\Theta_{a,b}\right) = ab^* ,
$$
cf. Definition 2.3 and Lemma 2.4 in \cite{Ka}. Note that the range of
$\psi_t$ is the ideal 
$$
X_{\Gamma}X_{\Gamma}^* = \overline{ \Span \left\{ ab^* : \ a,b \in
    X_{\Gamma} \right\}} 
$$
in $C^*_r\left(G^0_{J'_R}\right)$. We are here and in the following
lemma borrowing notation from \cite{Ka}.

\begin{lemma}\label{katyes} The ideal
$\left\{ a \in C^*_r\left(G^0_{J'_R}\right) : \ \varphi(a) \in \mathcal K(X_{\Gamma}) \right\}$
is equal to $X_{\Gamma}X_{\Gamma}^*$, and $a = \psi_t(\varphi(a))$ for
all $a \in J_{X_{\Gamma}}$.
\end{lemma}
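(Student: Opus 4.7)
The plan is to identify $\varphi^{-1}(\mathcal K(X_{\Gamma}))$ explicitly as $X_{\Gamma}X_{\Gamma}^{*}$ by combining a direct computation of $\varphi$ on rank-one pieces with the injectivity of $\varphi$ already established in Lemma \ref{injphi}. Since $\varphi$ is injective, $(\ker\varphi)^{\perp} = C^*_r(G^0_{J'_R})$ and hence Katsura's ideal $J_{X_{\Gamma}}$ collapses to $\varphi^{-1}(\mathcal K(X_{\Gamma}))$, so the statement reduces to the set equality and the identity $a = \psi_t(\varphi(a))$ on that ideal.

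First I would verify the inclusion $X_{\Gamma}X_{\Gamma}^{*} \subseteq \varphi^{-1}(\mathcal K(X_{\Gamma}))$ by showing that $\varphi(ab^{*}) = \Theta_{a,b}$ for all $a,b \in X_{\Gamma}$. Indeed, for $\xi \in X_{\Gamma}$ we have
\eQ{
\varphi(ab^{*})\xi = ab^{*}\xi = a\langle b,\xi\rangle = \Theta_{a,b}(\xi),
}
the middle equality being the definition of the Hilbert module structure on $X_{\Gamma}$. This simultaneously shows that $\psi_t(\varphi(ab^{*})) = ab^{*}$, i.e.\ that $\psi_t \circ \varphi$ is the identity on the dense subspace $\Span\{ab^{*}\}$ of $X_{\Gamma}X_{\Gamma}^{*}$. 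By continuity of $\psi_t$ and the fact that rank-one operators are dense in $\mathcal K(X_{\Gamma})$, one obtains $\psi_t(\mathcal K(X_{\Gamma})) \subseteq X_{\Gamma}X_{\Gamma}^{*} \subseteq C^*_r(G^0_{J'_R})$ and the identity $\varphi \circ \psi_t = \id$ on all of $\mathcal K(X_{\Gamma})$.

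For the reverse inclusion, given $a \in C^*_r(G^0_{J'_R})$ with $\varphi(a) \in \mathcal K(X_{\Gamma})$, I would set $b = \psi_t(\varphi(a))$, which by the preceding paragraph lies in $X_{\Gamma}X_{\Gamma}^{*} \subseteq C^*_r(G^0_{J'_R})$. Applying $\varphi$ and using $\varphi \circ \psi_t = \id_{\mathcal K(X_{\Gamma})}$ gives $\varphi(b) = \varphi(a)$, and then Lemma \ref{injphi} forces $a = b \in X_{\Gamma}X_{\Gamma}^{*}$. This proves both the equality of ideals and the formula $a = \psi_t(\varphi(a))$ for $a \in J_{X_{\Gamma}}$.

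The only step that requires some care is the verification that $\psi_t \circ \varphi$ (and hence $\varphi \circ \psi_t$) respects the Hilbert-module bookkeeping correctly: one must check that the adjoint $b^{*}$ of an element of $X_{\Gamma}$, viewed as a multiplier acting on $X_{\Gamma}$, really implements the inner product $\langle b,\cdot\rangle$. This follows from the way the $C^{*}$-correspondence $(\varphi, X_{\Gamma})$ is embedded in $C^*_r(J'_R)$, where the inner product is $\langle a,b\rangle = a^{*}b$ by construction, so the computation above is literally multiplication inside $C^*_r(J'_R)$ and no subtlety arises beyond density and continuity.
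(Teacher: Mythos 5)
Your proof is correct and follows essentially the same route as the paper: both hinge on the identity $\psi_t(\Theta_{a,b})=ab^*$ together with the fact that $ab^*$ acts on $X_{\Gamma}$ as $\Theta_{a,b}$, density of finite-rank operators, and the faithfulness of the left action from Lemma~\ref{injphi}. The only cosmetic difference is that you package the approximation step into the retraction identity $\varphi\circ\psi_t=\id_{\mathcal K(X_{\Gamma})}$ and cite Lemma~\ref{injphi} directly, where the paper works with an explicit approximating sequence and re-runs the $j(a)$ argument.
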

\begin{proof} The inclusion 
$$
X_{\Gamma}X_{\Gamma}^* \subseteq \left\{ a \in
  C^*_r\left(G^0_{J'_R}\right) : \ \varphi(a) \in \mathcal
  K(X_{\Gamma}) \right\}
$$
is trivial so we focus on the inverse. Let therefore $a \in
C^*_r\left(G^0_{J'_R}\right)$ be an element such that $\varphi(a) \in
\mathcal K(X_{\Gamma})$. There is then a sequence
$$
\sum_{i=1}^{N_n} \Theta_{a^n_i,b^n_i}, \ n = 1,2,3, \dots,
$$
where $a_i^n,b_i^n \in X_{\Gamma}$ for all $i,n$, which converges to
$\varphi(a)$ in
$\mathcal K(X_{\Gamma})$. In particular, 
\begin{equation}\label{katno}
af = \lim_{n \to \infty} \sum_{i=1}^{N_n} \Theta_{a^n_i,b^n_i}f = \lim_{n \to \infty}
\sum_{i=1}^{N_n} a^n_i{b^n_i}^* f
\end{equation}
for all $f \in X_{\Gamma}$. By continuity of $\psi_t$ it follows that the sequence $\psi_t\left( \sum_{i=1}^{N_n}
  \Theta_{a^n_i,b^n_i}\right) = \sum_{i=1}^{N_n}
a^n_i{b^n_i}^*$ converges in $X_{\Gamma}X_{\Gamma}^*$ to $\psi_t(\varphi(a))$. It follows from (\ref{katno}) that $af = \psi_t(\varphi(a))f$
for all $f \in X_{\Gamma}$ and as in the proof of Lemma \ref{injphi} we
deduce first that $j\left(a-\psi_t(\varphi(a))\right) = 0$ and then that $a =\psi_t(\varphi(a))$.   
\end{proof}

It follows from Lemma \ref{katyes} that the
representation of the $C^*$-correspondence $X_{\Gamma}$ given by the inclusions $C^*_r\left(G^0_{J'_R}\right) \subseteq C^*_r(J'_R)$ and $X_{\Gamma}
\subseteq C^*_r(J'_R)$ is covariant in the sense of
Katsura, \cite{Ka}. Combined with the presence of the gauge action on $C^*_r\left(G^0_{J'_R}\right)$
this allows us now to use Theorem 6.4 from \cite{Ka} to conclude that
the $C^*$-algebra $\mathcal O_{X_{\Gamma}}$ defined from the
$C^*$-correspondence $X_{\Gamma}$ is isomorphic to the $C^*$-subalgebra
of $C^*_r\left(J'_R\right)$ generated by
$C^*_r\left(G^0_{J'_R}\right)$ and $X_{\Gamma}$. It remains to show
that that this is all of $C^*_r\left({J'_R}\right)$.

\begin{lemma}\label{crt} For all $x \in J'_R$ there is an element $\gamma \in \Gamma$ such that
  $s(\gamma) = x$.
\end{lemma}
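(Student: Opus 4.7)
The plan is to construct $\gamma = [w, 1, \eta, x]$ with $\eta$ the simplest kind of degree-one local transfer: $\eta = R|_U$ on a neighbourhood $U$ of some $w \in R^{-1}(x)$ that is not critical. The identity $R^1(z) = R^0(R|_U(z))$ immediately realises $R|_U$ as an element of $\mathcal T_1(X)$ (take $n = 1$, $m = 0$) with $\eta(w) = R(w) = x$, so the two remaining tasks are to locate $w$ and to verify $w \in J'_R$.

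For the verification that $w \in J'_R$: membership in $J_R$ is automatic from $R^{-1}(J_R) = J_R$; $w \notin \crt$ holds by the choice of $w$; and for $j = 1, 2, 3$ one has $R^j(w) = R^{j-1}(x) \notin \crt$ because $x \in J'_R$ forces $R^i(x) \notin \crt$ for every $0 \leq i \leq 3$, so that $w \notin \bigcup_{j=0}^3 R^{-j}(\crt)$. Finally, since $w \notin \crt$, Lemma \ref{simpleobs} yields $x = R(w) \in \Ro(w)$; if $w$ lay in the $\Ro$-invariant set $E_R$, this would force $x \in E_R$, contradicting $x \in J'_R$. Hence $w \notin E_R$, and $w \in J'_R$ is established.

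The remaining technical point is to exhibit a non-critical preimage of $x$ under $R$. In the generic case $R^{-1}(x) \not\subseteq \crt$, any $w \in R^{-1}(x) \setminus \crt$ works. If instead $R^{-1}(x) \subseteq \crt$, I would replace $R|_U$ by a local transfer of the form $\varphi \circ R^{k+1}$ for some $k \in \{1, 2, 3\}$, where $\varphi$ denotes a branch of the local inverse of $R^k$ at $x$ (well defined since $x \in J'_R$ makes $R^k$ locally injective at $x$ for $k \leq 3$), applied to a preimage $w \in R^{-(k+1)}(R^k(x))$ on which $R^{k+1}$ is locally injective. Producing such a $w$ inside $J'_R$ for some $k \leq 3$ is the main difficulty, and I expect it to follow from the infinitude of $\Ro(x)$ (which holds because $x \notin E_R$) together with finiteness of the exclusion set $E_R \cup \bigcup_{j=0}^3 R^{-j}(\crt)$: sufficiently many elements of $\Ro(x)$ escape this finite set, and once the pair $(n, m)$ witnessing $y \in \Ro(x)$ is unpacked, it yields a workable $w$ and a corresponding $\eta \in \mathcal T_1$.
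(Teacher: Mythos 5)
Your first case is fine: when $R^{-1}(x)\nsubseteq \crt$ your construction of $\gamma=[w,1,R|_U,x]$ and your verification that $w\in J'_R$ (non-pre-critical up to level $3$ because $x$ is, and $w\notin E_R$ by $\Ro$-invariance of $E_R$) is exactly the paper's first step, and you even spell out details the paper leaves implicit. The gap is the case $R^{-1}(x)\subseteq\crt$, which is the actual content of the lemma and the reason the set $\bigcup_{j=0}^{3}R^{-j}(\crt)$ is removed in the definition of $J'_R$; there you only offer a hope, and the heuristic you propose does not close it. Knowing that $\Ro(x)$ is infinite while $E_R\cup\bigcup_{j=0}^{3}R^{-j}(\crt)$ is finite gives you infinitely many $y\in\Ro(x)\cap J'_R$, but each such $y$ only comes with a witnessing pair $(n,m)$ with $R^n(y)=R^m(x)$ and $\val(R^n,y)=\val(R^m,x)$, where $m$ may be large. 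Membership of $x$ in $J'_R$ only guarantees $x,R(x),R^2(x),R^3(x)\notin\crt$, so for large $m$ one can have $\val(R^m,x)>1$; then $y$ is itself pre-critical and ``unpacking'' the witness does not produce a chain of non-critical preimages terminating in a point $w$ with $R^{m+1}(w)=R^m(x)$ for some $m\leq 3$ --- which is what your $\varphi\circ R^{k+1}$ construction requires, since the inverse branch $\varphi$ of $R^m$ at $x$ is only available for small $m$. Even when $\val(R^m,x)=1$ you also need $n>m$ to push $y$ forward to such a $w$. So ``sufficiently many elements of $\Ro(x)$ escape a finite set'' is not the mechanism that makes the statement true.

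What the paper actually does in this case is a four-step walk along the forward orbit of $x$, using the sets $D_n(z)=\left\{y\in R^{-n}(z):\ y,R(y),\dots,R^{n-1}(y)\notin\crt\right\}$: if $D_{k+1}\left(R^k(x)\right)\neq\emptyset$ for some $k\leq 3$ one gets the desired $w$ and the arrow $[w,1,\eta,x]$; if $D_{k+1}\left(R^k(x)\right)=\emptyset$ then $R^{k+1}(x)$ must be a point not already on the orbit, since otherwise $\Ro(x)$ would be contained in an explicit finite union of orbit points and sets $D_j$, contradicting $x\notin E_R$; and if $D_4\left(R^3(x)\right)$ were also empty, one assembles from $x,R(x),\dots,R^4(x)$ and the finitely many relevant $D_j$'s a finite set with more than four elements satisfying (\ref{four1}), contradicting Lemma \ref{four0}. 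Note that the contradiction in the terminal step is with the universal bound of Lemma \ref{four0} --- the same bound that dictates why exactly four levels of pre-critical points are excised in $J'_R$ --- and not with the infinitude of $\Ro(x)$. Without some version of this counting argument your sketch cannot be completed, so as it stands the proposal proves only the generic case.
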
 
\begin{proof} For any $z \in J_R \backslash E_R$ and $n \geq 1$, set
$$
D_n(z) = \left\{ y \in R^{-n}(z) : \
  y, R(y), \dots, R^{n-1}(y) \notin \crt \right\}.
$$ 
Let $x \in J'_R$. If
$R^{-1}(x) \nsubseteq \crt$, choose $y \in R^{-1}(x) \backslash
\crt$. It follows then from Lemma \ref{simpleobs} that
$\left[y,1,\eta,x\right] \in \Gamma$ for some local transfer
$\eta$. Assume therefore that $R^{-1}(x) \subseteq \crt$. Then $R(x)
\neq x$ since otherwise $\Ro(x) = \{x\}$, contradicting that $x \notin
E_R$. If $D_2\left(R(x)\right) \neq \emptyset$, choose an element $y
\in D_2\left(R(x)\right)$ and note that $[y,1,\eta,x] \in \Gamma$
for some local transfer $\eta$. Assume therefore now that 
$D_2\left(R(x)\right) = R^{-1}(x) \backslash \crt = \emptyset$. Then $R^2(x) \notin
\left\{x,R(x)\right\}$ since otherwise $\Ro(x) \subseteq \{x,R(x)\}
\cup D_1(R(x))$, contradicting that $x \notin E_R$. If
$D_3\left(R^2(x)\right) \neq \emptyset$, choose $y \in
D_3\left(R^2(x)\right)$ and note that $[y,1,\eta,x] \in \Gamma$
for some local transfer $\eta$. Assume therefore that $D_3\left(R^2(x)\right)
= D_2\left(R(x)\right) = R^{-1}(x) \backslash \crt =\emptyset$. Then
$R^3(x) \notin \{x,R(x), R^2(x)\}$ since otherwise
$\Ro(x) \subseteq \{x,R(x), R^2(x)\}
\cup D_1(R(x)) \cup D_1\left(R^2(x)\right) \cup
D_2\left(R^2(x)\right)$, contradicting that $x \notin E_R$. We claim
that $D_4\left(R^3(x)\right)$ can not be empty. Indeed, if it is
empty we have
either that $R^4(x) \in \left\{x,R(x),R^2(x), R^3(x)\right\}$, and then
\begin{equation*}
\begin{split}
\Ro(x) \subseteq & \{x,R(x), R^2(x), R^3(x)\}
\cup D_1(R(x)) \cup D_1\left(R^2(x)\right) \\
& \ \ \ \ \ \ \ \ \ \ \ \ \ \ \cup
D_2\left(R^2(x)\right) \cup D_1\left(R^3(x)\right) \cup
D_2\left(R^3(x)\right) \cup D_3\left(R^3(x)\right), 
\end{split}
\end{equation*}
which is impossible since $x \in E_R$, or that $R^4(x) \notin
\left\{x,R(x),R^2(x), R^3(x)\right\}$ in which case
\begin{equation*}
\begin{split}
A = & \{x,R(x), R^2(x), R^3(x), R^4(x)\}
\cup D_1(R(x)) \cup D_1\left(R^2(x)\right) \\
& \ \ \ \ \ \ \ \ \ \ \ \ \ \ \cup
D_2\left(R^2(x)\right) \cup D_1\left(R^3(x)\right) \cup
D_2\left(R^3(x)\right) \cup D_3\left(R^3(x)\right), 
\end{split}
\end{equation*}
is a set with more than 4 elements for which (\ref{four1}) holds,
contradicting Lemma \ref{four0}. Thus $D_4\left(R^3(x)\right)$ is not
empty and we choose $y \in
D_4\left(R^3(x)\right)$ and note that $[y,1,\eta,x] \in \Gamma$
for some local transfer $\eta$. 
 \end{proof}

\begin{prop}\label{prop13} $\mathcal O_{X_{\Gamma}} \simeq
  C^*_r\left(J'_R\right)$, 
and $C^*_r\left(J_R \backslash E_R\right)$ is stably isomorphic to $\mathcal O_{X_{\Gamma}}$.
\end{prop}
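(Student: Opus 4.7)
The plan is to invoke Katsura's gauge-invariant uniqueness theorem (Theorem 6.4 of \cite{Ka}) applied to the injective covariant representation of the correspondence $(\varphi, X_\Gamma)$ constructed in Lemmas \ref{injphi} and \ref{katyes}. This produces a $*$-homomorphism $\mathcal O_{X_\Gamma} \to C^*_r(J'_R)$ intertwining the gauge actions, and by Lemma \ref{fixalg} the two gauge-fixed subalgebras both equal $C^*_r(G^0_{J'_R})$, so Theorem 6.4 of \cite{Ka} makes this map injective with image equal to the $C^*$-subalgebra $A \subseteq C^*_r(J'_R)$ generated by $C^*_r(G^0_{J'_R}) \cup X_\Gamma$. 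The first assertion therefore reduces to showing $A = C^*_r(J'_R)$.

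For the equality, I would verify that every $f \in C_c(U(\eta))$, for a basic open set attached to a local transfer $\eta$ of some degree $k \in \mathbb Z$, lies in $A$; by partition of unity this covers all of $C_c(G_{J'_R})$ and hence is dense in $C^*_r(J'_R)$. The cases $k = 0$ and $k = \pm 1$ are immediate from the definitions ($A_0 = C^*_r(G^0_{J'_R})$ and $X_\Gamma \subseteq A$), and taking adjoints reduces the remaining cases to $k \geq 2$.

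The key step is a groupoid factorization. For $[x, k, \eta, y] \in G_{J'_R}$ with $k \geq 2$, iterate Lemma \ref{crt} starting from $y$ to produce $\tilde y_1, \ldots, \tilde y_k \in J'_R$ and degree-one local transfers $\tau_1, \ldots, \tau_k$ with $[\tilde y_j, 1, \tau_j, \tilde y_{j-1}] \in \Gamma$ (setting $\tilde y_0 = y$). This yields
\begin{equation*}
[x, k, \eta, y] = [x, 0, \tau_k^{-1} \circ \cdots \circ \tau_1^{-1} \circ \eta, \tilde y_k] \cdot [\tilde y_k, 1, \tau_k, \tilde y_{k-1}] \cdots [\tilde y_1, 1, \tau_1, y],
\end{equation*}
whose first factor lies in $G^0_{J'_R}$ and whose remaining $k$ factors lie in $\Gamma$. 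The $\tau_j$ are holomorphic on open neighborhoods in $\C$, and the iterates $R^i(\tilde y_j)$ avoid $\crt$ for $0 \leq i \leq 3$; so by continuity the same factorization is available for every $z \in J'_R$ in a sufficiently small neighborhood of $x$. Choosing bump functions $g_j \in C_c(\Gamma)$ supported near $[\tilde y_j, 1, \tau_j, \tilde y_{j-1}]$ and $f_0 \in C_c(G^0_{J'_R})$ supported near the degree-zero factor, the product $f_0 \, g_k \, g_{k-1} \cdots g_1$ is supported in a basic open neighborhood of $[x, k, \eta, y]$ and belongs to $A$; a partition of unity on $\supp f$ combined with a standard approximation then places $f$ in $A$.

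Once the first assertion is established, the second follows from Theorem \ref{MRW}: $J'_R$ is open in $J_R \backslash E_R$ since $\bigcup_{j=0}^3 R^{-j}(\crt)$ is finite and therefore closed, and for any $x \in J_R \backslash E_R$ the orbit $\Ro(x)$ is infinite, disjoint from $E_R$ by $\Ro$-invariance, and meets $J'_R$ because only finitely many of its points are pre-critical of generation at most three. Theorem \ref{MRW} then gives that $C^*_r(J_R \backslash E_R)$ is stably isomorphic to $C^*_r(J'_R) \simeq \mathcal O_{X_\Gamma}$. The main obstacle I foresee is justifying the uniform version of the factorization: Lemma \ref{crt} is a pointwise statement, and one must verify that a single choice of $\tau_j$'s works on a neighborhood of $x$, which is ultimately enabled by the four-generation buffer of pre-critical points removed when forming $J'_R$.
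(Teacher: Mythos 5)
Your argument is correct and, for the isomorphism $\mathcal O_{X_{\Gamma}} \simeq C^*_r\left(J'_R\right)$, it follows essentially the paper's route: reduce via Theorem 6.4 of \cite{Ka} (together with Lemmas \ref{injphi} and \ref{katyes}) to showing that $C^*_r\left(G^0_{J'_R}\right)$ and $X_{\Gamma}$ generate $C^*_r\left(J'_R\right)$, and prove generation by factoring positive-degree bisections through elements of $\Gamma$ supplied by Lemma \ref{crt}, finishing with a partition of unity. The differences are organisational and in the last step. Where the paper peels off one degree-one factor at a time and inducts on $k$, you iterate Lemma \ref{crt} $k$ times (legitimate, since $r(\gamma)\in J'_R$ for every $\gamma\in\Gamma$) and factor $[x,k,\eta,y]$ in one stroke into a degree-zero germ times $k$ elements of $\Gamma$; the uniformity over a neighbourhood of $x$ that you flag as the main obstacle is in fact automatic, because the local transfers are defined on open subsets of $\C$ and $J'_R$ is relatively open in $J_R$ (the removed sets being finite), so the intermediate points $\tau_j^{-1}\circ\cdots\circ\tau_1^{-1}(\eta(z))$ remain in $J'_R$ for $z$ close to $x$ --- the same mechanism the paper uses implicitly when it passes to the neighbourhood $U_x$. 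For the second assertion you take a genuinely different, and equally valid, route: you verify the hypotheses of Theorem \ref{MRW} ($J'_R$ open in $J_R\setminus E_R$, and every restricted orbit meets $J'_R$ because $\Ro(x)$ is infinite, stays off $E_R$ by $\Ro$-invariance, and $\bigcup_{j=0}^{3}R^{-j}(\crt)$ is finite), whereas the paper notes that $C^*_r\left(J'_R\right)$ is a hereditary $C^*$-subalgebra of $C^*_r\left(J_R\setminus E_R\right)$, which is simple by Proposition \ref{JRsimple}, and cites \cite{Br}. Your variant avoids any appeal to simplicity at the cost of checking the orbit-intersection condition; the paper's is marginally shorter since Proposition \ref{JRsimple} is already available.
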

\begin{proof} By the remarks preceding Lemma \ref{crt}, to establish
  the isomorphism $\mathcal O_{X_{\Gamma}} \simeq
  C^*_r\left(J'_R\right)$ it suffices to show that $C^*_r\left(J'_R\right)$ is generated by $C^*_r\left(G^0_{J'_R}\right)$ and $X_{\Gamma}$. Let $k \geq 0$. Consider a bisection in $G_{J'_R}$ of the
  form
$$
S = \left\{\left[ z,k, \eta, \eta(z)\right] : \ z \in U \right\}
$$
for some local transfer $\eta$ and an open subset $U \subseteq J'_R$ in
the domain of $\eta$ such that $\eta(U) \subseteq J'_R$. By varying $k,\eta$ and $U$ functions
$f \in C_c\left(G_{J'_R}\right)$ with support in a set
of the form $S$ generate $C^*_r\left(J'_R\right)$ as a
$C^*$-algebra. It suffices therefore to show that such functions are
elements of the $*$-algebra generated by $C^*_r\left(G^0_{J'_R}\right)$ and $X_{\Gamma}$. We will prove this by induction in
$k$, starting with the observation that
the assertion is trivial when $k = 0$ and $k=1$. Assume that the
assertion is established for $k-1$ and consider $f \in C_c(G_{J'_R})$ supported in $S$. Let $x \in U$. It follows from Lemma \ref{crt}
that we can find $\gamma \in \Gamma$ such that $s(\gamma) = \eta(x)$. This
means that there is a bisection in $\Gamma$ of the form
$\left\{ \left[z,1,\xi, \xi(z)\right] : z \in V \right\}$
such that $\xi(y) = \eta(x)$ for some $y \in V$. Then
\begin{equation*}\label{uilt}
 \left[z,k,\eta, \eta(z)\right] 
  = \left[ z,k-1,\xi^{-1} \circ \eta,
  \xi^{-1}(\eta(z))\right]\left[\xi^{-1}(\eta(z)\right),1,\xi,
\eta(z)]
\end{equation*}
for all $z$ in a neighborhood $U_x \subseteq U$ of $x$. It follows
that there are functions $h,g \in C_c(G_{J'_R})$ supported in 
$$
\left\{ \left[ z,k-1,\xi^{-1} \circ \eta,
  \xi^{-1}(\eta(z))\right] : \ z \in U_x \right\}
$$
and
$$
\left\{ \left[\xi^{-1}(\eta(z)\right),1,\xi,
\eta(z)] : \ z \in U_x \right\},
$$
respectively, such that $f = hg$ in a neighborhood of $\left[x,k,\eta,
  \eta(x)\right]$. Note that $h$ is then in the $*$-algebra generated by $C^*_r\left(G^0_{J'_R}\right)$ and $X_{\Gamma}$ by induction hypothesis and that $g
\in X_{\Gamma}$. We choose then functions $\psi_i \in C_c(J'_R)$ forming a finite partition of
unity $\left\{\psi_i\right\}$ on
$r\left(\supp f\right)$ such that 
$$
f = \sum_i \psi_i h_ig_i
$$
where $g_i \in X_{\Gamma}$ and $h_i$ is in the $*$-algebra generated by $C^*_r\left(G^0_{J'_R}\right)$ and $X_{\Gamma}$. Since $\psi_i \in C^*_r\left(G^0_{J'_R}\right)$ for each $i$, this completes the induction step and
hence the proof of the isomorphism $\mathcal O_{X_{\Gamma}} \simeq
  C^*_r\left(J'_R\right)$.

Since $G_{J'_R}$ is the reduction to the open subset $J'_R$ of $J_R \backslash E_R$ the algebra
$C^*_r(J'_R)$ is a hereditary $C^*$-subalgebra of $C^*_r(J_R
\backslash E_R)$. The latter algebra is simple by Proposition
\ref{JRsimple} and it follows therefore from \cite{Br} that the two algebras are
stably isomorphic.
       
\end{proof}

\begin{cor}\label{UCT..} $C^*_r\left(J_R
    \backslash E_R\right)$ is a nuclear
  $C^*$-algebra which satisfies the universal coefficient theorem of
  Rosenberg and Schochet, \cite{RS}.
\end{cor}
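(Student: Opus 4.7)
My plan is to use the identification established in Proposition \ref{prop13}: $C^*_r(J_R\setminus E_R)$ is stably isomorphic to the Cuntz--Pimsner--Katsura algebra $\mathcal O_{X_\Gamma}$ of the $C^*$-correspondence $(\varphi, X_\Gamma)$ over the coefficient algebra $A := C^*_r\left(G^0_{J'_R}\right)$. Since nuclearity and the UCT are both preserved under stable isomorphism, it is enough to verify these two properties for $\mathcal O_{X_\Gamma}$.

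Next, I would invoke Katsura's results from \cite{Ka}: for a $C^*$-correspondence with injective left action (which holds here by Lemma \ref{injphi}), the algebra $\mathcal O_X$ is nuclear if and only if its coefficient algebra is nuclear, and $\mathcal O_X$ belongs to the UCT class if and only if the coefficient algebra does. Thus the task is reduced to showing that $A = C^*_r\left(G^0_{J'_R}\right)$ is nuclear and satisfies the UCT.

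For this I would use the inductive limit decomposition (\ref{union}), which expresses $A$ as the closure of the increasing union of the subalgebras $C^*_r\left(G^0_{J'_R}(n)\right)$. Both nuclearity and the UCT pass to inductive limits of $C^*$-subalgebras, so it suffices to show that each $C^*_r\left(G^0_{J'_R}(n)\right)$ is nuclear and UCT. I would argue that these algebras are in fact type~I. The orbits of $G^0_{J'_R}(n)$ are finite, of cardinality at most $(\deg R)^n$, since they are contained in $R^{-n}(R^n(x))$; moreover, for each $x$ the isotropy group in $G^0_{J'_R}(n)$ is a finite group of germs fixing $R^n$ near $x$ (bounded by the local ramification data). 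Consequently $G^0_{J'_R}(n)$ is an \'etale groupoid with finite orbits and finite isotropy, whose reduced $C^*$-algebra is subhomogeneous (a continuous field with fibres that are finite-dimensional $C^*$-algebras). Subhomogeneous $C^*$-algebras are type~I, hence nuclear and in the bootstrap class.

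The main obstacle I anticipate is the verification of subhomogeneity at the pre-critical points in $J'_R$, where the germs of local transfers can in principle be non-trivial; one has to check that the isotropy groups remain finite and that the fibres of $C^*_r\left(G^0_{J'_R}(n)\right)$ have uniformly bounded dimension. Once this is in place, the chain ``type~I $\Rightarrow$ nuclear and UCT'' $\Rightarrow$ inductive limit is nuclear and UCT $\Rightarrow$ $\mathcal O_{X_\Gamma}$ is nuclear and UCT $\Rightarrow$ $C^*_r(J_R\setminus E_R)$ is nuclear and UCT delivers the conclusion.
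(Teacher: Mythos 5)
Your route is the same as the paper's: reduce via Proposition \ref{prop13} (and stable isomorphism) to the Katsura algebra $\mathcal O_{X_\Gamma}$, invoke Corollary 7.4 and Proposition 8.8 of \cite{Ka}, and then prove the coefficient algebra $C^*_r\left(G^0_{J'_R}\right)$ is nuclear and UCT through the inductive limit decomposition (\ref{union}) together with the fact that each $C^*_r\left(G^0_{J'_R}(n)\right)$ is type I. Your ``subhomogeneous continuous field'' argument is exactly the paper's liminarity argument in different words: $C(\C)$ acts centrally on $C^*_r\left(G^0_{J'_R}(n)\right)$ through $f\mapsto f\circ R^n$, so an irreducible representation lives over a single point $z$ and is supported on $r^{-1}\left(R^{-n}(z)\right)$, which is finite; the finiteness of the isotropy groups that you flag as the main obstacle is supplied by Lemma 4.2 of \cite{Th2}, so that step is sound.

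The one genuine gap is your formulation of Katsura's results. It is not true, and it is not what \cite{Ka} proves, that $\mathcal O_X$ is nuclear, respectively UCT, \emph{if and only if} the coefficient algebra is; in particular Proposition 8.8 of \cite{Ka} (the UCT statement) requires membership in the bootstrap class both for the coefficient algebra \emph{and} for Katsura's ideal $J_{X_\Gamma}$, which by Lemma \ref{katyes} is the ideal $X_\Gamma X_\Gamma^*$ of $C^*_r\left(G^0_{J'_R}\right)$, since the six-term sequence relates $K_*\left(\mathcal O_{X_\Gamma}\right)$ to both of these algebras. There is no general principle letting you deduce the UCT for an ideal from the UCT of the ambient algebra (the two-out-of-three permanence for extensions would also require the quotient), so your reduction to the coefficient algebra alone skips a needed hypothesis. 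The repair is short and is exactly what the paper does: $X_\Gamma X_\Gamma^* = \overline{\bigcup_n C^*_r\left(G^0_{J'_R}(n)\right)\cap X_\Gamma X_\Gamma^*}$, and each of these intersections again has only finite dimensional irreducible representations (being an ideal in a type I algebra), so the ideal is likewise an inductive limit of type I algebras and hence nuclear and in the UCT class; with that added, your argument goes through.
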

\begin{proof} Thanks to Proposition \ref{prop13} and Lemma \ref{katyes} the assertions follow from
  Corollary 7.4 and Proposition 8.8 in \cite{Ka} provided we show that both
  $C^*_r\left(G^0_{J'_R}\right)$ and the ideal $X_{\Gamma}X_{\Gamma}^*$ are nuclear
and satisfy the UCT. To this end we use the inductive limit
decomposition
\begin{equation}\label{union2}
C^*_r\left(G^0_{J'_R}\right) = \overline{\bigcup_n C^*_r\left(G^0_{J'_R}(n)\right)},
\end{equation}
cf. (\ref{union}). We claim that $C^*_r\left(G^0_{J'_R}(n)\right)$ is liminary for each $n$. In fact we will show
that every irreducible representation  of $C^*_r\left(G^0_{J'_R}(n)\right)$ is finite dimensional for each $n$. By Theorem 6.2.3 in \cite{Pe} this
will show that $C^*_r\left(G^0_{J'_R}(n)\right)$ is a type I $C^*$-algebra and it follows then from
(\ref{union2}) that $C^*_r\left(G^0_{J'_R}\right)$ is nuclear and satisfies the UCT. Furthermore, since
we also have the inductive limit decomposition
\begin{equation}\label{union3}
X_{\Gamma}X_{\Gamma}^* = \overline{\bigcup_n C^*_r\left(G^0_{J'_R}(n) \right) \cap X_{\Gamma}X_{\Gamma}^* },
\end{equation}
and since we know that every irreducible representation of $C^*_r\left(G^0_{J'_R}(n) \right) \cap X_{\Gamma}X_{\Gamma}^*$ is finite dimensional
when this holds for $C^*_r\left(G^0_{J'_R}(n) \right)$, we can at the same time conclude that also
$X_{\Gamma}X_{\Gamma}^*$ is nuclear and satisfies the UCT.

To show that $C^*_r\left(G^0_{J'_R}(n)\right)$ is liminary consider an irreducible representation $\pi$ of $C^*_r\left(G^0_{J'_R }(n)\right)$. Every function $f \in C(\C)$ defines a multiplier
$\psi(f)$ of $C^*_r\left(G^0_{J'_R}(n)\right)$ such that
$$
\psi(f)g[x,0,\eta,y] = f\left(R^n(x)\right)g[x,0,\eta,y] 
$$
when $g \in C_c\left(G^0_{J'_R}(n)\right)$. Note that $\psi(f)$ is central in the multiplier algebra and that
$\psi$ is a $*$-homomorphism. Since $\pi$ is irreducible there is a point $z \in \C$ such that $\pi\left(\psi(f)g\right)  = f(z)\pi(g)$ for all $f \in
C(\C)$ and all $g \in C^*_r\left(G^0_{J'_R}(n)\right)$. Consequently $\pi(g) = 0$ for every $g \in C_c\left(G^0_{J'_R}(n)\right)$ whose support does not contain elements from
$F = r^{-1}\left(R^{-n}(z)\right)$. Since all the isotropy groups in
$G^0_{J'_R}(n)$ are finite by Lemma 4.2 in \cite{Th2} it follows that $F$ is a
finite set. Therefore $\pi\left( C_c\left(G^0_{J'_R}(n)\right)\right)$
must be finite dimensional and the same is then
necessarily true for $\pi$.     
\end{proof} 

Note that the $C^*$-correspondance $X_{\Gamma}$ represents an element
$$
\left[X_{\Gamma}\right] \in KK\left(X_{\Gamma}X_{\Gamma}^*, C^*_r\left(G^0_{J'_R}\right)\right).
$$ 
This element defines a homomorphism 
$$
\left[X_{\Gamma}\right]_* : \ K_*\left(X_{\Gamma}X_{\Gamma}^*\right) \to K_*\left( C^*_r\left(G^0_{J'_R}\right)\right)  
$$
which fits into the following six terms exact sequence, cf. Theorem
8.6 in \cite{Ka}.

\begin{cor}\label{cor2} There is an exact
  sequence
\begin{equation}
\begin{xymatrix}{
K_0\left( X_{\Gamma}X_{\Gamma}^*\right) \ar[rr]^-{\id -
  \left[X_{\Gamma}\right]_*} & & K_0\left( C^*_r\left(G^0_{J'_R}\right)\right) \ar[rr]^{\iota_*}  & & K_0\left( C^*_r\left({J_R \backslash
    E_R}\right)\right)\ar[d] \\
K_1\left( C^*_r\left({J_R \backslash
    E_R}\right)\right)\ar[u]  & & K_1\left( C^*_r\left(G^0_{J'_R}\right)\right) \ar[ll]^{\iota_*}  & &  K_1\left( X_{\Gamma}X_{\Gamma}^*\right) \ar[ll]^-{\id -
  \left[X_{\Gamma}\right]_*}
}\end{xymatrix}
\end{equation} 
where $\iota : C^*_r\left(G^0_{J'_R}\right) \to C^*_r\left({J_R \backslash
    E_R}\right)$ is the inclusion $C^*_r\left(G^0_{J'_R}\right) \subseteq  C^*_r\left(J'_R\right)$ followed by the
  stable isomorphism $C^*_r\left({J'_R}\right)  \simeq C^*_r\left({J_R \backslash
    E_R}\right)$.
\end{cor}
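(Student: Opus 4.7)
The plan is to deduce the six-term exact sequence directly from Theorem 8.6 of \cite{Ka} applied to the $C^*$-correspondence $(\varphi, X_{\Gamma})$ over $C^*_r(G^0_{J'_R})$, using the identifications already established.

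First, I would recall that Katsura's Theorem 8.6 produces, for any $C^*$-correspondence $(\varphi,X)$ with $\varphi$ injective, a six-term exact sequence of the form
\[
\begin{xymatrix}{
K_0(J_X) \ar[r]^-{\id - [X]_*} & K_0(A) \ar[r] & K_0(\mathcal O_X) \ar[d]\\
K_1(\mathcal O_X) \ar[u] & K_1(A) \ar[l] & K_1(J_X) \ar[l]_-{\id - [X]_*}
}\end{xymatrix}
\]
where $A$ is the coefficient algebra, $J_X = \{a \in A : \varphi(a) \in \mathcal K(X)\}$ is Katsura's ideal, and the horizontal maps on the right are induced by the canonical inclusion $A \hookrightarrow \mathcal O_X$. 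In our setup, $A = C^*_r(G^0_{J'_R})$, and $\varphi$ is injective by Lemma \ref{injphi}, so the hypothesis is met.

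Next I would identify $J_{X_\Gamma}$ with $X_\Gamma X_\Gamma^*$. This is exactly the content of Lemma \ref{katyes}. Substituting this identification replaces the leftmost term in Katsura's sequence by $K_*(X_\Gamma X_\Gamma^*)$, and the connecting map $[X_\Gamma]_* : K_*(X_\Gamma X_\Gamma^*) \to K_*(C^*_r(G^0_{J'_R}))$ is then the one induced by the Kasparov element $[X_\Gamma] \in KK(X_\Gamma X_\Gamma^*, C^*_r(G^0_{J'_R}))$ attached to the correspondence, as stated.

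Now I would replace $\mathcal O_{X_\Gamma}$ with $C^*_r(J_R \backslash E_R)$. By Proposition \ref{prop13} there is an isomorphism $\mathcal O_{X_\Gamma} \simeq C^*_r(J'_R)$ and a stable isomorphism $C^*_r(J'_R) \simeq C^*_r(J_R \backslash E_R)$. Since $K$-theory is invariant under stable isomorphism, the right-hand column of Katsura's sequence can be rewritten in terms of $K_*(C^*_r(J_R \backslash E_R))$. Tracking the inclusion $C^*_r(G^0_{J'_R}) \hookrightarrow \mathcal O_{X_\Gamma} \simeq C^*_r(J'_R)$ through the stable isomorphism produces precisely the map $\iota_*$ described in the statement.

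The only step that requires genuine care, rather than bookkeeping, is verifying that the map labelled $\id - [X_\Gamma]_*$ in the resulting diagram coincides with Katsura's connecting map; this is immediate once one recalls that the $KK$-class $[X_\Gamma]$ is by definition the class representing the correspondence $(\varphi, X_\Gamma)$ and that Katsura's formulation of the sequence in Theorem 8.6 is phrased in exactly these terms. No further amenability or UCT hypotheses are needed for the exactness itself (those were used in Corollary \ref{UCT..} to conclude more). Hence splicing together Katsura's six-term sequence with the identifications from Lemma \ref{katyes} and Proposition \ref{prop13} yields the stated exact sequence.
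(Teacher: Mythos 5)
Your proposal is correct and follows essentially the same route as the paper: the corollary is obtained by applying Theorem 8.6 of \cite{Ka} to the correspondence $(\varphi, X_{\Gamma})$, identifying Katsura's ideal with $X_{\Gamma}X_{\Gamma}^*$ via Lemma \ref{katyes}, and replacing $\mathcal O_{X_{\Gamma}}$ by $C^*_r\left(J_R \backslash E_R\right)$ through Proposition \ref{prop13} and the stability of $K$-theory, exactly as you describe.
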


It may seem possible to calculate the $K$-theory of $C^*_r\left({J_R \backslash
    E_R}\right)$ from Corollary \ref{cor2} and the inductive limit decompositions
(\ref{union2}) and (\ref{union3}). In practice, however, the task is
very difficult, not only because the six terms exact sequence of
Corollary \ref{cor2} is less helpful than the one which is available
for local homeomorphisms, \cite{DM}, \cite{Th3}, and which can be applied
here when there are no critical points in the Julia set, but also
because the topology of $J_R$ is poorly understood in general.

\section{The structure of $\F$}

It is well-known and not difficult to see that $R$ takes a connected
component $W$ of $F_R$ onto another connected component $R(W)$ of
$F_R$. It follows that we can define an equivalence relation $\sim$
in $F_R$ such that $x \sim y$ if and only if there are $n,m \in
\mathbb N$ such that $R^n(x)$ and $R^m(y)$ are contained in
the same connected component of $F_R$. By Sullivan's
no-wandering-domain theorem, Theorem 16.4 in \cite{Mi}, and a result
of Shishikura, \cite{Sh}, the set of equivalence classes $F_R/\sim$ is
finite, and in fact can not have more that $2\deg R-2$ elements. We can therefore write
\begin{equation*}\label{sullivandecompmmmmm}
F_R = \sqcup_{i = 1}^N \Omega_i,
\end{equation*}
where $N \leq 2 \deg R - 2$, such that each $\Omega_i$ is open, $R^{-1}\left(\Omega_i\right) =
 \Omega_i$ and $\Omega_i \cap \Omega_j =
 \emptyset$ when $i \neq j$. The $\Omega_i$'s will be called the
 \emph{stable regions} of $R$. Since they are $\Ro$-invariant
 it follows from Lemma \ref{basdecomp} that
\begin{equation}\label{sullivandecomp2}
C^*_r\left({F_R}\right) = \oplus_{i=1}^N C^*_r\left( {\Omega_i}
\right) .
\end{equation}

The stable regions are divided into different types reflecting
the faith of their elements under iteration.

\begin{defn}\label{fatoudomains} Let $U$ be an open subset of
  $F_R$ and $p \in \mathbb N$. We say that
\begin{itemize} 
\item[a)] $U$ is a \emph{super-attracting domain} of period $p$ when
  $R^p(U) \subseteq U$, $R^i(U) \cap U = \emptyset, 1\leq i \leq p-1$, and there are
  a natural number $d \geq 2$, an $r \in ]0,1[$ and a conformal conjugacy $\psi : U
  \to D_r = \left\{ z \in \mathbb C : \left|z \right| < r \right\}$
  such that
\begin{equation*}\label{dom1}
\begin{xymatrix}{ U \ar[rr]^-{R^p} \ar[d]_-{\psi} & & U \ar[d]^-{\psi} \\
D_r \ar[rr]_-{z \mapsto z^d} && D_r
}\end{xymatrix} 
\end{equation*}
commutes;
\item[b)] $U$ is an \emph{attracting domain} of period $p$ when
  $R^p(U) \subseteq U$, $R^i(U) \cap U = \emptyset, 1\leq i \leq p-1$, and there are
  a $\lambda \in \mathbb C, \ |\lambda| < 1$, an $r > 0$ and a conformal conjugacy $\psi : U
  \to D_r = \left\{ z \in \mathbb C : \left|z \right| < r \right\}$
  such that 
\begin{equation*}\label{dom2}
\begin{xymatrix}{ U \ar[rr]^-{R^p} \ar[d]_-{\psi} &&U \ar[d]^-{\psi} \\
D_r \ar[rr]_-{z \mapsto \lambda z} && D_r
}\end{xymatrix} 
\end{equation*}
commutes;
\item[c)] $U$ is a \emph{parabolic domain} of period $p$  when
  $R^p(U) \subseteq U$, $R^i(U) \cap U = \emptyset, 1\leq i \leq p-1$, and there is a conformal
  conjugacy $\alpha : U \to \mathbb H = \left\{z \in \mathbb C : \ \Real
    \ z > 0 \right\}$ such that
\begin{equation*}\label{dom3}
\begin{xymatrix}{ U \ar[rr]^-{R^p} \ar[d]_-{\alpha} &&U \ar[d]^-{\alpha} \\
\mathbb H \ar[rr]_-{z \mapsto z+1} && \mathbb H
}\end{xymatrix} 
\end{equation*}
commutes;
\item[d)] $U$ is a \emph{Siegel disk} of period $p$ when
  $R^p(U) = U$, $R^i(U) \cap U = \emptyset, 1\leq i \leq p-1$, and there are a $t \in \mathbb R \backslash \mathbb Q$ and a conformal
  conjugacy $\psi : U \to D_1 = \left\{z \in \mathbb C : \ |z| <
    1\right\}$ such that 
\begin{equation*}\label{dom4}
\begin{xymatrix}{ U \ar[rr]^-{R^p} \ar[d]_-{\psi} &&U \ar[d]^-{\psi} \\
D_1 \ar[rr]_-{z \mapsto e^{2\pi it} z} && D_1
}\end{xymatrix} 
\end{equation*}
commutes;
\item[e)] $U$ is a \emph{Herman ring} of period $p$  when
  $R^p(U) = U$, $R^i(U) \cap U = \emptyset, 1\leq i \leq p-1$, and there are a $t \in \mathbb R \backslash \mathbb Q$ and a conformal
  conjugacy $\psi : U \to \mathbb A = \left\{z \in \mathbb C : 1 <  |z| <
    2\right\}$ such that 
\begin{equation*}\label{dom5}
\begin{xymatrix}{ U \ar[rr]^-{R^p} \ar[d]_-{\psi} && U \ar[d]^-{\psi} \\
\mathbb A \ar[rr]_-{z \mapsto e^{2\pi it} z} && \mathbb A
}\end{xymatrix} 
\end{equation*}
commutes.
\end{itemize}
\end{defn}

It follows from the classification of the periodic Fatou components, \cite{Mi}, that a stable region $\Omega$ contains a domain $U$ of one of the types described in a)-e) of Definition
\ref{fatoudomains} with the property that 
\begin{equation}\label{5}
\Omega = \bigcup_{n=0}^{\infty} R^{-n}(U).
\end{equation} 
We will say that $\Omega$ is a super-attractive, attractive,
parabolic, Siegel or Herman region in accordance with the nature of the
domain $U$ which we will refer to as a \emph{core} of $\Omega$.

\begin{lemma}\label{perclosed} Let $x \in F_R$ be periodic. Then $\Ro(x)$
  is closed and discrete in $F_R$.
\end{lemma}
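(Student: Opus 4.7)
The plan is to show that $\Ro(x)$ has no accumulation points in $F_R$; for a subset of the open set $F_R$ this is equivalent to being both closed and discrete. Let $p$ be the period of $x$ and write its forward orbit as $\{x_0,x_1,\ldots,x_{p-1}\}$ with $x_0=x$. By the explicit description of $\Ro$ given immediately before Theorem~\ref{Juliaext}, every $y\in\Ro(x)$ satisfies $R^a(y)=R^b(x)\in\{x_0,\ldots,x_{p-1}\}$ for some $a,b\in\mathbb N$, so
\[
\Ro(x)\;\subseteq\;\bigcup_{i=0}^{p-1} E(x_i),\qquad E(z):=\bigcup_{n\geq 0} R^{-n}(z).
\]
If $x$ has finite grand orbit (an exceptional point) the right hand side is finite and the conclusion is immediate; so I may assume each $x_i$ is non-exceptional.

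It then suffices to show that $E(x_i)$ has no accumulation points in $F_R$ for each $i$. This is a classical dynamical fact: the set of accumulation points of $E(z)$ for non-exceptional $z$ is precisely $J_R$ (cf.\ \cite{Mi}). I would derive it directly using the classification in Definition~\ref{fatoudomains}. The set $E(x_i)$ sits inside the stable region $\Omega$ of $x$ by total invariance, and since $x\in F_R$ is periodic, $\Omega$ must be super-attracting, attracting, or of Siegel type: parabolic regions have their periodic points on $J_R$, and Herman regions admit no periodic Fatou points because the conjugacy of the core to an irrational rotation forbids periodic points. On each core component $U_j$ of $\Omega$, the conjugacy of $R^p|_{U_j}$ with $z\mapsto z^d$, $z\mapsto\lambda z$, or $z\mapsto e^{2\pi it}z$ shows that the preimages of $x_i$ in $U_j$ under iterates of $R^p$ form a discrete subset of $U_j$: in the Siegel case it collapses to $\{x_j\}$ (the unique fixed point of the rotation model), while in the (super-)attracting cases it is the zero set of the Koenigs or B\"ottcher coordinate extended as a non-constant holomorphic function on the whole of $U_j$. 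Consequently the preimages of $x_i$ in $U_j$ under arbitrary iterates of $R$ also form a discrete set in $U_j$. For a non-cycle Fatou component $V\subseteq\Omega$, some minimal iterate $R^N$ maps $V$ properly onto a core component, and the $R^N$-preimage of a discrete set remains discrete, so $E(x_i)\cap V$ is discrete in $V$. Since every point of $F_R$ has a neighborhood contained in its Fatou component (components of $F_R$ are open), local discreteness in each component gives no accumulation of $E(x_i)$ in $F_R$; the property then passes to the finite union $\bigcup_i E(x_i)$ and to its subset $\Ro(x)$.

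The step I expect to be the main obstacle is the global extension of the Koenigs or B\"ottcher linearization from a neighborhood of the periodic point to a non-constant holomorphic function on the entire immediate basin $U_j$ with discrete zero set; this is standard but requires a separate linearization argument in the (super-)attracting cases. The Siegel case is easier because the rotation conjugacy immediately collapses the preimage set in the core to a single point.
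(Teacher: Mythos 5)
Your overall strategy is viable and genuinely different from the paper's, but two of your ingredients are not correct as stated. First, the ``classical fact'' you invoke -- that the accumulation set of $E(z)=\bigcup_n R^{-n}(z)$ is \emph{precisely} $J_R$ for every non-exceptional $z$ -- is false in general: for a non-center point $z$ of a Siegel disk (or any point of a Herman ring) the backward orbit inside that component is the backward rotation orbit of $z$, which accumulates on the whole invariant curve through $z$, a subset of $F_R$. Only the inclusion $J_R\subseteq\overline{E(z)}$ is classical; the equality does hold for the points you actually use (periodic points of $F_R$), but that is exactly what has to be proved, so the citation carries no weight. Second, and more seriously, the tool you name as your main step in the super-attracting case does not exist: the B\"ottcher coordinate does \emph{not} in general extend to a holomorphic function on the whole immediate basin (the extension is obstructed by the other critical points of $R^p$ in the basin; only $|\phi_B|$ extends). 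So ``the zero set of the \dots B\"ottcher coordinate extended as a non-constant holomorphic function on the whole of $U_j$'' is not available, and as written the super-attracting case has a gap. The conclusion you want there is still true, but the argument must be local: inside a small forward-invariant linearizing disk $D$ around the periodic point the model $z\mapsto z^d$ (or $z\mapsto\lambda z$, or the rotation) shows that the only point of $D$ mapping to the periodic point under iterates of $R^p$ is the periodic point itself, and forward orbits of compact subsets of the basin enter $D$ after finitely many steps; combining these two facts gives discreteness without any global extension. (The Koenigs extension in the attracting case and the rotation argument in the Siegel case are fine.)

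Note that once you phrase the fix this way you have essentially rediscovered the paper's proof, which is much shorter: for a compact $K$ in a component $W$ of the stable region $\Omega$, (\ref{5}) gives $l$ with $R^l(K)\subseteq U$ for a core $U$ as in Definition~\ref{fatoudomains}; any $y\in K\cap\Ro(x)$ has $R^l(y)$ pre-periodic and lying in $U$, and each core type contains at most one pre-periodic point, so $K\cap\Ro(x)\subseteq R^{-l}(x)$, a finite set. This sidesteps linearization extensions, the component-by-component bookkeeping, and the pullback along iterates entirely, because the core is by definition a domain on which the model map has the single-preimage property.
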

\begin{proof} Let $W$ be a connected component of $F_R$ and $K
  \subseteq W$ a compact subset. We must show that $K \cap \Ro(x)$ is
  finite. Assume therefore that $W \cap \Ro(x) \neq \emptyset$. Let
  $\Omega$ be the stable region containing $W$ and $U$ a core for
  $\Omega$. It follows from (\ref{5}) and the compactness of $K$ that
  there is an $l \in \mathbb N$ such that $R^l(K) \subseteq U$. When $y
  \in K \cap \Ro(x)$ there is then a $k > l$ such that $R^k(y) = x$. Thus
  $R^l(y)$ is a pre-periodic element of $U$. By inspecting the
  possible types of $U$ we see that $U$ contains at most one point
  pre-periodic under $R$. Thus $R^l(y) = x$, proving that $K \cap \Ro(x)
  \subseteq R^{-l}(x)$ which is a finite set.
\end{proof}

\begin{lemma}\label{equal} $\bigcup_{c \in \crt} \Ro(c) = \bigcup_{n=0}^{\infty} R^{-n}(\crt)$.
\end{lemma}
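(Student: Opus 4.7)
The plan is to prove both inclusions by exploiting the multiplicative behavior of valency along iterates, namely the chain rule $\val(R^{n+m}, y) = \val(R^m, R^n(y))\cdot \val(R^n, y)$, together with the fact that $\val(R, z) > 1$ precisely when $z \in \crt$.

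For the inclusion $\subseteq$, I would start with $y \in \Ro(c)$ for some $c \in \crt$. By the characterization of $G_R$-orbits from Lemma 4.1 of \cite{Th2} recalled in the excerpt, there exist $n, m \in \mathbb N$ such that $R^n(y) = R^m(c)$ and $\val(R^n, y) = \val(R^m, c)$. Since $c$ is a critical point, $\val(R, c) \geq 2$ and therefore $\val(R^m, c) \geq 2$, forcing $\val(R^n, y) \geq 2$ as well. Expanding $\val(R^n, y)$ via the chain rule as $\prod_{j=0}^{n-1} \val(R, R^j(y))$, at least one factor must exceed $1$, so $R^j(y) \in \crt$ for some $0 \leq j \leq n-1$, which gives $y \in R^{-j}(\crt) \subseteq \bigcup_{k \geq 0} R^{-k}(\crt)$.

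For the inclusion $\supseteq$, suppose $y \in R^{-n}(\crt)$ for some $n \geq 0$. Let $k$ be the smallest non-negative integer with $R^k(y) \in \crt$; such a $k$ exists since $R^n(y) \in \crt$, and $0 \leq k \leq n$. Set $c = R^k(y) \in \crt$. For any $m \geq 0$ one has $R^{k+m}(y) = R^m(c)$, and by minimality of $k$ none of the points $y, R(y), \dots, R^{k-1}(y)$ lies in $\crt$, so the chain rule gives $\val(R^k, y) = 1$ (also trivially true if $k=0$). Another application of the chain rule then yields
\[
\val(R^{k+m}, y) = \val(R^m, R^k(y)) \cdot \val(R^k, y) = \val(R^m, c),
\]
so with $n' = k+m$ and $m' = m$ the identities $R^{n'}(y) = R^{m'}(c)$ and $\val(R^{n'}, y) = \val(R^{m'}, c)$ hold, showing $y \in \Ro(c)$.

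The only mildly subtle step is the reverse inclusion, where one must be careful to choose the critical point $c$ correctly: one cannot simply take the endpoint $R^n(y)$ of the pre-image chain since intermediate iterates may also be critical and spoil the valency match. Selecting $c$ as the first critical point encountered along the forward orbit of $y$ precisely cancels out the undesired valency factor, and everything else is a direct application of the chain rule.
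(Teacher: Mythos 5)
Your proof is correct and follows essentially the same route as the paper: the forward inclusion via the chain rule for valency forcing some iterate $R^j(y)$ to be critical, and the reverse inclusion by taking the first critical point on the forward orbit of $y$. The only cosmetic difference is that you verify $y\in\Ro(c)$ directly from the valency characterization of restricted orbits, whereas the paper cites Lemma \ref{simpleobs}, which encapsulates exactly this observation.
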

\begin{proof} Assume that $x \in \bigcup_{c \in \crt} \Ro(c)$. There is then a
  critical point $c \in \crt$ and $n,m \in \mathbb N$ such that
  $R^n(x) = R^m(c)$ and $\val\left(R^n,x\right) =
  \val\left(R^m,c\right)$. Since $\val(R^m,c) \geq 2$ this implies
  that $\val(R^n,x) = \val(R,R^{n-1}(x))
    \val(R,R^{n-2}(x)) \cdots \val(R,x) \geq 2$ and hence that $x \in
    \bigcup_{j=0}^{n-1} R^{-j}(\crt)$. Conversely, if $x \in
    \bigcup_{n=0}^{\infty} R^{-n}(\crt)$ let $j \in \mathbb N$
    be the least natural number such that $R^j(x) \in \crt$. It
    follows from Lemma \ref{simpleobs} that $x \in
    \Ro\left(R^j(x)\right) \subseteq  \bigcup_{c \in \crt} \Ro(c)$.
\end{proof}

\begin{lemma}\label{gcclosed} Let $x$ be a critical point in
  $F_R$. Then $\Ro(x)$ is 
  closed and discrete in $F_R$.
\end{lemma}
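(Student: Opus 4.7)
The plan is to adapt the compactness argument from Lemma \ref{perclosed}, replacing the pre-periodic analysis there by a valency-based pre-critical analysis. Let $W$ be a connected component of $F_R$ and $K \subseteq W$ a compact subset; I want to show that $K \cap \Ro(x)$ is finite. The case $K \cap \Ro(x) = \emptyset$ is trivial, so I assume $W \cap \Ro(x) \neq \emptyset$, whence $W$ lies in the stable region $\Omega$ of $x$, with some core $U$ of period $p$. By (\ref{5}) and compactness of $K$, I pick $l \in \mathbb{N}$ such that $R^l(K) \subseteq U$.

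For $y \in K \cap \Ro(x)$ there exist $n, m \in \mathbb{N}$ with $R^n(y) = R^m(x)$ and $\val(R^n, y) = \val(R^m, x)$. Simultaneously increasing $n$ and $m$ by the same amount preserves both equalities, so I arrange that $n \geq l$, $m \geq 1$ and $R^n(y) = R^m(x) \in U$. Criticality of $x$ together with $m \geq 1$ then gives $\val(R^n, y) = \val(R^m, x) \geq \val(R, x) \geq 2$. Setting $z = R^l(y) \in U$ and factoring $\val(R^n, y) = \val(R^l, y) \cdot \val(R^{n-l}, z)$, the task reduces to bounding the second factor $\val(R^{n-l}, z)$.

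The key dynamical input is Definition \ref{fatoudomains}. In the attracting, parabolic, Siegel and Herman cases the model maps $z \mapsto \lambda z$, $z \mapsto z+1$ and $z \mapsto e^{2\pi i t}z$ have no critical points; transporting this through the conformal conjugacies, and using that each $R : U_i \to U_{i+1}$ is then a conformal isomorphism, I conclude that none of the periodic components $U_0, \ldots, U_{p-1}$ contains a critical point of $R$, so $\val(R^{n-l}, z) = 1$. In the super-attracting case the only critical point of $R^p$ in $U$ is $\psi^{-1}(0)$, and the model dynamics $w \mapsto w^d$ on $\psi(U) = D_r$ shows that the forward $R$-orbit of $z$ meets the super-attracting cycle precisely when $z = \psi^{-1}(0)$; in particular $\val(R^{n-l}, z) = 1$ whenever $z \neq \psi^{-1}(0)$.

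Feeding this back: if $z \neq \psi^{-1}(0)$ then $\val(R^l, y) \geq 2$, and multiplicativity of $\val$ along the orbit $y, R(y), \dots, R^{l-1}(y)$ forces $R^j(y) \in \crt$ for some $0 \leq j \leq l-1$, placing $y$ in the finite set $\bigcup_{j=0}^{l-1} R^{-j}(\crt)$; if $z = \psi^{-1}(0)$ (super-attracting case only) then $y \in R^{-l}(\psi^{-1}(0))$, again a finite set. In either situation $K \cap \Ro(x)$ is finite, and hence $\Ro(x)$ has no accumulation point in $F_R$. The step I expect to be the main obstacle is the classification-based claim that, apart from the super-attracting periodic point, the periodic Fatou components contain no critical points of $R$; the remainder is routine compactness, valency bookkeeping and counting of preimages.
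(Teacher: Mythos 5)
Your argument is correct and follows essentially the same route as the paper's proof: push $K$ forward into the core $U$ by compactness, then use the classification of core types to see that forward orbits of points of $U$ meet $\crt$ only through the super-attracting centre, so that $y\in K\cap\Ro(x)$ forces a critical point among $y,R(y),\dots,R^{l-1}(y)$ or $R^l(y)=\psi^{-1}(0)$ --- the paper simply packages your valency bookkeeping as Lemma \ref{equal} and the core inspection as a single sentence. One caution on phrasing: the statement you actually need (and the only one the conformal conjugacy delivers) concerns $U$ and its forward images $R^i(U)$, not the full periodic Fatou components --- an immediate attracting or parabolic basin does in general contain a critical point of $R$ --- but since the orbit of $z\in U$ stays in $\bigcup_{i=0}^{p-1}R^{i}(U)$, this weaker fact is all your proof uses, so the argument stands.
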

\begin{proof} As in the proof of Lemma \ref{perclosed} we take a
  connected component $W$ of $F_R$ and show that $K \cap \Ro(x)$ is finite for
  any compact subset $K \subseteq W$. Let $\Omega$ be the stable
  region containing $W$ and $U$ a core for $\Omega$ of period
  $p$. There is an $l \in \mathbb N$ such that $R^l(K) \subseteq U$. By inspecting the possible types of $U$ we see that there is at
  most one element of $U$ which is pre-critical under $R$, and that
  element is already critical for $R^p$ when it exists. It follows
  therefore from Lemma \ref{equal} that $K \cap \Ro(x) \subseteq
  \bigcup_{n=0}^{l+p} R^{-n}(\crt)$ which is a finite set. 
\end{proof}

 Since there are only
finitely many periodic and critical points in $F_R$ the union of their
restricted orbits, which we will denote by $\mathcal I$, is closed and
discrete in $F_R$ by Lemma \ref{perclosed} and Lemma
\ref{gcclosed}. It follows therefore from Lemma \ref{quot} that for
each stable region $\Omega$ in $F_R$ there is an extension
\begin{equation}\label{sullivanextension}
\begin{xymatrix}{
0 \ar[r] & C^*_r\left({\Omega \backslash \mathcal I}\right) \ar[r]
& C^*_r\left(\Omega \right) \ar[r]^-{\pi_{\mathcal I}} &
 C^*_r\left(\Omega \cap {\mathcal I}\right) \ar[r] & 0.
}\end{xymatrix}
\end{equation}

To study the ideal $C^*_r\left({\Omega \backslash \mathcal
    I}\right)$ we need the following lemma which seems to be folklore
for mathematicians working with rational maps. We sketch a proof
for the benefit of the operator algebraists.

\begin{lemma}\label{3} Let $U \subseteq \overline{\mathbb C}$ be an open simply
  connected subset such that 
$$
U \cap \bigcup_{n=1}^{\infty} R^{n}\left(
    \crt \right) = \emptyset.
$$ 
Let $d = \deg R$ be the degree of $R$. For each $n \in \mathbb N$ there are $d^n$ open connected subsets
$W^n_1,W^n_2, \dots , W^n_{d^n}$ and $d^n$ holomorphic maps $\chi^n_i : U \to
W^n_i, i = 1,2, \dots, d^n$, such that
\begin{enumerate}
\item[i)] $R^n\left(\chi^n_i(z)\right) = z, \ z \in U$,
\item[ii)] $\chi^n_i(U) = W^n_i, i  = 1,2, \dots, d^n$,
\item[iii)] $W^n_i \cap W^n_j = \emptyset, \ i \neq j$, and
\item[iv)] $R^{-n}(U) = \bigcup_{i=1}^{d^n} W^n_i$. 
\end{enumerate}
\end{lemma}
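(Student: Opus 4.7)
The plan is to identify $R^n|_{R^{-n}(U)} : R^{-n}(U) \to U$ as a trivial unramified holomorphic covering of degree $d^n$, and to read off the inverse branches $\chi^n_i$ from the trivialization.

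First I would pin down the critical values of the iterate. By the chain rule $(R^n)'(z) = \prod_{k=0}^{n-1} R'\bigl(R^k(z)\bigr)$, so a point $z \in \C$ is critical for $R^n$ if and only if $R^k(z) \in \crt$ for some $0 \leq k \leq n-1$. For such $z$ one has
\[
R^n(z) \; = \; R^{n-k-1}\bigl(R(R^k(z))\bigr) \; \in \; \bigcup_{j=1}^n R^j(\crt).
\]
Hence the set of critical values of $R^n$ is contained in $\bigcup_{j=1}^n R^j(\crt)$, which by hypothesis is disjoint from $U$. Consequently no point of $R^{-n}(U)$ is critical for $R^n$, so $R^n$ is a local biholomorphism at every point of $R^{-n}(U)$.

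Next, since $R^n : \C \to \C$ is a proper surjective holomorphic map of degree $d^n$ and $U$ avoids the (finite) set of critical values, the restriction $R^n : R^{-n}(U) \to U$ is a proper local homeomorphism onto $U$, hence an unramified covering map, and its degree equals the generic fibre cardinality of $R^n$, namely $d^n$. Because $U$ is simply connected (and connected, being a simply connected open subset of $\C$), every connected covering of $U$ is trivial, so $R^{-n}(U)$ is the disjoint union of exactly $d^n$ connected components $W^n_1, \dots, W^n_{d^n}$, each mapped homeomorphically onto $U$ by $R^n$. As $R^n$ is holomorphic and a local biholomorphism, each restriction $R^n : W^n_i \to U$ is biholomorphic; let $\chi^n_i : U \to W^n_i$ denote its holomorphic inverse. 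Then (i)--(iv) are immediate from the construction.

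The only non-routine ingredient is the assertion that a proper holomorphic map $R^n : \C \to \C$ of degree $d^n$ restricts to a genuine (unramified) covering of degree $d^n$ over the complement of its critical values, and that such a covering over a simply connected base is trivial; both facts are standard, the former being an easy consequence of properness plus local injectivity, the latter being a direct application of the lifting criterion for covering spaces.
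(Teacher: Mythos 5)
Your proof is correct and follows essentially the same route as the paper: both identify $R^n : R^{-n}(U) \to U$ as an unramified covering of degree $d^n$ (using that $U$ misses the critical values of $R^n$, which lie in $\bigcup_{j=1}^n R^j(\crt)$) and then use simple connectivity of $U$ to split $R^{-n}(U)$ into $d^n$ components each mapped biholomorphically onto $U$. The only difference is presentational: you justify the covering property via properness plus local injectivity and trivialize the covering at once, while the paper asserts the covering property and uses path-lifting to show each component maps onto $U$.
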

\begin{proof} Note that $R^n$ is a $d^n$-fold covering of $U$ by
  $R^{-n}(U)$ since $U \cap \bigcup_{j=1}^n R^j(\crt) = \emptyset$. Let $W_i, i =1,2, \dots, N$, be the connected components of $R^{-n}(U)$. We
  claim that
  $R^{n}(W_i) = U$. To see this let $x \in W_i$ and let $y \in U$. Since
  $R^n : R^{-n}(U) \to U$ has the path-lifting property we can lift a
  path in $U$ connecting $R^n(x)$ to $y$ in $U$ to a path starting
  in $x$. This path must end in a point in $W_i$ which maps to $y$ under
  $R^n$, proving the claim. Then $R^n : W_i \to U$ is also a covering
  and since $U$ is simply connected it must be a homeomorphism. Let
  $\chi^n_i : U \to W_i$ be its inverse and note that $\chi^n_i$ is
  holomorphic since $R^n$ is. It follows also that $N = d^n$ since $\#
  R^{-n}(y) = d^n$ for all $y \in U$. The proof is complete.

\end{proof}

\begin{lemma}\label{16} Let $\Omega$ be a stable region for $R$ and $U \subseteq \Omega$ a core for $\Omega$. Then
$$
C^*_r\left({\Omega \backslash \mathcal I}\right) \simeq  C^*_r\left({U \backslash \mathcal I}\right) \otimes \mathbb K.
$$ 
\end{lemma}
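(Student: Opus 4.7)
The plan is to combine Theorem \ref{MRW} with Lemma \ref{stability}. Theorem \ref{MRW} applied to $G=G_{\Omega \backslash \mathcal I}$ and the open subset $W = U \backslash \mathcal I$ will yield a stable isomorphism $C^*_r(\Omega \backslash \mathcal I) \otimes \mathbb K \simeq C^*_r(U \backslash \mathcal I) \otimes \mathbb K$, and Lemma \ref{stability} will be used to show that $C^*_r(\Omega \backslash \mathcal I)$ is itself stable, so that the left-hand side above is already $C^*_r(\Omega \backslash \mathcal I)$.

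First I would verify the MRW hypothesis that every $G_{\Omega \backslash \mathcal I}$-orbit meets $U\backslash \mathcal I$. Let $x \in \Omega \backslash \mathcal I$. By Lemma \ref{equal}, $\mathcal I$ contains every pre-critical point of $R$ lying in $F_R$, so $x$ is not pre-critical and $(R^n)'(x) \neq 0$ for all $n$. By (\ref{5}) there is an $n$ with $R^n(x) \in U$, and Lemma \ref{simpleobs} then gives $R^n(x) \in \Ro(x)$. Since $\mathcal I$ is a union of $\Ro$-orbits and $x \notin \mathcal I$, also $R^n(x) \notin \mathcal I$; hence $R^n(x) \in U\backslash \mathcal I$. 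Openness of $U\backslash \mathcal I$ in $\Omega\backslash \mathcal I$ is immediate because $U$ is open in $\C$ and $\mathcal I$ is discrete in $F_R$ by Lemmas \ref{perclosed} and \ref{gcclosed}. Since the reduction of $G_{\Omega \backslash \mathcal I}$ to $U\backslash \mathcal I$ is $G_{U\backslash \mathcal I}$, Theorem \ref{MRW} applies.

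Next I would verify conditions i)--iii) of Lemma \ref{stability} for $G = G_{\Omega \backslash \mathcal I}$. Given a compact $K \subseteq \Omega\backslash \mathcal I$, compactness together with (\ref{5}) yields an $n \in \mathbb N$ with $R^n(K) \subseteq U$. The idea is to use Lemma \ref{3} on a suitable open subset $V \subseteq U$ disjoint from $\bigcup_{m \geq 1} R^m(\crt)$ and $R^n(K)$ to produce $(\deg R)^m$ pairwise disjoint inverse branches $\chi^m_j$ of $R^m$, defined on a neighbourhood of (a finite open cover of) $R^n(K)$; composing $\chi^m_j \circ R^n$ gives local transfers $\eta_j$ on small open sets covering $K$. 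For $m$ large enough the ranges $\eta_j(\cdot)$ live in components $W^m_j$ of $R^{-(n+m)}(U)$ that are disjoint from each other and from $K$ (since $K$ intersects at most finitely many components of any preimage level, while the number of available branches grows like $(\deg R)^m$). Restricting each $\eta_j$ to a small enough open set gives bi-sections $U_j \subseteq G_{\Omega\backslash \mathcal I}$ satisfying i), ii) and iii), and Lemma \ref{stability} yields stability of $C^*_r(\Omega \backslash \mathcal I)$.

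The main obstacle is constructing the simply-connected (or in the Herman case, annular) subset $V \subseteq U$ needed for Lemma \ref{3}, because in a super-attracting core $U$ the whole forward orbit of the super-attracting critical point accumulates in $U$ and in an attracting or parabolic core the post-critical set may enter $U$ along a convergent sequence. In each case, however, the post-critical set intersects $U$ only in a discrete set (converging to the fixed point for types a), b), c), and empty for types d), e)), so a small simply connected (or annular) open $V$ avoiding this countable set together with $R^n(K)$ can be extracted. Combining the stable isomorphism from MRW with stability of $C^*_r(\Omega \backslash \mathcal I)$ gives
\[
C^*_r(\Omega \backslash \mathcal I) \simeq C^*_r(\Omega \backslash \mathcal I) \otimes \mathbb K \simeq C^*_r(U\backslash \mathcal I) \otimes \mathbb K,
\]
as required.
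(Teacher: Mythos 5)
Your first step is fine and agrees with the paper: verifying the hypothesis of Theorem \ref{MRW} via (\ref{5}), Lemma \ref{equal} and Lemma \ref{simpleobs}, and then reducing the problem to showing that $C^*_r(\Omega\backslash\mathcal I)$ is stable via Lemma \ref{stability}, is exactly the paper's strategy. The gap is in your uniform pull-back construction of the bi-sections. First, $\mathcal I$ contains the \emph{pre}-critical points (Lemma \ref{equal}) but not the forward orbits of the critical points, so $K$, and hence $R^n(K)\subseteq U$, may very well contain points of $\bigcup_{m\geq 1}R^m(\crt)$. Around such a point there is no simply connected neighbourhood disjoint from the post-critical set, so Lemma \ref{3} cannot be applied to a cover of $R^n(K)$; and your prescription that $V$ be disjoint from $R^n(K)$ while the inverse branches are "defined on a neighbourhood of $R^n(K)$" is self-contradictory, since the branches produced by Lemma \ref{3} are defined only on the simply connected set to which the lemma is applied. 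This is precisely the point at which the paper's super-attractive argument splits off the finite set $Y=R^{-L}\bigl(\bigcup_{n\geq 1}R^n(\crt)\bigr)\cap K$ and handles it with the separate forward-moving bi-sections $T_z$, and it is one reason the attracting and parabolic cases are treated by pushing forward with $R^{n_i+L}$ rather than by inverse branches at all.

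Second, your argument for condition iii) of Lemma \ref{stability} (ranges disjoint from $K$) does not work. The number of components of $R^{-(n+m)}(U)$ need not grow with $m$ (for $z\mapsto z^2$ and $U$ a disc about $0$ there is one component at every level), a compact set can meet infinitely many pairwise disjoint open sets, and in the Siegel and Herman cases the claim fails outright: any inverse branch of $R^{n+m}$ whose range stays inside the periodic cycle of $U$ is conjugate to an irrational rotation, so if, say, $K$ contains an $R^p$-invariant circle in a Siegel disk, the range of every such branch meets $K$ for every $m$, no matter how many branches are available. To get iii) one must either route the bi-sections out of the cycle into a backward string of pairwise distinct Fatou components $U_{-1},U_{-2},\dots$ (the paper's Siegel/Herman argument), or exploit forward contraction toward the periodic point (the attracting and parabolic cases), or combine a quantitative escape argument inside $U\setminus\{x\}$ with Lemma \ref{3} (the super-attractive case, where forward iteration eventually destroys injectivity because $R^p$ is conjugate to $z\mapsto z^d$). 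This dynamical case distinction is the substance of the paper's proof and cannot be replaced by the counting heuristic; also note that Lemma \ref{3} requires simply connected domains, so in the Herman case one must work with small discs rather than an annular $V$.
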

\begin{proof} Let $x \in \Omega \backslash \mathcal I$. It follows
  from (\ref{5}) that there is a $k \in \mathbb N$ such that $R^k(x)
  \in U$. Since $x \notin \mathcal I$ it follows from Lemma
  \ref{equal} and Lemma \ref{simpleobs} that $R^k(x) \in \Ro(x)$. This
  shows that $\Ro(x) \cap \left(U \backslash \mathcal I\right)
  \neq \emptyset$ and
  it follows then from Theorem \ref{MRW} that $C^*_r\left({\Omega
      \backslash \mathcal I}\right)$ is stably isomorphic to
  $C^*_r\left({U \backslash \mathcal I}\right)$. It suffices
  therefore now to show that $C^*_r\left({\Omega \backslash
      \mathcal I}\right)$ is stable. To this end we use Lemma
  \ref{stability} and consider therefore a compact subset $K$ of
  $\Omega \backslash \mathcal I$. The construction of the required
  bi-sections will be performed
  differently for the different core domains.

Assume first that $U$ is a Siegel disk or a Herman ring. It follows
from Theorem 16.1 in \cite{Mi} that $U$ is a connected component of
$F_R$ in these cases. There is an
$l \in \mathbb N$ such that $R^l(K) \subseteq U$. Note that when
$U$ is a Siegel disk the periodic point at the center of $U$ is not
in $R^l(K)$. For each $z \in \crt \cap \Omega$, let 
$\chi(z)$ be the first element from $U$ in the forward orbit of $z$,
i.e. $\chi(z) \in U$ is the element determined by the condition that
$R^m(z) = \chi(z)$ while $R^i(z)  \notin U, i = 0,1,2, \dots,
m-1$. Then $\chi(\crt)$ is a finite (possibly empty)
set. Since the rotation in the core is irrational and $K \cap
\bigcup_{j=0}^{\infty} R^{-j}(\crt) = \emptyset$, there is for each point $x$ in $K$ an open neighbourhood $V_x$ of
$x$ such that $R^i$ is injective on $V_x$ for all $i \in \mathbb N$
and an $n_x \in \mathbb N$ with the property that
$R^{n_x + l}\left(V_x\right) \cap \chi(\crt) = \emptyset$. We
can also arrange that $R^{n_x + l}(V_x)$ is simply connected (e.g. a
small disc). By compactness of $K$ there is a finite collection $V_{x_i}, i =1,2, \dots, N$, such that $K
\subseteq \bigcup_{i=1}^N V_{x_i}$. Let $U_{-1},U_{-2}, U_{-3}, \dots$
be a sequence of Fatou components such that $U_{-1} \notin
\left\{U,R(U), R^2(U), \dots, R^{p-1}(U)\right\}$ and
$R\left(U_{-i}\right) = U_{-i +1}, i \geq 2$. Such a sequence exists
since $R^{-p}\left(U\right) \nsubseteq U$. Note that $U_{-i} \cap
U_{-j} = \emptyset$ when $i \neq j$. Since $K$ is compact there is an
$m \in \mathbb N$ such that
$$ 
K \cap U_{-j} = \emptyset, \ j \geq m.
$$
Let $i \in
\left\{1,2,\dots, N\right\}$. By using Lemma \ref{3} we choose a connected subset
$V'_i \subseteq U_{-i -m}$ such that $R^{i+m}$ is a homeomorphism from $V'_i$
onto $R^{n_{x_i} +
  l}\left(V_{x_i}\right)$. Let $R^{-i-m} : R^{n_{x_i} +
  l}\left(V_{x_i}\right) \to V'_i$ denote its inverse. Then  
$$
S_i = \left\{\left( R^{-i-m} \circ R^{n_{x_i} + l}(z), i + m -n_{x_i}-l , \left(R^{-i-m}
      \circ R^{n_{x_i} + l}|_{V_{x_i}}\right)^{-1}, z\right) : \ z \in V_i \right\}  ,
$$  
$ \ i =1,2,
\dots, N$, is a collection of open bi-sections in $G_{\Omega \backslash \mathcal I}$
meeting the requirements in Lemma \ref{stability}.

Assume instead that $U$ is attracting or parabolic. We choose first $L$
such that $R^L(K) \subseteq U$, and
then a finite open and relatively compact cover $V_1,V_2,
\dots, V_N$ of $K$ in $\Omega \backslash \mathcal I$ such that $R^L : V_i \to R^L(V_i) \subseteq U$ is injective for
each $i$. Subsequently we choose $n_1,n_2, \dots, n_N \in \mathbb N$ such
that $R^{n_i}\left(R^L(V_i)\right)) \cap R^{n_j}\left(R^L(V_j)\right) =
  \emptyset$ when $i \neq j$ and
$$
R^{n_i + L}(V_i) \cap K = \emptyset
$$
for all $i$. Then
$$
S_i = \left\{ \left(R^{n_i+L}(z), -n_i -L, \left(R^{n_i +L}|_{V_i}\right)^{-1}, z\right) : \ z
    \in V_i \right\} , 
$$
$i = 1,2, \dots, N$, is a collection of open bi-sections in $G_{\Omega \backslash \mathcal I}$
meeting the requirements in Lemma \ref{stability}. 

Finally, in the super-attractive case choose $L$ such that $R^L(K) \subseteq
U$ and $K \cap R^i(K) = \emptyset, i \geq L$. Let $U_0$ be an open
subset of $U$ such that $R^L(K) \subseteq U_0$ and $\overline{U_0}$ is
a compact subset of $U \backslash \{x\}$, where $x$ is the critical
point in $U$. Set 
$$
Y =  R^{-L}\left( \bigcup_{n=1}^{\infty}R^n(\crt) \right) \cap K 
$$
and note that $Y$ is a finite set. For every $z \in Y$ choose a small neighbourhood $V_z$ of $z$ and a natural number
$n_z$ such that $R^{n_z +L}$ is injective on $\overline{V_z}$, $R^{n_z
  +L}\left(\overline{V_z}\right) \subseteq U \backslash \{x\}$ and 
$$
R^{n_z + L}(V_z) \cap K = \emptyset
$$
for all $z$, and $R^{n_z
  +L}\left(\overline{V_z}\right) \cap R^{n_{z'}+L}\left(\overline{V_{z'}}\right) = \emptyset$
when $z \neq z'$. This is possible because $K \cap \mathcal I =
\emptyset$, cf. Lemma \ref{equal}. Then $R^L\left(K\backslash
  \bigcup_{z\in Y} V_z\right) \cap \bigcup_{n=1}^{\infty}R^n(\crt) = \emptyset$. We can therefore cover $K
\backslash \bigcup_{z \in Y} V_z$ by a finite collection $W_i, i = 1,2, \dots,
N$, of open sets such that $R^L(W_i)$ is an open simply
connected subset of $U_0 \backslash \bigcup_{n=1}^{\infty}R^n(\crt)$. It follows then from Lemma
\ref{3} that for any collection $n_i, i = 1,2, \dots, N$, of natural
numbers we can find univalent holomorphic maps $\chi_{i} :
R^L\left(W_i\right) \to R^{-n_i}\left(U_0\right)$ such that $R^{n_i} \circ
  \chi_i(z) = z$ for all $z \in R^L\left(W_i\right)$. Set $U_1 = U_0
  \cup \bigcup_{z \in Y} R^{n_z +L}\left(V_z\right)$ and note that $U_1$ is a
  relatively compact subset of $U \backslash \{x\}$. There is therefore
  an $N_1 \in \mathbb N$ such
  that $R^i(U_1) \cap U_1 = \emptyset, i \geq N_1$. Thus, if we
  arrange that $N_1 + L\leq n_1$ and $n_i \geq n_{i-1} + N_1, i = 2,3,
  \dots, N$, we will have that the sets
$$
\chi_i \circ R^{L}(W_i), \ i = 1,2, \dots, N,
$$
are mutually disjoint, and also disjoint from $K \cup U_1$. For each
$z \in Y$ the set
$$
T_z = \left\{ \left( R^{n_z+L}(z), -n_z -L, \left(R^{n_z + L}|_{V_z}\right)^{-1},
    z\right) : \ z \in V_z \right\}
$$
is an open bi-section in $G_{\Omega \backslash \mathcal I}$. The same is true
  for
$$
S_i = \left\{ \left( \chi_i \circ R^L(z), n_i-L, \left(\chi_i \circ
      R^L|_{W_i}\right)^{-1}, z\right) : \ z \in W_i \right\},
$$
$ i = 1,2, \dots, N$. Taken together we have a collection of
bi-sections in $G_{\Omega \backslash \mathcal I}$ with the properties
required in Lemma \ref{stability}.

\end{proof}

\subsection{Super-attractive stable regions}\label{sssuper-att} 

In this section we study the $C^*$-algebra $C^*_r\left(\Omega
  \backslash \mathcal I\right)$ in the case where $\Omega$ is a
super-attractive stable region. Let $U$ be a core domain for $\Omega$. Then
$\mathcal I \cap U$ consists only of the super-attracting periodic
point $x$ at the center of
$U$. It follows from Lemma \ref{16} and Proposition \ref{nocrit} that $C^*_r\left({\Omega
    \backslash \mathcal I}\right) \simeq C^*_r\left(\psi\right)
\otimes \mathbb K$
where $\psi : D_r \backslash \{0\} \to D_r \backslash \{0\}$, for
some $r \in ]0,1[$, is the
local homeomorphism $\psi(z) = z^d$. Note that $d = \val(R^p,x) \geq 2$ where $p$ is the period of $x$. Let $D = \left\{z \in \mathbb C : \ 0 < |z| <
  1\right\}$. Define $\alpha : D \to D$
such that $\alpha(z) = z^d$. Since $D = \bigcup_j \alpha^{-j}\left(D_r
\backslash \{0\}\right)$ it follows from Theorem \ref{MRW} that
$C^*_r(\psi)$ is stably isomorphic to $C^*_r(\alpha)$. Thus $C^*_r\left({\Omega
    \backslash \mathcal I}\right) \simeq C^*_r\left(\alpha\right)
\otimes \mathbb K$ since $C^*_r\left({\Omega
    \backslash \mathcal I}\right)$ is stable by Lemma \ref{16}, and in
this section we identify the stable isomorphism
class of $C^*_r\left(\alpha\right)$.

First identify $D$ with
$]0,1[ \times \mathbb T$ via that map $(t,\lambda) \mapsto
t\lambda$. In this picture 
$$
\alpha(t,\lambda) = \left(t^d,\lambda^d\right)
.
$$
Map $]0,1[ \times \mathbb T$ to $\mathbb R \times \mathbb T$ using
the map
$$
(t,\lambda) \mapsto \left( \frac{\log \left( - \log t\right)}{\log d},
  \lambda\right) .
$$
This gives us a conjugacy between $(D,\alpha)$ and $\left(\mathbb R
  \times \mathbb T, \tau \times \beta\right)$ where $\tau(t) = t+1$
and $\beta(\lambda) = \lambda^d$. It follows that $C^*_r\left(\alpha\right) \simeq C^*_r\left(\tau \times \beta\right)$. Let $S^1$ be the
one-point compactification of $\mathbb R$ and let $\tau^+$ be the continuous
extension of $\tau$ to $S^1$. To simplify notation set $\varphi =
\tau^+ \times \beta$. It follows from Proposition 4.6 in
\cite{CT} that there is an extension
\begin{equation}\label{CText}
\begin{xymatrix}{
0 \ar[r]  & C^*_r\left( \tau \times \beta\right) \ar[r] & C^*_r\left(
  \varphi\right) \ar[r] & C^*_r(\beta) \ar[r] & 0.
}\end{xymatrix}
\end{equation}
 In the
notation of Section \ref{dynsys} observe that $R_{\varphi} = S^1
\times R_{\beta}$ and that this decomposition gives rise to an isomorphism
\begin{equation}\label{tensor7}
C^*_r\left(R_{\varphi}\right) \simeq  C\left(S^1\right) \otimes
C^*_r\left(R_{\beta}\right). 
\end{equation}
Under this identification the Deaconu endomorphism $\widehat{\varphi}$ of
$C^*_r\left(R_{\varphi}\right)$ becomes the tensor product
$\hat{\tau} \otimes \widehat{\beta}$ where $\widehat{\beta} :
C^*_r(R_{\beta}) \to C^*_r(R_{\beta})$ is the Deaconu endomorphism of
$C^*_r(\beta)$ and
$\hat{\tau} : C\left(S^1\right)  \to C\left(S^1\right)$ is given by
$$
\hat{\tau}(f)(x) = f(\tau^+(x)) .
$$  
Let $B_{\beta}$ be the inductive limit of the sequence
\begin{equation}\label{parisV}
\begin{xymatrix}{
C^*_r(R_{\beta}) \ar[r]^-{\widehat{\beta}} & C^*_r(R_{\beta})
\ar[r]^-{\widehat{\beta}} & C^*_r(R_{\beta}) \ar[r]^-{\widehat{\beta}} & C^*_r(R_{\beta})
\ar[r]^-{\widehat{\beta}} & \cdots \cdots
}\end{xymatrix}
\end{equation}
and let $\widehat{\beta}_{\infty}$ be the automorphism of $B_{\beta}$
induced by letting $\widehat{\beta}$ act on all copies of
$C^*_r(R_{\beta})$ in the sequence (\ref{parisV}). Similarly, we can consider
the inductive limit $B_{\varphi}$ of the sequence
\begin{equation*}\label{sequne}
\begin{xymatrix}{
C^*_r(R_{\varphi}) \ar[r]^-{\widehat{\varphi}} & C^*_r(R_{\varphi})
\ar[r]^-{\widehat{\varphi}} & C^*_r(R_{\varphi}) \ar[r]^-{\widehat{\varphi}} & C^*_r(R_{\varphi})
\ar[r]^-{\widehat{\varphi}} & \cdots \cdots .
}\end{xymatrix}
\end{equation*}
Using (\ref{tensor7}) and the tensor product decomposition
$\widehat{\varphi} = \hat{\tau} \otimes \widehat{\beta}$ it follows that
$B_{\varphi} \simeq C(S^1) \otimes B_{\beta}$ under an
isomorphism which turns $\widehat{\varphi}_{\infty}$ into $\hat{\tau}
\otimes \widehat{\beta}_{\infty}$. It follows in this way from Theorem 4.8 in \cite{Th1} that there are embeddings of $C^*_r\left(\tau^+ \times
  \beta\right)$ and $C^*_r(\beta)$ into full corners of $\left(C(S^1) \otimes B_{\beta}\right)
\rtimes_{\hat{\tau} \otimes \widehat{\beta}_{\infty}} \mathbb Z$ and
$B_{\beta} \rtimes_{\widehat{\beta}_{\infty}} \mathbb Z$,
respectively. Together with the extension (\ref{CText}) this gives us
a commuting diagram
\begin{equation*}\label{thm48}
\begin{xymatrix}{
0 \ar[r] & C^*_r(\tau \times \beta) \ar[r] \ar[d] & C^*_r\left(\tau^+ \times
  \beta\right) \ar[r] \ar[d] & C^*_r(\beta) \ar[r] \ar[d] & 0 \\ 
0 \ar[r] & \left(C_0(\mathbb R) \otimes B_{\beta}\right)
\rtimes_{\hat{\tau} \otimes \widehat{\beta}_{\infty}} \mathbb Z \ar[r]
& \left(C(S^1) \otimes B_{\beta}\right)
\rtimes_{\hat{\tau} \otimes \widehat{\beta}_{\infty}} \mathbb Z \ar[r]
& B_{\beta} \rtimes_{\widehat{\beta}_{\infty}} \mathbb Z \ar[r] & 0
}\end{xymatrix}
\end{equation*}
with exact rows. Consequently the range of the embedding 
$$C^*_r(\tau \times
\beta) \to  \left(C_0(\mathbb R) \otimes B_{\beta}\right)
\rtimes_{\hat{\tau} \otimes \widehat{\beta}_{\infty}} \mathbb Z
$$ 
is a
full corner, and we conclude therefore from \cite{Br} that $C^*_r\left( \tau \times \beta\right)$ is stably isomorphic to
$\left(C_0(\mathbb R) \otimes B_{\beta}\right) \rtimes_{\hat{\tau} \otimes
  \widehat{\beta}_{\infty}} \mathbb Z$.

Recall that the \emph{mapping torus}
$MT_{\gamma}$ of an endomorphism $\gamma: B \to B$ of a $C^*$-algebra $B$ is the
$C^*$-algebra
$$
MT_{\gamma} = \left\{ f \in C[0,1] \otimes B : \ \gamma(f(0)) = f(1)
\right\} .
$$
We need the following lemma.

\begin{lemma}\label{eee} Let $A$ be a $C^*$-algebra and $\alpha : A
  \to A$ an automorphism. It follows that the crossed product
  $\left(C_0(\mathbb R) \otimes A\right) \rtimes_{\widehat{\tau}
    \otimes \alpha} \mathbb Z$ is isomorphic to the mapping torus
  $MT_{\id_{\mathbb K} \otimes \alpha}$ of
  the automorphism $\id_{\mathbb K} \otimes \alpha : \mathbb K \otimes
  A \to \mathbb K \otimes A$.
\end{lemma}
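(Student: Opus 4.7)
The plan is to construct the isomorphism spatially, using the free and proper $\mathbb{Z}$-action on $\mathbb{R}$ by translation with quotient $\mathbb{T}$. The prototype is the well-known identification
\begin{equation*}
C_0(\mathbb{R}) \rtimes_{\hat\tau} \mathbb{Z} \;\cong\; C(\mathbb{T}) \otimes \mathbb{K},
\end{equation*}
which arises by choosing $[0,1)$ as fundamental domain and identifying $L^2(\mathbb{R}) \cong \ell^2(\mathbb{Z}) \otimes L^2([0,1))$: translation by $1$ becomes the bilateral shift on $\ell^2(\mathbb{Z})$, while multiplication by $f \in C_0(\mathbb{R})$ becomes a diagonal operator built from the slices $f|_{[n,n+1)}$. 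The lemma amounts to twisting this picture by $\alpha$.

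Concretely, I would fix a faithful nondegenerate representation $A \hookrightarrow B(\mathcal{H})$ and consider the covariant pair on $L^2(\mathbb{R}, \mathcal{H}) \cong \ell^2(\mathbb{Z}) \otimes L^2([0,1), \mathcal{H})$ consisting of the pointwise representation of $C_0(\mathbb{R}) \otimes A$ and the unitary $V$ defined by $(V\xi)(x) = \alpha^{-1}(\xi(x+1))$, which implements $\hat\tau \otimes \alpha$. Then define
\begin{equation*}
\Phi : (C_0(\mathbb{R}) \otimes A) \rtimes_{\hat\tau \otimes \alpha} \mathbb{Z} \longrightarrow MT_{\id_\mathbb{K} \otimes \alpha}
\end{equation*}
on the dense $*$-subalgebra generated by $C_c(\mathbb{R}, A)$ and the generator $U$ by the formula
\begin{equation*}
\Phi(fU^k)(t) \;=\; \sum_{n \in \mathbb{Z}} \alpha^{-n}\bigl(f(t+n)\bigr) \otimes e_{n, n+k}, \qquad t \in [0,1],
\end{equation*}
where $\{e_{m,n}\}$ are matrix units for $\mathbb{K}(\ell^2(\mathbb{Z}))$. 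Verifying the boundary/monodromy condition $\Phi(fU^k)(1) = (\id_\mathbb{K} \otimes \alpha)\Phi(fU^k)(0)$ is a routine re-indexing $n \mapsto n-1$ together with a single application of $\alpha$, and the $*$-algebra relations are straightforward. Faithfulness is immediate from the spatial picture above.

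The main obstacle is surjectivity. Here I would use that $\mathbb{K}(\ell^2(\mathbb{Z})) \otimes A$-valued continuous sections on $[0,1]$ satisfying the twisted boundary condition can be approximated by finite linear combinations of sections of the form $t \mapsto a(t) \otimes e_{n, n+k}$ with $a \in C([0,1], A)$, and that such building blocks are in the image of $\Phi$ after correcting by the boundary condition using finitely many coordinates. A cleaner alternative, which I expect to succeed but is less direct, is to observe that the free and proper $\mathbb{Z}$-action on $\mathbb{R}$ gives a Morita equivalence between $(C_0(\mathbb{R}) \otimes A) \rtimes \mathbb{Z}$ and $MT_\alpha$ (a standard Green/Rieffel imprimitivity argument), combine this with the identity $\mathbb{K} \otimes MT_\alpha = MT_{\id_\mathbb{K} \otimes \alpha}$, and then upgrade the Morita equivalence to an isomorphism by verifying that the crossed product is already stable — the latter being an application of Lemma \ref{stability} to the transformation groupoid of $\mathbb{Z} \curvearrowright \mathbb{R} \times A$, whose unit space $\mathbb{R}$ has no compact orbits.
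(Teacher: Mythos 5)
Your main construction is in substance the same slicing map the paper uses, but the step you describe as routine is where the argument breaks. With $\Phi(fU^k)(t)=\sum_{n}\alpha^{-n}\bigl(f(t+n)\bigr)\otimes e_{n,n+k}$, evaluating at $t=1$ and reindexing gives $\Phi(fU^k)(1)=\sum_{n}\alpha^{1-n}\bigl(f(n)\bigr)\otimes e_{n-1,n-1+k}$, whereas $(\id_{\mathbb K}\otimes\alpha)\bigl(\Phi(fU^k)(0)\bigr)=\sum_{n}\alpha^{1-n}\bigl(f(n)\bigr)\otimes e_{n,n+k}$. The reindexing does not cancel the shift of matrix units, so the boundary condition your map actually satisfies is $\Phi(fU^k)(1)=\bigl((\Ad S)\otimes\alpha\bigr)\bigl(\Phi(fU^k)(0)\bigr)$, where $S$ is the bilateral shift with $Se_{nn}S^{*}=e_{n-1,n-1}$; the image lies in the mapping torus of $(\Ad S)\otimes\alpha$, not of $\id_{\mathbb K}\otimes\alpha$. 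The missing idea is the final identification of $MT_{(\Ad S)\otimes\alpha}$ with $MT_{\id_{\mathbb K}\otimes\alpha}$, which holds because $\Ad S$ is homotopic to $\id_{\mathbb K}$ inside $\Aut(\mathbb K)$ (connectedness of the automorphism group of $\mathbb K$); this is precisely the extra step the paper supplies, citing Proposition 10.5.1 of \cite{Bl}. Two smaller points: the formula $(V\xi)(x)=\alpha^{-1}(\xi(x+1))$ is not meaningful unless $\alpha$ is spatially implemented in the chosen representation, so it is cleaner to define the homomorphism algebraically and invoke the universal property of the crossed product, as the paper does; and your surjectivity sketch, while pointing in the right direction, needs to be the honest two-step argument (fibrewise the image generates all of $\mathbb K\otimes A$, then a partition-of-unity argument using that the image is a module over $\{f\in C[0,1]:f(0)=f(1)\}$) rather than "correcting by the boundary condition".

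Your fallback route via the free and proper $\mathbb Z$-action is a legitimately different strategy, but as stated it also has gaps: Lemma \ref{stability} concerns reduced $C^*$-algebras of \'etale groupoids with trivial coefficients, so it does not apply to "the transformation groupoid of $\mathbb Z\curvearrowright\mathbb R\times A$" --- there is no such groupoid when $A$ is a noncommutative coefficient algebra; and passing from Morita equivalence plus stability to an isomorphism uses Brown--Green--Rieffel, which requires $\sigma$-unitality that the lemma does not assume, whereas the paper's direct construction avoids this entirely.
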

\begin{proof}  For $f \in C_0(\mathbb R)$ and $n \in \mathbb Z$, let
  $f_n \in C[0,1]$ be the function $f_n(t) = f(n+t)$. Let $e_{ij}, i,j
  \in \mathbb Z$, be
  the standard matrix units in $\mathbb K = \mathbb K(l^2(\mathbb
  Z))$. Define a $*$-homomorphism $\pi : C_0(\mathbb R) \otimes A \to
  C[0,1] \otimes \mathbb K \otimes A$ such that
$$
\pi(f \otimes a) = \sum_{n \in \mathbb Z} f_n \otimes {e_{nn}} \otimes
\alpha^{-n}(a) .
$$
Let $S$ be the two-sided shift on $l^2(\mathbb K)$ such that
$Se_{nn}S^* = e_{n-1,n-1}$. Then $\pi$ maps
into the mapping torus of $(\Ad S) \otimes \alpha$ and $\pi \circ
(\widehat{\tau} \otimes \alpha) = \Ad T \circ \pi$ where $T = 1_{C[0,1]}
\otimes S \otimes 1_A$. It
follows from the universal property of the crossed product that we get
a $*$-homomorphism 
$$
\Phi :\left(C_0(\mathbb R) \otimes A\right) \rtimes_{\widehat{\tau}
    \otimes \alpha} \mathbb Z \to C[0,1] \otimes \mathbb K \otimes
  A
$$ 
which is injective since its restriction to $C_0(\mathbb R) \otimes A$
clearly is. Its range is generated by elements in $C[0,1] \otimes \mathbb K \otimes
  A$ of the form $T\pi(f \otimes a)$ and is therefore contained in the
  mapping torus
  of $(\Ad S) \otimes \alpha$. To see that 
$$
B = \Phi\left(\left(C_0(\mathbb R) \otimes A\right) \rtimes_{\widehat{\tau}
    \otimes \alpha} \mathbb Z\right)
$$ 
actually is equal
  to this mapping torus, let $\ev_t : C[0,1] \otimes \mathbb K \otimes
  A \to \mathbb K \otimes A$ denote evaluation at $t \in [0,1]$. Then
  $\ev_t(B)$ is generated by elements of the form 
$e_{n-1,n} \otimes a$ for some $n \in \mathbb Z$ and some $a
\in A$, and it is easy to see that this is all of $\mathbb K \otimes
A$. Consider then a continuous function $g : [0,1] \to \mathbb K
\otimes A$ which is an element of
$MT_{(\Ad S) \otimes \alpha}$, i.e. has the property that
$$
(\Ad S) \otimes \alpha\left(g(0)\right) = g(1).
$$
Let $\epsilon > 0$. For each $t \in [0,1]$ there is then an element $f_t
\in B$ such that $g(t) = f_t(t)$. We can therefore choose intervals $I_j =
\left[\frac{j}{M},\frac{j+1}{M}\right]$ and elements $f_j \in B$ such
that $\left\|g(t) - f_j(t)\right\| \leq \epsilon, t \in I_j$, for all
$j$, and such that 
$$
\left(\Ad S\right) \circ \alpha \left(f_0(0)\right) = f_{M-1}(1) .
$$
Choose a partition of unity $h_j \in C[0,1], j =
0,1,2,\dots, M-1$, such that $h_0(0) = h_{M-1}(1) = 1$ and $\supp h_j
\subseteq I_j$ for all $j$. Then $f= \sum_{j=0}^{M-1} h_if_i \in B$
because $B$ is a module over $\left\{f \in C[0,1] : \ f(0) =
  f(1)\right\}$. Since $\left\|f-g\right\| \leq \epsilon$ this shows that $B$
is equal to the entire mapping torus of $(\Ad S) \otimes \alpha$. This
mapping torus is isomorphic to that of $\id_{\mathbb K} \otimes \alpha$ because the automorphism group
  of $\mathbb K$ is connected, cf. Proposition 10.5.1 in \cite{Bl}.
\end{proof}

It follows from Lemma \ref{eee} and the preceding considerations that
$C^*_r(\alpha)$ is stably isomorphic to the mapping torus of
$\widehat{\beta}_{\infty} : B_{\beta} \to B_{\beta}$.

It is known that
$C^*_r\left(R_{\beta}\right)$ is isomorphic to the Bunce-Deddens algebra $\BD(d^{\infty})$
of type $d^{\infty}$, cf. Example 3 in \cite{De}. Thus $\BD(d^{\infty})$ is the unique
simple unital AT-algebra with a unique trace state such that
$K_1\left(\BD(d^{\infty})\right) \simeq \mathbb Z$ and $K_0\left(\BD(d^{\infty})\right)$
is isomorphic, as a partial ordered group with order unit, to the group $\mathbb
Z\left[1/d\right]$ of $d$-adic rationals when the latter has the
order inherited from $\mathbb R$ and the order unit $1$. As shown in Example 3 of \cite{De} the map
$\widehat{\beta}_* :  K_1\left(\BD(d^{\infty})\right) \to
K_1\left(\BD(d^{\infty})\right)$ is the identity while $\widehat{\beta}_* :  K_0\left(\BD(d^{\infty})\right) \to
K_0\left(\BD(d^{\infty})\right)$ is multiplication by $\frac{1}{d}$ on $\mathbb
Z\left[1/d\right]$. To emphasise the number $d$, which is the
determining input for the construction, we will denote the mapping torus of
$\widehat{\beta}$ by $MT_d$ in the following.

\begin{lemma}\label{mapping} The mapping torus
  $MT_{\widehat{\beta}_{\infty}}$ of
  $\widehat{\beta}_{\infty}$ is stably isomorphic to
$$
MT_d = \left\{ f \in C[0,1] \otimes C^*_r(\beta) : \ \widehat{\beta}(f(0)) =
  f(1) \right\} ;
$$
the mapping torus of the Deaconu endomorphism of $C^*_r(R_{\beta})$.
\end{lemma}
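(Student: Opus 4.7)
The plan is to realize $MT_{\widehat{\beta}_\infty}$ as an inductive limit of copies of $MT_{\widehat{\beta}}$ and then argue that this limit is stably isomorphic to a single stage.

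For the first step, let $\iota_n \colon C^*_r(R_\beta) \to B_\beta$ denote the canonical embedding into the $n$-th stage of the inductive system defining $B_\beta$, and consider the subalgebras
$$
M_n = \left\{f \in MT_{\widehat{\beta}_\infty} : f(t) \in \iota_n\bigl(C^*_r(R_\beta)\bigr) \text{ for all } t \in [0,1]\right\}.
$$
The relations $\widehat{\beta}_\infty \circ \iota_n = \iota_n \circ \widehat{\beta}$ and $\iota_{n+1} \circ \widehat{\beta} = \iota_n$, together with the injectivity of each $\iota_n$, imply that $M_n \simeq MT_{\widehat{\beta}}$ via $f \mapsto \iota_n^{-1} \circ f$ and that the inclusion $M_n \hookrightarrow M_{n+1}$ corresponds under this identification to the injective endomorphism $\Psi \colon MT_{\widehat{\beta}} \to MT_{\widehat{\beta}}$ given by $\Psi(f)(t) = \widehat{\beta}(f(t))$. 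A uniform-continuity argument combined with the density of $\bigcup_n \iota_n(C^*_r(R_\beta))$ in $B_\beta$---where one approximates $f \in MT_{\widehat{\beta}_\infty}$ by a function valued in $\iota_n(C^*_r(R_\beta))$ and then modifies it near $t = 1$ to restore the mapping torus condition---shows $MT_{\widehat{\beta}_\infty} = \overline{\bigcup_n M_n}$, so that $MT_{\widehat{\beta}_\infty}$ is the inductive limit of $(MT_{\widehat{\beta}}, \Psi)$.

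For the second step, I would use that $\widehat{\beta}_*$ is an isomorphism on both $K_0(C^*_r(R_\beta)) = \mathbb{Z}[1/d]$ (where it is multiplication by $1/d$) and $K_1(C^*_r(R_\beta)) = \mathbb{Z}$ (where it is the identity). Via the six-term exact sequence of the extension
$$
0 \to C_0(0,1) \otimes C^*_r(R_\beta) \to MT_{\widehat{\beta}} \to C^*_r(R_\beta) \to 0 ,
$$
this implies that $\Psi_*$ is an isomorphism on $K_*(MT_{\widehat{\beta}})$. Since $MT_{\widehat{\beta}}$ is nuclear and satisfies the UCT, $\Psi$ then represents a $KK$-equivalence, so the inductive limit and $MT_{\widehat{\beta}}$ carry the same Elliott invariant. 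Classification---applied, for instance, to extensions in the bootstrap class whose ideal and quotient are $\mathcal{Z}$-stable---then delivers the stable isomorphism.

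The hard part will be this last classification step: neither $MT_{\widehat{\beta}}$ nor $MT_{\widehat{\beta}_\infty}$ is simple---both contain the proper ideal $C_0(0,1) \otimes C^*_r(R_\beta)$, respectively $C_0(0,1) \otimes B_\beta$---so Elliott's classification of simple AT-algebras does not apply directly. The cleanest way around this is an explicit two-sided intertwining argument in the style of Elliott, carried out inside the inductive limit and respecting the ideal structure; alternatively, one could try to realize both mapping tori as groupoid $C^*$-algebras of Morita equivalent groupoids and invoke Theorem~\ref{MRW}.
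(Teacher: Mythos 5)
Your first step---realizing $MT_{\widehat{\beta}_\infty}$ as the inductive limit of the system $\bigl(MT_d,\Psi\bigr)$ with $\Psi=\id_{C[0,1]}\otimes\widehat{\beta}$---coincides with the first step of the paper's proof and is fine. The genuine gap is the second step. Knowing that $\Psi_*$ is an isomorphism on $K_*$, hence (via the UCT) that $\Psi$ is a $KK$-equivalence, only tells you that $MT_{\widehat{\beta}_\infty}$ and $MT_d$ have the same $K$-theory; it does not deliver a stable isomorphism. These algebras are far from simple: they are fibred over the circle with Bunce--Deddens fibres, so they carry a large ideal lattice, and there is no classification theorem covering them by the Elliott invariant or by $KK$-equivalence. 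In particular the proposed appeal to ``classification of extensions in the bootstrap class with $\mathcal Z$-stable ideal and quotient'' is not an available theorem: for non-simple algebras one needs at least ideal-related invariants, and even then stable isomorphism does not follow from $KK$-data alone. You acknowledge the problem yourself, but the two escape routes you sketch (an Elliott-style intertwining respecting the ideal structure, or a groupoid equivalence feeding into Theorem~\ref{MRW}) are not carried out and neither is routine, so the argument is incomplete precisely at its crucial point.

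The paper closes this gap by an elementary argument that avoids classification altogether. Since the Deaconu endomorphism is implemented by an isometry $V\in C^*_r(\beta)$ with $V^*C^*_r(R_\beta)V\subseteq C^*_r(R_\beta)$, one has $\widehat{\beta}\left(C^*_r(R_\beta)\right)=VV^*C^*_r(R_\beta)VV^*$, so the canonical map $\rho_{\infty,1}\colon C^*_r(R_\beta)\to B_\beta$ has image the corner $qB_\beta q$, $q=\rho_{\infty,1}(1)$. Consequently the image of the first copy of $MT_d$ inside $MT_{\widehat{\beta}_\infty}$, namely
\begin{equation*}
\left\{ f \in C[0,1]\otimes B_\beta : \ \widehat{\beta}_\infty(f(0)) = f(1), \ qf(t)=f(t)q=f(t) \ \forall t \right\},
\end{equation*}
is a hereditary $C^*$-subalgebra, and it is full: by simplicity of $B_\beta$ any ideal containing it has full fibre over every $t\in[0,1]$, and a partition-of-unity argument (as in the proof of Lemma~\ref{eee}) shows such an ideal must be all of $MT_{\widehat{\beta}_\infty}$. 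Brown's theorem \cite{Br} then gives the stable isomorphism directly. If you want to salvage your inductive-limit picture, the fact you need about the connecting map $\Psi$ is not that it is a $K_*$-isomorphism but that it is a full corner-type embedding in each fibre---which is exactly what $\widehat{\beta}(a)=VaV^*$ provides.
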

\begin{proof} Note that the mapping torus of
$\widehat{\beta}_{\infty}$ is isomorphic to the inductive limit
\begin{equation}\label{paris}
\begin{xymatrix}{
MT_d  \ar[rr]^-{\id_{C[0,1]} \otimes
  \widehat{\beta}} & & MT_d \ar[rr]^-{\id_{C[0,1]}
  \otimes \widehat{\beta}}  && MT_d
\ar[rr]^-{\id_{C[0,1]} \otimes \widehat{\beta}}  & & \cdots \cdots
}\end{xymatrix}
\end{equation}
Let $\rho_{\infty,1}
: C_r\left(R_{\beta}\right) \to B_{\beta}$ be the canonical
homomorphism out of the first copy of $C^*_r(R_{\beta})$ in the
sequence (\ref{parisV}). As observed in \cite{An} the isometry $V \in
C^*_r(\beta)$ which implements the Deaconu endomorphism, in the sense
that $\hat{\beta}(a) = VaV^*$, has the property that
$V^*C^*_r(R_{\beta})V \subseteq C^*_r(R_{\beta})$. It follows that
$\hat{\beta}\left(C^*_r(R_{\beta})\right) = VV^*C^*_r(R_{\beta})VV^*$
and that
$$
\rho_{\infty,1}\left(C^*_r\left(R_{\beta}\right)\right) = qB_{\beta}q
$$
where $q = \rho_{\infty,1}(1)$. It follows then from the commuting diagram
\begin{equation*}
\begin{xymatrix}{
MT_d \ar@{^{(}->}[d] \ar[rr] & & MT_{\widehat{\beta}_{\infty}} \ar@{^{(}->}[d] \\
C[0,1]\otimes C^*_r\left(R_{\beta}\right) \ar[rr]^-{\id_{C[0,1]}
  \otimes \rho_{\infty,1}} && C[0,1] \otimes B_{\beta} \\
C^*_r\left(R_{\beta}\right) \ar[u] \ar[rr]_{\rho_{\infty,1}} &&
B_{\beta} \ar[u]  
}
\end{xymatrix}
\end{equation*}
that the image in the mapping torus $MT_{\widehat{\beta}_{\infty}}$ of
the first copy of $MT_d$ from the sequence (\ref{paris}) is equal to
\begin{equation}\label{brown}
\left\{ f \in C[0,1] \otimes B_{\beta} : \
  \widehat{\beta}_{\infty}(f(0)) = f(1), \ qf(t) =
  f(t)q = f(t) \ \forall t \right\},
\end{equation}
which is visibly a hereditary $C^*$-subalgebra of the
mapping cone of $\widehat{\beta}_{\infty}$. Since $B_{\beta}$ is
simple (because $\BD(d^{\infty})$ is) it follows that an ideal in
$MT_{\widehat{\beta}_{\infty}}$ which contains (\ref{brown}) must have
full fiber over every $t \in [0,1]$. Then a standard partition of
unity argument shows, much as in the proof of Lemma \ref{eee}, that such an ideal must be all of
$MT_{\widehat{\beta}_{\infty}}$, i.e. (\ref{brown}) is both hereditary
and full in $MT_{\widehat{\beta}_{\infty}}$. The desired conclusion
follows then from Corollary 2.6 of \cite{Br}.
\end{proof}

We can now summarise with the following.

\begin{prop}\label{super-att} Let $\Omega$ be a super-attractive
  stable region. Then
$C^*_r\left(\Omega \backslash \mathcal I\right)$ is isomorphic to $ \mathbb K
  \otimes MT_d$ where $MT_d$ is the
  mapping torus of the Deaconu endomorphism on
  $\BD\left(d^{\infty}\right)$.
\end{prop}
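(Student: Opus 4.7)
The plan is to assemble the chain of stable isomorphisms that the preceding discussion has essentially constructed, and then verify that the collapse from ``stably isomorphic'' down to ``isomorphic'' is legitimate because $C^*_r(\Omega\backslash\mathcal{I})$ is already stable.

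First I would apply Lemma \ref{16} to reduce the problem to a core $U$ of $\Omega$ and identify $C^*_r(\Omega\backslash\mathcal{I})$ with $C^*_r(U\backslash\mathcal{I})\otimes\mathbb{K}$. In the super-attractive case the set $\mathcal{I}\cap U$ is precisely the super-attracting periodic point $x$ at the center of $U$, and $U\backslash\{x\}$ contains no further critical points of $R^p$. Hence Proposition \ref{nocrit} identifies $C^*_r(U\backslash\mathcal{I})$ with $C^*_r(\psi)$ for the local homeomorphism $\psi(z)=z^d$ on $D_r\backslash\{0\}$, where $d=\mathrm{val}(R^p,x)\geq 2$. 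An application of Theorem \ref{MRW} then replaces $D_r\backslash\{0\}$ by the full punctured disk $D=\{0<|z|<1\}$, since $D=\bigcup_j\alpha^{-j}(D_r\backslash\{0\})$ for $\alpha(z)=z^d$; thus $C^*_r(\psi)$ is stably isomorphic to $C^*_r(\alpha)$.

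Next I would push through the change of variables $(t,\lambda)\mapsto\bigl(\tfrac{\log(-\log t)}{\log d},\lambda\bigr)$ exhibited in Section \ref{sssuper-att} to conjugate $(D,\alpha)$ to $(\mathbb{R}\times\mathbb{T},\tau\times\beta)$ with $\tau(t)=t+1$ and $\beta(\lambda)=\lambda^d$. Compactifying $\mathbb{R}$ to $S^1=\mathbb{R}\cup\{\infty\}$ and extending $\tau$ to $\tau^+$ produces the Deaconu--Renault extension (\ref{CText}), in which the ideal $C^*_r(\tau\times\beta)$ sits inside $C^*_r(\tau^+\times\beta)$. Using the tensor decomposition $R_\varphi=S^1\times R_\beta$ and the factorization $\widehat{\varphi}=\hat{\tau}\otimes\widehat{\beta}$, I would appeal to Theorem 4.8 of \cite{Th1} to realize $C^*_r(\tau\times\beta)$ as a full corner of $(C_0(\mathbb{R})\otimes B_\beta)\rtimes_{\hat{\tau}\otimes\widehat{\beta}_\infty}\mathbb{Z}$, whence by \cite{Br} the two algebras are stably isomorphic.

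The final step is to identify this crossed product with a mapping torus: Lemma \ref{eee} gives an isomorphism with $MT_{\,\mathrm{id}_\mathbb{K}\otimes\widehat{\beta}_\infty}$, and Lemma \ref{mapping} identifies the latter (up to stable isomorphism) with $MT_d$. Chaining everything together, $C^*_r(U\backslash\mathcal{I})$ is stably isomorphic to $MT_d$, so $C^*_r(\Omega\backslash\mathcal{I})\simeq C^*_r(U\backslash\mathcal{I})\otimes\mathbb{K}$ is stably isomorphic to $MT_d$ as well. Since Lemma \ref{16} has already shown that $C^*_r(\Omega\backslash\mathcal{I})$ is a stable $C^*$-algebra, the stable isomorphism upgrades to $C^*_r(\Omega\backslash\mathcal{I})\simeq\mathbb{K}\otimes MT_d$.

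Essentially no new technical argument is required; the main obstacle was really absorbed into Lemmas \ref{eee} and \ref{mapping}, where one must carefully check that the inductive limit $B_\beta$ over repeated applications of $\widehat{\beta}$ can be replaced by $C^*_r(R_\beta)$ itself at the level of the mapping torus (via the full hereditary subalgebra (\ref{brown}) and \cite{Br}). The only bookkeeping subtlety at this stage is to make sure that every ``stably isomorphic'' statement in the chain is compatible with the already-established stability of $C^*_r(\Omega\backslash\mathcal{I})$, so that the final $\otimes\mathbb{K}$ can be absorbed cleanly.
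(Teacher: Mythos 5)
Your proposal is correct and follows essentially the same route as the paper: Lemma \ref{16} together with Proposition \ref{nocrit} and Theorem \ref{MRW} reduce to $C^*_r(\alpha)$ for $\alpha(z)=z^d$ on the punctured disk, the logarithmic change of variables and the extension (\ref{CText}) together with Theorem 4.8 of \cite{Th1} and \cite{Br} give stable isomorphism with $\left(C_0(\mathbb R)\otimes B_{\beta}\right)\rtimes_{\hat{\tau}\otimes\widehat{\beta}_{\infty}}\mathbb Z$, and Lemmas \ref{eee} and \ref{mapping} identify this, up to stable isomorphism, with $MT_d$, after which the stability of $C^*_r\left(\Omega\backslash\mathcal I\right)$ from Lemma \ref{16} upgrades the conclusion to an isomorphism with $\mathbb K\otimes MT_d$. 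This is precisely how the paper assembles the proof, so no further comment is needed.
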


To describe the quotient $
C^*_r\left({\Omega \cap \mathcal I}\right)$ in 
(\ref{sullivanextension}) we need to determine the restricted
orbits in $\mathcal I$ and find the isotropy groups of their
elements. Note that every periodic point in $\Omega$ is
$\Ro$-equivalent to a critical point in the critical periodic
orbit. Hence every $\Ro$-equivalence class in $\Omega \cap \mathcal I$ is
represented by a critical point $z$ in $\Omega$. When $z$ is
eventually periodic it follows from
Proposition 4.4 of \cite{Th2} that $\Is_z$ is an infinite subgroup of
$\mathbb Q/\mathbb Z$ and hence $C^*_r\left(\Is_z\right) \simeq
C\left( \widehat{\Is_x}\right) \simeq C(K)$
where $K$ is the Cantor set. When $z$ is not pre-periodic it follows
from Proposition 4.4 of \cite{Th2} that $\Is_z = \mathbb Z_v$ where
$v$ is the
  asymptotic valency of $z$. By using Lemma \ref{basdecomp} and Lemma
\ref{discretequot} we get in this way a complete description of
$C^*_r\left({\Omega \cap \mathcal I}\right)$ and we can
then put the information we have obtained into
(\ref{sullivanextension}). To summarise our findings we introduce the
notation $\mathbb K_x$ for the $C^*$-algebra of compact operators on
the Hilbert space $l^2\left(\Ro(x)\right)$. Thus
\begin{equation*}
\mathbb K_x = \begin{cases} \mathbb K \ \text{when $x$ is not
    exposed, and} \\ M_n(\mathbb C),  \ \text{ where $n = \# \Ro(x)
    \leq 4$ when $x$
    is exposed.} \end{cases}
\end{equation*}

\begin{thm}\label{supattdomain} Let $\Omega$ be a super-attractive
  stable region and $c_1,c_2, \dots, c_{n+m}$ critical points in
  $\Omega$ such that $\Omega \cap \mathcal I = \sqcup_{i=1}^{n+m}
  \Ro(c_i)$, and $c_1, c_2, \dots, c_n$ are pre-periodic while $c_{n+1}, c_{n+2},
  \dots, c_{n+m}$ are not. Let $v_i$ be the asymptotic
  valency of $c_i, \ n+1 \leq i \leq n+m$. There is an extension
\begin{equation*}\label{extsup}
\begin{xymatrix}{
0 \ar[r] & \mathbb K \otimes MT_d  \ar[r]
& C^*_r\left({\Omega}\right) \ar[r] & \left( \oplus_{i=1}^n 
C(K) \otimes \mathbb K_{c_i}\right)  \oplus \left(\oplus_{i=n+1}^{n+m} \mathbb
C^{v_i}\otimes \mathbb K_{c_i}\right)   \ar[r] & 0,
}\end{xymatrix}
\end{equation*}
where $K$ is the Cantor set and $MT_d$ is the mapping
torus of the Deaconu endomorphism on the Bunce-Deddens algebra of type $d^{\infty}$.
\end{thm}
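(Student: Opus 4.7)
The plan is to assemble the stated extension directly from the general extension (\ref{sullivanextension}) by identifying each of the two outer terms separately. The ideal on the left is already handled by Proposition \ref{super-att}, which gives $C^*_r(\Omega \backslash \mathcal{I}) \simeq \mathbb{K} \otimes MT_d$. So essentially all the work in the theorem is to pin down the quotient $C^*_r(\Omega \cap \mathcal{I})$ as the concrete finite direct sum displayed on the right.

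First I would show that $\Omega \cap \mathcal{I}$ decomposes as a disjoint union of the restricted orbits of the critical points $c_1, \ldots, c_{n+m}$. This is the content of the parenthetical remark immediately preceding the theorem: every periodic point in $\Omega$ is $\Ro$-equivalent to a critical point in the periodic critical orbit (since a core of a super-attractive region contains exactly one periodic point, which is itself critical), while every pre-critical point $z \in \mathcal{I}$ is by Lemma \ref{simpleobs} already $\Ro$-equivalent to a critical point in $\Omega$. Each $\Ro(c_i)$ is closed and discrete in $\Omega$ by Lemma \ref{gcclosed} and hence also locally compact, and by Lemma \ref{perclosed} applied to the periodic case the orbits $\Ro(c_i)$ are pairwise disjoint and relatively open in $\Omega \cap \mathcal{I}$. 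An application of Lemma \ref{basdecomp} then yields
\begin{equation*}
C^*_r(\Omega \cap \mathcal{I}) \simeq \bigoplus_{i=1}^{n+m} C^*_r\bigl(\Ro(c_i)\bigr) .
\end{equation*}

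Next I would evaluate each summand with Lemma \ref{discretequot}, which gives $C^*_r(\Ro(c_i)) \simeq C^*(\Is_{c_i}) \otimes \mathbb{K}(l^2(\Ro(c_i))) = C^*(\Is_{c_i}) \otimes \mathbb{K}_{c_i}$. It remains to identify $C^*(\Is_{c_i})$, and this is where Proposition 4.4 of \cite{Th2} enters. For a pre-periodic critical point $c_i$ ($1 \leq i \leq n$) lying in a super-attractive region, the periodic cycle it lands on is itself a critical cycle, so we are in the case where the isotropy is an infinite subgroup of $\mathbb{Q}/\mathbb{Z}$ (its elements are the germs $[R^{kp}]_{c_i}$ with fractional exponents arising from inverse branches through the super-attracting fixed point); Pontryagin duality then gives $C^*(\Is_{c_i}) \simeq C(\widehat{\Is_{c_i}}) \simeq C(K)$ since the Pontryagin dual of any infinite subgroup of $\mathbb{Q}/\mathbb{Z}$ is a Cantor set. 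For a non-pre-periodic critical point $c_i$ ($n+1 \leq i \leq n+m$) the relevant clause of Proposition 4.4 of \cite{Th2} gives $\Is_{c_i} \simeq \mathbb{Z}_{v_i}$, so $C^*(\Is_{c_i}) \simeq \mathbb{C}^{v_i}$.

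Combining these identifications with the extension (\ref{sullivanextension}) for $\Omega$ produces exactly the stated sequence. I do not expect any real obstacle: once one accepts Proposition \ref{super-att} and the classification of isotropy groups from \cite{Th2}, the argument is a straightforward bookkeeping exercise built on Lemmas \ref{basdecomp} and \ref{discretequot}. The only place where one must be a little careful is in verifying that $\Ro(c_i)$ is genuinely closed in $\Omega$ (so that it is a legitimate closed $\Ro$-invariant piece in the quotient), but this is exactly what Lemmas \ref{perclosed} and \ref{gcclosed} are designed to supply.
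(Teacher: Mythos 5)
Your proposal is correct and follows essentially the same route as the paper: the extension (\ref{sullivanextension}) with the ideal identified by Proposition \ref{super-att}, the observation that every $\Ro$-class in $\Omega \cap \mathcal I$ is represented by a critical point, and then Lemma \ref{basdecomp}, Lemma \ref{discretequot} and Proposition 4.4 of \cite{Th2} to compute the quotient, with $C(K)$ appearing for the pre-periodic critical points and $\mathbb C^{v_i}$ for the others. The extra details you supply (closedness and discreteness of the orbits via Lemmas \ref{perclosed} and \ref{gcclosed}) are exactly the facts the paper uses implicitly, so there is nothing to add.
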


\subsection{Attractive stable regions}

Let now $\Omega$ be an attractive stable region. Let $q$ be an element
of the periodic orbit in $\Omega$. The number $\lambda = \left(R^p\right)'(q)$,
where $p$ is the period of $q$, is
the multiplier of $q$. It agrees with the number $\lambda$ from b) of
Definition \ref{fatoudomains}. Let now
$\alpha$ be the local homeomorphism of $D_1$ defined such that
$\alpha(z) = \lambda z$. By the method used in the previous section we find that
$C^*_r\left({\Omega \backslash \mathcal I}\right) \simeq \mathbb K
\otimes C^*_r(\alpha)$. Write $\lambda = |\lambda| e^{2 \pi i \theta}$, where
$\theta \in [0,1[$, so that $\alpha$ can be realised as the map on
$]0,1[ \times \mathbb T$ given by
$$
(t,\mu) \mapsto \left(|\lambda|t, \mu e^{2 \pi i
    \theta}\right) .
$$
Via the map $(t,\mu) \to \left(\frac{\log t}{\log |\lambda|},
  \mu \right)$ we see that $\alpha$ is conjugate to the map $(t,\mu)
  \mapsto \left(t+1,\mu e^{2 \pi i
    \theta}\right)$ on $\mathbb R_+ \times \mathbb T$. The
  transformation groupoid of the last map is a reduction of the
  transformation groupoid of the homeomorphism $(t,\lambda) \mapsto
  \left(t+1,\lambda e^{2 \pi i
    \theta}\right)$ on $\mathbb R \times \mathbb T$. Hence
$C^*_r(\alpha)$ is stably isomorphic to the corresponding crossed product
$C_0(\mathbb R \times \mathbb T) \rtimes \mathbb Z$ by Theorem
\ref{MRW}. It follows from Lemma \ref{eee} that the latter crossed
product is isomorphic to $\mathbb K \otimes C\left(\mathbb
  T^2\right)$. In this way we obtain the following.

\begin{prop}\label{att}
Let $\Omega$ be an attractive stable region. Then 
$$
C^*_r\left({ \Omega \backslash \mathcal I}\right)  \simeq \mathbb K
  \otimes C\left(\mathbb T^2\right).
$$
\end{prop}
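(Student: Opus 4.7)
The plan is to mimic, step by step, the reduction carried out for the super-attractive case in Section \ref{sssuper-att}, with the power map $z\mapsto z^d$ replaced by the multiplication map $z\mapsto\lambda z$; the argument is also sketched in the paragraph immediately preceding the statement, and the task is to spell out each step.

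First I would apply Lemma \ref{16} to a core $U$ of $\Omega$. Since $\mathcal I\cap U$ is the single attracting periodic point $q$ at the center of $U$, it yields $C^*_r(\Omega\backslash\mathcal I)\simeq C^*_r(U\backslash\{q\})\otimes\mathbb K$. The conformal conjugacy of Definition \ref{fatoudomains}(b), combined with Proposition \ref{nocrit} (there are no critical points in $U\backslash\{q\}$), identifies $C^*_r(U\backslash\{q\})$ with $C^*_r(\psi)$, where $\psi(z)=\lambda z$ on $D_r\backslash\{0\}$. Since $D_1\backslash\{0\}=\bigcup_j\alpha^{-j}(D_r\backslash\{0\})$ when $\alpha(z)=\lambda z$ is viewed on the larger punctured disk, Theorem \ref{MRW} lets me pass from $\psi$ to $\alpha$ up to stable isomorphism.

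Next I would perform the coordinate change indicated before the statement. Writing $\lambda=|\lambda|e^{2\pi i\theta}$ and using polar coordinates followed by the logarithmic substitution $t\mapsto\frac{\log t}{\log|\lambda|}$, the map $\alpha$ becomes the local homeomorphism $(s,\mu)\mapsto(s+1,\mu e^{2\pi i\theta})$ on $\mathbb R_+\times\mathbb T$. This is the restriction to $\mathbb R_+\times\mathbb T$ of the honest homeomorphism of $\mathbb R\times\mathbb T$ given by the same formula, and every orbit of the latter visits the reduction; a second application of Theorem \ref{MRW} therefore identifies $C^*_r(\alpha)$, up to stable isomorphism, with the ordinary crossed product $C_0(\mathbb R\times\mathbb T)\rtimes\mathbb Z$ associated with this translation-rotation action.

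Finally, Lemma \ref{eee} applied with $A=C(\mathbb T)$ and the automorphism $\rho_\theta$ of rotation by $\theta$ realises this crossed product as the mapping torus $MT_{\id_{\mathbb K}\otimes\rho_\theta}$. To identify this with $\mathbb K\otimes C(\mathbb T^2)$ I would untwist using the continuous path $t\mapsto\id_{\mathbb K}\otimes\rho_{t\theta}$, $t\in[0,1]$, between the identity and $\id_{\mathbb K}\otimes\rho_\theta$: the map sending $f(t)$ to $(\id_{\mathbb K}\otimes\rho_{-t\theta})(f(t))$ carries the twisted mapping torus isomorphically onto $\{g\in C[0,1]\otimes\mathbb K\otimes C(\mathbb T):g(0)=g(1)\}=\mathbb K\otimes C(\mathbb T^2)$. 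I do not anticipate any real obstacle; the only step requiring care is this final untwisting, which is an instance of the standard fact that mapping tori of homotopic automorphisms are isomorphic, cf.\ Proposition 10.5.1 in \cite{Bl}.
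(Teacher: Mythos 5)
Your argument is correct and follows essentially the same route as the paper: Lemma \ref{16}, Proposition \ref{nocrit} and Theorem \ref{MRW} reduce matters to $C^*_r(\alpha)$ for $\alpha(z)=\lambda z$, the polar/logarithmic change of coordinates and a second application of Theorem \ref{MRW} give the crossed product $C_0(\mathbb R\times\mathbb T)\rtimes\mathbb Z$, and Lemma \ref{eee} finishes the identification. The only difference is that you spell out the untwisting of the mapping torus $MT_{\id_{\mathbb K}\otimes\rho_\theta}$ along the path $t\mapsto\rho_{t\theta}$, a step the paper leaves implicit in its appeal to Lemma \ref{eee}; your explicit isomorphism onto $\mathbb K\otimes C(\mathbb T^2)$ is correct.
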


It is also straightforward to adopt the methods from the preceding
section to obtain a description of
$C^*_r\left(\Omega \cap {\mathcal I}\right)$. The periodic points lie
in the same restricted orbit and the isotropy group of any of its
members is a copy of $\mathbb Z$ by Proposition 4.4 of \cite{Th2}. The
restricted orbits of the critical points are divided according to
whether or not they are pre-periodic. Since the periodic orbit is not
critical the isotropy group of a critical pre-periodic point is now
$\mathbb Z \oplus  \mathbb Z_d$ where $d$ is the asymptotic valency by
Proposition 4.4 of \cite{Th2}. This leads
to the following description of $C^*_r\left({\Omega}\right)$.

\begin{thm}\label{attdomain} Let $\Omega$ be an attractive stable
  region and $q$ a periodic point in $\Omega$. Let $c_1,c_2, \dots,
  c_{n+m}$ be critical points in
  $\Omega$ such that 
$$\Omega \cap \mathcal I = \Ro(q) \sqcup \sqcup_{i=1}^{n+m} \Ro(c_i),
$$ 
and $c_1, c_2, \dots, c_n$ are
  pre-periodic while $c_i, \ i \geq n+1,$ are not. Let $v_i$ be the asymptotic
  valency of $c_i$. There is an extension

\begin{equation*}\label{extsup}
\begin{xymatrix}{
0 \ar[r] & \mathbb K \otimes C(\mathbb T^2)  \ar[r]
& C^*_r\left({\Omega}\right) \ar[r] &  A  \ar[r]
 & 0
}\end{xymatrix}
\end{equation*}
where
$$
A = \left(C(\mathbb T) \otimes \mathbb
K_q\right)\oplus  \left(\oplus_{i=1}^n \mathbb C^{v_i} \otimes
C(\mathbb T) \otimes \mathbb K_{c_i}\right)  \oplus \left( \oplus_{i=n+1}^{n+m}
  \mathbb C^{v_i}\otimes \mathbb K_{c_i}\right) . 
$$
\end{thm}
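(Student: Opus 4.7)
The plan is to combine the general extension (\ref{sullivanextension}), with the ideal already identified in Proposition~\ref{att}, with a direct sum decomposition of the quotient $C^*_r(\Omega \cap \mathcal I)$ coming from the partition of $\Omega \cap \mathcal I$ into restricted orbits. First I would specialize (\ref{sullivanextension}) to the attractive stable region $\Omega$ and plug in $C^*_r(\Omega \setminus \mathcal I) \simeq \mathbb K \otimes C(\mathbb T^2)$ from Proposition~\ref{att} to put the left-hand term into the stated form. This reduces the proof to identifying the quotient $C^*_r(\Omega \cap \mathcal I)$ as the algebra $A$ of the theorem.

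Next I would observe that since $\mathcal I$ is closed and discrete in $F_R$ (by Lemma~\ref{perclosed} and Lemma~\ref{gcclosed}), the set $\Omega \cap \mathcal I$ is a discrete subset of $\C$, so every $\Ro$-orbit in it is simultaneously open and closed (and in particular $G_{\Omega \cap \mathcal I}$-invariant) in $\Omega \cap \mathcal I$. The hypothesis that $\Omega \cap \mathcal I = \Ro(q) \sqcup \bigsqcup_{i=1}^{n+m} \Ro(c_i)$ then triggers Lemma~\ref{basdecomp} to give
\begin{equation*}
C^*_r(\Omega \cap \mathcal I) \;\simeq\; C^*_r(\Ro(q)) \,\oplus\, \bigoplus_{i=1}^{n+m} C^*_r(\Ro(c_i)).
\end{equation*}
Each summand has discrete unit space, so Lemma~\ref{discretequot} yields $C^*_r(\Ro(x)) \simeq C^*(\Is_x) \otimes \mathbb K_x$ for $x \in \{q, c_1, \dots, c_{n+m}\}$, and it only remains to identify $C^*(\Is_x)$ in each of the three cases.

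For this identification I would quote Proposition~4.4 of \cite{Th2} in the form summarized in the paragraph preceding the theorem: at the periodic point $q$, which is not critical, $\Is_q \simeq \mathbb Z$, giving $C^*(\Is_q) \simeq C(\mathbb T)$; at a pre-periodic critical point $c_i$ ($i \leq n$), since the periodic cycle is non-critical the isotropy group becomes $\mathbb Z \oplus \mathbb Z_{v_i}$, whence $C^*(\Is_{c_i}) \simeq C(\mathbb T) \otimes \mathbb C^{v_i}$; and at a critical point $c_i$ ($i > n$) whose forward orbit avoids the periodic cycle, $\Is_{c_i} \simeq \mathbb Z_{v_i}$, giving $C^*(\Is_{c_i}) \simeq \mathbb C^{v_i}$. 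Substituting these three computations into the direct sum decomposition yields exactly the algebra $A$ in the statement, and the extension follows.

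The main obstacle is not really in the algebraic manipulations but in verifying that the partition $\Omega \cap \mathcal I = \Ro(q) \sqcup \bigsqcup_i \Ro(c_i)$ is well-posed, i.e.\ that the entire periodic cycle in $\Omega$ forms a single $\Ro$-orbit distinct from each $\Ro(c_i)$. For the first point, the non-critical periodic point $q$ has $\val(R^p, q) = 1$ and Lemma~\ref{simpleobs} places all iterates $R^j(q)$ in $\Ro(q)$; for the second, any critical $c_i$ satisfies $\val(R^k, c_i) \geq 2$ for all $k$ large enough (with equality to $v_i$ asymptotically), which cannot match the valency of a periodic point in the periodic orbit of $q$, so $\Ro(c_i)$ is disjoint from $\Ro(q)$. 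Distinctness among the $\Ro(c_i)$ is then just the hypothesis. With this verified, the appeal to Proposition~4.4 of \cite{Th2} to pin down the isotropy groups is the only delicate step, but it is precisely of the type already used in the super-attractive case (Theorem~\ref{supattdomain}), only with a non-critical periodic orbit in place of a critical one.
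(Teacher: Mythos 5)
Your proposal is correct and follows essentially the same route as the paper: specialize the extension (\ref{sullivanextension}) with Proposition~\ref{att} identifying the ideal, then decompose the quotient via Lemma~\ref{basdecomp} and Lemma~\ref{discretequot} and read off the isotropy groups ($\mathbb Z$ at $q$, $\mathbb Z \oplus \mathbb Z_{v_i}$ at pre-periodic critical points, $\mathbb Z_{v_i}$ otherwise) from Proposition~4.4 of \cite{Th2}, exactly as the paper does by adapting the super-attractive case. Your extra verification of the partition is harmless but not needed, since the decomposition $\Omega \cap \mathcal I = \Ro(q) \sqcup \bigsqcup_i \Ro(c_i)$ is part of the theorem's hypothesis.
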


\subsection{Parabolic stable regions}

The remaining cases, corresponding to stable regions of parabolic,
Siegel or Herman type can be handled by similar methods. Since the
considerations are simpler than those involved in the attractive
cases, we merely state the results.

\begin{thm}\label{para}
Let $\Omega$ be parabolic stable region. Let $c_i, i = 1,2, \dots,
  N$, be representatives for the restricted orbits of the critical points
  in $\Omega$ and let $v_i$ be the asymptotic valency of $c_i$. There is an extension
\begin{equation*}\label{extpar}
\begin{xymatrix}{
0 \ar[r] & \mathbb K
  \otimes C(\mathbb T) \otimes C_0(\mathbb R)  \ar[r]
& C^*_r\left(\Omega\right) \ar[r] & \oplus_{i=1}^{N}\mathbb
C^{v_i} \otimes \mathbb K_{c_i} \ar[r] & 0 .
}\end{xymatrix}
\end{equation*}
 \end{thm}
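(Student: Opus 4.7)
The plan is to apply the extension \eqref{sullivanextension} to $\Omega$ and identify each of its two outer terms up to the stated $*$-isomorphism, following exactly the template used for the super-attractive and attractive cases. Parabolic periodic orbits of $R$ lie on $\partial U \subset J_R$, so $\Omega$ contains no periodic points of $R$; consequently $\mathcal I \cap \Omega$ is the disjoint union $\sqcup_{i=1}^N \Ro(c_i)$ of the restricted orbits of critical points in $\Omega$, and \eqref{sullivanextension} takes the form
\[
0 \to C^*_r(\Omega \setminus \mathcal I) \to C^*_r(\Omega) \to \oplus_{i=1}^N C^*_r(\Ro(c_i)) \to 0.
\]

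For the ideal, Lemma~\ref{16} reduces the task to computing $C^*_r(U \setminus \mathcal I)$ up to stable isomorphism, where $U$ is a parabolic core. The conformal conjugacy $\alpha\colon U \to \mathbb H$ from Definition~\ref{fatoudomains}(c) intertwines $R^p$ with $\tau(z) = z+1$, and the mutual disjointness of $U, R(U), \dots, R^{p-1}(U)$ forces the cocycle on the reduction of $G_R$ to $U$ to be a multiple of $p$, identifying this groupoid with the transformation groupoid of $\tau$ on $\mathbb H \setminus \alpha(\mathcal I \cap U)$. Writing $\mathbb H = \mathbb R_+ \times \mathbb R$ via real and imaginary parts turns $\tau$ into the shift $(t,s)\mapsto(t+1,s)$ on the first coordinate. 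Every orbit of the corresponding homeomorphism on $\mathbb R \times \mathbb R$ meets the open invariant subset $\mathbb R_+ \times \mathbb R$, so Theorem~\ref{MRW} supplies a stable isomorphism of the ideal with
\[
C_0(\mathbb R^2) \rtimes \mathbb Z \;\simeq\; C_0(\mathbb R) \otimes \bigl(C_0(\mathbb R) \rtimes_{\widehat{\tau}} \mathbb Z\bigr).
\]
Lemma~\ref{eee}, applied with $A = \mathbb C$ and its trivial automorphism, identifies the inner crossed product with the mapping torus of $\id_{\mathbb K}$, namely $\mathbb K \otimes C(\mathbb T)$. Hence $C^*_r(\Omega \setminus \mathcal I) \simeq \mathbb K \otimes C(\mathbb T) \otimes C_0(\mathbb R)$.

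For the quotient, Lemma~\ref{gcclosed} makes each $\Ro(c_i)$ closed and discrete, so Lemma~\ref{discretequot} yields $C^*_r(\Ro(c_i)) \simeq C^*(\Is_{c_i}) \otimes \mathbb K_{c_i}$. Since $\Omega$ has no periodic points, none of the $c_i$ is pre-periodic, and Proposition~4.4 of \cite{Th2} gives $\Is_{c_i} \cong \mathbb Z_{v_i}$ with $v_i$ the asymptotic valency of $c_i$. Therefore $C^*(\Is_{c_i}) \cong \mathbb C^{v_i}$, and Lemma~\ref{basdecomp} assembles the quotient as $\oplus_{i=1}^N \mathbb C^{v_i} \otimes \mathbb K_{c_i}$. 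Feeding the two identifications into \eqref{sullivanextension} then delivers the stated extension.

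The principal technical hurdle is the ideal identification: one must verify that the discrete $\tau$-invariant subset $\alpha(\mathcal I \cap U) \subset \mathbb H$ is correctly absorbed when Theorem~\ref{MRW} enlarges the base space from $\mathbb R_+ \times \mathbb R$ to $\mathbb R \times \mathbb R$, and simultaneously that the cocycle rescaling by $p$ does not alter the reduced groupoid $C^*$-algebra --- precisely the manoeuvres carried out in the super-attractive and attractive cases above. All the other ingredients are extracted from the general machinery already assembled in Section~\ref{dynsys} and in the first part of the present section.
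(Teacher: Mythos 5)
Your architecture is exactly the one the paper intends: the paper gives no separate argument for the parabolic case (it states only that it ``can be handled by similar methods'' to the attractive case), and your proof follows that template --- the extension \eqref{sullivanextension}, Lemma~\ref{16} together with the conformal model of the core, Theorem~\ref{MRW} and Lemma~\ref{eee} for the ideal, and Lemma~\ref{basdecomp}, Lemma~\ref{discretequot} and Proposition 4.4 of \cite{Th2} for the quotient. The quotient half of your argument is complete and agrees with the statement.

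The gap is in the ideal. Your identification of $C^*_r\left(U\setminus \mathcal I\right)$ with the crossed product of the \emph{full} half-plane, $C_0\left(\mathbb R^2\right)\rtimes\mathbb Z$, is only valid because $\mathcal I\cap U=\emptyset$, and you neither prove this nor could the ``hurdle'' you flag be resolved in the way you suggest. If $\alpha\left(\mathcal I\cap U\right)$ were a non-empty discrete $\tau$-invariant set it would not be ``absorbed'' by Theorem~\ref{MRW}: one would only obtain stable isomorphism with $C_0\left(\mathbb R^2\setminus D\right)\rtimes\mathbb Z$ for the saturated discrete set $D$, and since the action is free and proper this is $\mathbb K\otimes C_0$ of the open cylinder with a discrete set removed, which is not $\mathbb K\otimes C(\mathbb T)\otimes C_0(\mathbb R)$ --- compare the attractive case, where the puncture at the fixed point is precisely what produces $C\left(\mathbb T^2\right)$ rather than $C(\mathbb T)\otimes C_0(\mathbb R)$. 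The correct resolution, and the real reason the parabolic case is simpler, is that a parabolic core is disjoint from $\mathcal I$: since $R^p|_U$ is conjugate to the injective, fixed-point-free translation $z\mapsto z+1$ on $\mathbb H$, none of the components $U, R(U),\dots,R^{p-1}(U)$ can contain a critical point of $R$ (such a point would force $\left(R^p\right)'$ to vanish somewhere on $U$), hence $U$ contains no pre-critical points, and it contains no periodic or pre-periodic points because every forward iterate of a point of $U$ lies in one of these components, on which the relevant power of $R$ is conjugate to a translation without fixed points. This also supplies the missing justification for your claim that $\Omega$ contains no periodic points at all, which your appeal to the parabolic orbit lying on $\partial U$ does not by itself give. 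Once $\mathcal I\cap U=\emptyset$ is in place, Proposition~\ref{nocrit} applies to the union of the cores, your rescaling of the cocycle by $p$ goes through, and the rest of your computation (Theorem~\ref{MRW}, Lemma~\ref{eee} with the trivial automorphism, and stability of the ideal from Lemma~\ref{16}) yields $\mathbb K\otimes C(\mathbb T)\otimes C_0(\mathbb R)$ as claimed.
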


\subsection{Stable regions of Siegel type}

Let $\theta \in [0,1] \backslash \mathbb Q$. The corresponding
\emph{irrational rotation algebra} is the universal $C^*$-algebra
generated by two unitaries $U,V$ satisfying the relation $UV =e^{2 \pi
  i \theta}VU$. See \cite{EE} for more on its structure.

\begin{thm}\label{siegel}
Let $\Omega$ be a stable region of Siegel type. Let $q$ be a
periodic point in $\Omega$. Let $c_1,c_2, \dots,
  c_{n+m}$ be critical points in
  $\Omega$ such that 
$$
\Omega \cap \mathcal I = \Ro(q) \sqcup \sqcup_{i=1}^{n+m} \Ro(c_i),
$$ 
and $c_1, c_2, \dots, c_n$ are
  pre-periodic while $c_i, \ i \geq n+1,$ are not. Let $v_i$ be the asymptotic
  valency of $c_i$. There is an extension 
\begin{equation*}\label{extsiegel}
\begin{xymatrix}{
0 \ar[r] & \mathbb K
  \otimes C_0(\mathbb R) \otimes A_{\theta}   \ar[r]
& C^*_r\left(\Omega\right) \ar[r] &  B \ar[r] & 0 
}\end{xymatrix}
\end{equation*}
where $A_{\theta}$ is the irrational rotation algebra, corresponding to the
rotation by the angle $2 \pi \theta$ in the core domain, and
$$
B = \left(C(\mathbb T) \otimes \mathbb
K_q\right)\oplus  \left(\oplus_{i=1}^n \mathbb C^{v_i} \otimes
C(\mathbb T) \otimes \mathbb K_{c_i}\right)  \oplus \left( \oplus_{i=n+1}^{n+m}
 \mathbb C^{v_i}\otimes \mathbb K_{c_i}\right) .
$$
 \end{thm}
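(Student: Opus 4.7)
The plan is to follow the same pattern as the proof of Theorem~\ref{attdomain}, starting from the extension (\ref{sullivanextension})
$$
0 \to C^*_r(\Omega \setminus \mathcal{I}) \to C^*_r(\Omega) \to C^*_r(\Omega \cap \mathcal{I}) \to 0
$$
and analysing the ideal and the quotient separately. The quotient step is essentially identical to the attractive case; the only substantial new work is in the ideal, where the irrational rotation at the heart of the Siegel disk produces the factor $A_{\theta}$.

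For the ideal, apply Lemma~\ref{16} to reduce to $C^*_r(\Omega \setminus \mathcal{I}) \simeq \mathbb{K} \otimes C^*_r(U \setminus \{q_0\})$, where $U$ is a Siegel core and $q_0$ is the central periodic point---the only element of $\mathcal{I}$ inside $U$, since Siegel disks contain no critical points. To handle $C^*_r(U \setminus \{q_0\})$ I would first apply Proposition~\ref{nocrit} to the $R$-invariant, critical-point-free set $\bigcup_{i=0}^{p-1} R^i(U)$ minus the periodic centers, and then use Theorem~\ref{MRW} to descend to the single-disk reduction $U \setminus \{q_0\}$; the resulting groupoid is the transformation groupoid of $R^p|_{U \setminus \{q_0\}}$, a homeomorphism which by the $\psi$ of Definition~\ref{fatoudomains}(d) is conjugate to the rotation $\alpha(z) = e^{2\pi i t}z$ on $D_1 \setminus \{0\}$. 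Writing $D_1 \setminus \{0\}$ in polar coordinates as $]0,1[\times \mathbb{T}$ and identifying $]0,1[$ with $\mathbb{R}$ via the logarithm, $\alpha$ becomes the product of the identity on $\mathbb{R}$ with the irrational rotation $\mu \mapsto e^{2\pi i t}\mu$ on $\mathbb{T}$. Since $\alpha$ is a homeomorphism the transformation groupoid is simply the $\mathbb{Z}$-action groupoid, so
$$
C^*_r(\alpha) \simeq C_0(\mathbb{R}) \otimes \bigl( C(\mathbb{T}) \rtimes_{\theta} \mathbb{Z} \bigr) \simeq C_0(\mathbb{R}) \otimes A_{\theta},
$$
and combining with the $\mathbb{K}$ from Lemma~\ref{16} yields the stated ideal.

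For the quotient, decompose $\Omega \cap \mathcal{I} = \Ro(q) \sqcup \bigsqcup_{i=1}^{n+m} \Ro(c_i)$ into restricted orbits, closed and discrete in $F_R$ by Lemmas~\ref{perclosed} and~\ref{gcclosed}, and apply Lemma~\ref{basdecomp} followed by Lemma~\ref{discretequot}. The isotropy groups come from Proposition~4.4 of \cite{Th2}: the non-critical periodic $q$ has $\Is_q \simeq \mathbb{Z}$ and contributes $C(\mathbb{T}) \otimes \mathbb{K}_q$; each pre-periodic critical $c_i$ ($i \leq n$), whose periodic target is non-critical, has $\Is_{c_i} \simeq \mathbb{Z} \oplus \mathbb{Z}_{v_i}$ and contributes $\mathbb{C}^{v_i} \otimes C(\mathbb{T}) \otimes \mathbb{K}_{c_i}$; and each non-pre-periodic critical $c_i$ ($i > n$) has $\Is_{c_i} \simeq \mathbb{Z}_{v_i}$ and contributes $\mathbb{C}^{v_i} \otimes \mathbb{K}_{c_i}$. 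Assembling yields the $B$ in the statement.

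The main obstacle is the rigorous identification of the reduction $G_{U \setminus \{q_0\}}$ with the $\mathbb{Z}$-action groupoid of $R^p|_U$, i.e.\ verifying that every local transfer between points of $U$ agrees on its domain with an iterate of $R^p|_U$. This uses $R^i(U) \cap U = \emptyset$ for $0 < i < p$ to force the $\mathbb{Z}$-label to be a multiple of $p$, combined with analytic continuation in the critical-point-free, simply connected region $U$. Once this is in hand, the remaining steps are routine adaptations of the attractive case; the only genuinely new feature is that $\alpha$ is an isometric homeomorphism rather than a contraction, so one obtains a plain crossed product and can bypass the mapping-torus machinery used in Section~\ref{sssuper-att}.
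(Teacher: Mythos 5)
Your proof is correct and is essentially the argument the paper intends: for the Siegel case the paper only says the result follows "by similar methods" to the attractive case, and your execution --- the extension (\ref{sullivanextension}), Lemma \ref{16} together with Proposition \ref{nocrit} and the conjugacy of $R^p$ on the punctured core to an irrational rotation for the ideal, and Lemma \ref{basdecomp}, Lemma \ref{discretequot} with Proposition 4.4 of \cite{Th2} for the quotient --- is exactly that program, including the key point that the labels in the reduction to $U\setminus\{q_0\}$ are forced into $p\mathbb Z$. Your remark that the rotation is already a homeomorphism of the punctured disk, so one obtains $C_0(\mathbb R)\otimes A_{\theta}$ as a plain crossed product without the mapping-torus or Lemma \ref{eee} machinery, is precisely why the paper calls these cases simpler than the attractive one.
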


\subsection{Stable regions of Herman type}

\begin{thm}\label{herman}
Let $\Omega$ be a stable region of Herman type. Let $c_i, i = 1,2, \dots,
  N$, be representatives for the restricted orbits of the critical points
  in $\Omega$ and let $v_i$ be the asymptotic valency of $c_i$. There is an extension
\begin{equation*}\label{extherman}
\begin{xymatrix}{
0 \ar[r] & \mathbb K
  \otimes C_0(\mathbb R) \otimes A_{\theta}   \ar[r]
& C^*_r\left(\Omega\right) \ar[r] & \oplus_{i=1}^{N} \mathbb
C^{v_i} \otimes \mathbb K_{c_i} \ar[r] & 0 
}\end{xymatrix}
\end{equation*}
where $A_{\theta}$ is the irrational rotation algebra corresponding to the
rotation by the angle $2 \pi \theta$ in the core domain. 
 \end{thm}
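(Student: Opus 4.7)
The plan is to parallel the arguments for the attractive and Siegel stable regions: start from the Sullivan-type extension (\ref{sullivanextension}) and compute its ideal and quotient separately. The only essential departure from Theorem \ref{siegel} is that a Herman core carries no periodic point, so the quotient loses the $C(\mathbb T)\otimes\mathbb K_q$ summand present in the Siegel case.

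For the ideal, Lemma \ref{16} gives $C^*_r(\Omega\backslash\mathcal I)\simeq C^*_r(U\backslash\mathcal I)\otimes\mathbb K$, where $U$ is a core of period $p$ that by Definition \ref{fatoudomains}(e) is conformally conjugate via some $\psi:U\to\mathbb A$ to the rotation $\zeta\mapsto e^{2\pi i\theta}\zeta$ on $\mathbb A=\{1<|\zeta|<2\}$. Since $R^p|_U$ is bi-holomorphic, $U$ contains neither critical nor periodic points of $R$, and $\mathcal I\cap U$ reduces to the (at most countable) discrete set of pre-critical points of $R$ lying in $U$. Using $R^j(U)\cap U=\emptyset$ for $1\leq j\leq p-1$ together with the absence of critical points on the entire cycle $U,R(U),\dots,R^{p-1}(U)$, one shows that every element $[x,k,\eta,y]$ of $G_R$ with $x,y\in U$ has $k\in p\mathbb Z$ and $\eta$ locally equal to an integer power of $R^p|_U$; hence $G_{U\backslash\mathcal I}$ is the transformation groupoid of the free homeomorphism $R^p|_{U\backslash\mathcal I}$, exactly as in Proposition \ref{nocrit} but with $R^p$ replacing $R$. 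In polar coordinates on $\mathbb A$ this is the groupoid of the $\mathbb Z$-action $(r,s)\mapsto(r,s+\theta)$ on $((1,2)\times\mathbb T)\backslash\psi(\mathcal I\cap U)$. Since $\psi(\mathcal I\cap U)$ meets each circle $\{r\}\times\mathbb T$ in a finite set while rotation orbits are infinite, every orbit meets the reduction, and Theorem \ref{MRW} yields a stable isomorphism with $C_0((1,2)\times\mathbb T)\rtimes\mathbb Z\simeq C_0((1,2))\otimes(C(\mathbb T)\rtimes_\theta\mathbb Z)\simeq C_0(\mathbb R)\otimes A_\theta$. The ideal is therefore $\mathbb K\otimes C_0(\mathbb R)\otimes A_\theta$.

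For the quotient, the classification of periodic Fatou components tells us that a Herman ring contains no periodic point, so $\Omega\cap\mathcal I=\bigsqcup_{i=1}^N\Ro(c_i)$ is a disjoint union of the restricted orbits of the critical points only. Each $c_i$ has finite asymptotic valency $v_i$ and is not pre-periodic (there are no periodic points in $\Omega$), so part c) of Proposition 4.4 of \cite{Th2} gives $\Is_{c_i}\simeq\mathbb Z_{v_i}$. Lemma \ref{basdecomp} combined with Lemma \ref{discretequot} yields $C^*_r(\Omega\cap\mathcal I)\simeq\bigoplus_{i=1}^N\mathbb C^{v_i}\otimes\mathbb K_{c_i}$, and plugging both identifications into (\ref{sullivanextension}) produces the extension claimed in Theorem \ref{herman}. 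The main technical obstacle is the $R^p$-version of Proposition \ref{nocrit} used in the ideal computation, namely the identification of $G_{U\backslash\mathcal I}$ with the transformation groupoid of $R^p|_{U\backslash\mathcal I}$ despite $U$ being only $R^p$-invariant rather than $R$-invariant; this is handled by the disjointness and injectivity arguments sketched above, after which no new phenomena appear compared to the Siegel case.
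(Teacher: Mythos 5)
Your proposal is correct and takes essentially the approach the paper intends for Theorem \ref{herman}: the paper merely states the result as following by ``similar methods'' from the super-attractive and attractive cases, namely the extension (\ref{sullivanextension}) together with Lemma \ref{16}, a Proposition \ref{nocrit}-type identification of the core groupoid with the transformation groupoid of the irrational rotation (with $R^p$ in place of $R$), Theorem \ref{MRW}, and Lemmas \ref{basdecomp} and \ref{discretequot} combined with Proposition 4.4 of \cite{Th2} for the quotient --- exactly what you carry out. One minor simplification: since $R^p$ is injective on the Herman cycle, the cycle contains no critical points and hence $U$ contains no pre-critical points at all, so $\mathcal I\cap U=\emptyset$ and your step of deleting $\psi(\mathcal I\cap U)$ and invoking Theorem \ref{MRW} to compare with the full annulus action is vacuous (harmlessly so).
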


\subsection{A square of six extensions}\label{secdiag}

It is possible to combine the extensions from the last sections into
an exact square of 6 extensions in the following way. Let $\mathcal
I_p$ be the union of the $\Ro$-orbits containing a non-critical
periodic orbit in $F_R$ and $\mathcal I_c = \mathcal I
\backslash \mathcal I_p$ its complement in $\mathcal I$. Several applications of Lemma \ref{quot}
gives us the following commuting diagram with exact rows and columns.

\begin{equation}\label{introdiag2}
\begin{xymatrix}{ 
  & 0 \ar[d]  & 0 \ar[d] & 0 \ar[d]  & \\
0 \ar[r] &  C^*_r\left(F_R \backslash \mathcal I\right)  \ar[r] \ar[d] &
  C^*_r\left(F_R \backslash \mathcal I_c\right) \ar[r] \ar[d]  &
 C^*_r\left(\mathcal I_p\right) \ar[r] \ar[d]
& 0 \\
0 \ar[r] &   C^*_r\left(F_R \backslash \mathcal I_p\right)  \ar[r]
\ar[d] &
C^*_r\left(R\right)  \ar[r] \ar[d] &  C^*_r\left(J_R \cup \mathcal I_p\right) \ar[d] \ar[r]
& 0 \\
0 \ar[r] &   C^*_r\left(\mathcal I_c\right)  \ar[r]
\ar[d] &  C^*_r\left(J_R \cup\mathcal I_c\right)  \ar[r] \ar[d] & C^*_r(J_R) \ar[d] \ar[r]
& 0 \\
  & 0   & 0  & 0   & \\
}\end{xymatrix}
\end{equation} 
The algebras in the corners, $C^*_r\left(F_R \backslash \mathcal
  I\right)$, $C^*_r\left(\mathcal I_p\right)$, $C^*_r\left(\mathcal
  I_c\right)$ and $C^*_r(J_R)$, can all be identified from the
preceding sections. Specifically, $C^*_r(J_R)$ is either nuclear,
simple and purely infinite, or an extension of such an algebra by a
finite direct sum of circle and matrix algebras, cf. Theorem \ref{Juliaext}. $C^*_r\left(F_R \backslash \mathcal
  I\right)$ is a finite direct sum of algebras each of which is the
stabilization of $MT_d$, $C\left(\mathbb T^2\right)$, $C(\mathbb T)
\otimes C_0(\mathbb R)$ or $C_0(\mathbb R) \otimes A_{\theta}$. Which
of the four types are present depends on the nature of the
stable regions in $F_R$. The
algebra $C^*_r\left(\mathcal I_p\right)$ is a finite direct sum of
algebras stably isomorphic to $C(\mathbb T)$ while
$C^*_r\left(\mathcal I_c\right)$ is a finite direct sum of algebras
stably isomorphic to $\mathbb C$, $C(\mathbb T)$ or $C(K)$. Which
summands occur depends on the behaviour under iteration of the
critical points in $F_R$.

It should be noted that the decomposition of $C^*_r(R)$ depicted in
(\ref{introdiag2}) is not the only possible. In fact there is a commuting square of the form
(\ref{introdiag2}) for any $\Ro$-invariant partitioning of $\mathcal
I$; not just for the partition $\mathcal I = \mathcal I_p \sqcup
\mathcal I_p$ chosen above.


\section{Primitive ideals and primitive quotients}

In the following an ideal in a $C^*$-algebra is a closed two-sided and proper ideal. Recall that an ideal $I$ is
\emph{primitive} when it is the kernel of an irreducible non-zero
representation, and \emph{prime} when it has the property that $I_1I_2
\subseteq I \Rightarrow I_1 \subseteq I$ or $I_2 \subseteq I$ when
$I_1$ and $I_2$ are also ideals. Since we shall only deal with
separable $C^*$-algebras the primitive ideals will be the same as the
prime ideals, cf. e.g. \cite{RW}.

\subsection{The primitive ideals}

When $I$ is an ideal in $C^*_r\left(R\right)$ we set
$$
\rho(I) = \left\{ x \in \C : \ f(x) = 0 \ \forall f \in C(\C) \cap I
\right\} .
$$
We call $\rho(I)$ the \emph{co-support} of $I$.

\begin{lemma}\label{cloinv} $\rho(I)$ is a closed non-empty $\Ro$-invariant
  subset of $\C$.
\end{lemma}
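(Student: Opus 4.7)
The plan is to check the three assertions in turn. Closedness is immediate from the definition $\rho(I) = \bigcap_{f \in C(\C) \cap I} f^{-1}(0)$, since this exhibits $\rho(I)$ as an intersection of closed subsets of $\C$. Non-emptiness is just as quick: because $I$ is proper and $C^*_r(R)$ is unital (the unit being $1_{\C}$, since $\C$ is compact), the intersection $C(\C) \cap I$ is a proper ideal of the unital commutative algebra $C(\C)$, so by the Gelfand correspondence its common zero locus $\rho(I)$ is a non-empty closed subset of $\C$.

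The only step with actual content is $\Ro$-invariance. I would fix $x \in \rho(I)$ and $y \in \Ro(x)$, and choose $\gamma \in G_R$ with $s(\gamma) = x$ and $r(\gamma) = y$. Since $G_R$ is \'etale, there is an open bisection $U \subseteq G_R$ with $\gamma \in U$; pick $h \in C_c(U) \subseteq C_c(G_R)$ with $h(\gamma) = 1$. The key point is that for every $g \in C(\C) \cap I$ the product $h^* g h \in I$ actually lies in $C(\C)$. This rests on the inclusion $U^{-1} U \subseteq G_R^{(0)}$, which holds because $U$ is a bisection: if $\alpha, \beta \in U$ satisfy $r(\alpha) = r(\beta)$, injectivity of $r|_U$ forces $\alpha = \beta$, so $\alpha^{-1}\beta$ is a unit. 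Hence $h^* g h$ is supported in $G_R^{(0)} \simeq \C$, and a direct convolution computation yields
\begin{equation*}
(h^* g h)(x) \ = \ |h(\gamma)|^2 g(y) \ = \ g(y).
\end{equation*}
Since $h^* g h \in C(\C) \cap I$ and $x \in \rho(I)$, the left-hand side vanishes, so $g(y) = 0$. As $g \in C(\C) \cap I$ was arbitrary, $y \in \rho(I)$.

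I do not anticipate any real obstacle. The only mild point to be careful about is verifying that $h^* g h$ genuinely is a function on $\C$ rather than a general element of $C_c(G_R)$, but this follows at once from the bisection property; no amenability hypothesis or other structural assumption on $G_R$ is needed.
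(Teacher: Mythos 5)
Your proof is correct. Closedness and non-emptiness are exactly as you say (properness of $I$ plus the unit $1_{\C}\in C(\C)\subseteq C^*_r(R)$ give a proper closed ideal $C(\C)\cap I$ of $C(\C)$, whose hull is non-empty), and the invariance argument works: for $\gamma\in G_R$ with $s(\gamma)=x$, $r(\gamma)=y$ and $h\in C_c(U)$ supported in an open bisection $U\ni\gamma$ with $h(\gamma)=1$, the element $h^*gh$ lies in $C(\C)\cap I$ because $U^{-1}U\subseteq G_R^{(0)}$, and the convolution formula gives $(h^*gh)(x)=\sum_{\zeta\in s^{-1}(x)}|h(\zeta)|^2\,g(r(\zeta))=g(y)$, where isolating the single term $\zeta=\gamma$ uses injectivity of $s|_U$ (the other half of the bisection property, which you invoke implicitly). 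The only difference from the paper is that the paper does not argue at all: it simply cites Lemma 4.5 of \cite{CT}, where the analogous statement is proved for the transformation groupoid of a locally injective surjection and then carried over to $G_R$. Your write-up supplies the self-contained version of that argument; it buys the explicit observation that nothing beyond the \'etale Hausdorff structure of $G_R$, the identification of the $G_R$-orbit of $x$ with $\Ro(x)$, and the embedding $C(\C)\subseteq C^*_r(R)$ is needed, in particular no amenability or local injectivity hypothesis.
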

\begin{proof} See Lemma 4.5 in \cite{CT}.
%
\end{proof} 

\begin{lemma} \label{ideal-gen}
  Let $I$ be an ideal in $C_r^*(R)$ and let
  $A$ be a closed $\Ro$-invariant subset of $\C$.
  If $\rho(I)\subseteq A$, then $\ker\pi_A\subseteq I$.
\end{lemma}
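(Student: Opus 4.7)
The plan is to identify $\ker\pi_A$ with the ideal $C^*_r(\C\setminus A)$ using Lemma \ref{quot}, and then to show directly that every $f\in C_c(G_{\C\setminus A})$ lies in $I$. Applying Lemma \ref{quot} with $X=\C$ and $Y=A$, one obtains $\ker\pi_A = C^*_r(\C\setminus A)$. Since $C_c(G_{\C\setminus A})$ is dense in $C^*_r(\C\setminus A)$ and $I$ is closed, it suffices to verify that each such $f$ belongs to $I$.

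So fix $f\in C_c(G_{\C\setminus A})$ and set $K = r(\supp f)$, a compact subset of $\C\setminus A$. By assumption $\rho(I)\subseteq A$, so no point of $K$ lies in $\rho(I)$; hence for each $x\in K$ there is $g_x\in C(\C)\cap I$ with $g_x(x)\neq 0$. Since $I$ is selfadjoint and $C(\C)$ embeds into $C^*_r(R)$ as a $*$-subalgebra (namely the functions supported on the unit space $G_R^{(0)}=\C$), the element $g_x^*g_x$ corresponds to $|g_x|^2\in C(\C)\cap I$. We may therefore replace $g_x$ by $|g_x|^2$ and assume $g_x\geq 0$ with $g_x(x)>0$. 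A standard compactness argument then yields finitely many $x_1,\dots,x_n\in K$ such that $h := g_{x_1}+\cdots+g_{x_n}$ lies in $C(\C)\cap I$ and satisfies $h\geq\epsilon$ on $K$ for some $\epsilon>0$.

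Now use that under the embedding $C(\C)\hookrightarrow C^*_r(R)$, multiplication acts on $C_c(G_R)$ via $(\psi\cdot f)(\gamma)=\psi(r(\gamma))f(\gamma)$. By Urysohn's lemma pick $\chi\in C(\C)$ with $0\leq\chi\leq 1$, $\chi\equiv 1$ on $K$, and $\supp\chi\subseteq\{h>\epsilon/2\}$. Defining $u\in C(\C)$ by $u=\chi/h$ on $\{h>\epsilon/2\}$ and $u=0$ elsewhere, one checks that $u$ is continuous (because $\supp\chi$ is contained in the open set $\{h>\epsilon/2\}$) and that $hu=\chi$. Since $\chi\equiv 1$ on $K=r(\supp f)$, the formula for the action gives $huf = \chi f = f$. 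As $uf\in C^*_r(R)$ and $h\in I$, we conclude $f = h(uf)\in I$, finishing the proof.

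The argument mirrors the proof of Lemma 4.6 in \cite{CT}. I anticipate no real obstacle: the only step that requires attention is the passage from the pointwise non-vanishing of $C(\C)\cap I$ on $\C\setminus\rho(I)$ to a single positive element of $I$ that is bounded below on $K$, which is a routine compactness-plus-sum construction. The rest is a formal cutoff/multiplier computation inside $C^*_r(R)$.
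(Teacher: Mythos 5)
Your proof is correct, and it takes essentially the route the paper intends: the paper's own ``proof'' is just the citation of Lemma 4.8 in \cite{CT} (you cite 4.6, but the content is the same), and your argument---identifying $\ker\pi_A$ with $C^*_r(\C\setminus A)$ via Lemma \ref{quot}, then using that points of $K=r(\supp f)$ avoid $\rho(I)$ to build $h\in C(\C)\cap I$ bounded below on $K$ and factoring $f=h(uf)$ through the pointwise action of $C(\C)$ on $C_c(G_R)$---is precisely the standard cutoff argument that citation encapsulates. All the small steps you flag (continuity of $u$, $hu=\chi$, $\chi f=f$ since $\chi\equiv 1$ on $r(\supp f)$) check out, so the write-up fills in the reference correctly.
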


\begin{proof} See Lemma 4.8 in \cite{CT}.
\end{proof}

When $I \subseteq C^*_r(R)$ is an ideal we let $q_I : C^*_r(R) \to
C^*_r(R)/I$ denote the corresponding quotient map.
Note that it follows from Lemma \ref{ideal-gen} that $q_I$ factorises
through $C^*_r\left(\rho(I)\right)$, i.e. there is a $*$-homomorphism
$C^*_r\left(\rho(I)\right) \to C^*_r(R)/I$ such that
\begin{equation}\label{triangle}
\begin{xymatrix}{
C^*_r(R) \ar[dr]_{\pi_{\rho(I)}} \ar[rr]^{q_I} & &C^*_r(R)/I \\
& C^*_r\left(\rho(I)\right) \ar[ur] & 
}
\end{xymatrix}
\end{equation}
commutes.

A non-empty closed $\Ro$-invariant subset $A \subseteq \C$ is
\emph{prime} when the implication
$$
A \subseteq B \cup C \ \Rightarrow \ A \subseteq B \ \text{or} \ A
\subseteq C 
$$
holds for all closed $\Ro$-invariant subsets $B$ and $C$ of $\C$.

\begin{lemma} \label{prime}
Assume that $I$ is a primitive ideal in $C_r^*(R)$. It follows that
$\rho(I)$ is prime.
\end{lemma}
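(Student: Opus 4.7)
The plan is to invoke the equivalence of primitive and prime ideals (valid since all $C^*$-algebras in sight are separable, as the paper notes) and then translate between closed $\Ro$-invariant subsets of $\C$ and ideals of $C^*_r(R)$ via $\rho$, Lemma \ref{quot} and Lemma \ref{ideal-gen}. So suppose $B,C$ are closed $\Ro$-invariant subsets of $\C$ with $\rho(I) \subseteq B \cup C$; I must show $\rho(I) \subseteq B$ or $\rho(I) \subseteq C$. Set $J_B = \ker \pi_B$ and $J_C = \ker \pi_C$, which by Lemma \ref{quot} equal $C^*_r(\C \setminus B)$ and $C^*_r(\C \setminus C)$ respectively.

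The crux of the argument is the claim that $J_B \cdot J_C \subseteq \ker \pi_{B \cup C}$. Since $B \cup C$ is closed and $\Ro$-invariant, Lemma \ref{quot} applied to the pair $(X,Y) = (B \cup C,\, B)$ furnishes an exact sequence
\begin{equation*}
0 \longrightarrow C^*_r\left({C \setminus B}\right) \longrightarrow C^*_r\left({B \cup C}\right) \stackrel{\widetilde{\pi}_B}{\longrightarrow} C^*_r\left(B\right) \longrightarrow 0,
\end{equation*}
and factoring $\pi_B = \widetilde{\pi}_B \circ \pi_{B \cup C}$ gives $\pi_{B \cup C}(J_B) \subseteq \ker \widetilde{\pi}_B = C^*_r(C \setminus B)$, and symmetrically $\pi_{B \cup C}(J_C) \subseteq C^*_r(B \setminus C)$. (Here $C \setminus B$ is $\Ro$-invariant in $B \cup C$: any restricted orbit meeting $C \setminus B$ lies in $C$ by $\Ro$-invariance of $C$, and meeting $B$ would force the whole orbit into $B$ by $\Ro$-invariance of $B$, a contradiction.) I then claim $C^*_r(C \setminus B) \cdot C^*_r(B \setminus C) = 0$ inside $C^*_r(B \cup C)$. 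It suffices to verify this on the dense $*$-subalgebras $C_c(G_{C \setminus B})$ and $C_c(G_{B \setminus C})$: for $f_1, f_2$ in these, the convolution formula $(f_1 f_2)(g) = \sum_{h \in G^{r(g)}} f_1(h)\, f_2(h^{-1} g)$ gives no nonzero term, since a nonzero summand would require $s(h) \in C \setminus B$ (from $h \in \supp f_1$) and simultaneously $s(h) = r(h^{-1} g) \in B \setminus C$ (from $h^{-1} g \in \supp f_2$), which is impossible.

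Having established $J_B J_C \subseteq \ker \pi_{B \cup C}$, the hypothesis $\rho(I) \subseteq B \cup C$ and Lemma \ref{ideal-gen} yield $\ker \pi_{B \cup C} \subseteq I$, whence $J_B J_C \subseteq I$. Primality of $I$ then forces $J_B \subseteq I$ or $J_C \subseteq I$. If $J_B \subseteq I$ and $x \in \rho(I) \setminus B$, Urysohn on the compact Hausdorff space $\C$ produces $f \in C(\C)$ with $f|_B = 0$ and $f(x) = 1$; then $f \in C_0(\C \setminus B) \subseteq J_B \subseteq I$, so $f \in C(\C) \cap I$, forcing $f(x) = 0$ by definition of $\rho(I)$, a contradiction. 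Hence $\rho(I) \subseteq B$, and symmetrically the alternative $J_C \subseteq I$ yields $\rho(I) \subseteq C$. The main technical obstacle is the vanishing $C^*_r(C \setminus B) \cdot C^*_r(B \setminus C) = 0$ inside $C^*_r(B \cup C)$; once this groupoid-level calculation is in hand, the remainder is a formal manipulation of the correspondence between $\Ro$-invariant subsets and ideals provided by $\rho$.
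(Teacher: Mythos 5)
Your proof is correct. Note that the paper itself gives no argument here at all, it simply cites Proposition 4.10 of \cite{CT}, so what you have done is reconstruct a self-contained proof inside the present setting, using only Lemma \ref{quot} and Lemma \ref{ideal-gen}. The decisive step is the inclusion $\ker\pi_B\cdot\ker\pi_C\subseteq\ker\pi_{B\cup C}$, and your verification is sound: after pushing forward with $\pi_{B\cup C}$ and applying Lemma \ref{quot} to the pairs $(B\cup C,B)$ and $(B\cup C,C)$, the two images land in the ideals $C^*_r(C\setminus B)$ and $C^*_r(B\setminus C)$ of $C^*_r(B\cup C)$, and the convolution computation on $C_c(G_{C\setminus B})\cdot C_c(G_{B\setminus C})$, using $r(h^{-1}g)=s(h)$ and the disjointness of $C\setminus B$ and $B\setminus C$, shows these two ideals are orthogonal; density and continuity finish it. The passage from $\ker\pi_B\subseteq I$ back to $\rho(I)\subseteq B$ via a Urysohn function is also fine, since a function in $C(\C)$ vanishing on $B$ is a uniform limit of elements of $C_c(\C\setminus B)$ and the reduced norm agrees with the sup norm on functions supported on the unit space, so it lies in $C^*_r(\C\setminus B)=\ker\pi_B$. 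Two small remarks: you only need the implication ``primitive $\Rightarrow$ prime'', which holds without any separability hypothesis (separability is only relevant for the converse, which this lemma does not use); and primeness of $\rho(I)$ as defined in the paper also requires $\rho(I)\neq\emptyset$ and $\rho(I)$ closed and $\Ro$-invariant, which is exactly Lemma \ref{cloinv} and is worth citing explicitly.
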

\begin{proof} 
See Proposition 4.10 in \cite{CT}.
\end{proof}

\begin{lemma}\label{prime2} Let $Y$ be a prime subset of $\C$. Assume that $x \in Y$ is isolated in $Y$. Then all elements
  of $\Ro(x)$ are isolated in $Y$ and
$Y = \overline{\Ro(x)}$.
\end{lemma}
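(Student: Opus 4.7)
The plan is to prove the two assertions in order, exploiting that $\Ro$-equivalence is realised by bi-holomorphic local transfers.

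For the first assertion, I would fix $y \in \Ro(x)$. By definition of $G_R$ there is an element $[x,k,\eta,y] \in G_R$, where $\eta$ is a bi-holomorphism between an open neighbourhood $U_x$ of $x$ in $\C$ and $U_y = \eta(U_x)$. Because $\eta$ is a local transfer, $[z,k,\eta,\eta(z)] \in G_R$ for every $z \in U_x$, so $\eta(z) \in \Ro(z)$; since $Y$ is $\Ro$-invariant this forces $\eta(U_x \cap Y) \subseteq U_y \cap Y$. Applying the same argument to $\eta^{-1}$ gives the reverse inclusion, so $\eta$ restricts to a bijection $U_x \cap Y \to U_y \cap Y$. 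As $x$ is isolated in $Y$, we may shrink $U_x$ so that $U_x \cap Y = \{x\}$; then $U_y \cap Y = \{y\}$, so $y$ is isolated in $Y$.

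For the second assertion, set $B = \overline{\Ro(x)}$ and $C = Y \setminus \Ro(x)$. Since every point of $\Ro(x)$ is isolated in $Y$ by the first part, $\Ro(x)$ is open in $Y$, so $C$ is closed in $Y$ and hence closed in $\C$. The set $C$ is also $\Ro$-invariant: if $z \in C$ and $w \in \Ro(z) \subseteq Y$ lay in $\Ro(x)$, then $\Ro(z) = \Ro(w) = \Ro(x)$ would give $z \in \Ro(x)$, contradicting $z \in C$. To see that $B$ is $\Ro$-invariant, take $z \in B$ and $w \in \Ro(z)$, and choose a local transfer $\eta$ defined on a neighbourhood $U$ of $z$ with $\eta(z) = w$; if $z_n \to z$ with $z_n \in \Ro(x)$, then eventually $z_n \in U$, and $\eta(z_n) \in \Ro(z_n) = \Ro(x)$ converges to $\eta(z) = w$, so $w \in B$.

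Now $Y = \Ro(x) \cup C \subseteq B \cup C$ exhibits $Y$ as contained in the union of two closed $\Ro$-invariant subsets of $\C$. Since $x \in Y \setminus C$, primeness of $Y$ forces $Y \subseteq B$. Combined with $B \subseteq Y$, which follows from $\Ro(x) \subseteq Y$ and the closedness of $Y$, this yields $Y = \overline{\Ro(x)}$. The only substantive point in the whole argument is the use of the local transfer $\eta$ in two different guises—to transport an isolating neighbourhood from $x$ to $y$ in part one, and to propagate approximating sequences through $\eta$ when verifying the $\Ro$-invariance of $B$ in part two. Everything else is elementary set theory combined with the definition of primeness.
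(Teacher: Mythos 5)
Your proof is correct and follows essentially the same route as the paper: isolation propagates along $\Ro(x)$ because the equivalence is implemented by local transfers, and then primeness is applied to the decomposition of $Y$ into $\overline{\Ro(x)}$ and the closed set of remaining points, using that the isolated point $x$ cannot lie in the latter. You merely fill in details the paper leaves implicit (the explicit $\Ro$-invariance checks, and taking $Y\setminus \Ro(x)$ directly rather than the closure of $\left\{z\in Y : z\notin \Ro(x)\right\}$), which changes nothing essential.
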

\begin{proof} It is clear that all elements of $\Ro(x)$ are isolated in
  $Y$ since $x$ is. Set 
$$
B = \overline{ \left\{z \in Y :  \ z \notin {\Ro(x)} \right\}} .
$$ 
Since $Y \subseteq \overline{\Ro(x)} \cup B$ the primeness of $Y$
implies $Y \subseteq \overline{\Ro(x)}$ or $Y \subseteq B$. Note that $x
\notin B$ since $x$ is isolated in $Y$. It follows that $Y  \subseteq \overline{\Ro(x)}$. 
\end{proof}

In the following we denote by $\orb(x)$ the (full) orbit of $x$, i.e. 
$$
\orb(x) = \left\{y \in \C : \ R^n(x) = R^m(y) \ \text{for some} \ n,m
  \in \mathbb N \right\}.
$$

\begin{lemma}\label{prime3}  Let $Y$ be a prime subset of $\C$. Assume $Y$ has no isolated points. It
  follows that there is a point $x \in Y \backslash \bigcup_{j
    =0}^{\infty} R^{-j}(\crt)$ such that
$Y =  \overline{\Ro(x)} = \overline{\orb(x)}$.
\end{lemma}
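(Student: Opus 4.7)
The plan is to run a Baire category argument inside $Y$, combined with a local transfer trick to control restricted orbits, and then to handle the identification $\overline{\orb(x)} = \overline{\Ro(x)}$ separately.

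The first thing I would do is verify a key preliminary: the closure of any $\Ro$-invariant subset of $\C$ remains $\Ro$-invariant. Given $y \in \overline{A}$ with $A$ $\Ro$-invariant and $z \in \Ro(y)$, pick $n,m$ with $R^n(z) = R^m(y)$ and common valency $v$; the fact that $R^n$ near $z$ and $R^m$ near $y$ are branched $v$-to-$1$ coverings yields a local biholomorphism $\eta$ defined near $z$ with $\eta(z) = y$ and $R^m \circ \eta = R^n$. For $y_k \to y$ in $A$, the points $z_k = \eta^{-1}(y_k)$ converge to $z$, and the valency chain rule gives $z_k \in \Ro(y_k) \subseteq A$. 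Combined with primeness this yields the structural consequence I want to use: every proper closed $\Ro$-invariant subset $Z$ of $Y$ is nowhere dense, because $Y \setminus Z$ is open and $\Ro$-invariant, so $\overline{Y \setminus Z}$ is closed and $\Ro$-invariant, and $Y = Z \cup \overline{Y \setminus Z}$ together with primeness and $Z \neq Y$ forces $Y = \overline{Y \setminus Z}$.

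For the Baire step I would fix a countable basis $\{U_n\}$ of non-empty relatively open subsets of $Y$ and set
\[
V_n = \left\{ x \in Y \,:\, \Ro(x) \cap U_n \neq \emptyset \right\}.
\]
Each $V_n$ is $\Ro$-invariant, contains $U_n$, and is open (using the same local transfer construction to carry a neighborhood of $x$ into $U_n$ via $x' \mapsto \eta(x')$). Hence $Y \setminus V_n$ is a proper closed $\Ro$-invariant subset of $Y$ and is therefore nowhere dense by the previous step, so each $V_n$ is dense open in $Y$. By Lemma~\ref{7!!} the set $D$ of points of $Y$ that are neither pre-periodic nor pre-critical is dense, and since its complement in $Y$ is countable $D$ is a dense $G_\delta$. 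The Baire category theorem in the locally compact space $Y$ then supplies an $x \in D \cap \bigcap_n V_n$; such an $x$ is not pre-critical and $\Ro(x)$ meets every $U_n$, so $\overline{\Ro(x)} = Y$.

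The last and most delicate step is the equality $\overline{\orb(x)} = \overline{\Ro(x)}$; this is where I expect the main obstacle, since $\orb(x)$ can contain pre-critical points that need not lie in $\Ro(x)$. The inclusion $\supseteq$ is immediate. For $\subseteq$, fix $y \in \orb(x)$ with $R^n(y) = R^m(x)$. Since $x$ is not pre-critical, $\val(R^m,x) = 1$; if moreover $\val(R^n,y) = 1$ then $y \in \Ro(x)$. Otherwise, since $R^m(x) \in Y$ is not isolated in $Y$ and $\overline{\Ro(x)} = Y$, I would pick $z_k \in \Ro(x) \setminus \{R^m(x)\}$ with $z_k \to R^m(x)$, and exploit the local model $w \mapsto w^v$ of $R^n$ near $y$ (with $v = \val(R^n,y) \geq 2$ and sole ramification at $y$) to lift each $z_k$ to $v$ distinct preimages in a small neighborhood of $y$ converging to $y$, each of local $R^n$-valency $1$. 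Applying the valency chain rule to any witness $n',m'$ of $z_k \in \Ro(x)$ then shows that every such preimage $w$ satisfies $R^{n+n'}(w) = R^{m'}(x)$ with $\val(R^{n+n'},w) = 1 = \val(R^{m'},x)$, so $w \in \Ro(x)$. Hence $y \in \overline{\Ro(x)}$, completing the proof.
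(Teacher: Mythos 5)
Your proof is correct, but it takes a genuinely different route from the paper's. The paper first invokes Lemma~\ref{escape}: since $Y$ has no isolated points it is totally $R$-invariant, so $\overline{\Ro(x)}\subseteq\overline{\orb(x)}\subseteq Y$ comes for free and the whole task reduces to producing one non-pre-critical $x$ with $\Ro(x)$ dense. That point is then produced by an explicit inductive construction (modelled on Proposition 4.9 of \cite{CT}) of nested compact sets $C_k,C'_k$ and times $n_k,n'_k$, where primeness is used at each stage to force two totally $R$-invariant open sets to intersect and where the finitely many forward images of $\crt$ are avoided by hand so that all relevant valencies equal one; the generator $x$ is any point of the resulting nested intersection. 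You instead run a Baire category argument: primeness shows that every proper closed $\Ro$-invariant subset of $Y$ is nowhere dense, the saturations $V_n$ of a countable basis are open (your local-transfer computation is exactly the openness of the range map of the \'etale groupoid $G_Y$ applied to $s^{-1}(U_n)$) and hence dense, and intersecting with the co-countable dense set supplied by Lemma~\ref{7!!} yields a non-pre-critical, non-pre-periodic $x$ with $\Ro(x)$ dense --- in fact your argument shows that the set of such generators is residual in $Y$, which is slightly more than the paper records. Finally, where the paper gets $\overline{\orb(x)}=Y$ from total invariance, you prove $\orb(x)\subseteq\overline{\Ro(x)}$ directly by lifting nearby points of $\Ro(x)$ through the local model $w\mapsto w^v$ of $R^n$ at a point $y\in\orb(x)$ of higher valency; this is correct (the valency chain rule does give $\val\left(R^{n+n'},w\right)=1=\val\left(R^{m'},x\right)$ because $x$ is not pre-critical), though the entire step could be replaced by a one-line appeal to Lemma~\ref{escape} as in the paper. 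The trade-off is clear: your route is more conceptual and gives genericity of the generator, at the cost of two small preliminary facts (the closure of an $\Ro$-invariant set is $\Ro$-invariant, and saturations of open sets are open), both of which you correctly reduce to the local-transfer and valency characterization of restricted orbits from \cite{Th2}.
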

\begin{proof} The proof is largely the same as the proof of Proposition 4.9
  in \cite{CT}, but with a few crucial modifications. It follows from
  Lemma \ref{escape} that $Y$ is totally $R$-invariant and
  hence in particular that $\overline{\Ro(x)} \subseteq \overline{\orb(x)} \subseteq Y$ for all
  $x \in Y$. It suffices therefore to find an $x \in Y \backslash \bigcup_{j
    =0}^{\infty} R^{-j}(\crt )  $ such that $Y \subseteq
  \overline{\Ro(x)}$. Let $\{U_k\}_{k=1}^\infty$ be a basis for the topology of
  $Y$. We will by induction construct
  compact sets $\{C_k\}_{k=0}^\infty$ and
  $\{C_k'\}_{k=0}^\infty$ with non-empty interiors in $Y$ and positive integers $(n_k)_{k=0}^\infty$
  and $(n_k')_{k=0}^\infty$ such that 
\begin{itemize}
\item[i)] $C_k\subseteq U_k$,
\item[ii)] $C_k'\subseteq R^{n_{k-1}}(C_{k-1})\cap
  R^{n'_{k-1}}\left(C'_{k-1} \right)$ when $k \geq 1$ and
\item[iii)] $C'_k \cap \left( \bigcup_{j=0}^{n'_0 + n'_1 + \cdots +
      n'_{k-1}} R^j(\crt) \cup  \bigcup_{j=0}^{n_{k-1}} R^j(\crt)\right)   = \emptyset$ when $k \geq 1$.
\end{itemize}
Let $C_0=C'_0$ be any compact subset of $Y$ with non-empty
interior in $Y$. Assume that $k\ge 1$ and that
  $C_1,\dots,C_k$, $C'_1,\dots,C'_k$, $n_0,\dots,n_{k-1}$ and $n'_0,\dots,n'_{k-1}$
  satisfying the conditions above have been chosen. Choose non-empty
  open subsets $V_k\subseteq C_k$ and $V'_k\subseteq C'_k$.
  Then 
  \begin{equation*}
    \bigcup_{l,m=0}^\infty R^{-l}(R^m(V_k))\text{ and
    }\bigcup_{l,m=0}^\infty R^{-l}(R^m(V'_k))
  \end{equation*}
  are non-empty open and totally $R$-invariant
  subsets of $Y$, and hence 
  \begin{equation} \label{eq:1}
    Y\setminus\bigcup_{l,m=0}^\infty R^{-l}(R^m(V_k))\text{ and
    }Y\setminus\bigcup_{l,m=0}^\infty R^{-l}(R^m(V'_k)) 
  \end{equation}
  are closed and totally $R$-invariant subsets of $Y$. Since $Y$ is prime and not
  contained in either of the sets from \eqref{eq:1}, it is also not
  contained in their union. That is,
  \begin{equation*} 
    \left(\bigcup_{l,m=0}^\infty R^{-l}(R^m(V_k))\right)\bigcap
    \left(\bigcup_{l,m=0}^\infty R^{-l}(R^m(V'_k))\right) \ne\emptyset.
  \end{equation*}
  It follows that there are positive integers $n_k$ and $n'_k$
  such that $R^{n_k}(V_k)\cap R^{n'_k}(V'_k)$ is
  non-empty. We can therefore choose a non-empty compact set $C_{k+1}$ with
  non-empty interior such that $C_{k+1}\subseteq
  U_{k+1}$ and a non-empty compact set $C'_{k+1}$ with non-empty
  interior such that $C'_{k+1}\subseteq
  R^{n_k}(V_k)\cap R^{n'_k}(V'_k)$. Since 
$$
\bigcup_{j=0}^{n'_0 + n'_1 + \cdots +
      n'_{k}} R^j(\crt) \cup  \bigcup_{j=0}^{n_{k}} R^j(\crt)
$$ 
is a finite set and $Y$ contains no isolated points,
  we can arrange that
\begin{equation*}\label{delete}
C'_{k+1} \cap \left(\bigcup_{j=0}^{n'_0 + n'_1 + \cdots +
      n'_{k}} R^j(\crt) \cup  \bigcup_{j=0}^{n_{k}} R^j(\crt) \right)  = \emptyset.
\end{equation*}
This completes the induction step.

It follows from ii) that 
$$
C_k' = R^{n'_0 + n'_1 + \dots + n'_{k-1}} \left( C'_0 \cap R^{-n'_0}
  (C'_1) \cap \dots \cap R^{-n'_0 - n'_1 - \dots -
    n'_{k-1}}\left(C'_k\right)\right) 
$$
for all $k$ and hence that
  \begin{equation*}
    C'_0\cap R^{-n'_0}(C'_1)\cap\dots \dots \cap R^{-n'_0- \dots -n'_k}(C'_{k+1}),\ k=0,1,\dots
  \end{equation*}
  is a decreasing sequence of non-empty compact sets. Let 
  \begin{equation*}
    x\in \bigcap_{k=0}^\infty R^{-n'_0-\dots \dots -n'_k}(C'_{k+1})\cap C'_0.
  \end{equation*}
By construction there is for each $k$ an element $u_k \in U_k$ such
that $R^{n'_0 + \dots +n'_k}(x) = R^{n_k}(u_k)$ and 
$$
\val \left(R^{n'_0 + \dots +n'_k},x\right) = \val\left(R^{n_k},
  u_k\right) = 1 .
$$
Since this implies that $u_k \in \Ro(x)$ we conclude that $\Ro(x)$ is
dense in $Y$. Furthermore, it implies also that $\val
\left(R,R^j(x)\right) = 1$ for all $j$, i.e. $x \notin \bigcup_{j
    =0}^{\infty} R^{-j}(\crt ) $. 

\end{proof}

\begin{cor}\label{onegenerator} Let $Y \subseteq \C$ be a closed
  $\Ro$-invariant subset. Then $Y$ is prime if and only if there is a
  point $x \in \C$ such that $Y = \overline{\Ro(x)}$.
\end{cor}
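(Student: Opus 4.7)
The plan is to split along the obvious direction of the biconditional. Both directions should be short, with essentially all of the work already absorbed into Lemma \ref{prime2} and Lemma \ref{prime3}.

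For the forward direction, assume $Y$ is prime. I would split into two cases according to whether $Y$ has an isolated point. If $Y$ has an isolated point $x$, then Lemma \ref{prime2} applied to $Y$ immediately gives $Y = \overline{\Ro(x)}$, so we are done. If $Y$ has no isolated points, then Lemma \ref{prime3} produces a point $x \in Y \setminus \bigcup_{j=0}^{\infty} R^{-j}(\crt)$ with $Y = \overline{\Ro(x)}$. In either case we obtain the required generator.

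For the converse, suppose $Y = \overline{\Ro(x)}$ for some $x \in \C$. Let $B$ and $C$ be closed $\Ro$-invariant subsets of $\C$ with $Y \subseteq B \cup C$. Then in particular $x \in B \cup C$, so without loss of generality $x \in B$. Since $B$ is $\Ro$-invariant we have $\Ro(x) \subseteq B$, and since $B$ is closed we conclude $Y = \overline{\Ro(x)} \subseteq B$. This verifies the defining condition for primeness.

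There is no real obstacle here; the substance of the corollary has been packaged into the two preceding lemmas, and the only thing to check in the converse is that membership in a closed $\Ro$-invariant set propagates from the single generator $x$ to the closure of its restricted orbit, which is immediate.
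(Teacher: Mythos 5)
Your proposal is correct and follows the paper's own route exactly: necessity via Lemma \ref{prime2} (isolated point present) and Lemma \ref{prime3} (no isolated points), and sufficiency by the direct argument that $x\in B$ forces $\Ro(x)\subseteq B$ and hence $\overline{\Ro(x)}\subseteq B$, which is precisely what the paper dismisses as following ``immediately from the definitions.'' Nothing to add.
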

\begin{proof} If $Y$ is prime it follows from Lemma \ref{prime2} and
  Lemma \ref{prime3} that there is an element $x \in \C$ such that $Y =
  \overline{\Ro(x)}$. This proves the necessity of the
  condition. Sufficiency follows immediately from the definitions.
\end{proof}

Let $\mathcal M$ be the set of prime subsets of
$\C$. Let $\mathcal M_{ex}$ denote the collection of elements $Y \in \mathcal M$ with the property that $Y$ contains an
isolated point which is either periodic or critical.

\begin{lemma}\label{prime4} Let $Y \in \mathcal M \backslash \mathcal
  M_{ex}$. It follows that $\ker \pi_Y$ is the only ideal $I$ in
  $C^*_r\left(R\right)$ with $\rho(I) = Y$, and that $\ker \pi_Y$ is
  a primitive ideal in $C^*_r\left(R\right)$.
\end{lemma}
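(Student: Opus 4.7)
The plan is to handle both assertions in parallel, by first establishing that the reduction groupoid $G_Y$ is essentially free and then exploiting the resulting intersection property for $C^*_r(Y) = C^*_r(R)/\ker \pi_Y$.

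\emph{Essential freeness.} Since $Y \in \mathcal M \setminus \mathcal M_{ex}$, the set $Y$ contains no isolated periodic or critical point, so Lemma \ref{7!!} applies and yields a dense subset of $Y$ consisting of points that are neither pre-periodic nor pre-critical. Because the isotropy of $y \in Y$ in the reduction $G_Y$ coincides with the isotropy of $y$ in $G_R$, Proposition 4.4 of \cite{Th2} identifies these dense points as precisely the points of trivial isotropy in $G_Y$. Hence $G_Y$ is essentially free, and by the same reasoning used in the proof of Proposition \ref{JRsimple} (via Corollary 2.18 of \cite{Th1}), every non-zero ideal of $C^*_r(Y)$ has non-trivial intersection with $C(Y)$.

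\emph{Uniqueness.} Let $I$ be an ideal of $C^*_r(R)$ with $\rho(I) = Y$. Lemma \ref{ideal-gen} forces $\ker \pi_Y \subseteq I$, so $I$ descends to an ideal $\overline{I} = I/\ker \pi_Y$ of $C^*_r(Y)$. A short computation based on the factorisation (\ref{triangle}) shows that $\rho(I) = Y$ is equivalent to $\pi_Y(I \cap C(\C)) = \{0\}$, i.e. $\overline{I} \cap C(Y) = \{0\}$. The intersection property from the previous step then forces $\overline{I} = 0$, so $I = \ker \pi_Y$.

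\emph{Primitivity.} Since $C^*_r(R)$ is separable, primitive ideals coincide with prime ideals, so it suffices to show that $C^*_r(Y)$ is a prime $C^*$-algebra. Let $J_1, J_2$ be non-zero ideals of $C^*_r(Y)$. By the intersection property, each $J_i \cap C(Y)$ is a non-zero ideal of $C(Y)$, corresponding to a non-empty open subset $U_i \subseteq Y$; a standard feature of étale groupoid algebras, verified by conjugating elements of $J_i \cap C(Y)$ with compactly supported functions on bisections of $G_Y$, ensures that $U_i$ is $\Ro$-invariant. If $U_1 \cap U_2 = \emptyset$, then $Y = (Y \setminus U_1) \cup (Y \setminus U_2)$ would express $Y$ as a union of two proper closed $\Ro$-invariant subsets, contradicting the primeness of $Y$. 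Hence $U_1 \cap U_2 \neq \emptyset$, and by multiplying non-zero elements of $J_1 \cap C(Y)$ and $J_2 \cap C(Y)$ whose supports meet in $U_1 \cap U_2$ one exhibits a non-zero element of $J_1 J_2 \subseteq J_1 \cap J_2$. Therefore $C^*_r(Y)$ is prime and $\ker \pi_Y$ is primitive.

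The only non-formal input is the intersection property for $C^*_r(G_Y)$, obtained from essential freeness via \cite{Th1}, together with the routine but slightly technical verification that an ideal of $C(Y)$ coming from an ideal of $C^*_r(Y)$ corresponds to an $\Ro$-invariant open subset of $Y$; all other steps are formal consequences of the definition of primeness of $Y$.
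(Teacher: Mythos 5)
Your proof is correct and follows essentially the same route as the paper: essential freeness of $G_Y$ obtained from Lemma~\ref{7!!} and Proposition 4.4 of \cite{Th2} yields the intersection property with $C(Y)$ (which the paper derives from Lemma 2.15 of \cite{Th1} and the faithful conditional expectation rather than from Corollary 2.18), the lifting of elements of $\pi_Y(I)\cap C(Y)$ back into $I\cap C(\C)$ gives uniqueness, and primitivity is reduced to primeness of the $C^*$-algebra $C^*_r\left(Y\right)$. The only cosmetic difference is in the primeness step, where the paper uses Corollary~\ref{onegenerator} to pick $x$ with $Y=\overline{\Ro(x)}$ and places it in one of the two zero sets, while you apply the definition of primeness of $Y$ directly to the complements of the $\Ro$-invariant open sets carrying $J_i\cap C(Y)$; the two arguments are equivalent, and both rest on the same (routine) invariance fact that the paper also uses without proof.
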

\begin{proof}
 Let $I$ be an ideal in $C^*_r\left(R\right)$ with
$\rho(I) = Y$. Then $\ker \pi_Y \subseteq I$ by Lemma
\ref{ideal-gen}. To conclude that $I = \ker \pi_Y$ it suffices
therefore to show that $\pi_Y(I) = \{0\}$ in
$C^*_r\left(Y\right)$. To this end note first that $\pi_Y(I)
\cap C(Y) = \{0\}$. Indeed, if $h \in \pi_Y(I) \cap C(Y)$, let $g
\in C(\C)$ be a function such that $g|_Y = h$ and let $a \in I$ be an
element such that $\pi_Y(a) = h$. Then $\pi_Y(a-g) = 0$ and hence $a
-g \in \ker \pi_Y \subseteq I$. It follows that $ g= a - (a-g) \in I
\cap C(\C)$ and hence that $g(y) =0$ for all $y \in \rho(I) =
Y$. Thus $h = 0$, proving that $\pi_Y(I)
\cap C(Y) = \{0\}$. To conclude from this that $\pi_Y(I) = 0$ note
first that the elements of $Y$ with non-trivial isotropy in
$G_Y$ are dense in $Y$. This follows from Lemma \ref{7!!} because a point $y \in Y$
with non-trivial isotropy in $G_Y$ must be pre-periodic or
pre-critical for $R$ by Proposition 4.4 a) in \cite{Th2}. It follows then from Lemma 2.15 of
\cite{Th1} that $P\left(\pi_Y(I)\right) = \{0\}$ when $P :
C^*_r\left(Y\right) \to C(Y)$ denotes the
conditional expectation. Since $P$ is faithful this shows that $\pi_Y(I) =
0$ and hence that $I = \ker \pi_Y$.

To show that $\ker \pi_Y$ is primitive we may as well show that
$C^*_r\left(Y\right)$ is a prime $C^*$-algebra. Consider
therefore two ideals $I_j \subseteq
C^*_r\left(Y\right), j = 1,2$, such that $I_1I_2 
= \{0\}$. Then
$$
\left\{ y \in Y : f(y) = 0 \ \forall f \in I_1 \cap C(Y) \right\}
\cup \left\{ y \in Y : f(y) = 0 \ \forall f \in I_2 \cap C(Y) \right\}  = Y .
$$ 
By Corollary \ref{onegenerator} there is an element $x \in Y$ such
that $Y =
\overline{\Ro(x)}$. Then $x$ must be in $\left\{ y
  \in Y : f(y) = 0 \ \forall f \in I_j \cap C_0(Y) \right\}$ for
either $j = 1$ or $j=2$. Assume without loss of generality that $x \in
\left\{ y \in Y : f(y) = 0 \ \forall f \in I_1 \cap C(Y)
\right\}$. The latter set is both closed and $\Ro$-invariant so we
conclude that 
$$
Y = \overline{\Ro(x)} = \left\{ y \in Y : \ f(y) = 0 \ \forall
  f \in I_1 \cap C(Y) \right\},
$$ 
i.e $I_1 \cap C(Y) = \{0\}$. As above we conclude from this that
$I_1 = \{0\}$, thanks to Lemma 2.15 of \cite{Th1}.
\end{proof}

Let $Y \in \mathcal M_{ex}$ and let $y \in Y$ be an isolated point
which is either periodic or critical. By
Proposition 4.4 of \cite{Th2} the isotropy group $\Is_y$ is abelian and in fact either
$\mathbb Z$, a non-zero subgroup of $\mathbb Q/\mathbb Z$ or isomorphic to
$\mathbb Z \oplus \mathbb Z_d$ for some $d \in \mathbb N$. Let
$\widehat{\Is_y}$ be its Pontryagin dual group. Since $y$ is isolated in $Y$ every element $\xi \in \Is_y$ is isolated
in $G_Y$ and hence the characteristic function $1_{\xi}$ of the
set $\{\xi\}$ is an element of $C_c\left(G_Y\right)
\subseteq C^*_r\left(Y\right)$. For each
$\omega \in \widehat{\Is_y}$ set 
$$
I(y,\omega) =
\pi_Y^{-1}(I_0(y,\omega)),
$$ 
where $I_0(y,\omega)$ is the
ideal in $C^*_r\left(Y\right)$ generated by the elements
$$
1_{[y,0,\id,y]} - \overline{\omega(\xi)} 1_{\xi} , \ \xi 
\in {\Is_y} .
$$ 
By adopting the proof of Proposition 4.15 from \cite{CT} in a
straightforward way we obtain the following.

\begin{lemma}\label{excep} Let $Y \in \mathcal M_{ex}$ and let $y \in
  Y$ be an isolated point. Then the map
$\widehat{\Is_y} \ni \omega \mapsto I(y,\omega)$
is a bijection from $\widehat{\Is_y}$ onto the collection of primitive ideals $I$
in $C^*_r\left(R\right)$ with $\rho(I) = Y$.
\end{lemma}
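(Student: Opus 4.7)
The proof will follow Proposition~4.15 in \cite{CT}. First, since $y$ is isolated in $Y \in \mathcal{M}_{ex}$, Lemma~\ref{prime2} gives $Y = \overline{\Ro(y)}$ and every point of $\Ro(y)$ is isolated in $Y$, so $\Ro(y)$ is an open, dense, $\Ro$-invariant subset of $Y$; hence $C^*_r(\Ro(y))$ is an (essential) ideal of $C^*_r(Y)$ by Lemma~\ref{quot}. Lemma~\ref{discretequot}, combined with the fact that $\Is_y$ is abelian, identifies $C^*_r(\Ro(y)) \simeq C(\widehat{\Is_y}) \otimes \mathbb{K}(l^2(\Ro(y)))$; under this isomorphism $1_y$ corresponds to $1 \otimes e_{y,y}$ and, for $\xi \in \Is_y$, $1_\xi$ corresponds to $(\chi \mapsto \chi(\xi)) \otimes e_{y,y}$. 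Its primitive ideals are therefore $P_\omega := \ker(\ev_\omega \otimes \id)$ for $\omega \in \widehat{\Is_y}$, and the generators of $I_0(y,\omega)$ sit inside $P_\omega$ and, inside $C^*_r(\Ro(y))$, generate $P_\omega$ (they separate $\omega$ from every other character, and $e_{y,y}$ is full in $\mathbb{K}(l^2(\Ro(y)))$).

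\textbf{Reduction to primitive ideals of $C^*_r(\Ro(y))$.} By Lemma~\ref{ideal-gen}, any primitive ideal $I$ of $C^*_r(R)$ with $\rho(I) = Y$ factors as $\pi_Y^{-1}(I_0)$ for a primitive $I_0 \triangleleft C^*_r(Y)$ satisfying $I_0 \cap C(Y) = \{0\}$. Such an $I_0$ cannot contain $C^*_r(\Ro(y))$, for otherwise $I_0 \supseteq C_0(\Ro(y)) \neq 0$; and, by direct computation in the tensor-product picture, any primitive $I_0$ not containing $C^*_r(\Ro(y))$ satisfies $I_0 \cap C_0(\Ro(y)) = 0$ and hence (by density of $\Ro(y)$ in $Y$) $I_0 \cap C(Y) = 0$. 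The standard primitive-ideal correspondence (cf.\ \cite{RW}) then puts primitive $I_0 \not\supseteq C^*_r(\Ro(y))$ in bijection with primitive ideals of $C^*_r(\Ro(y))$, i.e.\ with $\widehat{\Is_y}$, via $I_0 \mapsto I_0 \cap C^*_r(\Ro(y)) = P_\omega$; the inverse sends $\omega$ to the primitive ideal $\widehat{P_\omega}$ that is the kernel of the unique irreducible extension $\tilde\pi_\omega$ of $\ev_\omega \otimes \id$ to a representation of $C^*_r(Y)$ on $l^2(\Ro(y))$.

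\textbf{Main obstacle: $I_0(y,\omega) = \widehat{P_\omega}$.} The central remaining task — which is exactly the CT-analog — is to verify that $\widehat{P_\omega}$ is generated as an ideal of $C^*_r(Y)$ by the explicit elements $\{1_y - \overline{\omega(\xi)} 1_\xi : \xi \in \Is_y\}$. One inclusion is straightforward: under $\tilde\pi_\omega$ each generator acts as $(1 - |\omega(\xi)|^2) e_{y,y} = 0$, so $I_0(y,\omega) \subseteq \widehat{P_\omega}$. For the reverse, I would work in the quotient $C^*_r(Y)/I_0(y,\omega)$: the image of $1_y$ is a projection whose corner $\overline{1_y} \bigl(C^*_r(Y)/I_0(y,\omega)\bigr) \overline{1_y}$ is the quotient of $C^*(\Is_y)$ by the ideal generated by $\{1 - \overline{\omega(\xi)} \xi\}$, namely $\mathbb{C}$; hence $\overline{1_y}$ is a minimal projection in the quotient. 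Density of $\Ro(y)$ in $Y$ together with faithfulness of the conditional expectation $E : C^*_r(Y) \to C(Y)$ (and the fact that $P_\omega \cap C(Y) = 0$) are then used to propagate this across the orbit and control the contribution from the non-isolated part $Y \setminus \Ro(y)$, identifying $C^*_r(Y)/I_0(y,\omega)$ with $\tilde\pi_\omega(C^*_r(Y))$ and forcing the equality $I_0(y,\omega) = \widehat{P_\omega}$. Once this is established, the bijection is complete and injectivity of $\omega \mapsto I(y,\omega)$ is immediate from the distinctness of the $P_\omega$'s. The essentiality-propagation step just described — balancing the explicit tensor structure on $C^*_r(\Ro(y))$ against the contribution from $Y \setminus \Ro(y)$ via the conditional expectation — is the one point where I expect the adaptation of \cite{CT} to demand the most care.
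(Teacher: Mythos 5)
Your overall architecture is the one the paper itself invokes, namely the adaptation of Proposition 4.15 of \cite{CT}: reduce via Lemma \ref{ideal-gen} to primitive ideals $I_0$ of $C^*_r(Y)$ with $I_0\cap C(Y)=\{0\}$, identify $C^*_r(\Ro(y))\simeq C(\widehat{\Is_y})\otimes\mathbb K\bigl(l^2(\Ro(y))\bigr)$ via Lemma \ref{discretequot}, and use the standard correspondence between primitive ideals not containing this ideal and primitive ideals of the ideal itself, i.e.\ the points of $\widehat{\Is_y}$. Those parts, including the computation that the generators map to $0$ under $\tilde\pi_\omega$ (so $I_0(y,\omega)\subseteq\ker\tilde\pi_\omega$), the verification that $I_0\cap C(Y)=0$ forces $I_0\not\supseteq C^*_r(\Ro(y))$ and conversely, and injectivity of $\omega\mapsto I(y,\omega)$, are correct. (A small simplification you could exploit: since a closed ideal of an ideal is an ideal, the generators already generate $P_\omega:=\ker(\ev_\omega\otimes\id)$ inside $C^*_r(\Ro(y))$, and hence $I_0(y,\omega)=P_\omega$ exactly.)

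The genuine gap is the step you flag at the end and leave as a plan: the reverse inclusion $\ker\tilde\pi_\omega\subseteq I_0(y,\omega)$, equivalently that $C^*_r(\Ro(y))/P_\omega\simeq\mathbb K$ is \emph{essential} in $C^*_r(Y)/I_0(y,\omega)$. This is not a routine propagation, and the tools you name cannot deliver it. Faithfulness of the conditional expectation $E:C^*_r(Y)\to C(Y)$ pins down an ideal only when the points with trivial isotropy are dense (as in the proof of Lemma \ref{prime4}); here the situation is the opposite, since $\Ro(y)$ is a dense set of points with non-trivial isotropy, and correspondingly there are many ideals meeting $C(Y)$ trivially --- both $I_0(y,\omega)$ and $\ker\tilde\pi_\omega$ do --- so no argument using only $E$, density of $\Ro(y)$ and minimality of $\overline{1_y}$ can separate or identify them. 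Note also that essentiality of $C^*_r(\Ro(y))$ in $C^*_r(Y)$ (which is true) does not pass to the quotient by $I_0(y,\omega)$. What must actually be proved is that $\tilde\pi_\omega(a)=0$ implies $\pi_{Y\setminus\Ro(y)}(a)=0$, i.e.\ that the matrix coefficients $\sum_{\xi\in\Is_y} j(a)\bigl(\eta_u\xi\eta_v^{-1}\bigr)\omega(\xi)$ taken over the dense orbit detect the restriction of $a$ to $G_{Y\setminus\Ro(y)}$ despite possible cancellations in these $\omega$-twisted sums over the isotropy group; this approximation argument, using the structure of $G_Y$ near the non-isolated part, is precisely the non-trivial content of Proposition 4.15 in \cite{CT}. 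Without it your proposal establishes neither that $I(y,\omega)$ is primitive nor that every primitive ideal with co-support $Y$ has this explicit form.
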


In particular, it follows from Lemma \ref{excep} and Lemma
\ref{prime4} that every prime subset of $\overline{\mathbb C}$ is the
co-support of a primitive ideal in $C^*_r(R)$. By combining Lemma \ref{excep} with Lemma \ref{prime4} we get the following.

\begin{lemma}\label{primitive} For each $A \in \mathcal M_{ex}$ choose
  an isolated point $y_A$ in $A$ which is either periodic or critical. Then the set of primitive ideals in
  $C^*_r(R)$ is the disjoint union
$$
\left\{\ker \pi_B : \ B \in \mathcal M \backslash \mathcal M_{ex}
\right\} \cup \bigcup_{A \in \mathcal M_{ex}} \left\{ I(y_A,\omega) : \
  \omega \in \widehat{\Is_{y_A}}\right\} .
$$
\end{lemma}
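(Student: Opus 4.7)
\textbf{Proof plan for Lemma \ref{primitive}.} The strategy is to show that the co-support map $I \mapsto \rho(I)$ gives a surjection from the set of primitive ideals onto $\mathcal M$, and to classify the primitive ideals fiber-by-fiber over $\mathcal M$ using the preceding lemmas.

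First I would show that the two parts of the claimed union actually consist of primitive ideals and exhaust them. Let $I$ be a primitive ideal of $C^*_r(R)$. By Lemma \ref{prime}, $\rho(I) \in \mathcal M$, so exactly one of two cases occurs: either $\rho(I) \in \mathcal M \setminus \mathcal M_{ex}$, in which case Lemma \ref{prime4} forces $I = \ker \pi_{\rho(I)}$; or $\rho(I) \in \mathcal M_{ex}$, in which case Lemma \ref{excep} says $I = I(y_{\rho(I)},\omega)$ for some $\omega \in \widehat{\Is_{y_{\rho(I)}}}$. Conversely, the ideals listed on the right-hand side are all primitive: for $B \in \mathcal M \setminus \mathcal M_{ex}$ this is the last assertion of Lemma \ref{prime4}, and for $A \in \mathcal M_{ex}$ this is built into Lemma \ref{excep}.

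Next I would verify disjointness of the displayed union. The key observation is that for every ideal $J$ appearing in the union, the co-support $\rho(J)$ can be read off: we have $\rho(\ker \pi_B) = B$ for $B \in \mathcal M \setminus \mathcal M_{ex}$, since $C(\C) \cap \ker \pi_B$ is precisely the set of continuous functions on $\C$ vanishing on $B$ (this uses that $\pi_B$ restricts to the restriction map $C(\C) \to C(B)$); and $\rho(I(y_A,\omega)) = A$ for $A \in \mathcal M_{ex}$ by Lemma \ref{excep}. Consequently, ideals coming from distinct prime sets $B \neq B'$ in $\mathcal M \setminus \mathcal M_{ex}$ are distinct, ideals coming from distinct prime sets $A \neq A'$ in $\mathcal M_{ex}$ are distinct, and the two parts of the union do not overlap. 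Finally, within a single $A \in \mathcal M_{ex}$, distinct characters $\omega \neq \omega'$ yield distinct ideals $I(y_A,\omega) \neq I(y_A,\omega')$, again by the bijection part of Lemma \ref{excep}.

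The routine verification above essentially packages the preceding two lemmas; the only content beyond that is the easy identification $\rho(\ker\pi_B) = B$, which uses only that $B$ is closed in $\C$ and that $\pi_B$ restricts on $C(\C)$ to evaluation-restriction. I do not anticipate a substantive obstacle: all the hard work (the primeness of $\rho(I)$ for primitive $I$, the uniqueness for $B \notin \mathcal M_{ex}$, and the parametrization by $\widehat{\Is_{y_A}}$ for $A \in \mathcal M_{ex}$) has been done in Lemmas \ref{prime}, \ref{prime4}, and \ref{excep}. Note also that the set on the right-hand side is independent of the choice of isolated point $y_A$, since for any two valid choices the corresponding families both equal $\{I \text{ primitive} : \rho(I) = A\}$ by Lemma \ref{excep}; the reparametrization is then an automorphism of $\widehat{\Is_{y_A}}$.
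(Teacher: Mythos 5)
Your proposal is correct and follows essentially the same route as the paper, which obtains the lemma simply by combining Lemma \ref{prime}, Lemma \ref{prime4} and Lemma \ref{excep}; your extra verifications (that $\rho(\ker\pi_B)=B$, that $\rho(I(y_A,\omega))=A$, and independence of the choice of $y_A$) are exactly the routine points the paper leaves implicit.
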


\begin{lemma}\label{mex} Let $A \in \mathcal M_{ex}$. There is either an exposed point $x \in E_R$ such that $A = \Ro(x)$, or a
  critical or periodic point $x \in F_R \backslash E_R$ such that $A = \Ro(x) \sqcup
  J_F$.
\end{lemma}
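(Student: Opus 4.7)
The plan is to take the isolated periodic or critical point $y \in A$ whose existence defines membership in $\mathcal{M}_{ex}$ and show that $y$ itself plays the role of $x$ in the conclusion. Combining Corollary~\ref{onegenerator} with Lemma~\ref{prime2}, primeness of $A$ together with $y$ being isolated in $A$ forces $A = \overline{\Ro(y)}$ and makes every point of $\Ro(y)$ isolated in $A$.

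First I would dispose of the easy case $y \in E_R$: then $\Ro(y)$ is finite, hence closed, so $A = \Ro(y)$ and the first alternative holds with $x = y$. Otherwise $\Ro(y)$ is infinite, and I need to show $y \in F_R$. Were $y \in J_R$, total $R$-invariance would give $\Ro(y) \subseteq J_R$, hence $A \subseteq J_R$; Lemma~\ref{snit} would then force $A$ finite or $A = J_R$, but the former contradicts non-exposedness of $y$ while the latter contradicts $y$ being isolated, as $J_R$ has no isolated points. So $y \in F_R \setminus E_R$ is critical or periodic, and by Lemma~\ref{perclosed} (respectively Lemma~\ref{gcclosed}) the orbit $\Ro(y)$ is closed and discrete in $F_R$.

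Next I would let $A_0$ denote the set of non-isolated points of $A$. Since $\Ro(y)$ accumulates nowhere in $F_R$, any point of $A \setminus \Ro(y)$ is a limit point of $\Ro(y)$ situated in $J_R$; combined with the isolatedness of the points of $\Ro(y)$ in $A$, this gives $A_0 = A \setminus \Ro(y) \subseteq J_R$. Lemma~\ref{escape} renders $A_0$ totally $R$-invariant, hence $\Ro$-invariant, and Lemma~\ref{snit} forces $A_0$ to be either finite or equal to $J_R$. The second alternative of the statement will follow once I show $A_0 = J_R$, i.e.\ $J_R \subseteq \overline{\Ro(y)}$.

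This last inclusion is the main obstacle. Since $\Ro(y) \subseteq \orb(y)$ is infinite, $y$ is non-exceptional, so the classical blowing-up property in complex dynamics (cf.\ Theorem~4.10 of \cite{Mi}) asserts that $\bigcup_{n \geq 0} R^{-n}(y)$ is dense in $J_R$. For any $p \in J_R$ and any neighbourhood $U$ of $p$, some iterate $R^N$ sends $U$ onto $\C$ minus the at most two exceptional points, so $U$ meets $R^{-n}(y)$ for every $n \geq N$ and therefore contains infinitely many elements of $\bigcup_n R^{-n}(y)$. Since the pre-critical set $\bigcup_j R^{-j}(\crt)$ is countable, at least one such preimage $z \in U$ satisfies $(R^n)'(z) \neq 0$, and Lemma~\ref{simpleobs} then places $z$ in $\Ro(y)$. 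Hence $p \in \overline{\Ro(y)} = A$ for every $p \in J_R$, completing the argument that $A = \Ro(y) \sqcup J_R$ with $x = y \in F_R \setminus E_R$.
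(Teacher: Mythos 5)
Up to the last paragraph your argument runs parallel to the paper's: you take the isolated periodic or critical point $y$, get $A=\overline{\Ro(y)}$ with the points of $\Ro(y)$ isolated in $A$ from Lemma~\ref{prime2}, dispose of the case $y\in E_R$ directly and of $y\in J_R$ via Lemma~\ref{snit}, and for $y\in F_R\setminus E_R$ use Lemmas~\ref{perclosed}, \ref{gcclosed} and \ref{escape} to see that $A_0=\overline{\Ro(y)}\setminus\Ro(y)$ is a closed, totally $R$-invariant subset of $J_R$, hence finite or equal to $J_R$ by Lemma~\ref{snit}. The final paragraph, however, contains a genuine gap. The blow-up property does give points of the full backward orbit $\bigcup_n R^{-n}(y)$ in every neighbourhood $U$ of every $p\in J_R$, but the step ``since $\bigcup_j R^{-j}(\crt)$ is countable, at least one such preimage $z\in U$ satisfies $(R^n)'(z)\neq 0$'' is a non sequitur: the set of preimages of $y$ lying in $U$ is itself countable, so countability of the pre-critical set excludes nothing. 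Worse, the implication you are implicitly invoking --- density of the full backward orbit plus countability of the pre-critical set implies density of the critically unbranched backward orbit, hence of $\Ro(y)$ --- is false in general: in Rees's example discussed after Theorem~\ref{Juliaext} one has $R^{-1}(0)=\{2\}\subseteq\crt$, so every backward branch from $0$ hits a critical point at the first step and $\Ro(0)=\{0\}$, even though the full backward orbit of $0$ is dense in $J_R=\C$. So the inclusion $J_R\subseteq\overline{\Ro(y)}$, while true in your situation, cannot be extracted from the classical density statement by a cardinality count; it needs an actual argument.

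The fix is already in your hands, and it is essentially how the paper concludes. Having the dichotomy from Lemma~\ref{snit}, you only need to exclude that $A_0$ is finite. First, $A_0\neq\emptyset$: $\Ro(y)$ is infinite and, by Lemmas~\ref{perclosed} and \ref{gcclosed}, closed and discrete in $F_R$, so by compactness of $\C$ it accumulates somewhere, necessarily in $J_R$ and outside $\Ro(y)$. Second, a finite non-empty $A_0$ is impossible: by Lemma~\ref{escape} it satisfies $R^{-1}(A_0)=A_0$, so each of its points has finite grand orbit, i.e.\ is exceptional, and the exceptional points lie in the Fatou set, contradicting $A_0\subseteq J_R$. (The paper argues instead that $A_0$ has no isolated points, so that ``finite'' forces $A_0=\emptyset$, which is the exposed alternative; in your case-split, $A_0=\emptyset$ would make $\Ro(y)$ closed in $\C$, hence compact and discrete, hence finite, contradicting $y\notin E_R$.) With the last paragraph replaced by such an observation, your proof becomes complete and is then essentially the paper's.
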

\begin{proof} Let $x$ be periodic or critical point such that $x$ is
  isolated in $A$ and $A =
  \overline{\Ro(x)}$. If $x \in J_R$ it follows from Lemma \ref{snit}
  that $\Ro(x)$ is finite since $J_R$ has no isolated points, i.e. $x$
  is exposed. Assume $x \in F_R$. It follows from Lemma
  \ref{perclosed} and Lemma \ref{gcclosed} that $\overline{\Ro(x)}
  \backslash \Ro(x) \subseteq J_R$. Since $\overline{\Ro(x)}
  \backslash \Ro(x)$ is closed, $\Ro$-invariant and has no isolated
  points, it follows then from Lemma \ref{snit} that $\overline{\Ro(x)}
  \backslash \Ro(x) = \emptyset$ or $\overline{\Ro(x)}
  \backslash \Ro(x) = J_R$. In the first case $x$ is exposed and in
  the second we have that $A = \Ro(x) \sqcup
  J_F$.
\end{proof}

\begin{lemma}\label{nonmex} Let $A \in \mathcal M \backslash \mathcal
  M_{ex}$. Then either $A = J_R$ or $A = \overline{\Ro(x)}$ for some
  $x \in F_R \backslash \mathcal I$. In the last case, $J_R \subseteq A$.
\end{lemma}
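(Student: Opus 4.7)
The plan is as follows. By Corollary~\ref{onegenerator} I write $A = \overline{\Ro(x)}$ for some $x \in \C$ and split into cases according to the location of $x$. If $x \in J_R$, then $A$ is a closed $\Ro$-invariant subset of $J_R$, so Lemma~\ref{snit} gives either $A = J_R$ or $A$ finite; in the finite case $A$ is an exposed $\Ro$-orbit, and by the type analysis (Corollary~\ref{four} together with the types 1, 2, 3 discussion before Lemma~\ref{expoinJ}, using Lemma~\ref{typ1} in type~1) the set $A$ always contains a periodic or critical point. Being in a finite set this point is automatically isolated, so $A \in \mathcal M_{ex}$ --- contradicting the hypothesis. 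Hence $A = J_R$ in this case.

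Next I rule out $x \in F_R \cap \mathcal I$. Replacing $x$ by a periodic or critical representative $y$ of $\Ro(x)$, Lemma~\ref{perclosed} (if $y$ is periodic) or Lemma~\ref{gcclosed} (if $y$ is critical) shows that $\Ro(x) = \Ro(y)$ is closed and discrete in $F_R$. Since $F_R$ is open in $\C$, $y$ has a neighbourhood $V \subseteq F_R$ with $V \cap \Ro(x) = \{y\}$; and since closedness of $\Ro(x)$ in $F_R$ pushes every point of $A \setminus \Ro(x)$ into $\C \setminus F_R = J_R$, we in fact have $V \cap A = \{y\}$. Thus $y$ is an isolated periodic or critical point of $A$, forcing again $A \in \mathcal M_{ex}$, a contradiction. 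So $x \in F_R \setminus \mathcal I$, which proves the first assertion.

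For the second assertion the strategy is to show that $\Ro(x)$ is dense in $J_R$. Since $\Ro(x)$ contains no periodic or critical point, the argument of Case~1 applied to a hypothetical finite $\Ro(x)$ shows that $\Ro(x)$ must be infinite, so $x$ is not exceptional in the classical sense and by Julia's theorem $\bigcup_n R^{-n}(R^m(x))$ is dense in $J_R$ for every $m \geq 0$. A pre-image $u \in R^{-n}(R^m(x))$ lies in $\Ro(x)$ precisely when $\val(R^n, u) = 1$, which fails only if $R^j(u) = c \in \crt$ for some $j < n$; this forces $R^{n-j}(c) = R^m(x)$, hence $R^m(x) \in \bigcup_k R^k(\crt)$. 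Therefore if $x$ is not post-critical (take $m = 0$), every pre-image of $x$ is automatically in $\Ro(x)$ and density in $J_R$ is immediate.

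The main obstacle is the case where $x$ itself is post-critical: then every $R^m(x)$ is post-critical and pre-critical pre-images really do occur. In this case $x = R^{k_0}(c_0)$ for a critical $c_0 \in F_R$ which must be non-periodic (a super-attracting critical periodic orbit would, by matching valencies in Proposition~4.4 of \cite{Th2}, put $x \in \Ro(c_0) \subseteq \mathcal I$, contradicting our hypothesis). I will handle this by a counting argument: the number of pre-critical pre-images of $R^m(x)$ inside $R^{-n}(R^m(x))$ coming from a given $c \in \crt$ is bounded by $|R^{-(n-\ell_{c,m})}(c)|$, where $\ell_{c,m}$ is the iterate at which $c$'s forward orbit first hits $R^m(x)$, and is globally of size $O(d^{n-k_0-m})$, whereas $|R^{-n}(R^m(x))|$ grows like $d^n$. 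An application of Brolin's equidistribution theorem, both to $R^{-n}(R^m(x))$ and to the relevant $R^{-(n-\ell)}(c)$, whose limit Brolin measure has full support $J_R$, then shows that the non-pre-critical pre-images of $R^m(x)$ accumulate on every open subset of $J_R$. Hence $\Ro(x)$ is dense in $J_R$ and $J_R \subseteq \overline{\Ro(x)} = A$, as required.
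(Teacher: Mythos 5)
Your dichotomy argument for the first assertion (if the generator lies in $J_R$ then Lemma~\ref{snit} forces $A=J_R$, since a finite $\Ro$-orbit contains a periodic or critical point and would put $A$ in $\mathcal M_{ex}$; if it lies in $F_R\cap\mathcal I$ then Lemmas~\ref{perclosed} and \ref{gcclosed} produce an isolated periodic or critical point of $A$, again contradicting $A\notin\mathcal M_{ex}$) is correct and close in spirit to the paper, and your treatment of the density claim when $x$ is not post-critical is also fine. The genuine gap is the counting estimate in the post-critical case. You bound the number of pre-critical elements of $R^{-n}(R^m(x))$ by $O\left(d^{\,n-k_0-m}\right)$, as though the only critical orbit reaching $R^m(x)$ were that of $c_0$, and only at time $k_0+m$. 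But another critical point $c$ may satisfy $R^{\ell}(c)=R^m(x)$ with $\ell$ small (for instance $R^m(x)$ could be a critical value, $\ell=1$), and the corresponding ``bad'' preimages then number on the order of $d^{\,n-\ell}$, not $d^{\,n-k_0-m}$; counted with the multiplicities that the equidistribution theorem actually weights, such a contribution can be a definite fraction of $d^n$ (for $d=2$ and an early critical-value relation it can even exhaust $R^{-n}(R^m(x))$). Nothing in your argument rules out such early hits for every $m$, so the comparison ``bad $\ll$ total'' does not follow as stated and the density argument breaks precisely at its key step.

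The gap is repairable, but the repair needs an ingredient you never establish: that $x$ is not pre-periodic. (It does hold: $x$ is not pre-critical by Lemma~\ref{simpleobs} and Lemma~\ref{equal} since otherwise $x\in\mathcal I$; and if $x$ were pre-periodic its terminal cycle would have to be non-critical---otherwise $x$ would be pre-critical---so $x\in\Ro(q)\subseteq\mathcal I$ for the periodic point $q$ it lands on, a contradiction.) Granting this, the points $R^m(x)$ are pairwise distinct, so for any fixed $L$ only finitely many $m$ satisfy $R^m(x)\in\left\{R^{\ell}(c): c\in\crt,\ 1\le\ell\le L\right\}$; choosing $m$ outside this finite set makes every hitting time exceed $L$, each critical point hits $R^m(x)$ at most once, the multiplicities $\val\left(R^{\ell},c\right)$ stay bounded, and then your comparison with the equidistributed preimages goes through. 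Note that the paper reaches the same conclusion without equidistribution: after arranging (Lemma~\ref{prime2}, Lemma~\ref{prime3}) a generator $y$ of $A$ that is neither pre-critical nor pre-periodic, it uses the combinatorial argument of Lemma 7.3 in \cite{Th2} to produce a preimage of some $R^n(y)$ whose entire backward orbit avoids $\crt$; that backward orbit lies in $\Ro(y)$, and its closure contains $J_R$ by the elementary Theorem 4.2.5 of \cite{B}, which is all that is needed.
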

\begin{proof} Let $x \in A$ such that $\overline{\Ro(x)} = A$. If $x
  \in J_R$ it follows from Lemma \ref{snit} that $A = J_R$ or $A$ is
  finite. In the last case $A = \Ro(x)$ is an exposed $\Ro$-orbit
  which must contain either a periodic point or a critical point,
  cf. Section \ref{stabjr}. Since this is impossible when $A \notin
  \mathcal M_{ex}$ we must have that $A = J_R$.

Assume that $x \in F_R$. If $A$ contains an isolated point $y$ it
follows form Lemma \ref{prime2} that $ A = \overline{\Ro(y)}$. Note
that $y \in F_R$ because $y \in \Ro(x)$ and $F_R$ is totally
$R$-invariant. It follows that $y \notin \mathcal I$ since $A \notin
\mathcal M_{ex}$. To prove that $J_R
\subseteq A$ note that $R^n(y) \in \Ro(y)$ for all $n \in
\mathbb N$ since $y \notin
\bigcup_{i=0}^{\infty} R^{-i}\left(\crt \right)$. Thus $y$ can not be
pre-periodic since this would contradict that $A \notin \mathcal
M_{ex}$. Now an argument from the proof of Lemma 7.3 in \cite{Th2}
shows that there is an $n \in \mathbb N$ such that the backward orbit
of $R^n(y)$ contains no critical points. Then the backward orbit of
$R^n(y)$ is contained in $\Ro(y)$, and since $R^n(y)$ is not
exceptional, it follows therefore from Theorem 4.2.5 in \cite{B} that
$J_R \subseteq \overline{\Ro(x)}$.

If $A$ has no isolated points it follows from Lemma \ref{prime3} that
there is a point $y \in A\backslash \bigcup_{j=0}^{\infty}
R^{-j}(\crt)$ such that $A = \overline{\Ro(y)}$. Note that $y$ can not
be pre-periodic because $A \notin
\mathcal M_{ex}$. Since the asymptotic valency of $y$ is $1$ it
follows that $y \notin \mathcal I$, i.e. $y \in F_R \backslash \mathcal
I$. As above it follows that there is an $n \in \mathbb N$ such that
the backward orbit of $R^n(y)$ is in $\Ro(y)$ and it follows again
that $J_R \subseteq A$.

\end{proof}

We can now show that the primitive ideal space of $C^*_r(R)$ is not
Hausdorff, or even $T_0$, in the hull-kernel topology unless $J_R = \C$ and there are no
exposed points. Indeed, if $J_R \neq \C$ it follows from Lemma
\ref{7!!} that $F_R$
contains a point $y$ which is neither pre-critical nor
pre-periodic or exposed. Then Lemma \ref{mex} combined with Lemma \ref{gcclosed} and
Lemma \ref{perclosed} shows that $A =\overline{\Ro(y)} \in \mathcal M
\backslash \mathcal M_{ex}$. Furthermore, $J_R \subseteq A$ by Lemma \ref{nonmex}. When $J_R = \C$ and there is an exposed point, its
restricted orbit will be an element $B \in
\mathcal M_{ex}$ such that $B \subsetneq J_R$. In the first case it
follows from Lemma \ref{ideal-gen} that $\ker \pi_{A}
\subsetneq \ker \pi_{J_R} $ so that $\ker \pi_{J_R}$ is in the closure of
$\left\{\ker \pi_{A}\right\}$ with respect to the hull-kernel
topology. In the second case $\{0\} = \ker \pi_{J_R}  \subsetneq \ker \pi_B
\subseteq I(y_B,\omega)$  for any $y_B \in B$ and any $\omega \in
\widehat{\Is_{y_B}}$, and then $I(y_B,\omega)$ is a primitive ideal in the closure of
$\left\{\ker \pi_{J_R}\right\}$. In both cases we conclude that the
primitive ideal spectrum is not $T_0$. Note that when $J_R = \C$
and there are no exposed points, $C^*_r(R)$ is simple by Proposition
\ref{JRsimple} and the primitive ideal spectrum reduces to one point.

\subsection{The primitive quotients}

It follows from Lemma \ref{mex} and Lemma \ref{nonmex} that we can
divide the primitive ideals $I$ of $C^*_r(R)$ into four types,
according to the nature of their co-supports: 
\begin{enumerate}
\item[i)] $\rho(I) = J_R$,
\item[ii)] $\rho(I) = \Ro(x)$ for some exposed point $x$,
\item[iii)] $\rho(I) = \Ro(x) \sqcup J_R$ for some $x \in \mathcal I
  \cap F_R
  \backslash E_R$, and
\item[iv)] $\rho(I) = \overline{\Ro(x)}$ for some $x \in F_R \backslash
  \mathcal I$.
\end{enumerate}

When $\rho(I) = J_R$ the quotient $C^*_r(R)/I$ is $C^*_r(J_R)$ whose
structure was elucidated in Section \ref{JR}. When $\rho(I) = \Ro(x)$ for some
exposed point it follows from (\ref{triangle}), Lemma
\ref{discretequot} and Corollary \ref{four} that $C^*_r(R)/I \simeq
M_n(\mathbb C)$ for some $n \leq 4$. In case iii) it follows first
from Lemma \ref{quot} and Lemma \ref{discretequot} that there is an extension 
\begin{equation*}\label{3x0}
\begin{xymatrix}{ 
0 \ar[r] &  C^*(\Is_x) \otimes \mathbb K  \ar[r]
 &
C^*_r(\rho(I))   \ar[r]   & C^*_r\left(J_R \right)  \ar[r] & 0
}
\end{xymatrix}
\end{equation*} 
and then from (\ref{triangle}) and Lemma \ref{excep} that there is an extension 
\begin{equation*}\label{3y0}
\begin{xymatrix}{ 
0 \ar[r] &  \mathbb K  \ar[r]
 &
C^*_r(R)/I   \ar[r]   & C^*_r\left(J_R \right)  \ar[r] & 0.
}
\end{xymatrix}
\end{equation*} 

It remains to describe the primitive quotient $C^*_r(R)/I$ in case
iv). The result depends very much on which stable region the point $x
\in F_R \backslash \mathcal I$ which generates $\rho(I)$
comes from. We consider the different possibilities in the following subsections.

\subsubsection{The super-attractive and attractive stable regions}

Assume $x $ is contained in a super-attracting stable region $\Omega$. It follows from Lemma
\ref{prime4} that $C^*_r(R)/I \simeq C^*_r(\rho(I))$. Since $I \cap
C^*_r\left(\Omega \backslash \mathcal I\right)$ is a primitive ideal
in $C^*_r\left(\Omega \backslash \mathcal I\right)$ it follows that
$$
C^*_r\left(\rho(I) \cap \Omega \backslash \mathcal I\right) =
C^*_r\left(\Omega \backslash \mathcal I\right)/I
$$ 
is a primitive
quotient of $C^*_r(\Omega \backslash \mathcal I)$ and hence isomorphic to
the stabilised Bunce-Deddens algebra $\mathbb K \otimes \BD\left(d^{\infty}\right)$ by Proposition
\ref{super-att}. When we apply the method from Section
\ref{secdiag} to $C^*_r(\rho(I))$ rather than $C^*_r(R)$, we obtain
therefore the
following commuting diagram with exact rows and columns because there
are no periodic non-critical orbits.    
\begin{equation*}\label{3x4}
\begin{xymatrix}{ 
  & 0 \ar[d]  & 0 \ar[d] &   & \\
  &  \BD(d^{\infty}) \otimes \mathbb K \ar@{=}[r] \ar[d] &
  \BD(d^{\infty}) \otimes \mathbb K   \ar[d]  & &  \\
0 \ar[r] &  C^*_r\left(\rho(I) \cap F_R\right) \ar[r]
\ar[d] &
C^*_r\left(R\right)/I  \ar[r] \ar[d] & C^*_r(J_R)  \ar@{=}[d] \ar[r]
&  0 \\
0 \ar[r] &  A  \ar[r]
\ar[d] &
 C^*_r\left(J_R \cup  \left( \mathcal I_c \cap \rho(I)\right)\right)   \ar[r] \ar[d] & C^*_r\left(J_R \right)  \ar[r]
& 0 \\
  & 0   & 0  &    & \\
}
\end{xymatrix}
\end{equation*}
Here
$$
A = C^*_r\left(\mathcal I_c \cap \rho(I)\right) = \left( \oplus_{i=1}^n 
C(K) \otimes \mathbb K_{c_i}\right)  \oplus \left(\oplus_{i=n+1}^{n+m} \mathbb
C^{v_i}\otimes \mathbb K_{c_i}\right)
$$
where $c_1,c_2, \dots, c_{n+m}$ are critical points in
  $\Omega$ such that $\rho(I) \cap \mathcal I_c = \sqcup_{i=1}^{n+m}
  \Ro(c_i)$, and $c_1, \dots, c_n$ are pre-periodic while $c_{n+1},
  \dots, c_{n+m}$ are not. As usual $v_i$ is the asymptotic
  valency of $c_i$ and $K$ is the Cantor set.

When $\Omega$ is attractive with periodic point $p$ we get by the same
reasoning the diagram
\begin{equation*}\label{3x5}
\begin{xymatrix}{ 
  & 0 \ar[d]  & 0 \ar[d] & 0 \ar[d]  & \\
0 \ar[r] & \mathbb K \ar[r] \ar[d] & C^*_r\left(\Ro(p)\right)    \ar[r] \ar[d]  &
C(\mathbb T)\otimes \mathbb K_p \ar[r] \ar[d]
& 0 \\
0 \ar[r] &  C^*_r\left(\rho(I) \cap F_R \backslash \Ro(p)\right) \ar[r]
\ar[d] &
C^*_r\left(R\right)/I  \ar[r] \ar[d] & C^*_r\left(J_R \cup \Ro(p)\right) \ar[d] \ar[r]
& 0 \\
0 \ar[r] &  A \ar[r]
\ar[d] &
C^*_r\left(J_R \cup \left(\mathcal I_c \cap \rho(I)\right)\right)   \ar[r] \ar[d] & C^*_r\left(J_R \right) \ar[d] \ar[r]
& 0 \\
  & 0   & 0  & 0   & \\
}
\end{xymatrix}
\end{equation*} 
Here
$$
A = C^*_r\left(\mathcal I_c \cap \rho(I)\right) = \left( \oplus_{i=1}^n 
C(\mathbb T) \otimes \mathbb C^{v_i} \otimes \mathbb K_{c_i}\right)  \oplus \left(\oplus_{i=n+1}^{n+m} \mathbb
C^{v_i}\otimes \mathbb K_{c_i}\right)
$$
where $c_1,c_2, \dots, c_{n+m}$ are critical points in
  $\Omega$ such that $\rho(I) \cap \mathcal I_c = \sqcup_{i=1}^{n+m}
  \Ro(c_i)$, and $c_1, \dots, c_n$ are pre-periodic while $c_{n+1},
  \dots, c_{n+m}$ are not.

\subsubsection{Parabolic stable regions}

Assume now that $x$ is contained in a parabolic stable region $\Omega$. In this case there is no periodic
point in $\Omega$ and we get the following diagram.
\begin{equation}\label{ups7}
\begin{xymatrix}{ 
  & 0 \ar[d]  & 0 \ar[d]  &   & \\
 &  \mathbb K \ar[d] \ar@{=}[r]  &  \mathbb K  \ar[d] &
&  \\
0 \ar[r] &  C^*_r\left(\rho(I) \cap F_R \right) \ar[r]
\ar[d] &
C^*_r\left(R \right)/I  \ar[r] \ar[d]  & C^*_r\left(J_R \right)  \ar[r]\ar@{=}[d] & 0 \\
 0 \ar[r] &  \oplus_{i=1}^{N} \mathbb
C^{v_i}\otimes \mathbb K_{c_i} 
\ar[d] \ar[r] & C^*_r\left(J_R \cup \left( \mathcal I_c\cap
    \rho(I)\right) \right) \ar[r]\ar[d]   &   \J \ar[r]
&  0 \\
  & 0   &  0 &    & \\
}
\end{xymatrix}
\end{equation} 
where $c_i, i = 1,2, \dots, N$, are critical points in $\Omega$ such
that $\mathcal I_c \cap \rho(I) = \bigsqcup_{i=1}^N \Ro(c_i)$.

\subsubsection{Stable regions of Siegel
  or Herman type}

Assume now that $x$ is contained in a stable region $\Omega$ of Siegel type.  In this case there is a periodic
point in $\Omega$ with a non-critical orbit, but since $x \notin
\mathcal I$ this orbit is not in $\rho(I)$. Therefore the picture is
the same as in the case of a Herman type stable region and we get in
both cases a
diagram similar to the parabolic case. The only difference is that the algebra
$\mathbb K$ in (\ref{ups7}) is exchanged with the stabilised irrational rotation algebra
$A_{\theta}$ 
corresponding
to the rotation by the angle $2 \pi \theta $ in the core of $\Omega$.


\bigskip

This completes the list of primitive quotients of $C^*_r(R)$. Note that only very few
of the primitive quotients are simple. In fact, the simple quotients
of $C^*_r(R)$ are all matrix algebras $M_n(\mathbb C)$ with $n
\leq 4$, together with $\J$ when
there are no exposed point in $J_R$.

\end{document}